\title[Topological K-theory of quasi-BPS categories]{Topological K-theory of quasi-BPS categories of symmetric quivers with potential}
\author{Tudor P\u{a}durariu and Yukinobu Toda}
\newtheorem{thm}{Theorem}[section]
\newtheorem{cor}[thm]{Corollary}
\newtheorem{prop}[thm]{Proposition}
\newtheorem{lemma}[thm]{Lemma}
\theoremstyle{definition}
\newtheorem{defn}[thm]{Definition}
\newtheorem{thm*}[thm]{Theorem$^*$}
\newtheorem{remark}[thm]{Remark}
\newtheorem{example}[thm]{Example}
\newtheorem{assum}{Assumption}[section]
\newcommand{\comment}[1]{}
\renewcommand{\leq}{\leqslant}
\renewcommand{\geq}{\geqslant}
\newcommand{\X}{\mathscr{X}}
\newcommand{\id}{\operatorname{id}}
\newcommand{\Ind}{\operatorname{Ind}}
\newcommand{\Hom}{\operatorname{Hom}}
\newcommand{\Spec}{\operatorname{Spec}}
\newcommand{\ch}{\operatorname{ch}}
\newcommand{\C}{\mathbb{C}}
\newcommand{\dd}{\underline{d}}
\newcommand{\inclusion}{\ar@<-0.3ex>@{^{(}->}[r]}
\newcommand{\linclusion}{\ar@<-0.3ex>@{^{(}->}[rr]}
\newcommand{\Tr}{\mathop{\rm Tr}}
\newcommand{\ssslash}{/\!\!/}
\tikzstyle{block}=[draw=black, width=1cm, minimum height=2cm, align=center] 
\tikzstyle{block2}=[draw=black, text width=2cm, minimum height=1cm, align=center] 
\tikzstyle{block3}=[draw=black, text width=2cm, minimum height=1cm, align=center] 
\begin{document}

\begin{abstract}
In previous works, we introduced and studied certain categories called quasi-BPS categories associated to
symmetric quivers with potential, preprojective algebras, 
and local surfaces. 
They have properties reminiscent of BPS 
invariants/ cohomologies in enumerative geometry, for example they play
important roles in categorical wall-crossing 
formulas. 

In this paper, we make the connections between quasi-BPS categories and BPS cohomologies more precise
via the cycle map for topological K-theory. 
We show the existence of filtrations on 
topological K-theory of quasi-BPS categories whose 
associated graded are isomorphic to the 
monodromy invariant BPS cohomologies. 
Along the way, we also compute the topological K-theory of categories of matrix factorizations
in terms of the monodromy invariant vanishing cycles (a version of this comparison was already known by work of Blanc-Robalo-Toën-Vezzosi), prove a Grothendieck-Riemann-Roch theorem for matrix factorizations, and prove the compatibility between the Koszul equivalence in K-theory and dimensional reduction in cohomology.

In a separate paper, we use the results from this paper 
to show that the quasi-BPS categories of K3 surfaces recover the 
BPS invariants of the corresponding local surface, which are Euler characteristics of Hilbert schemes of
points on K3 surfaces.

\end{abstract}

\maketitle

 \setcounter{tocdepth}{2}
\tableofcontents

\section{Introduction}
\subsection{Background}

The BPS invariants are integer-valued invariants, which virtually 
count semistable (compactly supported) coherent 
sheaves on a smooth complex Calabi-Yau (CY) $3$-fold. 
They generalize Donaldson-Thomas (DT) invariants~\cite{Thom} which are only defined when there are no strictly semistable sheaves, 
and they were introduced by Joyce--Song~\cite{JS} and Kontsevich--Soibelman~\cite{K-S}
in the study of wall-crossing formulas for DT invariants. 
BPS invariants are fundamental enumerative invariants which determine many other enumerative invariants of interest for Calabi-Yau $3$-folds, such as Gromov-Witten, Donaldson-Thomas (DT), or Pandharipande-Thomas invariants, 
see the surveys~\cite[Section 2 and a half]{MR3221298} and~\cite[Section~6]{Tsurvey}. 

Let $X$ be a smooth Calabi-Yau $3$-fold, and consider $v\in H^{\ast}(X,\mathbb{Z})$
and a stability condition $\sigma$. 
We denote by $M^\sigma_X(v)$ the good moduli space of $\sigma$-semistable (compactly supported) coherent sheaves on $X$ of Chern character $v$. 
There is an associated BPS invariant, see~\cite[Equation (1.19)]{JS}
\begin{align*}\Omega^\sigma_X(v) \in \mathbb{Z}.
\end{align*}
It was originally defined as a rational number
in~\cite{JS, K-S}, and conjectured to be an integer. 
The above integrality conjecture is proved by Davison--Meinhardt~\cite{DM} for quivers with 
potential, 
by proving that the BPS invariant is the Euler characteristic of some graded vector space, called \textit{BPS cohomology}. 
In the case of a CY 3-fold, 
the integrality conjecture can be proved by 
combining the result of~\cite{DM} with 
the local description of the moduli stacks 
of semistable sheaves via quivers with potential~\cite{MR3811778}. 
However, natural 
BPS cohomology spaces for CY 3-folds are not yet available, 
and it is an important problem in enumerative algebraic geometry to define them. 
BPS cohomologies may be used to obtain 
a version of the wall-crossing formulas~\cite{JS, K-S} in 
cohomological DT theory~\cite{BDJS}, 
a Poincaré–Birkhoff–Witt type theorem for Hall algebras of CY 3-folds which is a global version of the Davison--Meinhardt theorem for Hall algebras of quivers with potential~\cite{DM}, 
and a construction of GV invariants via 
$sl_2$-actions~\cite{MT, TodGV}. 

We note here that BPS cohomology is expected to be defined as the cohomology of a \textit{BPS sheaf} on $M_X^{\sigma}(v)$, and that BPS cohomology is \textit{not} part of a general cohomology theory (for topological spaces or algebraic varieties). Indeed, it is expected to be defined only for a restricted class of spaces with singularities analogous to $M^\sigma_X(v)$ (for example, for good moduli spaces of $(-1)$-shifted symplectic stacks \cite{MR3090262}), which nonetheless provide a new tool to study moduli spaces of sheaves on a CY 3-fold. 

Moreover, one could attempt to construct a natural dg-category 
\begin{equation}\label{BPS}
\mathscr{BPS}^\sigma_X(v)
\end{equation}
which recovers a $2$-periodic version of BPS cohomology (via periodic cyclic homology or topological K-theory \cite{Blanc}), and thus also the BPS invariant $\Omega^\sigma_X(v)$. 
The BPS cohomology, the BPS category \eqref{BPS} and the K-theory of \eqref{BPS} (which we call \textit{BPS K-theory}) are alternatives to their classical counterparts for $M^\sigma_X(v)$.
One may hope that the constructed BPS spaces are more tractable than their classical counterparts, and that they will have applications in noncommutative algebraic geometry, 
such as in the construction of non-commutative crepant resolutions~\cite{PTquiver, PTK3}, and in geometric representation theory, for example in the study of quantum groups and their representations~\cite{MR3618057, VarVas, RSYZ2, BuRap}.

Locally, the spaces $M^\sigma_X(v)$ can be described as good moduli spaces of representations of certain Jacobi algebras constructed from a symmetric quiver with potential~\cite{JS, MR3811778}. 
One could then attempt to construct the BPS category \eqref{BPS} in two steps: first, construct a BPS category for symmetric quivers with potential; second, glue these categories and obtain \eqref{BPS}.
We refer to~\cite{halpK32, T, P2} for related arguments 
on the second step, i.e. gluing categories locally 
defined in terms of quivers with potential. 

\subsection{The main result}
The purpose of this paper is to complete the first step
mentioned in the previous subsection 
for symmetric quivers $Q=(I,E)$ with a potential $W$. 
For a dimension vector 
$d=(d^i)_{i\in I}\in\mathbb{N}^I$,
we denote by \[\X(d)=R(d)/G(d)\] the stack of 
representations of $Q$ of dimension $d$, 
and consider its good moduli space
\begin{equation}\label{map:gms}
\X(d) \to X(d).
\end{equation}
The BPS cohomology of $(Q,W)$ is the cohomology of a perverse sheaf (called \textit{BPS sheaf}~\cite{DM}) 
\[\mathcal{BPS}_d\in \mathrm{Perv}(X(d)).\]

In~\cite{P}, 
the first author 
introduced 
natural candidates for the category~\eqref{BPS}
called \textit{quasi-BPS categories} in~\cite{PTzero}:
\begin{equation}\label{quasiBPS}
\mathbb{S}(d; \delta)\subset \mathrm{MF}(\X(d), \mathrm{Tr}\,W).
\end{equation}
Here, 
$\delta$ is a (real) character of $G(d)$. The 
potential $W$ induces a regular function 
$\mathrm{Tr}\,W\colon \X(d)\to\mathbb{C}$
and the category $\mathrm{MF}(\X(d), \mathrm{Tr}\,W)$ is the 
dg-category of matrix factorizations of the function 
$\mathrm{Tr}\,W$, whose objects 
consist of tuples
\begin{align}\label{intro:tuple}
    (\alpha \colon E \rightleftarrows F \colon \beta), \ 
    \alpha \circ \beta=\mathrm{Tr}\,W \cdot \id_F, \ 
    \beta \circ \alpha=\mathrm{Tr}\,W \cdot \id_E,
\end{align}
where $E, F$ are coherent sheaves on $\X(d)$. 
The construction of the subcategory (\ref{quasiBPS}) 
is based on the 
work by \v{S}penko--Van den Bergh~\cite{SVdB}
on non-commutative 
resolutions of GIT quotients, and contains matrix factorizations (\ref{intro:tuple}) 
such that the weights of $E, F$
are contained in a certain polytope of the weight lattice. 

For any dg-category, Blanc~\cite{Blanc} defined the topological K-theory spectrum. 
In this paper, we study the topological K-theory of the quasi-BPS category 
\begin{align*}
    K^{\rm{top}}(\mathbb{S}(d; \delta)) \in \mathrm{Sp}. 
\end{align*}
 The following is a corollary of the main theorem of this paper, which shows that the topological 
K-theory~\cite{Blanc} 
of the quasi-BPS category (\ref{quasiBPS})
recovers the dimensions of monodromy 
invariant BPS cohomologies:

\begin{thm}\emph{(Corollary~\ref{corollarytheorem61})}
\label{t}
Let 
$(Q, W)$ be a symmetric quiver with potential. 
Then, for $i\in \mathbb{Z}$ and each dimension vector $d$, there exists a Weyl-invariant real weight $\delta$ 
such that 
\begin{equation}\label{ineqintro}
\dim_\mathbb{Q} K^{\mathrm{top}}_i(\mathbb{S}(d; \delta))_{\mathbb{Q}}= \sum_{j\in\mathbb{Z}} \dim_\mathbb{Q} H^j(X(d), \mathcal{BPS}_d)^{\mathrm{inv}}.
\end{equation}
Here, the right hand side is the monodromy invariant part of the BPS cohomology. 

\end{thm}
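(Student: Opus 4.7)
My plan is to deduce the equality \eqref{ineqintro} by comparing two decompositions: a categorical semiorthogonal decomposition of $\mathrm{MF}(\X(d), \mathrm{Tr}\,W)$ into products of quasi-BPS categories $\mathbb{S}(d_i;\delta_i)$ indexed by ordered partitions $d=\sum d_i$, and the Davison--Meinhardt cohomological decomposition of the monodromy invariant vanishing cohomology of $(\X(d), \mathrm{Tr}\,W)$ into contributions from BPS sheaves on the strata of $X(d)$. The cycle map from topological K-theory to periodic vanishing cohomology should intertwine these two decompositions, so that the $\mathbb{S}(d;\delta)$-summand on the K-theory side matches the $\mathcal{BPS}_d$-summand on the cohomology side.

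The first step is to apply the comparison theorem for topological K-theory of matrix factorizations (the extension of Blanc--Robalo--To\"en--Vezzosi established earlier in the paper) to identify $K^{\mathrm{top}}(\mathrm{MF}(\X(d), \mathrm{Tr}\,W))_{\mathbb{Q}}$ with the monodromy invariant vanishing cohomology of $\mathrm{Tr}\,W$ on $\X(d)$, pushed forward along the good moduli space map \eqref{map:gms}. The second step is to invoke the categorical PBW-type semiorthogonal decomposition of $\mathrm{MF}(\X(d), \mathrm{Tr}\,W)$ whose summands are external products of quasi-BPS categories indexed by Harder--Narasimhan/Levi partitions of $d$ with the shift $\delta$; such a decomposition is available from the quiver-with-potential constructions of the authors and from the \v{S}penko--Van den Bergh magic windows. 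Passing to topological K-theory produces a direct sum decomposition of the left hand side of \eqref{ineqintro} into contributions indexed by partitions of $d$.

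The third step is to match this decomposition with the Davison--Meinhardt decomposition of monodromy invariant vanishing cohomology, which expresses the entire vanishing cohomology in terms of symmetric/free supercommutative products of BPS cohomologies $H^{\ast}(X(d_i), \mathcal{BPS}_{d_i})^{\mathrm{inv}}$ over partitions of $d$. Compatibility between the two decompositions is ensured by the Grothendieck--Riemann--Roch theorem for matrix factorizations and the compatibility between the Koszul equivalence in K-theory and dimensional reduction in cohomology, both of which are the main auxiliary results of this paper. Applied term-by-term to the two decompositions and inducted on the size of $d$, this compatibility forces the summand associated to the trivial partition $d=d$ to match on both sides, namely $K^{\mathrm{top}}(\mathbb{S}(d;\delta))_{\mathbb{Q}}$ with $H^{\ast}(X(d), \mathcal{BPS}_d)^{\mathrm{inv}}$; extracting dimensions yields \eqref{ineqintro}.

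The main obstacle is the construction and control of the cocharacter $\delta$: one needs $\delta$ such that the categorical semiorthogonal decomposition exists with pieces given precisely by the quasi-BPS categories $\mathbb{S}(d_i;\delta_i)$ for induced shifts $\delta_i$, and such that this decomposition is genuinely parallel to the Davison--Meinhardt filtration under the cycle map. Choosing $\delta$ generic inside the appropriate chamber defined by the \v{S}penko--Van den Bergh polytope, combined with the Koszul/dimensional reduction compatibility, should make these two structures match on the nose; verifying that the numerics of weight polytopes line up with the numerics of BPS strata is where the argument will be most delicate, and is presumably what the full body of the paper establishes before assembling Corollary~\ref{corollarytheorem61}.
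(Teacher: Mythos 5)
Your plan founders at its first step: the identification of $K^{\mathrm{top}}(\mathrm{MF}(\X(d),\mathrm{Tr}\,W))_{\mathbb{Q}}$ with the monodromy invariant vanishing cohomology of $\X(d)$ is false, because the Chern character \eqref{chintro} is an isomorphism over $\mathbb{Q}$ only when the ambient space is a \emph{variety}; for a quotient stack such as $\X(d)=R(d)/G(d)$ there is an Atiyah--Segal completion obstruction (already for $B\mathbb{C}^{*}$, where $K^{\mathrm{top}}_0\otimes\mathbb{Q}=\mathbb{Q}[q^{\pm1}]$ maps to $\mathbb{Q}[[h]]$ without being surjective). Consequently there is no ambient space on which your two decompositions --- the PBW-type semiorthogonal decomposition into Hall products of quasi-BPS categories on the K-theory side, and the Davison--Meinhardt decomposition on the cohomology side --- can be compared, and the proposed induction on $d$ has no valid total identification to induct against. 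The paper never compares the two totals: it only uses \emph{injectivity} of the Chern character on $\mathbb{S}(d;\delta)$, obtained by embedding $\mathbb{M}(d;\delta)$ admissibly into $D^b$ of the smooth framed \emph{variety} $\X^{\alpha f}(d)^{\mathrm{ss}}$ (Propositions~\ref{cherninj} and~\ref{prop226}), which yields only the inequality $\dim K^{\mathrm{top}}_i(\mathbb{S}(d;\delta))_{\mathbb{Q}}\leq \dim H^{\ast}(X(d),\mathcal{BPS}_{d,\delta})^{\mathrm{inv}}$ once the image of the cycle map is confined to the BPS summand; the reverse inequality is obtained by a separate mechanism, namely Moulinos' relative topological K-theory $\mathcal{K}^{\mathrm{top}}_{X(d)}$ together with the Meinhardt--Reineke decomposition of $\pi_{\alpha f,d*}\mathbb{Q}$, which exhibits $\mathcal{BPS}^{\mathrm{inv}}_{d,\delta}[\beta^{\pm1}][-\dim\X(d)]$ as a direct summand of $\mathcal{K}^{\mathrm{top}}_{X(d)}(\mathbb{S}(d;\delta))_{\mathbb{Q}}$.

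A second, related gap is your assertion that GRR for matrix factorizations plus the Koszul/dimensional-reduction compatibility ``forces'' the trivial-partition summands to match. Confining the image of the cycle map on $\mathbb{S}(d;\delta)$ to the BPS piece of the decomposition theorem is the hard content of Theorem~\ref{thm2}, and it is proved not by matching decompositions but by combining the weight bound defining quasi-BPS categories with coproduct-like operators $\Delta_\lambda$ built from attracting stacks and a perverse truncation (Proposition~\ref{prop06}, Corollary~\ref{onesupport2}), including a careful choice of cocharacter within each partition. Moreover ``generic $\delta$ in a chamber'' is not the right condition: for general $\delta$ the answer is the larger complex $\mathcal{BPS}_{d,\delta}=\bigoplus_{A\in S^d_\delta}\mathcal{BPS}_A[-\ell(A)]$, and the statement you are proving requires the explicit combinatorial condition $S^d_\delta=\{d\}$ (an integrality condition on $n_\lambda/2+\langle\lambda,\delta\rangle$), whose existence for every $d$ is Proposition~\ref{prop:delta}; indeed the authors note that quasi-BPS categories do \emph{not} categorify BPS cohomology for arbitrary $\delta$.
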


\subsection{Cohomological DT theory}

 We briefly review cohomological DT theory, which 
 was first introduced by Kontsevich--Soibelman~\cite{MR2851153},
 and later developed by Joyce et al.~\cite{BDJS}.
 Under the assumption 
 that 
 $M_X^{\sigma}(v)$ consists of only stable 
 sheaves, the invariant 
 $\Omega^\sigma_X(v)$ equals 
 the DT invariant 
 $\mathrm{DT}^\sigma_X(v)$
 counting stable sheaves~\cite{Thom}. 
Joyce et al.~\cite{BDJS} introduced a perverse sheaf $\varphi_{\mathrm{JS}}$ on $M^\sigma_{X}(v)$ whose Euler characteristic recovers the DT invariant:
\[\sum_{j\in\mathbb{Z}}(-1)^j \dim H^j(M^\sigma_{X}(v), \varphi_{\mathrm{JS}})=\mathrm{DT}^\sigma_X(v).\]
The cohomology $H^{\ast}(M^\sigma_{X}(v), \varphi_{\mathrm{JS}})$ has some advantages over the approach discussed in~\cite[Section 7.3]{Sz}, and is more computable \cite[Section 6.2]{Sz} than the singular cohomology of $M^\sigma_{X}(v)$. 

If there are strictly $\sigma$-semistable sheaves, then $\mathrm{DT}^\sigma_X(v)$ may not be an integer
and is related to the BPS invariant $\Omega_X^{\sigma}(v)$ 
via the multiple cover formula~\cite[(1.19)]{JS}. 
It is thus more natural to search for categorifications of the (integer-valued) BPS invariant $\Omega^\sigma_X(v)$, namely we look for a BPS sheaf on $M_X^{\sigma}(v)$ 
whose hypercohomology (which is the BPS cohomology) recovers 
the BPS invariant as its Euler characteristic. 
Davison--Meinhardt \cite{DM} defined the BPS cohomology for all symmetric quivers with potentials, thus for all local models of moduli spaces of sheaves on CY 3-folds~\cite{MR3811778}. 
Davison--Hennecart--Schlegel Mejia \cite{DHSM} defined it for $X=\mathrm{Tot}_SK_S=S\times \mathbb{A}^1_\mathbb{C}$, where $S$ is a Calabi-Yau surface. For a general CY 3-fold, up to the existence of certain orientation data, the BPS sheaf for moduli spaces of 
one-dimensional sheaves is defined in~\cite[Definition~2.11]{TodGV}. 

In the case of 
$X=\mathrm{Tot}_SK_S$
for a CY surface $S$, 
we can also regard, via dimensional reduction, the BPS cohomology in~\cite{DHSM}
as cohomology spaces for good moduli spaces of 
semistable sheaves on $S$. 
Local models of these moduli spaces are given by moduli of representations of preprojective algebras~\cite{Sacca0, DavPurity}. 
For a quiver $Q^{\circ}$,
the moduli stack $\mathscr{P}(d)$ of the representations of its preprojective algebra is a quasi-smooth derived stack 
with a good moduli space \[\mathscr{P}(d)^{\rm{cl}} \to P(d).\] 
The BPS sheaf on its tripled quiver with potential 
gives rise to the perverse sheaf (called \textit{preprojective BPS sheaf})
\begin{align*}
    \mathcal{BPS}^p_d\in \mathrm{Perv}(P(d))
\end{align*}
 whose cohomology is the BPS cohomology of the preprojective algebra of $Q^\circ$.


\subsection{Categorical DT theory}

We are interested in constructing a category \eqref{BPS} which recovers (and has analogous properties to) the BPS invariants/cohomologies. If there are no strictly $\sigma$-semistable sheaves of Chern character $v$, such a category will recover the DT invariants by taking the Euler characteristic of its periodic cyclic homology, see \cite{T} for a definition for local surfaces and \cite{RHH} for a recent work addressing the general case.



In previous works, we introduced and studied quasi-BPS categories in the following cases:
\begin{itemize}
    \item for symmetric quivers with potential \eqref{quasiBPS} in~\cite{P, PTquiver},
    \item for preprojective algebras~\cite{PTzero, PTquiver}, which we denote by
\begin{equation}\label{quasiBPSpreproj}
    \mathbb{T}(d; \delta)\subset D^b(\mathscr{P}(d)),
\end{equation}
\item for zero-dimensional sheaves on smooth surfaces \cite{P0, PT2}, and
\item for semistable sheaves on K3 surfaces \cite{PTK3}.
\end{itemize}
These categories have analogous properties to those of BPS cohomologies. Indeed, in the above cases, 
there are semiorthogonal decompositions of the categorical Hall algebras~\cite[Theorem 1.1]{P}, \cite{P2, PTquiver, PTK3}, or of Donaldson-Thomas categories~\cite{PTzero, PTquiver} into products of quasi-BPS categories. These semiorthogonal decompositions are analogous to the PBW theorem for cohomological Hall algebras~\cite{DM, DHSM}, or of the DT/BPS wall-crossing of
Meinhardt--Reineke for framed quivers \cite{MeRe}. 
We also proved categorical versions of the Davison support lemma~\cite[Lemma~4.1]{Dav}
about the support of BPS sheaves on tripled quivers~\cite{PT1, PTquiver, PTK3}. However, we observed in \cite{PTzero} that quasi-BPS categories do not categorify BPS cohomology for an arbitrary $\delta$. 

\subsection{Matrix factorizations and vanishing cycles}

When it is defined, the BPS sheaf
is, locally on $M_X^{\sigma}(v)$, isomorphic to the vanishing cycle sheaf 
of the IC sheaf of some good moduli space 
of a smooth quotient stack, see~\eqref{BPSequ} in the case of quivers with potentials.
We thus first study vanishing cycle sheaves
for a regular function 
\[f\colon \X\to \mathbb{C},\]
where $\X=X/G$ is a smooth quotient stack with 
$G$ a reductive group and $X$ a smooth affine variety
with a $G$-action.  

It is well-known that the category of matrix factorizations $\mathrm{MF}(\X, f)$ is a categorification of vanishing cohomology $H^{\ast}(\X, \varphi_f\mathbb{Q}_\X)$, see \cite{Eff, BRTV}. 
Let $\mathrm{T}$ be the monodromy operator on $\varphi_f\mathbb{Q}_{\X}$. 
The \textit{monodromy invariant vanishing cycle sheaf}
$\varphi^{\mathrm{inv}}_f\mathbb{Q}_{\X}$ is defined to 
be the cone of the endomorphism $1-\mathrm{T}$ on $\varphi_f\mathbb{Q}_\X$.
Inspired by \cite{Eff, BRTV, BD}, 
we construct a Chern character map
\begin{equation}\label{chintro}
    \mathrm{ch}\colon K^{\mathrm{top}}_i(\mathrm{MF}(\X,f))\to \prod_{j\in\mathbb{Z}}H^{i+2j}(\X, \varphi^{\mathrm{inv}}_f\mathbb{Q}_\X),
\end{equation}
which is an isomorphism (after tensoring with $\mathbb{Q}$) if $\X$ is a variety, see Subsection~\ref{subsec:ChernMF}. 
The Chern character map \eqref{chintro} induces a cycle map on an associated graded of topological K-theory:
\begin{equation}\label{cintro}
    \mathrm{c}\colon \mathrm{gr}_\ell K^{\mathrm{top}}_i(\mathrm{MF}(\X, f))\to H^{2\dim\X-i-2\ell}(\X, \varphi_{f}^{\mathrm{inv}}\mathbb{Q}_{\X}).
\end{equation}
In Section \ref{s4}, we discuss functoriality of \eqref{chintro} and \eqref{cintro}, in particular we prove a Grothendieck-Riemann-Roch theorem, see Theorem \ref{GRRMFtop}.

\subsection{Quasi-BPS categories for symmetric quivers with potential}\label{sub14}

We briefly explain the construction of the quasi-BPS categories \eqref{quasiBPS}, and then give a
more precise statement of the main result. 
Let $(Q, W)$ be a symmetric quiver with potential. 
For a dimension vector $d$
and a real character $\delta$ of $G(d)$, \v{S}penko--Van den Bergh \cite{SVdB} constructed 
a subcategory 
\begin{equation}\label{magic}
\mathbb{M}(d; \delta)\subset D^b(\X(d))
\end{equation}
such that the endomorphism algebra of its generator 
gives a twisted non-commutative crepant resolution of the 
affine variety $X(d)$. 
The generator of $\mathbb{M}(d; \delta)$ is a direct sum of vector bundles 
of highest weight contained in 
some polytope inside the weight lattice of the maximal torus $T(d)$ of $G(d)$, see Subsection~\ref{subsec27} 
for details. 
The quasi-BPS category 
\[\mathbb{S}(d; \delta):=\mathrm{MF}(\mathbb{M}(d; \delta), \mathrm{Tr}\,W)\subset \mathrm{MF}(\X(d), \mathrm{Tr}\,W)\] is defined 
to be the category consisting 
of matrix factorizations $(\alpha \colon E\rightleftarrows F \colon \beta)$
as in (\ref{intro:tuple}), where $E, F$ are direct sums of generating vector bundles of $\mathbb{M}(d; \delta)$.

We define a set $S^d_{\delta}$ of partitions of $d$ from the combinatorics of the polytope used to define \eqref{magic}, see Subsection~\ref{subsec612}.
For each partition $A\in S^d_{\delta}$, there is a corresponding perverse sheaf $\mathcal{BPS}_A$
on $X(d)$, see Subsection~\ref{sub613}. We define 
\begin{equation}\label{def:BPSsheafdw}
\mathcal{BPS}_{d,\delta}:=\bigoplus_{A\in S^d_\delta}\mathcal{BPS}_A[-\ell(A)],
\end{equation}
where $\ell(A)$ is the length of the partition $A$. 
Its monodromy invariant version $\mathcal{BPS}_{d, \delta}^{\rm{inv}}$ is 
also defined in Subsection~\ref{s6}. 
If $A$ is 
the length one partition $\{d\}$, then 
\begin{equation}\label{BPSequ}
\mathcal{BPS}_{A}=\mathcal{BPS}_d:=\begin{cases}
\varphi_{\mathrm{Tr}\,W}\mathrm{IC}_{X(d)}[-1],\text{ if }X(d)^{\circ}\neq \emptyset,\\0, \text{ otherwise}.
\end{cases}
\end{equation}
Here $X(d)^{\circ} \subset X(d)$ is the open subset 
parameterizing simple representations. 
There is thus a monodromy action on the cohomology of $\mathcal{BPS}_{d, \delta}$  induced from the monodromy of vanishing cycles.
The following is the main result in this paper:

\begin{thm}\emph{(Theorem~\ref{thm1})}\label{thmintro1}
Let $(Q, W)$ be a symmetric quiver with potential
and let $d\in \mathbb{N}^I$. 
For any $i, \ell\in\mathbb{Z}$, 
the cycle map (\ref{cintro}) for $(\X, f)=(\X(d), \Tr W)$ induces an isomorphism 
\begin{align*}\label{m2}
   \mathrm{c}\colon \mathrm{gr}_\ell K_i^{\mathrm{top}}\left(\mathbb{S}(d; \delta)\right)_{\mathbb{Q}}\stackrel{\cong}{\to} H^{\dim\X(d)-2\ell-i+1}(X(d), \mathcal{BPS}_{d,\delta}^{\mathrm{inv}}).
\end{align*}
\end{thm}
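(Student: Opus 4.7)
The map in the theorem is the cycle map $\mathrm{c}$ of (\ref{cintro}) restricted to $\mathbb{S}(d;\delta)$. The plan is to isolate $\mathbb{S}(d;\delta)$ inside the full matrix factorization category via a \v{S}penko-Van den Bergh type semi-orthogonal decomposition (SOD) from \cite{P, PTquiver}, apply the Chern character/cycle map (\ref{chintro}) and (\ref{cintro}), and match the pieces with the decomposition of the target cohomology coming from the Davison-Meinhardt (DM) cohomological integrality theorem applied to the good moduli space map $\pi\colon \X(d)\to X(d)$. The partitions in $S^d_\delta$ are, by construction from the polytope combinatorics, those that should label precisely the DM summand attached to $\mathbb{S}(d;\delta)$.

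\textbf{Key steps.} First, I would use the SOD of $\mathrm{MF}(\X(d), \mathrm{Tr}\,W)$ into pieces labelled by partitions $A = (d_1,\dots,d_k)$ of $d$, whose factor for the length-one partition is $\mathbb{S}(d;\delta)$ itself and whose longer-partition factors are external products of quasi-BPS categories $\mathbb{S}(d_i;\delta_i)$ with $|d_i|<|d|$. Topological K-theory takes this SOD to a direct sum decomposition, exhibiting $K^{\mathrm{top}}(\mathbb{S}(d;\delta))_\mathbb{Q}$ as a direct summand of $K^{\mathrm{top}}(\mathrm{MF}(\X(d), \mathrm{Tr}\,W))_\mathbb{Q}$. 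Second, the DM theorem identifies $R\pi_* \varphi_{\mathrm{Tr}\,W} \mathbb{Q}_{\X(d)}$, and its monodromy invariant version, with a direct sum over partitions of $d$ of shifted pushforwards of external products of BPS sheaves on the $X(d_i)$; the summand attached to a partition $A$ is exactly $\mathcal{BPS}_A[-\ell(A)]$, and summing over $A\in S^d_\delta$ reproduces $\mathcal{BPS}_{d,\delta}$ as in (\ref{def:BPSsheafdw}). Third, using (\ref{chintro}) and its functoriality encoded in the Grothendieck-Riemann-Roch theorem (Theorem \ref{GRRMFtop}), I would match the two decompositions piece by piece.

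\textbf{Induction and obstacle.} The matching is best organised by induction on $|d|=\sum_i d^i$. In the base case, when $X(d)^\circ$ is nonempty and no nontrivial partitions contribute to $S^d_\delta$, the statement reduces to the variety-type Chern character isomorphism (\ref{chintro}) combined with (\ref{BPSequ}). For the inductive step, each SOD piece of length $\geq 2$ involves strictly smaller $d_i$, so by the inductive hypothesis together with a K\"unneth formula (derived from Theorem \ref{GRRMFtop}) its K-theory matches the external product of BPS cohomologies pushed forward along the sum map $\prod_i X(d_i)\to X(d)$; taking orthogonal complements on both sides identifies $K^{\mathrm{top}}(\mathbb{S}(d;\delta))_\mathbb{Q}$ with $\bigoplus_{A\in S^d_\delta} H^{*}(X(d), \mathcal{BPS}_A[-\ell(A)])^{\mathrm{inv}}$, after tracking the degree shifts coming from $\dim\X(d)$ and the IC-shift of (\ref{BPSequ}). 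The main obstacle is this matching step: proving that the cycle map intertwines the \v{S}penko-Van den Bergh semi-orthogonal structure on $\mathrm{MF}(\X(d), \mathrm{Tr}\,W)$ with the DM perverse decomposition of $R\pi_* \varphi_{\mathrm{Tr}\,W}\mathbb{Q}_{\X(d)}$, and that the combinatorial labelling by $S^d_\delta$ coincides on both sides. A secondary technical point is to verify compatibility with the monodromy operator on vanishing cycles, so that the invariant part is extracted consistently on both sides.
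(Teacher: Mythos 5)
Your proposal correctly identifies the main ingredients — the \v{S}penko--Van den Bergh SOD, the Davison--Meinhardt (DM) decomposition, and the Grothendieck--Riemann--Roch theorem — and you correctly flag the ``matching'' of the two decompositions under the cycle map as the crux. However, you do not resolve that obstacle, and the induction you propose does not close the gap in the way you suggest. The paper's actual proof proceeds by two quite different arguments, each of which does real work that the proposed induction sidesteps.

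The first issue is the base case. You claim that when $S^d_\delta=\{d\}$ the statement ``reduces to the variety-type Chern character isomorphism~(\ref{chintro}).'' But $\X(d)$ is a quotient stack with nontrivial automorphisms, and the Chern character~(\ref{chintro}) is \emph{not} an isomorphism for stacks — the paper explicitly emphasizes this point in Subsection~\ref{subsec:ChernMF}. What makes the base case work in the paper is not the Chern character on $\X(d)$ itself but an approximation by the framed stable locus $\X^{\alpha f}(d)^{\mathrm{ss}}$ (a variety) together with the admissibility of $\mathbb{M}(d;\delta)$ there (Proposition~\ref{prop226}) and the Meinhardt--Reineke decomposition~(\ref{MR}) of $\pi_{\alpha f,d\ast}$. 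Even when $S^d_\delta=\{d\}$, one must show that the length-one perverse summand is the one picked out by $\mathbb{M}(d;\delta)$, which is a nontrivial statement about how the SOD piece restricts over the stable locus.

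The second, more fundamental issue is that the cycle map does \emph{not} intertwine the SOD on $\mathrm{MF}(\X(d),\Tr W)$ with the DM perverse decomposition in a formal way, so there is nothing to run the induction on. The paper resolves this with two distinct tools that do not appear in your sketch. One direction (Theorem~\ref{thm2}) is proved by constructing coproduct-like maps $\Delta_\lambda$ via perverse truncation of $\pi_{d\ast}\mathrm{IC}_{\X(d)}$ (equation~(\ref{defdeltamap})), together with a weight-polytope ``width'' bound (Proposition~\ref{prop06}) which constrains, for $\lambda$-fixed restrictions, the degree in $h$ of the image of the cycle map on $\mathbb{S}(d;\delta)$; this forces the image into the summands labelled by $S^d_\delta$ (Corollary~\ref{onesupport2}). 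This is an analytic/combinatorial argument about weights, not an SOD-matching. The opposite inequality of dimensions is established with the \emph{relative} topological K-theory sheaf $\mathcal{K}^{\mathrm{top}}_{X(d)}(-)$ of Moulinos: the paper shows (via Theorem~\ref{thm:phiproper}, Lemma~\ref{lem:sod:induce} and the explicit form of the Meinhardt--Reineke decomposition) that $\mathcal{BPS}^{\mathrm{inv}}_{d,\delta}[\beta^{\pm 1}][-\dim\X(d)]$ is a direct summand of $\mathcal{K}^{\mathrm{top}}_{X(d)}(\mathbb{S}(d;\delta))_\mathbb{Q}$. Neither the relative K-theory sheaf nor the coproduct-like map appears in your proposal, and without them the ``matching step'' — which you rightly identify as the obstacle — remains unproved.
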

We note the following corollary of the above theorem. 
\begin{cor}\emph{(Corollary~\ref{corollarytheorem61})}\label{cor:dimequal}
There is an equality 
\begin{align*}
    \dim_{\mathbb{Q}} K_i^{\rm{top}}(\mathbb{S}(d; \delta))_{\mathbb{Q}}=
    \sum_{j\in \mathbb{Z}}\dim_{\mathbb{Q}}H^j(X(d), \mathcal{BPS}_{d, \delta})^{\rm{inv}}. 
    \end{align*}
    \end{cor}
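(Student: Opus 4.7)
The plan is to deduce this corollary directly from Theorem \ref{thmintro1} by summing the graded-piece isomorphisms over all $\ell \in \mathbb{Z}$ and then matching the resulting sum with the full $\mathbb{Z}$-graded sum on the right-hand side.

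First I would invoke Theorem \ref{thmintro1}, which provides, for each $\ell \in \mathbb{Z}$, an isomorphism of finite-dimensional $\mathbb{Q}$-vector spaces
$$\mathrm{gr}_\ell K_i^{\rm{top}}(\mathbb{S}(d;\delta))_{\mathbb{Q}} \xrightarrow{\ \cong\ } H^{\dim \X(d)-2\ell-i+1}(X(d), \mathcal{BPS}_{d,\delta}^{\rm{inv}}).$$
The next step is to check that the filtration on $K_i^{\rm{top}}(\mathbb{S}(d;\delta))_{\mathbb{Q}}$ whose associated graded is computed above is exhaustive, Hausdorff, and has only finitely many nonzero pieces. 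This follows because $X(d)$ is finite-dimensional and $\mathcal{BPS}_{d,\delta}$ is a bounded complex of constructible sheaves, so the right-hand side vanishes outside a finite range of $\ell$. Summing dimensions then yields
$$\dim_{\mathbb{Q}} K_i^{\rm{top}}(\mathbb{S}(d;\delta))_{\mathbb{Q}} = \sum_{\ell\in\mathbb{Z}} \dim_{\mathbb{Q}} H^{\dim \X(d)-2\ell-i+1}(X(d), \mathcal{BPS}_{d,\delta}^{\rm{inv}}).$$

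The remaining step is to identify this sum with $\sum_{j\in\mathbb{Z}} \dim_{\mathbb{Q}} H^j(X(d),\mathcal{BPS}_{d,\delta})^{\rm{inv}}$. As $\ell$ varies over $\mathbb{Z}$, the index $j = \dim \X(d) - 2\ell - i + 1$ traverses precisely the integers of one fixed parity; so I would argue that BPS cohomology vanishes in the opposite parity. This parity concentration is a consequence of the cohomological integrality theorem of Davison--Meinhardt \cite{DM} for symmetric quivers with potential: each $\mathcal{BPS}_A$ appearing in \eqref{def:BPSsheafdw} is pure, and the shifts $[-\ell(A)]$ are compatible with a single parity for the total complex $\mathcal{BPS}_{d,\delta}$. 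Since taking monodromy invariants is exact over $\mathbb{Q}$, the sheaf-level $\mathcal{BPS}_{d,\delta}^{\rm{inv}}$ and the cohomology-level invariants $H^j(X(d),\mathcal{BPS}_{d,\delta})^{\rm{inv}}$ give the same dimensions, completing the chain of equalities.

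The main obstacle here is not in the corollary itself, which reduces to a straightforward dimension count once Theorem \ref{thmintro1} is in hand; the substantive work lies in establishing the graded-piece isomorphism of the main theorem, while the parity concentration used in the last step is a documented consequence of \cite{DM} for symmetric quivers with potential.
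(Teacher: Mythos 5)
There is a genuine gap, and it is in your final step. You identify $H^{j}(X(d),\mathcal{BPS}_{d,\delta}^{\mathrm{inv}})$ degreewise with $H^{j}(X(d),\mathcal{BPS}_{d,\delta})^{\mathrm{inv}}$ ("taking monodromy invariants is exact over $\mathbb{Q}$") and then invoke a parity concentration of BPS cohomology. Neither claim is correct. The complex $\mathcal{BPS}_{d,\delta}^{\mathrm{inv}}$ is the cone of $1-\mathrm{T}$, so its hypercohomology mixes invariants and coinvariants in \emph{adjacent} degrees via the short exact sequence \eqref{SESC}: $\dim H^{m}(\mathcal{BPS}_{d,\delta}^{\mathrm{inv}})=\dim H^{m}(\mathcal{BPS}_{d,\delta})_{\mathrm{inv}}+\dim H^{m+1}(\mathcal{BPS}_{d,\delta})^{\mathrm{inv}}$; invariants under $\mathrm{T}$ are not an exact functor, and there is no parity vanishing of BPS cohomology for a general symmetric quiver with potential. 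Moreover, the parity route cannot be the right mechanism: the corollary holds for \emph{both} $i=0$ and $i=1$ with the same ($i$-independent) right-hand side (e.g.\ Proposition~\ref{prop89} gives $\dim K_0^{\mathrm{top}}=\dim K_1^{\mathrm{top}}=p_2(n)\neq 0$), whereas under your reading the sum over $j$ of one fixed parity would have to vanish for one of the two values of $i$. The paper's argument instead sums, for fixed $i$, the degrees $\dim\X(d)-2\ell-i+1$ (one parity class) of $H^{\bullet}(\mathcal{BPS}_{d,\delta}^{\mathrm{inv}})$; by \eqref{SESC} together with the non-canonical isomorphism $H^{\bullet}(\cdot)_{\mathrm{inv}}\cong H^{\bullet}(\cdot)^{\mathrm{inv}}$ of \eqref{minv:isom}/\eqref{isom:noncan}, this collects \emph{both} parities of $H^{\bullet}(X(d),\mathcal{BPS}_{d,\delta})^{\mathrm{inv}}$ exactly once, which is what produces the total sum over all $j$.

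A second, smaller gap is your justification that the filtration is Hausdorff. By construction each $E_{\ell}K_i^{\mathrm{top}}(\mathbb{S}(d;\delta))$ contains the kernel of the Chern character, so knowing that the graded pieces vanish outside a finite range of $\ell$ does not show $E_{\ell}=0$ for $\ell\ll 0$; if $\mathrm{ch}$ had a kernel, the sum of the dimensions of the graded pieces would undercount $\dim K_i^{\mathrm{top}}$. The needed input is Lemma~\ref{thm2inj}: the Chern character \eqref{chsdv} is injective over $\mathbb{Q}$ on $K_i^{\mathrm{top}}(\mathbb{S}(d;\delta))$, which the paper deduces from Proposition~\ref{cherninj} together with the admissibility of $\mathbb{M}(d;\delta)$ in the framed moduli (Theorem~\ref{theorem266}, Proposition~\ref{prop226}); this is a nontrivial ingredient that your argument omits.
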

For any $d$, there exists $\delta$ such that 
$S_{\delta}^d=\{d\}$, see Proposition~\ref{prop:delta}. 
Therefore, the above corollary implies Theorem~\ref{t}. 
In the case that $S_{\delta}^d$
contains non-trivial partitions, the topological K-theory 
of quasi-BPS categories may have bigger dimensions 
than monodromy invariant BPS cohomologies, and we need 
to take into account the direct sum of BPS sheaves as in (\ref{def:BPSsheafdw}). 

The potential zero case of Theorem \ref{thmintro1} is related to the categorification of intersection cohomology for good moduli spaces of smooth symmetric stacks pursued in \cite{P3}.

\begin{thm}\emph{(Theorem~\ref{thmWzero})}
\label{thm13zeropote}
Let $Q=(I, E)$ be a symmetric quiver
and let $d \in \mathbb{N}^I$.  
If $S_{\delta}^d=\{d\}$, 
  there is an isomorphism of $\mathbb{Q}$-vector spaces for all $\ell\in\mathbb{Z}$:
    \[\mathrm{c}\colon \mathrm{gr}_\ell K^{\mathrm{top}}_0(\mathbb{M}(d; \delta))_{\mathbb{Q}}\xrightarrow{\sim} I\!H^{\dim \X(d)-2\ell-1}(X(d)).\]
    Moreover, we have $K_1^{\rm{top}}(\mathbb{M}(d; \delta))_{\mathbb{Q}}=0$. 
\end{thm}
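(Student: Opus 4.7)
The plan is to deduce Theorem \ref{thmWzero} from the general Theorem \ref{thm1} by specializing to the zero potential $W = 0$, combined with a comparison between Blanc's topological K-theory of matrix factorizations of the zero function and of the underlying dg-category.

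First, I would specialize Theorem \ref{thm1} to the pair $(Q, W = 0)$. Since $\mathrm{Tr}\,W$ vanishes identically, the vanishing cycle functor acts as the identity on perverse sheaves with trivial monodromy, and the convention in (\ref{BPSequ}) identifies $\mathcal{BPS}_d$ with $\mathrm{IC}_{X(d)}[-1]$ whenever $X(d)^{\circ}\neq\emptyset$. Under the hypothesis $S_\delta^d=\{d\}$ the sum (\ref{def:BPSsheafdw}) collapses to the single term $\mathcal{BPS}_d[-1]=\mathrm{IC}_{X(d)}[-2]$, and the monodromy-invariant version coincides with $\mathcal{BPS}_{d,\delta}$. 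Substituting into Theorem \ref{thm1} yields isomorphisms
\[
\mathrm{gr}_\ell K_i^{\mathrm{top}}(\mathbb{S}(d;\delta))_{\mathbb{Q}}\;\xrightarrow{\sim}\;\mathrm{IH}^{\dim\X(d)-2\ell-i-1}(X(d))
\]
for every $i,\ell\in\mathbb{Z}$.

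Next, I would relate $K^{\mathrm{top}}(\mathbb{S}(d;\delta))$ to $K^{\mathrm{top}}(\mathbb{M}(d;\delta))$. By definition $\mathbb{S}(d;\delta)=\mathrm{MF}(\mathbb{M}(d;\delta),0)$ is the $2$-periodic folding of $\mathbb{M}(d;\delta)$, so there is an invertible Bott element $\beta$ of degree $2$ acting on its Blanc topological K-theory, yielding, over $\mathbb{Q}$, a splitting
\[
K_i^{\mathrm{top}}(\mathbb{S}(d;\delta))_{\mathbb{Q}}\;\cong\;\prod_{n\in\mathbb{Z}}K_{i+2n}^{\mathrm{top}}(\mathbb{M}(d;\delta))_{\mathbb{Q}}.
\]
Moreover, multiplication by $\beta$ raises the cycle-map filtration by one, so passing to $\mathrm{gr}_\ell$ isolates a single summand $\mathrm{gr}_\ell K_i^{\mathrm{top}}(\mathbb{M}(d;\delta))_{\mathbb{Q}}$ on the right. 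Both the splitting and the compatibility with the filtration can be derived by specializing the Blanc--Robalo--Toën--Vezzosi analysis to the trivial-potential case, or, more explicitly, by writing $\mathbb{M}(d;\delta)$ in terms of its generating vector bundles and reducing to the computation on the smooth ambient stack $\X(d)$.

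Combining these two steps yields the asserted isomorphism $\mathrm{gr}_\ell K_0^{\mathrm{top}}(\mathbb{M}(d;\delta))_{\mathbb{Q}}\xrightarrow{\sim}\mathrm{IH}^{\dim\X(d)-2\ell-1}(X(d))$. For the vanishing $K_1^{\mathrm{top}}(\mathbb{M}(d;\delta))_{\mathbb{Q}}=0$, Step 1 at $i=1$ identifies $\mathrm{gr}_\ell$ with $\mathrm{IH}^{\dim\X(d)-2\ell-2}(X(d))$; these groups all vanish thanks to the parity concentration of the intersection cohomology of good moduli spaces $X(d)$ of symmetric quotient stacks. The main obstacle lies in the second step, namely simultaneously proving the Bott-graded splitting of $K^{\mathrm{top}}$ for matrix factorizations of zero and the precise compatibility with the cycle-map filtration, so as to extract a single summand of $K^{\mathrm{top}}(\mathbb{M}(d;\delta))$ from the $2$-periodic object $K^{\mathrm{top}}(\mathbb{S}(d;\delta))$; a secondary care point is making rigorous the specialization of $\mathcal{BPS}_d$ to $\mathrm{IC}_{X(d)}[-1]$ within the convention of (\ref{BPSequ}) when $W=0$.
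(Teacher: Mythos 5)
Your overall route (specialize Theorem~\ref{thm1} to $W=0$, then compare $K^{\mathrm{top}}$ of $\mathbb{S}(d;\delta)=\mathrm{MF}(\mathbb{M}(d;\delta),0)$ with $K^{\mathrm{top}}(\mathbb{M}(d;\delta))$) is indeed the paper's route, but both of your steps have genuine gaps. In Step 1, it is false that at zero potential ``the monodromy invariant version coincides with $\mathcal{BPS}_{d,\delta}$'': by Definition~\ref{def:moninv}, $\mathcal{BPS}^{\mathrm{inv}}_{d,\delta}$ is the cone of $1-\mathrm{T}$, and trivial monodromy means $1-\mathrm{T}=0$, so the cone is the \emph{doubled} object $\mathcal{BPS}_{d,\delta}\oplus\mathcal{BPS}_{d,\delta}[1]$ (cf.\ the splitting \eqref{notdagger}). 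Hence Theorem~\ref{thm1} at $W=0$ gives $\mathrm{gr}_\ell K_i^{\mathrm{top}}(\mathbb{S}(d;\delta))_{\mathbb{Q}}\cong \mathrm{IH}^{a}(X(d))\oplus\mathrm{IH}^{a+1}(X(d))$ for the appropriate $a$, two adjacent intersection cohomology groups, not the single group you wrote; to reduce to one group you must already here invoke the parity vanishing $\mathrm{IH}^{j}(X(d))=0$ for $j\equiv\dim\X(d)\pmod 2$, not only at the end. (There is also an indexing slip: with the paper's conventions $\varphi_0\cong\iota_*\iota^*(-)[1]$, so $\mathcal{BPS}_d=\mathrm{IC}_{X(d)}$ at zero potential, not $\mathrm{IC}_{X(d)}[-1]$; this only shifts your degrees, but it propagates.)

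Step 2 is where the argument actually breaks. The claimed splitting $K_i^{\mathrm{top}}(\mathbb{S}(d;\delta))_{\mathbb{Q}}\cong\prod_{n\in\mathbb{Z}}K_{i+2n}^{\mathrm{top}}(\mathbb{M}(d;\delta))_{\mathbb{Q}}$ cannot be right: $K^{\mathrm{top}}$ of \emph{any} dg-category is already $2$-periodic via the Bott element, so this formula is an infinite product of copies of one group, and, more importantly, it misses the essential feature that matrix factorizations of the zero potential mix the even and odd K-groups. The correct statement is Proposition~\ref{Lambdastructure}: $K_{\ast}^{\mathrm{top}}(\mathrm{MF}(\mathbb{M},0))\cong K_{\ast}^{\mathrm{top}}(\mathbb{M})\otimes_{\mathbb{Z}}\Lambda$ with $\Lambda=\mathbb{Z}[\epsilon]$, $\deg\epsilon=1$, so $K_0^{\mathrm{top}}(\mathbb{S}(d;\delta))\cong K_0^{\mathrm{top}}(\mathbb{M}(d;\delta))\oplus K_1^{\mathrm{top}}(\mathbb{M}(d;\delta))$; proving this is not formal ``$2$-periodic folding'' but uses the localization sequence \eqref{LESKtheory} for $f=0$ together with Lemma~\ref{prop4zero} (the map $K^{\mathrm{top}}(\X_0)\to G^{\mathrm{top}}(\X_0)$ vanishes for the derived zero locus of the zero function). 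The extra $K_1(\mathbb{M})$-summand is exactly what matches the second shifted copy of $\mathrm{IC}$ coming from $\mathcal{BPS}^{\mathrm{inv}}$, and it is killed by the parity vanishing (or directly: $\mathbb{M}(d;\delta)$ is admissible in $D^b(\X(d))$ and $K_1^{\mathrm{top}}(\X(d))=0$ since $\X(d)$ is an affine bundle over $BG(d)$ with $G(d)$ connected, as in Proposition~\ref{prop81}). Finally, your deduction of $K_1^{\mathrm{top}}(\mathbb{M}(d;\delta))_{\mathbb{Q}}=0$ rests on the halved Step-1 formula and would equally ``prove'' $K_1^{\mathrm{top}}(\mathbb{S}(d;\delta))_{\mathbb{Q}}=0$, which is false: under the $\Lambda$-action $K_1^{\mathrm{top}}(\mathbb{S})_{\mathbb{Q}}\cong K_0^{\mathrm{top}}(\mathbb{S})_{\mathbb{Q}}$, and this is nonzero whenever $\mathrm{IH}^{\ast}(X(d))\neq 0$.
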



\subsection{Main ingredients used in the proof}
There are two main ingredients of the proof of Theorem~\ref{thmintro1}. 
The first one is 
the construction of 
a cycle map from the topological K-theory of the quasi-BPS category to monodromy invariant 
BPS cohomology, see Theorem \ref{thm2}:
 \begin{align}\label{m2.5}
   \mathrm{c}\colon \mathrm{gr}_\ell K_i^{\mathrm{top}}\left(\mathbb{S}(d; \delta)\right) \to H^{\dim\X(d)-2\ell-i+1}(X(d), \mathcal{BPS}_{d,\delta}^{\mathrm{inv}}).
\end{align}   
The idea is to construct coproduct-like maps associated with 
each partition of $d$ on both the topological K-theory of the category of matrix 
factorizations 
and on the cohomology of monodromy invariant vanishing cycles. 
These coproduct-like maps are used to restrict the image of the cycle map of the topological K-theory of quasi-BPS 
categories. Indeed,
one can show that the above image lies in the summand 
of the monodromy invariant vanishing cycles 
corresponding to $S_{\delta}^d$ under the BBDG decomposition theorem for the pushforward of the vanishing cycle sheaf along the map \eqref{map:gms}. 

The map (\ref{m2.5}) is injective by 
construction. Therefore, to show Theorem \ref{thmintro1}, it is enough to argue that 
the dimension of the left hand side in (\ref{m2.5}) is bigger than or equal to the 
dimension of the right hand side. In order to prove the above inequality of dimensions, we use the second crucial ingredient, which is the theory of relative topological K-theory developed 
by Moulinos~\cite{Moulinos}. 
For a complex variety $M$ and $\mathrm{Perf}(M)$-linear dg-category 
$\mathscr{D}$, it associates a sheaf of spectra
$\mathcal{K}_M^{\rm{top}}(\mathscr{D})$ on $M$. Its rationalization 
$\mathcal{K}_M^{\rm{top}}(\mathscr{D})_{\mathbb{Q}}$
is a complex of $\mathbb{Q}$-vector spaces on $M$. 
In Subsection~\ref{subsec:reltop},
we study $\mathcal{K}_{X(d)}^{\rm{top}}(\mathbb{S}(d; \delta))_{\mathbb{Q}}$ and 
show that it contains 
$\mathcal{BPS}^{\rm{inv}}_{d, \delta}[\beta^{\pm 1}][-\dim \X(d)]$ 
as a direct summand. Here, $\beta$ is a formal variable of degree two. 
It implies the desired inequality of the dimensions
mentioned above. 

The combination of these two ingredients also implies 
the following sheaf-theoretic version of Theorem~\ref{thmintro1}, which 
is stronger than Corollary~\ref{cor:dimequal}:
\begin{thm}\emph{(Theorem~\ref{thm:Ksheaf2})}
\label{thm:Ksheaf}
There is an isomorphism of (rational) constructible complexes on $X(d)$:
\begin{align*}
    \mathcal{K}_{X(d)}^{\rm{top}}(\mathbb{S}(d; \delta))_{\mathbb{Q}}
    \cong \mathcal{BPS}^{\rm{inv}}_{d, \delta}[\beta^{\pm 1}][-\dim \X(d)]. 
\end{align*}
    \end{thm}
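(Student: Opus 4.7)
The plan is to establish the isomorphism at the sheaf level on $X(d)$ by working in Moulinos's framework of relative topological K-theory. Since $\mathbb{S}(d; \delta) \subset \mathrm{MF}(\X(d), \Tr W)$ is $\mathrm{Perf}(X(d))$-linear via pullback along the good moduli map $\pi\colon \X(d) \to X(d)$, the sheaf of spectra $\mathcal{K}^{\rm{top}}_{X(d)}(\mathbb{S}(d; \delta))$ is defined, and we consider its rationalization. The target $\mathcal{BPS}^{\rm{inv}}_{d, \delta}[\beta^{\pm 1}][-\dim\X(d)]$ should arise as a direct summand of $\mathrm{R}\pi_* \varphi^{\rm{inv}}_{\Tr W} \mathbb{Q}_{\X(d)}[\beta^{\pm 1}][-\dim\X(d)]$ after applying the BBDG decomposition theorem to $\pi_* \varphi^{\rm{inv}}_{\Tr W} \mathbb{Q}_{\X(d)}$ and isolating the summands indexed by partitions in $S^d_\delta$.

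First, I would sheafify the Chern character (\ref{chintro}) and the cycle map (\ref{cintro}) to obtain a morphism of constructible complexes on $X(d)$:
$$\mathrm{c}\colon \mathcal{K}^{\rm{top}}_{X(d)}(\mathrm{MF}(\X(d), \Tr W))_{\mathbb{Q}} \to \mathrm{R}\pi_*\varphi^{\rm{inv}}_{\Tr W}\mathbb{Q}_{\X(d)}[\beta^{\pm 1}][-\dim\X(d)].$$
Restricting to the subcategory $\mathbb{S}(d; \delta)$ and using the coproduct-like maps indexed by partitions of $d$ (built from the combinatorics of the \v{S}penko--Van den Bergh polytope defining $\mathbb{M}(d; \delta)$), I expect to show that the image lands in precisely the BPS summands indexed by $A \in S^d_\delta$. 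This yields a morphism $\mathrm{c}\colon \mathcal{K}^{\rm{top}}_{X(d)}(\mathbb{S}(d; \delta))_{\mathbb{Q}} \to \mathcal{BPS}^{\rm{inv}}_{d, \delta}[\beta^{\pm 1}][-\dim\X(d)]$, which should be a split injection stalk-wise by the same coproduct argument together with the fact that the Chern character \eqref{chintro} is an isomorphism on the variety-level strata of $X(d)$.

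Second, I would establish the reverse containment by a direct analysis of the relative topological K-theory sheaf, along the lines sketched in the introduction (Subsection 1.7 of the paper as outlined in the text). Using the semiorthogonal decomposition of $\mathrm{MF}(\X(d), \Tr W)$ into quasi-BPS pieces from \cite{P, PTquiver}, together with Moulinos's localization theorem for relative topological K-theory, I expect to identify $\mathcal{BPS}^{\rm{inv}}_{d, \delta}[\beta^{\pm 1}][-\dim\X(d)]$ as a direct summand of $\mathcal{K}^{\rm{top}}_{X(d)}(\mathbb{S}(d; \delta))_{\mathbb{Q}}$. Combining the split injection from the first step with this direct-summand containment forces both the source and the target to have matching ranks stalk-wise, and therefore the cycle map $\mathrm{c}$ is an isomorphism of constructible complexes.

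The main obstacle is the coherence between two decompositions living in different worlds: the categorical semiorthogonal decomposition of $\mathrm{MF}(\X(d), \Tr W)$ coming from the polytope combinatorics, and the BBDG decomposition of $\pi_* \varphi^{\rm{inv}}_{\Tr W}\mathbb{Q}_{\X(d)}$ into BPS perverse sheaves indexed by partitions of $d$. Showing that these decompositions match under the cycle map—so that $\mathbb{S}(d; \delta)$ contributes exactly the summands indexed by $S^d_\delta$, no more and no less—requires careful bookkeeping of how the support of matrix factorizations stratifies over $X(d)$ and how this interacts with the monodromy action. The compatibility of dimensional reduction in cohomology with the Koszul equivalence in K-theory (announced in the abstract) should be the key technical tool allowing us to translate between the two sides.
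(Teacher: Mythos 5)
Your high-level plan correctly identifies the two ingredients — that $\mathcal{BPS}^{\rm{inv}}_{d,\delta}[\beta^{\pm 1}][-\dim\X(d)]$ is a direct summand of $\mathcal{K}^{\rm{top}}_{X(d)}(\mathbb{S}(d;\delta))_{\mathbb{Q}}$ via Moulinos's formalism, proper push-forward, and the Meinhardt--Reineke decomposition, and that the cycle map on graded pieces lands in the $S^d_\delta$-indexed summands via the coproduct-like maps of Theorem~\ref{thm2}. The paper indeed first deduces a splitting $\mathcal{K}^{\rm{top}}_{X(d)}(\mathbb{S}(d;\delta))_{\mathbb{Q}} \cong \mathcal{BPS}^{\rm{inv}}_{d,\delta}[\beta^{\pm 1}][-\dim\X(d)] \oplus \varphi^{\rm{inv}}_{\Tr W}(K''')$ and then argues that the complement vanishes.

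However, the step where you propose a ``split injection stalk-wise by the Chern character being an isomorphism on variety-level strata'' is where the gap is. The stalks of $\mathcal{K}^{\rm{top}}_{X(d)}(\mathbb{S}(d;\delta))_{\mathbb{Q}}$ at a point $x \in X(d)$ are still built from the quasi-BPS category over a stacky neighborhood of $\pi_d^{-1}(x)$, not from a category of sheaves on a variety, so there is no a priori Chern character isomorphism there. Moreover, the cycle map $\mathrm{c}$ in Theorems~\ref{thm1}--\ref{thm2} is a map between associated graded pieces of a filtration on global K-groups — it is not constructed as (and does not obviously lift to) a morphism of constructible complexes. The paper avoids both issues by a different argument: it observes that the entire chain of Theorems~\ref{thm1} and~\ref{thm2} applies verbatim to the base-changed category $\mathbb{S}_U(d;\delta)$ over any \'etale $g \colon U \to X(d)$ (Subsection~\ref{basechange} provides the semiorthogonal base-change), yielding $H^{\ast}(g^{\ast}\varphi^{\rm{inv}}_{\Tr W}(K''')) = 0$ for every \'etale $U$, which forces $\varphi^{\rm{inv}}_{\Tr W}(K''') = 0$ as a constructible complex. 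This \'etale descent replaces your stalk-level injectivity claim and is the crux of the proof.

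Finally, you flag the compatibility between Koszul equivalence and dimensional reduction as the ``key technical tool,'' but for Theorem~\ref{thm:Ksheaf2} (quivers with potential) that compatibility plays no role — it enters only for the preprojective case in Section~\ref{subsection:preproj}. The relevant technical input here is the Moulinos proper-push-forward and semiorthogonal-summand results (Theorem~\ref{thm:phiproper}, Lemmas~\ref{lem:sod:Ktheory} and~\ref{lem:sod:induce}) together with the framed quiver approximation and Meinhardt--Reineke.
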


\subsection{Quasi-BPS categories for preprojective algebras}\label{sub15}
The result of 
Theorem \ref{thmintro1} can also be used, in conjunction with dimensional reduction, to compute the topological K-theory of quasi-BPS categories for preprojective algebras.
For a quiver $Q^\circ$, 
consider its tripled quiver with potential $(Q,W)$. The subcategory 
\eqref{quasiBPSpreproj} is 
Koszul equivalent \cite{I} to the subcategory of graded matrix factorizations with summands in $\mathbb{M}(d; \delta)$:
\[\mathbb{S}^{\mathrm{gr}}(d; \delta):=\mathrm{MF}^{\mathrm{gr}}\left(\mathbb{M}(d; \delta), \mathrm{Tr}\,W\right).\]
We define perverse sheaves $\mathcal{BPS}^p_{d,\delta}$ on $P(d)$ 
as a direct sum of preprojective BPS sheaves 
as in \eqref{def:BPSsheafdw}. There is a cycle map 
\begin{equation}\label{cintro2}
\mathrm{c}\colon \mathrm{gr}_\ell G^{\mathrm{top}}_0(\mathscr{P}(d))\to H^{\mathrm{BM}}_{2\ell}(\mathscr{P}(d))\end{equation} induced from the Chern character map of $\mathscr{P}(d)$.

\begin{thm}\emph{(Corollary~\ref{cor615} and Theorem~\ref{thm1plus})}\label{thmintro2}
Let $Q^\circ$ be a quiver, $d\in \mathbb{N}^I$ and $\ell \in \mathbb{Z}$. Then $K^{\mathrm{top}}_1(\mathbb{T}(d; \delta))_{\mathbb{Q}}=0$ and the cycle map \eqref{cintro2} induces an isomorphism:
\begin{equation}\label{m3}
   \mathrm{c}\colon \mathrm{gr}_\ell K^{\mathrm{top}}_0(\mathbb{T}(d; \delta))_{\mathbb{Q}}\stackrel{\cong}{\to} H^{-2\ell}(P(d), \mathcal{BPS}^p_{d, \delta}).
\end{equation}
\end{thm}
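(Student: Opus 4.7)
The plan is to bootstrap from the ungraded case (Theorem~\ref{thmintro1} and its sheaf-theoretic refinement Theorem~\ref{thm:Ksheaf}) via the Koszul equivalence and Davison's dimensional reduction, two ingredients whose compatibility is (per the abstract) one of the main technical outputs of the present paper.

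First, I would invoke the Isik/Hirano Koszul equivalence \cite{I} to rewrite
\[
\mathbb{T}(d;\delta) \simeq \mathbb{S}^{\mathrm{gr}}(d;\delta) = \mathrm{MF}^{\mathrm{gr}}\bigl(\mathbb{M}(d;\delta),\,\mathrm{Tr}\,W\bigr),
\]
where $(Q,W)$ is the tripled quiver with potential of $Q^\circ$ and the $\mathbb{G}_m$-action used to grade matrix factorizations scales $\mathrm{Tr}\,W$ with weight two. Thus the computation of $K^{\mathrm{top}}(\mathbb{T}(d;\delta))$ is reduced to that of the topological K-theory of a graded matrix-factorization category, to which the machinery of Theorem~\ref{thm:Ksheaf} should apply after accounting for the extra grading.

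Second, I would apply the compatibility between the Koszul equivalence in K-theory and dimensional reduction in cohomology. For the tripled quiver, the critical locus of $\mathrm{Tr}\,W$ inside $\X(d)$ recovers the classical truncation of $\mathscr{P}(d)$, and Davison's dimensional reduction identifies the pushforward along $X(d) \to P(d)$ of the monodromy-invariant vanishing cycle complex $\varphi^{\mathrm{inv}}_{\mathrm{Tr}\,W}\mathbb{Q}_{\X(d)}$ with (a shift/Tate twist of) the corresponding complex on $\mathscr{P}(d)^{\mathrm{cl}}$. In particular $\mathcal{BPS}_{d,\delta}$ is sent, up to shift, to $\mathcal{BPS}^p_{d,\delta}$; note that on the preprojective side the monodromy is unipotent, so the monodromy-invariant part coincides with the sheaf itself, which explains the cleaner right-hand side. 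On the K-theoretic side, the $\mathbb{G}_m$-grading gives the Bott element $\beta$ an internal weight one, so the associated graded of $K^{\mathrm{top}}$ of $\mathbb{S}^{\mathrm{gr}}(d;\delta)$ extracts a single strand of the $\beta$-periodic sheaf appearing in Theorem~\ref{thm:Ksheaf}, removing the $\bigoplus_j$ present in Theorem~\ref{thmintro1}.

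Combining these, I would pass to the relative/sheaf-theoretic form of Theorem~\ref{thm:Ksheaf},
\[
\mathcal{K}^{\mathrm{top}}_{X(d)}(\mathbb{S}(d;\delta))_{\mathbb{Q}} \cong \mathcal{BPS}^{\mathrm{inv}}_{d,\delta}[\beta^{\pm 1}][-\dim \X(d)],
\]
take its graded version, push forward along the proper map $X(d) \to P(d)$ supplied by dimensional reduction, and compute global sections. This yields the desired isomorphism
\[
\mathrm{gr}_\ell K^{\mathrm{top}}_0(\mathbb{T}(d;\delta))_{\mathbb{Q}} \xrightarrow{\ \sim\ } H^{-2\ell}\!\bigl(P(d),\, \mathcal{BPS}^p_{d,\delta}\bigr),
\]
while the vanishing $K^{\mathrm{top}}_1(\mathbb{T}(d;\delta))_{\mathbb{Q}} = 0$ falls out because, once periodicity is broken by the grading, the whole sheaf is concentrated in even internal degree (equivalently, $\mathbb{T}(d;\delta)$ is an ordinary, non-$\mathbb{Z}/2$-graded dg-category whose $K^{\mathrm{top}}$ matches BM homology of a quasi-smooth stack, which is concentrated in even Bott degree through the cycle map). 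The main obstacle is the second step: one has to check precisely that the Koszul equivalence intertwines the Bott-periodicity filtration on the graded-MF side with the cohomological degree filtration produced by dimensional reduction on the preprojective side, so that the weight-one $\mathbb{G}_m$-grading of $\beta$ matches the Tate twist implicit in Davison's identification. Once this matching is established at the level of relative topological K-theory sheaves on $X(d)$, everything else is formal and the remaining verifications (monodromy invariance on the preprojective side, comparison of dimensions, identification of the cycle map) reduce to the statements already available in the paper.
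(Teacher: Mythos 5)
Your skeleton is essentially the paper's: Koszul equivalence to trade $\mathbb{T}(d;\delta)$ for $\mathbb{S}^{\mathrm{gr}}(d;\delta)$, the compatibility of the Koszul equivalence with dimensional reduction to compare cycle maps (this is exactly Propositions~\ref{prop522} and~\ref{prop53}), and then the ungraded Theorems~\ref{thm1} and~\ref{thm2} together with Davison's matching of BPS sheaves under dimensional reduction (\eqref{etai}, \eqref{BPSmatched}). The genuine gap is in how you dispose of $K^{\mathrm{top}}_1$ and, with it, of the passage from the ungraded statement to the graded one. The assertion that $K^{\mathrm{top}}_1(\mathbb{T}(d;\delta))_{\mathbb{Q}}=0$ ``falls out'' because $\mathbb{T}(d;\delta)$ is an ordinary (non-$\mathbb{Z}/2$-graded) dg-category whose $K^{\mathrm{top}}$ matches Borel--Moore homology of a quasi-smooth stack ``concentrated in even Bott degree'' is not a valid argument: a $\mathbb{Z}$-graded dg-category can have nonzero odd topological K-theory, and the Borel--Moore homology of a quasi-smooth stack is not concentrated in even degrees in general. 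The paper's proof of Corollary~\ref{cor615} needs two non-formal inputs you never invoke: Davison's purity theorem $H^{\mathrm{BM}}_{\mathrm{odd}}(\mathscr{P}(d)^{\mathrm{cl}})=0$ \cite{Dav}, and the injectivity over $\mathbb{Q}$ of the Chern character $K^{\mathrm{top}}_i(\mathbb{T}(d;\delta))\to\widetilde{H}^{\mathrm{BM}}_i(\mathscr{P}(d))$ (Corollary~\ref{cor615bis}), which fails for arbitrary stacks and is obtained here from admissibility of $\mathbb{M}(d;\delta)$ inside $D^b$ of the smooth stable framed variety (Proposition~\ref{cherninj}, Theorem~\ref{theorem266}, Proposition~\ref{prop226}) transported through the Koszul equivalence.

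Moreover, this odd-degree vanishing is not a side remark: your ``extract a single strand of the $\beta$-periodic sheaf'' step is, in precise form, Proposition~\ref{prop:forg} ($K^{\mathrm{top}}_{\ast}(\mathrm{MF})\cong K^{\mathrm{top}}_{\ast}(\mathrm{MF}^{\mathrm{gr}})\otimes_{\mathbb{Z}}\Lambda$ with $\Lambda=\mathbb{Z}[\epsilon]$, $\epsilon$ odd) combined with the splitting of $\varphi^{\mathrm{inv}}$ into two copies of $\varphi$ coming from the \emph{trivial} (not just unipotent) monodromy on the dimensional-reduction image; to conclude that forgetting the grading is an isomorphism on associated graded $K_0$ (Corollary~\ref{cor616}), one must kill the contribution of the odd strand, i.e.\ one again needs exactly the purity input. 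So the "internal weight of the Bott element" heuristic does not by itself remove the second strand, and without Corollaries~\ref{cor615bis} and~\ref{cor615} your argument does not close. Two smaller inaccuracies: there is no proper map $X(d)\to P(d)$ to push forward along (the comparison goes through $\gamma$, $\eta'$ and $P(d)\hookrightarrow Y(d)$ as in \eqref{etai} and \eqref{BPSmatched}), and the grading convention has $\mathrm{Tr}\,W$ of weight one, not two, in the setting of Theorem~\ref{thm:Koszul}.
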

As a corollary, we have the following. 
\begin{cor}\emph{(Corollary~\ref{corollarythm1plus})}
\label{corintro2}
There is an equality 
\begin{align*}
    \dim_{\mathbb{Q}} K_0^{\rm{top}}(\mathbb{T}(d; \delta))_{\mathbb{Q}}
    =\dim_{\mathbb{Q}} H^{\rm{even}}(P(d), \mathcal{BPS}_{d, \delta}^p). 
\end{align*}
\end{cor}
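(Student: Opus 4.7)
The plan is to derive this corollary directly from Theorem \ref{thmintro2} by summing the isomorphism on associated graded pieces over all filtration indices $\ell \in \mathbb{Z}$. The strategy has two steps: (i) verify that the filtration on $K_0^{\rm{top}}(\mathbb{T}(d; \delta))_{\mathbb{Q}}$ is finite, so that its total dimension equals the sum of the dimensions of its graded pieces; (ii) apply Theorem \ref{thmintro2} term by term and recognize the resulting sum as the even cohomology of $\mathcal{BPS}^p_{d, \delta}$.

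For step (i), the key observation is that $\mathcal{BPS}_{d,\delta}^p$, being a direct sum of shifts of preprojective BPS perverse sheaves indexed by the finite set $S^d_\delta$ on the finite-dimensional variety $P(d)$, has cohomology supported in a bounded range of degrees. Combined with the isomorphism \eqref{m3} of Theorem \ref{thmintro2}, this forces $\mathrm{gr}_\ell K_0^{\rm{top}}(\mathbb{T}(d;\delta))_{\mathbb{Q}} = 0$ for all but finitely many $\ell$, so that
\[\dim_{\mathbb{Q}} K_0^{\rm{top}}(\mathbb{T}(d;\delta))_{\mathbb{Q}} = \sum_{\ell \in \mathbb{Z}} \dim_{\mathbb{Q}} \mathrm{gr}_\ell K_0^{\rm{top}}(\mathbb{T}(d;\delta))_{\mathbb{Q}}.\]
Here I implicitly use that the filtration is separated and exhaustive; both properties should be built into the construction of the filtration underlying Theorem \ref{thmintro2}, and are the only subtle points to verify.

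For step (ii), substituting the isomorphism of Theorem \ref{thmintro2} into the previous equality gives
\[\dim_{\mathbb{Q}} K_0^{\rm{top}}(\mathbb{T}(d;\delta))_{\mathbb{Q}} = \sum_{\ell \in \mathbb{Z}} \dim_{\mathbb{Q}} H^{-2\ell}(P(d), \mathcal{BPS}_{d,\delta}^p) = \dim_{\mathbb{Q}} H^{\rm{even}}(P(d), \mathcal{BPS}_{d,\delta}^p),\]
since $\{-2\ell : \ell \in \mathbb{Z}\}$ is exactly the set of even integers. The main (minor) hurdle is therefore the finiteness and separatedness of the filtration; everything else is bookkeeping. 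The vanishing $K_1^{\rm{top}}(\mathbb{T}(d; \delta))_{\mathbb{Q}} = 0$ from Theorem \ref{thmintro2} is not strictly needed for this statement, but explains why the odd K-theory contributes nothing beyond the even cohomology on the right-hand side.
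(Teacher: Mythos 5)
Your bookkeeping in step (ii) matches the paper, and exhaustiveness of the filtration is indeed automatic (by construction $E_\ell K^{\mathrm{top}}_0(\mathbb{T}(d;\delta))$ equals everything for $\ell\gg 0$). The gap is in step (i): separatedness is \emph{not} ``built into the construction of the filtration,'' and this is exactly where the nontrivial content of the corollary sits. The filtration is defined as the preimage under the Chern character $\mathrm{ch}\colon K^{\mathrm{top}}_0(\mathbb{T}(d;\delta))\hookrightarrow G^{\mathrm{top}}_0(\mathscr{P}(d))\to \widetilde{H}^{\mathrm{BM}}_0(\mathscr{P}(d))$ of the cohomological-degree truncation, so every step $E_\ell$ contains $\ker(\mathrm{ch})$ and $\bigcap_\ell E_\ell=\ker(\mathrm{ch})$. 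For quotient stacks the Chern character is in general neither injective nor surjective (the paper stresses this), so knowing that only finitely many graded pieces are nonzero gives you only
\[
\dim_{\mathbb{Q}} K^{\mathrm{top}}_0(\mathbb{T}(d;\delta))_{\mathbb{Q}}
=\dim_{\mathbb{Q}}\ker(\mathrm{ch})_{\mathbb{Q}}+\sum_{\ell\in\mathbb{Z}}\dim_{\mathbb{Q}}\mathrm{gr}_\ell K^{\mathrm{top}}_0(\mathbb{T}(d;\delta))_{\mathbb{Q}},
\]
i.e.\ an inequality $\geq$ against the BPS cohomology, not the claimed equality.

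The paper closes this gap by proving that $\mathrm{ch}$ is injective over $\mathbb{Q}$ on the quasi-BPS category (Corollary~\ref{cor615bis}), which is a genuine theorem rather than a formal property: it uses the Koszul equivalence to transport $\mathbb{T}(d;\delta)$ into matrix factorizations, the (framed) semiorthogonal decompositions of Theorem~\ref{theorem266} and Proposition~\ref{prop226} to realize it as an admissible subcategory after pullback to the smooth quasi-projective variety of stable framed representations, and then the Atiyah--Hirzebruch isomorphism on that variety (this is the mechanism of Proposition~\ref{cherninj}). With this injectivity, $\ker(\mathrm{ch})_{\mathbb{Q}}=0$, the filtration is separated, and your summation argument goes through (the identification of $\bigoplus_\ell H^{-2\ell}$ with $H^{\mathrm{even}}$ is fine, and in fact equals $H^{\ast}$ since the odd BPS cohomology vanishes by purity). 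So you should replace the sentence ``both properties should be built into the construction'' by an actual proof of injectivity of the Chern character on $\mathbb{T}(d;\delta)$; otherwise the argument does not yield the equality.
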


The moduli stacks of semistable sheaves
on Calabi-Yau surfaces are locally described by
those of representations of preprojective algebras of quivers~\cite{Sacca0, DavPurity}.
In a separate paper~\cite{PTK3}, we use the above
local results 
to construct natural categorifications \eqref{BPS} of the BPS invariants for non-compact Calabi-Yau $3$-folds $X=S\times \mathbb{C}$, where $S$ is a K3 surface.



\subsection{Weight-independence}

We revisit the discussion from Subsection \ref{sub14}, but the same observations apply in the setting of Subsection \ref{sub15}. Let $Q=(I,E)$ be a quiver with an even number of edges between any two different vertices and an odd number of loops at every vertex, and let $W$ be a 
potential of $Q$. Note that the quivers which locally model moduli of semistable sheaves on a K3 surface are of this form. 
For $v \in \mathbb{Z}$, we write $\mathbb{S}(d)_v :=\mathbb{S}(d; v\tau_d)$, 
see Subsection~\ref{subsec:character} for the notation. 
Note that there are equivalences, where $k\in\mathbb{Z}$:
\[\mathbb{S}(d)_v\simeq \mathbb{S}(d)_{v+k\dd},\,\, \mathbb{S}(d)_v\simeq \mathbb{S}(d)^{\text{op}}_{-v}\]
given by tensoring with the $k$th power of the determinant line bundle and by taking the derived dual, respectively. There are no other obvious relations between $\mathbb{S}(d)_v$ and $\mathbb{S}(d)_{v'}$ for $v,v'\in\mathbb{Z}$. However, by Corollary~\ref{cor:dimequal}, we obtain:
\begin{cor}\label{corintro}
    Let $v,v'\in\mathbb{Z}$ be such that $\gcd(v,\dd)=\gcd(v',\dd)$. Let $i\in\mathbb{Z}$. Then there is an equality of dimensions:
    \[\dim_\mathbb{Q} K^{\mathrm{top}}_i(\mathbb{S}(d)_v)_{\mathbb{Q}}=\dim_\mathbb{Q} K^{\mathrm{top}}_i(\mathbb{S}(d)_{v'})_{\mathbb{Q}}.\]
\end{cor}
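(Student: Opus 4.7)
The plan is to combine Corollary~\ref{cor:dimequal} with a combinatorial analysis of the index set $S^d_\delta$ of partitions. By Corollary~\ref{cor:dimequal} applied to both sides,
\begin{align*}
\dim_{\mathbb{Q}} K^{\mathrm{top}}_i(\mathbb{S}(d)_v)_{\mathbb{Q}} = \sum_{j\in\mathbb{Z}} \dim_{\mathbb{Q}} H^j(X(d), \mathcal{BPS}_{d, v\tau_d})^{\mathrm{inv}},
\end{align*}
and similarly for $v'$. It therefore suffices to show that the monodromy-invariant cohomology of $\mathcal{BPS}_{d, v\tau_d}$, taken as a total $\mathbb{Q}$-dimension, depends on $v$ only through $\gcd(v,\underline{d})$.

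Next, I would invoke the definition \eqref{def:BPSsheafdw}: $\mathcal{BPS}_{d, \delta} = \bigoplus_{A\in S^d_\delta} \mathcal{BPS}_A[-\ell(A)]$, where each summand $\mathcal{BPS}_A$ carries an intrinsic monodromy structure that depends only on the partition $A$ and on $(Q,W)$, not on $\delta$. Consequently the dimension of $H^*(X(d), \mathcal{BPS}_{d, v\tau_d})^{\mathrm{inv}}$ depends on $v$ solely through the finite set $S^d_{v\tau_d}$ of partitions appearing in the decomposition, and the problem reduces to the purely combinatorial equality
\begin{align*}
S^d_{v\tau_d} = S^d_{v'\tau_d} \quad \text{whenever} \quad \gcd(v, \underline{d}) = \gcd(v', \underline{d}).
\end{align*}

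To establish this, I would analyze the definition of $S^d_\delta$ from Subsection~\ref{subsec612}. For a partition $A = (d_1, \ldots, d_k)$ of $d$, membership $A \in S^d_{v\tau_d}$ reduces to the requirement that the restriction of the character $v\tau_d$ to the Levi subgroup of block type $A$ decomposes into genuine (integral) characters on the individual blocks. Since $\tau_d$ is the renormalized determinantal character chosen so that tensoring with the determinant line bundle implements $v \mapsto v + \underline{d}$, the restriction to the $i$-th block is the rational multiple $(v \underline{d_i}/\underline{d})$ of the determinant of that block. Writing $g = \gcd(v, \underline{d})$, the integrality condition $v\underline{d_i}/\underline{d} \in \mathbb{Z}$ for all $i$ is equivalent to $(\underline{d}/g) \mid \underline{d_i}$ for all $i$, a condition depending on $v$ only through $g$. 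The equality of sets follows at once.

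The main obstacle is the last step: extracting the clean integrality criterion for $A \in S^d_{v\tau_d}$ from the polytope/weight-lattice definition of $S^d_\delta$, and handling boundary cases such as $v = 0$ (where $g = \underline{d}$ and every partition contributes). Once the integrality reformulation is verified, the rest of the argument is a formal chain of reductions through Corollary~\ref{cor:dimequal} and \eqref{def:BPSsheafdw}.
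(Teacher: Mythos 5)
Your strategy matches the paper's exactly: reduce via Corollary~\ref{cor:dimequal} to comparing dimensions of $H^{\ast}(X(d),\mathcal{BPS}_{d,v\tau_d})^{\mathrm{inv}}$, observe via \eqref{def:BPSsheafdw} that the summands $\mathcal{BPS}_A$ depend only on the partition $A$ (and $W$), so the dimension is governed entirely by the index set $S^d_v$, and then prove the combinatorial identity $S^d_v=S^d_{v'}$ whenever $\gcd(v,\dd)=\gcd(v',\dd)$. The computation that the condition $v\dd_i/\dd\in\mathbb{Z}$ for all $i$ is equivalent to $(\dd/g)\mid\dd_i$ with $g=\gcd(v,\dd)$ is correct and is exactly how the paper closes the argument (their Proposition~\ref{computationsetsdv} plus Theorem~\ref{thm1}).

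The one step you flag as ``the main obstacle'' is indeed where the actual work lies, and I want to sharpen the warning. In the definition \eqref{varepsilondeltalambda}, membership of $A$ in $S^d_\delta$ requires $n_\lambda/2+\langle\lambda,\delta\rangle\in\mathbb{Z}$ for every antidominant $\lambda$ with associated partition $A$, not the bare integrality of $\langle\lambda,v\tau_d\rangle$. The reduction to ``$v\tau_d$ restricts to integral characters on the blocks'' silently drops the $n_\lambda/2$ term, which is only legitimate because this corollary is stated under the running hypothesis that $Q$ has an even number of edges between any two distinct vertices and an odd number of loops at every vertex; for such $Q$ every pairing $n_\lambda=\langle\lambda,\mathbb{L}_{\X(d)}|_0^{\lambda>0}\rangle$ is even, so $n_\lambda/2$ is an integer and drops out. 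You should state and use this hypothesis explicitly. Without it the criterion picks up a half-integer offset and the conclusion can genuinely change --- compare Lemma~\ref{lem:onevertex} for the one-vertex quiver with an even number of loops, where the condition for a partition to lie in $S^d_v$ involves the term $\tfrac{1}{2}d_i(\sum_{j<i}d_j-\sum_{j>i}d_j)$ and the clean reformulation in terms of $\gcd(v,\dd)$ no longer holds verbatim. Once you insert the parity observation (the content of Proposition~\ref{computationsetsdv}), the rest of your chain of reductions is exactly the paper's proof.
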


Note that the statement is reminiscent of the $\chi$-independence phenomenon \cite{MaulikShen, KinjoKoseki}, see especially \cite[Corollary 1.5]{KinjoKoseki}. We observed an analogous statement for quasi-BPS categories of K3 surfaces in \cite{PTK3}. We do not know whether a stronger categorical statement, or at the level of algebraic K-theory, should hold for quivers with potential, see \cite[Conjecture 1.4]{PTK3} for a conjecture in the case of K3 surfaces.

It is natural to ask whether one can use a coproduct to define a primitive part \begin{align*}\mathrm{P}K^{\mathrm{top}}_i(\mathbb{S}(d)_v)_{\mathbb{Q}}\subset K^{\mathrm{top}}_i(\mathbb{S}(d)_v)_{\mathbb{Q}}
\end{align*}
of dimension equal to the dimension of the (total) monodromy invariant BPS cohomology, and thus independent of $v\in\mathbb{Z}$. We defined such spaces in the localized equivariant algebraic K-theory for the tripled quiver with potential in \cite{PT1}. We do not pursue this idea further in this paper.

\subsection{Plan of the paper}

We briefly review the content of each section. In Section \ref{sec:replim}, we list the main notation used in the paper, and we mention some constructions and results about categories of constructible sheaves, and about categories of coherent sheaves and matrix factorizations. 

In Section \ref{sec:quivers}, we recall the construction of quasi-BPS categories for symmetric quivers with potential and for preprojective algebras.

In Section \ref{sec:topK:dg}, we discuss some results about the topological K-theory of dg-categories (due to Blanc) and of its relative version (due to Moulinos).

In Section \ref{s3}, we review the Chern character, the cycle map, and the (topological) Grothendieck-Riemann-Roch theorem for quotient stacks. The main tool we use is the approximation of quotient stacks by varieties. 

In Section \ref{s4}, we revisit the known relation between matrix factorizations and vanishing cohomology, and compute the (relative) topological K-theory of matrix factorizations in terms of (monodromy invariant) vanishing cohomology.

In Section \ref{s5}, we compare the Koszul equivalence for dg-categories (and its induced isomorphism in K-theory) with the dimensional reduction theorem in cohomology. In particular, we construct a Chern character map from the topological K-theory of a class of graded matrix factorizations to vanishing cohomology.

Sections \ref{s6} and \ref{sec:proof} are the main parts of the paper, where we prove the main theorems mentioned in the introduction.

In Section \ref{subsection:preproj}, 
we discuss the computations of topological K-theory of quasi-BPS categories of preprojective algebras in terms of BPS cohomology. In Section \ref{subsec67}, we discuss an extension of these results to étale covers of moduli of representations of preprojective algebras. These two sections are the main inputs in computing topological K-theory of quasi-BPS categories of local K3 surfaces.  

In Section \ref{s8}, we discuss some explicit computations of the topological K-theory of quasi-BPS categories. We mention two examples. First, let $g\geq 0$. The coarse space of representations of the $2g+1$ loop quiver is the variety of matrix invariants $X(d)=\mathfrak{gl}(d)^{2g+1}\ssslash G(d)$.
By Theorem \ref{thm13zeropote}, we obtain a combinatorial formula for the dimensions of the intersection cohomology $I\!H^\bullet(X(d))$, which recovers a formula of Reineke \cite{MR2889742}. Second, we compute the topological K-theory for quasi-BPS categories of points in $\mathbb{C}^3$, see Proposition \ref{prop89}. 

In Section \ref{sec:aux}, we prove some technical results used in the previous subsections.

\subsection{Acknowledgements}
We thank Andrei Okounkov, Alex Perry, and \v{S}pela \v{S}penko for useful discussions. 
T.~P.~is grateful to Columbia University in New York and to Max Planck Institute for Mathematics in Bonn for their hospitality and financial support during the writing of this paper.
Y.~T.~is supported by World Premier International Research Center
	Initiative (WPI initiative), MEXT, Japan, and JSPS KAKENHI Grant Numbers JP19H01779, JP24H00180.

\section{Preliminaries}\label{sec:replim}
\subsection{Some notations and conventions}
For a $\mathbb{Z}$-graded vector space $V=\bigoplus_{j\in\mathbb{Z}}V^j$, we set $\widetilde{V}^i:=\prod_{j\in\mathbb{Z}}V^{i+2j}$. 

For a set $S$, let $\# S$ be the cardinal of $S$.

For an abelian group $V$, we 
write $V_{\mathbb{Q}}:=V \otimes_{\mathbb{Z}} \mathbb{Q}$.

A map of abelian groups $V_1 \to V_2$ is
called \textit{an isomorphism (resp. injective) over $\mathbb{Q}$}
if it is an isomorphism (resp. injective) after tensoring with $\otimes_{\mathbb{Z}}\mathbb{Q}$. 

We list the main notation used in the paper in 
Figure~\ref{table:notation}.

\begin{figure}
	\centering
\scalebox{0.7}{
	\begin{tabular}{|l|l|l|}
		\hline
	Notation & Description & Location defined \\\hline
 $\omega_\X$ & dualizing (constructible) sheaf & Subsection \ref{subsectionconshvs}
\\ \hline
$\phi_f, \psi_f$ & vanishing and nearby fibers & Equation \eqref{psiphi}
\\ \hline
$G^{\mathrm{top}}, K^{\mathrm{top}}$ & K-homology, topological K-theory (of a stack) &
Equation \eqref{HLPSiso}
\\ \hline
$l\colon\X^{\mathrm{cl}}\to\X$  & inclusion of the classical stack & Subsection~\ref{subsec:stacks}
\\ \hline
$V_n, U_n, S_n$  & representations (and open and closed subsets) of a group & Subsection \eqref{subsub25}
\\ \hline
$\kappa$  & Koszul equivalence & Equation \eqref{Koszul}
\\ \hline
$j$  & inclusion of a Koszul stack in a smooth stack & Diagram \eqref{diagmapskoszul}
\\ \hline
$\eta$  & projection from the total space of a vector bundle & Diagram \eqref{diagmapskoszul}
\\ \hline
$\X(d)=R(d)/G(d)$ & stack of representations of a quiver $Q$ &
Subsection \ref{section:prelim}
\\ \hline
$\pi_{X,d}\colon \X(d)\to X(d)$ & good moduli space map for $\X(d)$ &
Subsection \ref{section:prelim}
\\ \hline
$T(d)\subset G(d)$ & maximal torus &
Subsection \ref{section:prelim}
\\ \hline
$M(d)$, $M(d)_\mathbb{R}$ & weight spaces & Subsection \ref{section:prelim}
\\ \hline
$\chi$ & (dominant, integral) weight &
Subsection \ref{section:prelim}
\\ \hline
$1_d$ & identity cocharacter &
Subsection \ref{section:prelim}
\\ \hline
$\underline{d}$ & sum of components of a dimension vector of a quiver &
Subsection \ref{section:prelim}
\\ \hline
$\rho$ & half the sum of positive roots &
Subsection \ref{section:prelim}
\\ \hline
$\tau_d$ & Weyl-invariant weight with sum of coefficients $1$ & Subsection \ref{section:prelim}
\\ \hline
$\X^f(d)$, $\X^{\alpha f}(d)$ & stack of representations of the framed quiver $Q^f$, $Q^{\alpha f}$ &
Subsection \ref{subsec:framedquiver}
\\ \hline
$\mathscr{Y}(d)$ & stack of representations of the doubled quiver $Q^{\circ,d}$ &
Subsection \ref{subsec26}
\\ \hline
$\mathscr{P}(d)$ & stack of representations of the preprojective algebra of $Q^{\circ}$ &
Subsection \ref{subsec26}
\\ \hline
$\pi_{P,d}\colon\mathscr{P}(d)^{\mathrm{cl}}\to P(d)$ & good moduli space map of $\mathscr{P}(d)^{\mathrm{cl}}$ &
Subsection \ref{subsec26}
\\ \hline
$(Q,W)$ & tripled quiver with potential & 
Subsection \ref{subsec29}
\\ \hline
$\delta$ & Weyl-invariant real weight & 
Subsection \ref{subsec27}
\\ \hline
$\lambda$ & cocharacter (usually antidominant) associated to a partition & 
Subsection \ref{subsec27}
\\ \hline
$p_\lambda, q_\lambda$ & maps used to define the Hall product & 
Subsection \ref{subsec27}
\\ \hline
$\textbf{W}(d)$ & polytope used to define quasi-BPS categories
& Equation \eqref{defpolytope} 
\\ \hline
$\mathbb{M}(d; \delta), \mathbb{M}(d)_v$  & magic categories for a quiver with potential
& Equation \eqref{defmdw} 
\\ \hline
 $\mathbb{S}(d; \delta), \mathbb{S}(d)_v$ &  quasi-BPS categories for a quiver with potential
& Equation \eqref{def:quasiBPS} 
\\ \hline
$\mathbb{T}(d; \delta), \mathbb{T}(d)_v$ & preprojective quasi-BPS categories 
& Equation \eqref{kappamagic} 
\\ \hline
$\X(d)^{\mathrm{red}}, \mathbb{M}(d;\delta)^{\mathrm{red}}$ etc. & the reduced stack, its magic category etc. &
Subsection \ref{subsec27}
\\ \hline
$l'\colon\mathscr{P}(d)^{\mathrm{red}}\to\mathscr{P}(d)$ & inclusion of the reduced stack &
Subsection \ref{subsec27}
\\ \hline
$K^{\mathrm{top}}$ & topological K-theory (of a dg-category) &
Subsection \ref{subsec:topK:review}
\\ \hline
$\mathcal{K}_M^{\mathrm{top}}$ & relative topological K-theory &
Subsection \ref{subsec:reltopKth}
\\ \hline
$E_\ell$, $\mathrm{gr}_\ell$ & filtration and the associated graded & Equations \ref{def:filtration}, \eqref{cherngraded}
\\ \hline
 $\mathrm{c}$ &  the cycle map &  Equation \eqref{cherngraded}
\\ \hline
$\iota\colon\X_0\to\X$ & inclusion of the derived zero fiber of a regular function & Section \ref{s4}
\\ \hline
$\mathrm{T}$ & monodromy of vanishing cycles &
Subsection \ref{subsection:vanishing}
\\ \hline
$\varphi_f^{\mathrm{inv}}$ & cone of $1-\mathrm{T}$ on vanishing cycles &
Equations \eqref{defC}, \eqref{thirdcolumn}
\\ \hline
$\Theta$ & forget-the-grading map &
Equation \eqref{forgetthepotential}
\\ \hline
$\varepsilon_{\lambda, \delta}$  & integer used to measure magic categories &
Equation \eqref{varepsilondeltalambda}
\\ \hline
$S^d_\delta, S^d_v$ & sets of partitions &
Subsection \ref{subsec612}
\\ \hline
$\mathbf{d}$ & partition of $d$ &
Subsection \ref{subsec612}
\\ \hline
$\mathcal{BPS}_d, \mathcal{BPS}_A, \mathcal{BPS}_{d,\delta}$ & BPS sheaves for a quiver with potential &
Subsection \ref{sub613}
\\ \hline
$\pi_{\alpha f,d}$ & proper map from the variety of stable framed representations
& Subsection \ref{subsection:decompositiontheorem}
\\ \hline
$\mathrm{P}_A, \mathrm{Q}_A$ & constructible complexes & Equation \eqref{defqa}
\\ \hline
$Q^\sharp$ & auxiliary quiver & Subsection \ref{subsec:reduce}
\\ \hline
$a_\lambda$ & addition map at the level of stacks &
Subsection \ref{subsec64}
\\ \hline
$\X(d)'^\lambda$ & auxiliary stack used to define a coproduct-like map &
Subsection \ref{subsec64}
\\ \hline
$c_{\lambda,\delta}$ & width of a magic category &
Equation \eqref{def:width}
\\ \hline
$\Delta_\lambda$ & coproduct-like maps & Equation \eqref{defdeltamap}
\\ \hline
$\mathcal{BPS}^p_d$ & preprojective BPS sheaves &
Subsection \ref{subsec71}
\\ \hline
	\end{tabular}
}
	\vspace{.5cm}
	 \caption{Notation introduced in the paper}
	 \label{table:notation}
\end{figure}

\subsection{Stacks and semiorthogonal decompositions}\label{subsec:stacks}
The spaces $\X=X/G$ considered are quasi-smooth (derived) quotient stacks over $\mathbb{C}$, where $G$ is a reductive group acting on $X$. 
Here a morphism of derived stacks $\X \to \mathscr{Y}$ is 
called \textit{quasi-smooth} if its relative cotangent complex 
is perfect of amplitude $[-1, 1]$, and $\X$ is quasi-smooth 
if $\X \to \Spec \mathbb{C}$ is quasi-smooth. 
The classical truncation of $\X$ is denoted by 
$\X^{\rm{cl}}=X^{\rm{cl}}/G$, 
and $l \colon \X^{\rm{cl}} \hookrightarrow \X$
is the natural closed immersion. 
We assume that $X^{\rm{cl}}$ is quasi-projective. 
We denote by $\mathbb{L}_\X$ the cotangent complex of 
$\X$. 
When $X$ is affine, we denote by $X\ssslash G$ the quotient derived scheme with dg-ring of regular functions $\mathcal{O}_X^G$. 

We use the terminology of \textit{good moduli spaces} of Artin stacks, see \cite[Example 8.3]{MR3237451}.
In the case of the quotient stack $\X=X/G$, its good moduli space 
is $X\ssslash G$. 

We denote by $D^b(\X)$ the dg-category of bounded derived category of coherent 
sheaves on $\X$, which is a pre-triangulated dg-category. 
In particular, its homotopy category is a triangulated category. 
We will consider semiorthogonal decompositions \begin{equation}\label{SOD}
    D^b(\X)=\langle \mathcal{C}_i \mid i\in I\rangle,
\end{equation}
where $I$ is a totally ordered set. 

Consider a morphism $\pi\colon \X\to S$. We say the semiorthogonal decomposition \eqref{SOD} is \textit{$S$-linear} or 
\textit{$\mathrm{Perf}(S)$-linear} if $\mathcal{C}_i\otimes \pi^*\mathrm{Perf}(S)\subset \mathcal{C}_i$ for all $i\in I$. 

A morphism of classical stack $f\colon \X \to \mathscr{Y}$ is called 
\textit{smoothable lci} if it admits a factorization 
$\X \stackrel{i}{\hookrightarrow} A \stackrel{p}{\to} \mathscr{Y}$
such that $A$ is smooth, $i$ is a regular closed immersion and $p$ is smooth.


\subsection{Constructible sheaves}\label{subsectionconshvs}
For $\X=X/G$ a quotient stack, denote by
$D(\mathrm{Sh}_{\mathbb{Q}}(\X))$ the derived 
category of complexes of $\mathbb{Q}$-constructible sheaves on the classical stack (for the analytic topology) $\X^{\rm{cl}}$ and by $D^b(\mathrm{Sh}_{\mathbb{Q}}(\X))$ the subcategory of bounded complexes, see \cite{MR2312554}. 
For $F\in D(\mathrm{Sh}_{\mathbb{Q}}(\X))$, we use the notation $H^\bullet(\X, F)$ for individual cohomology spaces (that is, $\bullet$ is an arbitrary integer) and $H^{\ast}(\X, F)$ for the total cohomology $H^{\ast}(\X, F):=\bigoplus_{i\in\mathbb{Z}} H^i(\X, F)$. 
Let $\mathbb{D}$ denote the Verdier duality functor on $D^b(\mathrm{Sh}_{\mathbb{Q}}(\X))$, and set $\omega_\X:=\mathbb{D}\mathbb{Q}_{\X^{\rm{cl}}}$. If $\X$ is a smooth stack, equidimensional of dimension $d$, then $\omega_\X=\mathbb{Q}_\X[2d]$.

For $\X=X/G$, we denote by 
\begin{align*}
 H^i(\X):=H^i(\X, \mathbb{Q})=H^i_G(X, \mathbb{Q}), \ 
 H^{\mathrm{BM}}_i(\X)=H^{\mathrm{BM}}_i(\X, \mathbb{Q})=H^{\mathrm{BM}}_{i, G}(X, \mathbb{Q})
 \end{align*}
 the singular cohomology of $\X$ and 
 the Borel-Moore homology of $\X$, 
 respectively (with rational coefficients). 
For $i, j \in \mathbb{Z}$, there
is an intersection product \begin{align}\label{intersect}H^i(\X)\otimes H^{\mathrm{BM}}_j(\X)\to H^{\mathrm{BM}}_{j-i}(\X). 
\end{align}

We denote by $\mathrm{Perv}(\X)\subset D^b(\mathrm{Sh}_{\mathbb{Q}}(\X))$ the abelian category of perverse sheaves on $\X^{\rm{cl}}$, see \cite{MR2480756}. 
The truncation functors with respect to the perverse 
t-structure is denoted by 
\begin{align*}
{}^p\tau^{\leq \bullet}\colon D^b(\mathrm{Sh}_{\mathbb{Q}}(\X))\to D^b(\mathrm{Sh}_{\mathbb{Q}}(\X)), \
{}^p\tau^{\geq \bullet}\colon D^b(\mathrm{Sh}_{\mathbb{Q}}(\X))\to D^b(\mathrm{Sh}_{\mathbb{Q}}(\X)),
\end{align*}
and the cohomology 
functors with respect to the perverse t-structure is denoted by 
\[
{}^p\mathcal{H}^i=({}^p\tau^{\leq i})
({}^p\tau^{\geq i})
\colon D^b(\mathrm{Sh}_{\mathbb{Q}}(\X))\to \mathrm{Perv}(\X) \subset D^b(\mathrm{Sh}_{\mathbb{Q}}(\X)).\] 
For $F\in D^b(\mathrm{Sh}_{\mathbb{Q}}(\X))$, its total cohomology and perverse cohomology are denoted by: 
\[
\mathcal{H}^{\ast}(F):=\bigoplus_{i\in \mathbb{Z}}\mathcal{H}^i(F)[-i], \ 
{}^p\mathcal{H}^{\ast}(F):=\bigoplus_{i\in\mathbb{Z}}{}^p\mathcal{H}^i(F)[-i].\]
We say $F\in D^b(\mathrm{Sh}_{\mathbb{Q}}(\X))$ is \textit{a shifted perverse sheaf in degree $\ell$} if $F[\ell]\in \mathrm{Perv}(\X)$ and \textit{a shifted perverse sheaf} if there exists $\ell\in\mathbb{Z}$ such that $F[\ell]\in \mathrm{Perv}(\X)$. 

\subsection{Nearby and vanishing cycles}\label{subsec:nearby}
For $\X$ a smooth quotient stack, let 
\begin{equation}\label{f}
f\colon \X\to\mathbb{C}
\end{equation}
be a regular function. In this paper, we consider regular functions \eqref{f} such that $0 \in \mathbb{C}$ is the only critical value, equivalently that $\mathrm{Crit}(f)\subset \X_0:=f^{-1}(0)$ set 
theoretically.
We consider the vanishing and nearby cycle functors:
\begin{equation}\label{psiphi}
\varphi_f(-), \psi_f(-)\colon D(\mathrm{Sh}_{\mathbb{Q}}(\X))\to D(\mathrm{Sh}_{\mathbb{Q}}(\X_0)).
\end{equation}
We refer to~\cite[Subsection 2.2]{MR3667216}, ~\cite[Proposition 2.13]{DM}
for details on vanishing and nearby cycles
for quotient stacks. 
By abuse of notation, their push-forward along the closed 
immersion $\iota\colon \X_0 \hookrightarrow\X$ 
are also denoted by $\varphi_f(-)$, $\psi_f(-)$. 

From the definition of nearby and vanishing cycle functors, 
there is an exact triangle:
\[\iota_*\iota^*(-)\to \psi_f(-)\to \varphi_f(-)\to \iota_*\iota^*(-)[1].\] 
The functors \eqref{psiphi} restrict to functors \[\varphi_f[-1], \psi_f[-1]\colon \mathrm{Perv}(\X)\to \mathrm{Perv}(\X_0).\] Further, $\varphi_f[-1]$ and $\psi_f[-1]$ commute with 
the dualizing functor $\mathbb{D}$.
They are also equipped with monodromy operators 
\begin{align*}
    \mathrm{T} \colon \psi_f(-) \to \psi_f(-), \ 
    \mathrm{T} \colon \varphi_f(-) \to \varphi_f(-). 
\end{align*}
We will abuse notation and let 
\begin{align*}\varphi_f:=\varphi_f\mathbb{Q}_\X, \ 
\psi_f:=\psi_f\mathbb{Q}_\X, \ \varphi_f\mathrm{IC}:=\varphi_f\mathrm{IC}_\X, \ \psi_f\mathrm{IC}:=\psi_f\mathrm{IC}_\X.
\end{align*}
We may drop $f$ from the notation if there is no danger of confusion.

\subsection{Matrix factorizations}\label{subsection:matrixfactorizations}
 We refer to \cite[Subsection 2.6]{PTzero} for definitions and references related to categories of matrix factorizations. 
 
Let $\X=X/G$ be a quotient stack with $X$ affine smooth and $G$ reductive. For a regular function $f\colon\X\to\mathbb{C}$, we denote the corresponding dg-category of matrix factorizations by 
\[\mathrm{MF}(\X, f).\] 
Its objects are tuples $\left(\alpha\colon E\rightleftarrows F\colon\beta\right)$ such that $E, F\in \mathrm{Coh}(\X)$ and
\begin{align*}
\alpha \circ \beta=f \cdot \id_F, \ \beta \circ \alpha=f \cdot \id_E.
\end{align*}
If $\mathbb{M}\subset D^b(\X)$ is a pre-triangulated dg-subcategory, let 
\[\mathrm{MF}(\mathbb{M}, f)\subset \mathrm{MF}(\X, f)\]
be the subcategory consisting of totalizations of tuples $(E, F, \alpha, \beta)$ with $E, F \in \mathbb{M}$. The category $\mathrm{MF}(\mathbb{M}, f)$ has a description in terms of categories of singularities \cite[Subsection 2.6]{PTzero}. In this paper, we consider categories $\mathbb{M}$ generated by a collection $\mathscr{C}$ of vector bundles, then 
$\mathrm{MF}(\mathbb{M}, f)$ is the category of matrix factorizations with summands $E, F$ which are direct sums of vector bundles in $\mathscr{C}$, see \cite[Lemma 2.3]{PTzero}.

Suppose that there exists an extra action of $\mathbb{C}^*$ on $X$ which commutes with the action of $G$ on $X$.
Assume that $f$ is of weight one with respect to 
the above $\mathbb{C}^{\ast}$-action. 
We also consider the dg-category of graded matrix factorizations $\mathrm{MF}^{\mathrm{gr}}(\mathscr{X}, f)$. 
The objects of $\mathrm{MF}^{\mathrm{gr}}(\mathscr{X}, f)$ can be described as tuples 
\begin{align}\label{tuplet:graded}
(E, F, \alpha \colon 
E\to F(1), \beta \colon F\to E),
\end{align}
where $E$ and $F$ are $G\times\mathbb{C}^{\ast}$-equivariant coherent sheaves
on $X$, $(1)$ is the twist by the character 
$G \times \mathbb{C}^{\ast} \to \mathbb{C}^{\ast}$, 
and $\alpha$ and $\beta$ are $\mathbb{C}^{\ast}$-equivariant morphisms
such that $\alpha\circ\beta$ and $\beta\circ\alpha$ are multiplication by $f$.
For a pre-triangulated dg-subcategory $\mathbb{M}\subset D^b_{\mathbb{C}^*}(\X)$, we define 
\[\mathrm{MF}^{\mathrm{gr}}(\mathbb{M}, f)\subset \mathrm{MF}^{\mathrm{gr}}(\X, f)\] 
the subcategory of totalizations of tuples $(E, F, \alpha, \beta)$ with $E, F \in \mathbb{M}$. 
Alternatively, if $\mathbb{M}$ is generated by a collection of $\mathbb{C}^*$-equivariant vector bundles $\mathscr{C}$, then 
$\mathrm{MF}^{\mathrm{gr}}(\mathbb{M}, f)$ is the category of matrix factorizations with summands $E, F$ which are direct sums of vector bundles in $\mathscr{C}$.

In this paper, we will consider either ungraded or 
graded categories of matrix factorizations. 
When considering the tensor product $\mathcal{D}_1 \otimes \mathcal{D}_2$ of two categories of matrix factorizations, 
we define it in the context of the Thom-Sebastiani theorem:
the tensor product is over $\mathbb{C}(\!(\beta)\!)$ for $\beta$ of homological degree $-2$ in the ungraded case, see \cite[Theorem 4.1.3]{Preygel}, and is over $\mathbb{C}$ in the graded case, see \cite[Corollary 5.18]{MR3270588}. 

\subsection{The categories of singularities}
Let $\mathscr{X}$ be a smooth quotient stack of dimension $d$, and $f\colon \mathscr{X}\to\mathbb{C}$ a regular function with $0 \in \mathbb{C}$ the only singular value. Let $\iota\colon \X_0\hookrightarrow \X$ be the (derived) zero locus of $f$. 
The \textit{category of singularities} is defined by 
\begin{equation}\label{sg}
D_{\mathrm{sg}}(\mathscr{X}_0):=D^b(\mathscr{X}_0)/\text{Perf}(\mathscr{X}_0). 
\end{equation}
It is related to the category of matrix factorizations 
as follows: 
\begin{thm}\emph{(\cite{o2, EfPo})}\label{thm:orlov}
There is an equivalence
\begin{equation}\label{dsgmf}
\mathrm{MF}(\X, f)\stackrel{\sim}{\to} D_{\mathrm{sg}}(\mathscr{X}_0).
\end{equation}
\end{thm}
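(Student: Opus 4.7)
The plan is to realize the equivalence in the same spirit as Orlov's original proof, adapted to the stacky setting by Efimov--Positselski. The idea is that a matrix factorization is, up to homotopy, the same thing as an eventually $2$-periodic tail of a locally free resolution on $\X_0$, and such tails are precisely what survives in the quotient $D^b(\X_0)/\mathrm{Perf}(\X_0)$.

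First I would construct a functor
\[
\Phi \colon \mathrm{MF}(\X, f) \longrightarrow D_{\mathrm{sg}}(\X_0), \quad
(\alpha \colon E \rightleftarrows F \colon \beta) \longmapsto \mathrm{coker}\bigl(\alpha|_{\X_0} \colon E|_{\X_0} \to F|_{\X_0}\bigr).
\]
The identity $\beta\alpha = f \cdot \mathrm{id}_E$ forces this cokernel to be annihilated by $f$, so it is a coherent sheaf on $\X_0$. I would then check that $\Phi$ extends to a dg-functor (mapping the natural closed model structure on matrix factorizations to the quotient by perfect complexes), and that null-homotopic morphisms, as well as totalizations of short exact sequences, go to morphisms factoring through $\mathrm{Perf}(\X_0)$.

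Next I would construct a quasi-inverse $\Psi$ as follows. Given $\mathcal{F} \in D^b(\X_0)$, push it forward along $\iota$ to $D^b(\X)$, and choose a finite length resolution by vector bundles; this is possible because $\X = X/G$ is a smooth quotient stack with $X$ quasi-projective, so $G$-equivariant locally free resolutions exist. Restricting this resolution back to $\X_0$ and looking at its tail sufficiently far to the left, one obtains a complex whose differentials admit canonical chain null-homotopies coming from multiplication by $f$ on $\X$: pairing consecutive differentials with these null-homotopies packages the tail into a $2$-periodic diagram $(\alpha \colon E \rightleftarrows F \colon \beta)$ satisfying $\alpha\beta = f$ and $\beta\alpha = f$. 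Different choices of resolution and truncation differ by objects that become zero in $D_{\mathrm{sg}}(\X_0)$ and by trivial matrix factorizations, so $\Psi$ is well defined up to homotopy.

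The main steps to verify would then be: (i) $\Phi \circ \Psi$ and $\Psi \circ \Phi$ are naturally isomorphic to the identity, which reduces to standard resolution arguments together with the fact that a bounded complex of vector bundles on $\X_0$ which is perfect must eventually split off its $2$-periodic tail; and (ii) the construction is sufficiently natural to promote to a quasi-equivalence of dg-categories rather than just an equivalence of homotopy categories. The most delicate point, and the one where the Efimov--Positselski refinement \cite{EfPo} over Orlov's original argument \cite{o2} really matters, is upgrading from the affine, graded setting to the quotient stack setting with no extra $\mathbb{C}^*$-action: one has to work with equivariant resolutions and show that the constructions descend to the non-graded $2$-periodic category $\mathrm{MF}(\X, f)$ after base change to $\mathbb{C}(\!(\beta)\!)$ as in Subsection \ref{subsection:matrixfactorizations}.
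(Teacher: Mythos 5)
The paper does not actually prove Theorem~\ref{thm:orlov}: it is quoted from the references \cite{o2, EfPo}, so the only comparison available is with those sources, and your overall strategy --- the cokernel functor $\Phi$, an inverse built from locally free resolutions together with a null-homotopy for multiplication by $f$, with equivariant resolutions supplying the stacky input --- is indeed the standard route taken there. Two small corrections to your set-up: annihilation of $\mathrm{coker}(\alpha)$ by $f$ comes from $\alpha\circ\beta=f\cdot\mathrm{id}_F$, not from $\beta\circ\alpha=f\cdot\mathrm{id}_E$; and the base change to $\mathbb{C}(\!(\beta)\!)$ is irrelevant to this equivalence --- it only enters when forming tensor products of matrix factorization categories (Thom--Sebastiani), as in Subsection~\ref{subsection:matrixfactorizations}.

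Your construction of the quasi-inverse $\Psi$, however, has a genuine gap. A locally free resolution of $\iota_*\mathcal{F}$ on the smooth stack $\X$ has \emph{finite} length, so it has no ``tail sufficiently far to the left''; its restriction to $\X_0$ is moreover not a resolution (when $f$ is a nonzerodivisor, $\mathrm{Tor}^{\mathcal{O}_{\X}}_i(\iota_*\mathcal{F},\mathcal{O}_{\X_0})$ is nonzero in degrees $i=0$ and $1$); and pairing the last differential $d$ of the finite resolution with the null-homotopy $s$ for $f\cdot\mathrm{id}$ yields only one of the two required identities: at the end one gets $s\,d=f\cdot\mathrm{id}$, while $d\,s=f\cdot\mathrm{id}-s\,d'$ picks up a correction from the previous stage, so the pair is not a matrix factorization. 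The missing idea is the syzygy (maximal Cohen--Macaulay) step: up to objects of $\mathrm{Perf}(\X_0)$, i.e.\ up to isomorphism in $D_{\mathrm{sg}}(\X_0)$, one replaces $\mathcal{F}$ by a sufficiently high syzygy computed \emph{on} $\X_0$; such a sheaf, viewed on $\X$, admits a two-term resolution $0\to E\xrightarrow{\alpha}F\to N\to 0$ by vector bundles, and only for a length-one resolution does the null-homotopy of $f$ produce $\beta$ with $\alpha\beta=\beta\alpha=f$. (Alternatively one avoids an explicit inverse and proves directly that $\Phi$ is fully faithful and essentially surjective, which is essentially how the cited references argue.) Note finally that the paper applies the theorem with $\X_0$ the \emph{derived} zero locus, including the degenerate case $f=0$ used in Lemma~\ref{lemmalambda}, where your classical cokernel-and-syzygy description needs to be interpreted with care.
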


\subsection{The Koszul equivalence}\label{subsectionKoszul}
In this subsection, we recall the Koszul equivalences for quasi-smooth 
quotient stacks. 
Let $\mathscr{X}$ be a smooth quotient stack
and let $\eta\colon \mathscr{E}\to \mathscr{X}$ be a rank $r$ vector bundle with a section $s\in \Gamma(\X, \mathscr{E})$. 
Let  
\begin{equation}\label{def:K}
j\colon\mathscr{P}:=s^{-1}(0)\hookrightarrow \X
\end{equation}
be the derived zero locus of $s$, and define 
the regular function 
\begin{equation}\label{deffunctionf}
f\colon \mathscr{E}^{\vee}\to \mathbb{C}
\end{equation}
to be $f(x, v_x)=\langle s(x), v_x\rangle$ for $x\in \X$ and $v_x\in \mathscr{E}^\vee|_x$.
Let $\mathscr{E}_0^{\vee}$ be the derived zero locus of $f$. Consider 
the following diagram 
\begin{equation}\label{diagmapskoszul}
    \begin{tikzcd}
        \mathscr{E}^{\vee}|_{\mathscr{P}} \arrow[d, "\eta'"] \arrow[rr, bend left, hook, "j'"] \arrow[r, hook, "\tau"]  & \mathscr{E}_0^{\vee} \arrow[r, hook, "\iota"] & \mathscr{E}^{\vee} 
   \arrow[r, "f"] \arrow[d, "\eta"] & \mathbb{C} \\
    \mathscr{P} \arrow[rr, hook, "j"] & & \X.
    \end{tikzcd}
\end{equation}

Let $\mathbb{C}^*$ act with weight one on the fibers of $\mathscr{E}^{\vee}$ and consider the corresponding graded category of matrix factorizations $\mathrm{MF}^{\mathrm{gr}}(\mathscr{E}^{\vee}, f)$.
The Koszul equivalence is stated 
as follows: 
\begin{thm}\emph{(\cite{I, Hirano, T})}\label{thm:Koszul}
There is an equivalence 
\begin{align}\label{Koszul}
\kappa \colon D^b(\mathscr{P})\xrightarrow{\sim} \mathrm{MF}^{\mathrm{gr}}(\mathscr{E}^{\vee}, f).
\end{align}
\end{thm}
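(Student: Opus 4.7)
The plan is two-step: first construct $\kappa$ explicitly via the correspondence diagram~\eqref{diagmapskoszul}, then prove it is an equivalence by reducing to a local Koszul duality statement. The decisive observation is that $f$ has $\mathbb{C}^{\ast}$-weight one along the fibres of $\mathscr{E}^{\vee}\to \X$, which is precisely the setting in which graded matrix factorizations of $f$ are equivalent to the derived category of the zero section, rather than merely to a Verdier quotient of the derived category of the zero fibre as in the ungraded Theorem~\ref{thm:orlov}.

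For the definition, I would send $\mathscr{F}\in D^b(\mathscr{P})$ to $\kappa(\mathscr{F}):=\tau_{\ast}\eta'^{\ast}\mathscr{F}$, an object of the $\mathbb{C}^{\ast}$-equivariant derived category of the derived zero fibre $\mathscr{E}_0^{\vee}$, which then corresponds to a graded matrix factorization via a weight-one analog of Theorem~\ref{thm:orlov} (the form originally established by Isik in \cite{I}). Since $\tau$ is a closed immersion between quasi-smooth stacks, its pushforward preserves boundedness of coherent complexes, so $\kappa$ indeed lands in $\mathrm{MF}^{\mathrm{gr}}(\mathscr{E}^{\vee},f)$.

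To prove the equivalence, I would reduce to a local model by smooth descent: after replacing $\X$ by a smooth affine atlas, I may assume $\X=\mathrm{Spec}\,A$ with $\mathscr{E}\cong\mathcal{O}_{\X}^{\,r}$ trivialized so that $s=(s_{1},\dots,s_{r})$. Then $\mathscr{P}$ is the $\mathrm{Spec}$ of the Koszul dg-algebra $(A[\epsilon_{1},\dots,\epsilon_{r}],\,d\epsilon_{i}=s_{i})$ with $\deg\epsilon_{i}=-1$, while the total space $\mathscr{E}^{\vee}$ has coordinate ring $A[y_{1},\dots,y_{r}]$ with each $y_{i}$ of $\mathbb{C}^{\ast}$-weight one and $f=\sum s_{i}y_{i}$. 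In this local picture, $\kappa$ becomes an instance of classical $\mathbb{C}^{\ast}$-equivariant Koszul duality between the exterior algebra on the $\epsilon_{i}$ and the polynomial algebra on the $y_{i}$, coupled by the section $s$ and twisted over $A$; the compatibility with the functor defined above is a direct bar-construction computation. Smooth descent, which both sides satisfy, then produces the global equivalence.

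The main obstacle is the derived enhancement. When $s$ is not a regular section, $\mathscr{P}$ is genuinely a quasi-smooth derived stack and the zero fibre $\mathscr{E}_0^{\vee}$ is also derived, so the Koszul duality must be carried out at the dg-level rather than classically; ensuring that $\kappa$ restricts to an equivalence on bounded coherent (rather than ind-coherent) objects, and keeping track of the $\mathbb{C}^{\ast}$-grading through both the Orlov-type identification and the Koszul duality, is the technical heart of the argument, carried out in \cite{Hirano, T}.
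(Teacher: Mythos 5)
The paper does not actually prove Theorem~\ref{thm:Koszul}: it is imported from \cite{I, Hirano, T}, the only internal content being the remark that $\kappa$ is given by tensoring with the Koszul factorization and Proposition~\ref{prop511}, which identifies $\kappa\circ l_*$ with $j'_*\eta'^*l_*$. Your outline is consistent with that: the functor you propose, $\tau_*\eta'^*$ followed by the graded analogue of Theorem~\ref{thm:orlov}, is essentially Isik's functor, and it matches the paper's description on the image of $l_*$. One small imprecision: the graded matrix factorization category is still equivalent to a Verdier quotient (the graded singularity category of $\mathscr{E}^{\vee}_0$); the special feature of the weight-one grading is that this graded singularity category is in turn equivalent to $D^b(\mathscr{P})$, which is the statement being proved, not a known input you can quote independently of it.

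As a proof, the argument has a genuine gap at exactly the step that matters. Describing the local statement as ``classical $\mathbb{C}^*$-equivariant Koszul duality'' settled by ``a direct bar-construction computation'' is not accurate: since $f=\sum s_iy_i\neq 0$, the Koszul dual of the dg-algebra $(A[\epsilon_1,\dots,\epsilon_r],\,d\epsilon_i=s_i)$ is a \emph{curved} algebra with curvature $f$, and what is needed is an equivalence between bounded coherent dg-modules on one side and coherent graded factorizations on the other. Passing from the formal bar/cobar adjunction to an equivalence of these small categories requires handling completion along the zero section, the distinction between derived and coderived (or absolute derived) categories on the curved side, and matching the finiteness conditions; this is precisely the content of \cite{I, Hirano, T} and is not supplied by your sketch, which you acknowledge by delegating it. The descent reduction also needs the statement that both sides, with their $\mathbb{C}^*$-gradings, form sheaves of categories over a trivializing cover and that $\kappa$ respects the descent data; this is true but must be argued, and the route actually taken in \cite{T} avoids it entirely by defining $\kappa$ globally via the Koszul factorization and reducing to Hirano's equivariant Kn\"orrer periodicity. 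So what you have is a correct outline of the known argument with its technical heart outsourced to the references --- which is also all the paper does --- rather than a self-contained proof.
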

The functor $\kappa$ 
is given by a tensor product with 
the Koszul factorization, see~\cite[Section~2.3.2]{T}. 
We also have the following explicit description on complexes from the classical stack: 
\begin{prop}\label{prop511}
    The composition 
    \[D^b(\mathscr{P}^{\mathrm{cl}})\stackrel{l_{\ast}}{\to} D^b(\mathscr{P})\xrightarrow{\kappa} \mathrm{MF}^{\mathrm{gr}}(\mathscr{E}^{\vee}, f)\] is isomorphic to the functor $j'_*\eta'^*l_*$. 
    \end{prop}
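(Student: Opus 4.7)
The plan is to deduce the proposition from a more general identification: for any $G \in D^b(\mathscr{P})$, there is a natural isomorphism
\[ \kappa(G) \cong j'_* \eta'^* G \]
in $\mathrm{MF}^{\mathrm{gr}}(\mathscr{E}^{\vee}, f)$. Here the right-hand side is a coherent complex on $\mathscr{E}^\vee$ whose support lies in $\mathscr{E}^\vee|_\mathscr{P}$, a closed substack of $\mathscr{E}_0^\vee$ (since $f$ vanishes on $\mathscr{E}^\vee|_\mathscr{P}$), so it determines an object of $\mathrm{MF}^{\mathrm{gr}}(\mathscr{E}^\vee, f) \simeq D_{\mathrm{sg}}(\mathscr{E}_0^\vee)$ via Orlov's equivalence of Theorem~\ref{thm:orlov}. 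Specializing to $G = l_* F$ for $F \in D^b(\mathscr{P}^{\rm{cl}})$ then yields the proposition immediately, since the right-hand side of the claimed formula and that of the identification coincide.

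To establish the general identification, I would unwind the construction of $\kappa$ from \cite{I, Hirano, T}, where it is realized as tensoring with the Koszul matrix factorization $K$ of $f$: concretely, $K$ has underlying graded vector bundle $\bigoplus_k \eta^* \wedge^k \mathscr{E}$ on $\mathscr{E}^\vee$ with $\mathbb{Z}/2$-graded differential $\iota_t + (\wedge\, \eta^* s)$, where $t$ denotes the tautological section of $\eta^* \mathscr{E}^{\vee}$ and $\langle \eta^* s, t\rangle = f$. The key step is the identification of $K$ with $j'_* \mathcal{O}_{\mathscr{E}^\vee|_\mathscr{P}}$ in $D_{\mathrm{sg}}(\mathscr{E}_0^\vee)$, which amounts to observing that the underlying graded complex of $K$ is a Koszul-type resolution of $j'_* \mathcal{O}_{\mathscr{E}^\vee|_\mathscr{P}}$ on $\mathscr{E}^\vee$, whose curved $\mathbb{Z}/2$-graded deformation precisely encodes passage to $D_{\mathrm{sg}}(\mathscr{E}_0^\vee)$. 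Granting this, the general identification follows from a standard projection formula and flat base change argument applied to the Cartesian square of derived stacks in \eqref{diagmapskoszul}.

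The main technical obstacle is the identification of the Koszul factorization $K$ with $j'_* \mathcal{O}_{\mathscr{E}^\vee|_\mathscr{P}}$ in the singularity category. This is essentially folklore in the literature on Koszul duality for matrix factorizations and is implicit in the proofs of \cite{I, Hirano, T}; nevertheless, a careful verification requires tracing through the definitions and compatibilities with Orlov's equivalence, which can be done by first trivializing $\mathscr{E}$ locally on $X$ and then globalizing using the $G$-equivariant structure (where $\X = X/G$).
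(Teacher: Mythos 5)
Your approach is essentially the same as the paper's: both reduce the claim to the explicit Koszul--factorization description of $\kappa$ from \cite[Section~2.3.2]{T} (which in the regular-section case is Hirano's equivalence, cited via \cite[Remark~2.3.5]{T}), so the proposal matches the intended argument. One small imprecision worth flagging in your ``key step'': the underlying complex of $K$ with the contraction differential $\iota_t$ resolves the zero-section $\mathcal{O}_{\X}$ inside $\mathscr{E}^\vee$, not $j'_*\mathcal{O}_{\mathscr{E}^\vee|_\mathscr{P}}$; the Koszul resolution of $j'_*\mathcal{O}_{\mathscr{E}^\vee|_\mathscr{P}}$ instead comes from the $\wedge\,\eta^*s$ direction and carries a $\det\mathscr{E}$-twist and a shift by $r$, both of which are built into the precise normalization of $\kappa$ in~\cite{T} but get lost in the phrasing of your proposal.
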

\begin{proof}
In the case that $s$ is a regular section, 
the closed immersion $l \colon \mathscr{P}^{\rm{cl}} \hookrightarrow \mathscr{P}$ is an equivalence 
and $\kappa=j_{\ast}'\eta^{'\ast}$ 
coincides with the equivalence constructed in~\cite{Hirano}, 
see~\cite[Remark~2.3.5]{T}. 
Therefore the proposition is obvious. 
In general, the proposition follows from 
the explicit formula 
for $\kappa$ via the Koszul factorization
in \cite[Section 2.3.2]{T}.
\end{proof}

\section{Review of quasi-BPS categories for symmetric quivers}\label{sec:quivers}
In this section, we review the definition and properties of 
quasi-BPS categories for symmetric quivers. 
The references for this section are~\cite{P0, P, PTzero, PTquiver}. 

\subsection{Quivers}\label{section:prelim}
Let $Q=(I,E)$ be a quiver, where $I$ is the set of 
vertices and $E$ is the set of edges. 
For a dimension vector $d=(d^i)_{i\in I}\in\mathbb{N}^I$, 
we denote by  
\[\X(d)=R(d)/G(d)\]
the stack of representations of $Q$ of dimension $d$, alternatively the stack of representations of dimension $d$ of the path algebra $\mathbb{C}[Q]$.
Here $R(d)$, $G(d)$ are given by 
\begin{align*}
    R(d)=\bigoplus_{(i\to j) \in E}\Hom(V^i, V^j), \ 
    G(d)=\prod_{i\in I}GL(V^i), 
\end{align*}
where $V^i$ is a $\mathbb{C}$-vector space of dimension $d^i$. 
Consider the good moduli space
\[\pi_d \colon \X(d)\to X(d):=R(d)\ssslash G(d).\]
The quotient variety $X(d)$ parametrizes semisimple 
$Q$-representations. We denote by $X(d)^{\circ} \subset X(d)$
the open subset corresponding to simple $Q$-representations. 

A \textit{potential} of $Q$ is an element 
$W \in \mathbb{C}[Q]/[\mathbb{C}[Q], \mathbb{C}[Q]]$. 
It is represented by a linear combination of 
cycles in $Q$. 
A pair $(Q, W)$ is called a \textit{quiver with potential}. 
A potential $W$ determines a regular function 
\begin{align}\label{map:trW}
    \Tr W \colon \X(d) \to \mathbb{C}
\end{align}
given by the trace of the compositions 
of maps along with cycles in $W$.

\subsection{Character and cocharacter lattices}\label{subsec:character}
Let $Q$ be a quiver and $d$ a dimension vector. 
We denote by $T(d)$ a maximal torus of $G(d)$, by $M(d)$ the weight lattice of $T(d)$, 
and by $\mathfrak{g}(d)$ the Lie algebra of $G(d)$. 
Let $M(d)_{\mathbb{R}}=M(d)\otimes_{\mathbb{Z}}\mathbb{R}$. Note that we have 
\begin{align*}
M(d)_{\mathbb{R}}=\bigoplus_{i\in I} \bigoplus_{1\leq a\leq d^i}
\mathbb{R} \beta_a^i
\end{align*}
where $\beta_a^i$ for $1\leq a \leq d^i$ 
are the weights of the standard representation of 
$T(d^i)$.
We denote by $\mathfrak{S}_a$ the permutation group on $a$ letters (where $a\in \mathbb{N}$), 
and by $W_d:=\times_{i\in I}\mathfrak{S}_{d^i}$ the Weyl 
group of $G(d)$.

We denote by $M(d)_\mathbb{R}^+\subset M(d)_\mathbb{R}$
the dominant chamber consisting of weights 
\begin{align*}\chi=\sum_{i\in I}\sum_{a=1}^{d^i}c^i_a\beta^i_a, \ c^i_a\geq c^i_b \mbox{ for all }i\in I, d^i\geq a\geq b\geq 1.
\end{align*}
For $\chi \in M(d)^+:=M(d) \cap M(d)_{\mathbb{R}}^+$, we denote by $\Gamma_{G(d)}(\chi)$ the irreducible 
representation of $G(d)$ with highest weight $\chi$. 
We denote by $\rho$ half the sum of positive roots of $\mathfrak{g}(d)$, 
\begin{align*}
    \rho=\frac{1}{2}\sum_{i\in I} \sum_{a>b} (\beta_a^i-\beta_b^i). 
\end{align*}
We denote by $1_d$ the diagonal cocharacter of $T(d)$, which 
acts on $\beta_a^i$ by weight one. 
For $d=(d^i)_{i\in I}$,
denote by $\underline{d}=\sum_{i\in I}d^i$. We define the following weights
\begin{align*}
    \sigma_{d}:=\sum_{i\in I, 1\leq a\leq d^i}\beta^i_a\in M(d), \ 
    \tau_d:=\frac{\sigma_d}{\dd}\in M(d)_\mathbb{R}.
\end{align*}

Let $N(d)$ be the cocharacter lattice of $T(d)$. 
We have the perfect pairing 
\begin{align*}
    \langle -, - \rangle \colon 
    N(d) \times M(d) \to \mathbb{Z}, \ 
    N(d)_{\mathbb{R}} \times M(d)_{\mathbb{R}} \to \mathbb{R}. 
\end{align*}
If $\lambda$ is a cocharacter of $T(d)$ and $V$ is a $T(d)$-representation, we may abuse notation and write
$\langle \lambda, V\rangle=\langle \lambda, \det(V)\rangle$
to simplify the notation.
A cocharacter $\lambda=(\lambda_a^i) \in N(d)$
for $i \in I$ and $1\leq a\leq d^i$ is \textit{antidominant} 
if $\lambda_a^i \geq \lambda_b^i$ for $1\leq b\leq a \leq d^i$. 
An antidominant cocharacter of $T(d)$ uniquely 
determines a decomposition $d=d_1+\cdots+d_k$ 
in $\mathbb{N}^I$, 
such that $\lambda$ acts on $\beta_a^i$ with weight 
$\lambda_j$ for \begin{align*}\sum_{l<j} d_l^i <a \leq \sum_{l\leq j} d_l^i.\end{align*}
The partition $(d_i)_{i=1}^k$ is called \textit{associated 
partition} of $\lambda$.

 For a cocharacter $\lambda$ of $T(d)$ and a 
 $T(d)$-representation $V$, we set
 $V^{\lambda \geq 0} \subset V$ to be 
 spanned by $T(d)$-weights $\beta$
 of $V$ such that $\langle \lambda, \beta \rangle \geq 0$. 
 The subspace $V^{\lambda} \subset V$ is defined 
 to be the $\lambda$-fixed subspace.

\subsection{Framed quivers}\label{subsec:framedquiver}
For a quiver $Q=(I,E)$
and $\alpha \in \mathbb{N}$, 
its $\alpha$-\textit{framed quiver} 
$Q^{\alpha f}=(I^f, E^{\alpha f})$ is defined as follows~\cite{EnRei}. 
The vertex set is $I^f=\{\infty\} \sqcup I$
and the edge set is 
\begin{align*}E^{\alpha f}=E\sqcup \{e_{i, j}\mid i\in I, 1\leq j \leq \alpha\},
\end{align*}
where $e_{i, j}$ is an edge from $\infty$ to $i\in I$. 
For $d=(d^i)_{i\in I}\in\mathbb{N}^I$, 
we set \[V^{\alpha}(d)=\bigoplus_{i\in I}(V^i)^{\oplus \alpha},\] where $\dim V^i=d^i$, 
and define 
 \begin{align}\label{Rf:framed}R^{\alpha f}(d):=R(d)\oplus V^{\alpha}(d).
 \end{align}
 Note that $R^{\alpha f}(d)$ is 
 the affine space of representations of $Q^{\alpha f}$ of dimension 
 vector $(1, d)$. 
 The moduli stack of $\alpha$-framed representations of $Q$ is given by 
\[\X^{\alpha f}(d):=R^{\alpha f}(d)/G(d).\]
The character $\sigma_{d}$ defines 
the GIT stability on $Q^{\alpha f}$, 
whose (semi)stable locus 
consists of 
$Q^{\alpha f}$-representations with 
no subrepresentations of dimension $(1,d')$ for $d' \neq d$,  
see~\cite{EnRei}, \cite[Lemma~5.1.9]{T}. 
By taking the GIT quotient, 
we obtain the smooth quasi-projective variety of 
stable framed representations 
\[\X^{\alpha f}(d)^{\text{ss}}:=R^{\alpha f}(d)^{\text{ss}}/G(d).\]

\subsection{The categorical Hall product}
For a quiver $Q=(I, E)$
and a dimension vector $d$, 
let $\lambda$ be an antidominant cocharacter of $T(d)$. 
We have the attracting and fixed stacks given by 
\begin{align*}
    \X(d)^{\lambda \geq 0}=R(d)^{\lambda \geq 0}/G(d)^{\lambda \geq 0},  \ 
    \X(d)^{\lambda}=R(d)^{\lambda}/G(d)^{\lambda}
    =\times_{i=1}^k \X(d_i). 
\end{align*}
Here $(d_i)_{i=1}^k$ is the 
partition associated with $\lambda$. 
We have the natural maps 
\begin{align}\label{diagram:natural}
\xymatrix{
\X(d)^{\lambda \geq 0} \ar[r]^-{q_{\lambda}} \ar[d]_-{p_{\lambda}} & \X(d)^{\lambda} 
\ar[ld]^-{a_{\lambda}} \ar[r]^-{\pi_{\lambda}} & X(d)^{\lambda} \ar[d]_-{i_{\lambda}} \\
\X(d) \ar[rr]^-{\pi_d} & & X(d). 
}
\end{align}
Here $\pi_d$, $\pi_{\lambda}$ are good moduli space 
morphisms, 
and 
$p_{\lambda}$, $a_{\lambda}$, $q_{\lambda}$ are
induced by the inclusions, projection, 
respectively
\begin{align*}
    R(d)^{\lambda \geq 0} \hookrightarrow R(d), \ 
    R(d)^{\lambda} \hookrightarrow R(d), \ 
    R(d)^{\lambda \geq 0} \twoheadrightarrow R(d)^{\lambda}. 
\end{align*}
In this paper, we often use the notation 
in the diagram (\ref{diagram:natural}). 

The categorical Hall product of $Q$
is defined to be the functor \cite{P0}: 
\begin{align}\label{def:Hallprod}
m_{\lambda}:= p_{\lambda*}q_\lambda^{\ast} \colon 
\boxtimes_{a=1}^k D^b(\mathscr{X}(d_a)) \to D^b(\mathscr{X}(d)). 
\end{align}
The above functor is a categorical version 
of the cohomological Hall product~\cite{MR2851153}, 
which is induced by 
the following sheaf-theoretic one, see~\cite{DM}:
\begin{align}\label{coha}
  m_{\lambda}^{\rm{co}} := p_{\lambda*}q_\lambda^{\ast} \colon 
  i_{\lambda \ast}\pi_{\lambda\ast}\mathrm{IC}_{\X(d)^{\lambda}} \to 
  \pi_{d\ast}\mathrm{IC}_{\X(d)}. 
\end{align}

Let $W$ be a potential of $Q$, and $\Tr W$
the function on $\X(d)$ as in (\ref{map:trW}). 
The categorical Hall product of $(Q, W)$ is defined to 
be the functor 
\begin{align}\label{def:Hallprod2}
    p_{\lambda*}q_\lambda^{\ast} \colon \boxtimes_{a=1}^k \mathrm{MF}(\mathscr{X}(d_a), \Tr W) \to \mathrm{MF}(\mathscr{X}(d), \Tr W). 
\end{align}
If there is a grading, there is also
a categorical Hall product for graded matrix 
factorizations. 

\subsection{Doubled quivers and preprojective algebras}\label{subsec26}


Let $Q^\circ=(I, E^\circ)$ be a quiver. For an edge $e$ of $Q^{\circ}$, denote by $\overline{e}$ the edge with opposite orientation.
Let $E^{\circ, d}$ be the multiset $E^{\circ, d}=\{e, \overline{e} \mid e\in E^{\circ}\}$. 
The \textit{doubled quiver}
is defined by \[Q^{\circ, d}=(I, E^{\circ, d}).\] 
Let $\mathscr{I}\subset \mathbb{C}[Q^{\circ, d}]$ be the two-sided ideal generated by $\sum_{e\in E^\circ}[e, \overline{e}]$.
The preprojective algebra of $Q^\circ$ is $\Pi_{Q^\circ}:=\mathbb{C}[Q^{\circ, d}]/\mathscr{I}$.

For $d\in \mathbb{N}^I$, recall the stack of representations of dimension $d$ of $Q^{\circ}$:
\[\X^\circ(d)=R^\circ(d)/G(d).\]
The stack of representations of $Q^{\circ, d}$ is
given by 
\[\mathscr{Y}(d):=(R^\circ(d)\oplus R^\circ(d)^\vee)/G(d).\]
The stack of representations of the preprojective algebra $\Pi_{Q^{\circ}}$ is given by 
\[\mathscr{P}(d):=T^*\left(\X^\circ(d)\right)=\mu^{-1}(0)/G(d),\]
where \[\mu\colon T^*R^\circ(d)=R^\circ(d)\oplus R^\circ(d)^\vee\to \mathfrak{g}(d)^\vee\cong \mathfrak{g}(d)\] is the moment map 
defined by the relation 
$\sum_{e \in E^{\circ}}[e, \overline{e}]$,
and $\mu^{-1}(0)$ is the derived zero locus of $\mu$. 

The image of $\mu$ lies in the traceless Lie subalgebra $\mathfrak{g}(d)_0\subset \mathfrak{g}(d)$, and thus induces a map $\mu_0\colon T^*R^\circ(d)\to \mathfrak{g}(d)_0$.  
The reduced version of the moduli stack 
is defined to be  
\[\mathscr{P}(d)^{\mathrm{red}}:=\mu_0^{-1}(0)/G(d).\]
Note that $\mathscr{P}(d)^{\mathrm{red, cl}}=\mathscr{P}(d)^{\rm{cl}}$. 
We have the good moduli space map:
\[\pi_{P,d}\colon \mathscr{P}(d)^{\mathrm{cl}}\to P(d):=\mu^{-1}(0)^{\rm{cl}}\ssslash G(d).\] 

\subsection{Tripled quivers with potential}\label{subsec29}

Let $Q^\circ=(I, E^\circ)$ be a quiver and let $Q^{\circ, d}=(I, E^{\circ, d})$ be
its doubled quiver. 
\textit{The tripled quiver with potential} \[(Q,W)\] is defined as follows. 
The quiver $Q=(I, E)$ has set of edges $E=E^{\circ, d}\sqcup\{\omega_i \mid i\in I\}$, where $\omega_i$ is a loop at the vertex $i \in I$. The potential $W$ is 
\[W:=\left(\sum_{i\in I}\omega_i\right)
\left(\sum_{e\in E^\circ}[e, \overline{e}]\right)\in \mathbb{C}[Q]/[\mathbb{C}[Q], \mathbb{C}[Q]].\]
We say $(Q,W)$ is a tripled quiver with potential if it is obtained as above for some quiver $Q^\circ$.

For a tripled quiver $Q$ as above, the moduli stack 
of its representations is given by 
\begin{align*}
    R(d)=T^{\ast}R^{\circ}(d)\oplus \mathfrak{g}(d)=
    R^{\circ}(d) \oplus R^{\circ}(d)^{\vee} \oplus \mathfrak{g}(d). 
\end{align*}
We have the moduli stack of $Q$-representations 
together with the regular function determined by $W$
\[\Tr W \colon 
\X(d)=R(d)/G(d) \to \mathbb{C}.\]
Let $\mathbb{C}^{\ast}$ acts on the $\mathfrak{g}(d)$-factor on $R(d)$ 
with weight one.  
The Koszul equivalence \eqref{Koszul} implies an equivalence \begin{equation}\label{Koszulequivalence}
\kappa\colon D^b\left(\mathscr{P}(d)\right)\stackrel{\sim}{\to} \mathrm{MF}^{\mathrm{gr}}\left(\X(d), \mathrm{Tr}\,W\right).
\end{equation}

\subsection{Quasi-BPS categories}\label{subsec27}
Here we recall the definition of quasi-BPS categories for 
symmetric quivers with potential. 
Here a quiver $Q=(I, E)$ is called \textit{symmetric}
if for any $i, j \in I$ the number of edges from $i$ to $j$ is the same 
as that from $j$ to $i$. 
Let $(Q, W)$ be a symmetric quiver with potential $W$. 
For a dimension vector 
$d=(d^i)_{i\in I}\in\mathbb{N}^I$, 
let $\mathscr{A}$ be the following multiset of $T(d)$-weights on $R(d)$:
\[\mathscr{A}:=\{\beta^i_a-\beta^j_b \mid i,j\in I, 
    (i \to j) \in E, 1\leq a\leq d^i, 1\leq b\leq d^j\}.\] 
The above multiset defines the following polytope in $M(d)_{\mathbb{R}}$:
\begin{equation}\label{defpolytope}
\mathbf{W}(d):=\frac{1}{2}\mathrm{sum}_{\beta\in \mathscr{A}}[0, \beta]\subset M(d)_{\mathbb{R}},
\end{equation}
where the sums above are Minkowski sums in the space of weights $M(d)_{\mathbb{R}}$.

For a weight $\delta\in M(d)_{\mathbb{R}}^{W_d}$,
the subcategory \begin{equation}\label{defmdw}
    \mathbb{M}(d; \delta)\subset D^b(\X(d))
\end{equation}
is defined to be the full subcategory of $D^b(\X(d))$ generated by vector bundles $\mathcal{O}_{\X(d)}\otimes\Gamma_{G(d)}(\chi)$, where $\chi \in M(d)^+$ is
such that
\[\chi+\rho-\delta\in \mathbf{W}(d).\] 
    
    Note that any complex $F\in D^b(B\mathbb{C}^*)$ splits as a direct sum \[F=\bigoplus_{w\in\mathbb{Z}}F_w\] such that $\mathbb{C}^*$ acts with weight $w$ on $F_w$. We say \textit{$w\in \mathbb{Z}$ is a weight of $F$} if $F_w\neq 0$.
For a cocharacter $\lambda$ of $T(d)$, we 
define 
   \begin{align}\label{nlambdadef}
n_{\lambda} :=\langle \lambda, \mathbb{L}_{\X(d)}|_{0}^{\lambda>0} \rangle 
=\langle \lambda, (R(d)^{\vee})^{\lambda>0}-
(\mathfrak{g}(d)^{\vee})^{\lambda>0}\rangle. 
\end{align}
 The category \eqref{defmdw} has the following alternative description. 
\begin{lemma}\emph{(\cite[Corollary 3.11]{PTquiver})}\label{lemma:alt}
    The category $\mathbb{M}(d; \delta)$ is the subcategory of $D^b(\X(d))$ of objects $F\in D^b(\X(d))$ such that, for any $\nu \colon B\mathbb{C}^*\to \X(d)$
    corresponding to a point $x \in R(d)$ and a cocharacter 
    $\lambda$ of $T(d)$ which fixes $x$, the weights of $\nu^*F$ are contained in the set 
    \begin{align*}\left[-\frac{1}{2}n_{\lambda}+\langle \lambda, \delta\rangle, \frac{1}{2}n_{\lambda}+\langle \lambda, \delta\rangle\right].
    \end{align*}  
\end{lemma}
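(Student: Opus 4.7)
The plan is to convert the polytope containment $\chi + \rho - \delta \in \mathbf{W}(d)$ defining the generators of $\mathbb{M}(d;\delta)$ into the weight-window characterization of the statement, via pairing with cocharacters of $T(d)$. I would begin by observing that both sides of the claimed equality are thick subcategories of $D^b(\X(d))$ (the right-hand side because the weight condition is stable under shifts, summands, and extensions), so it suffices to compare them on the generating set consisting of the bundles $\mathcal{O}_{\X(d)} \otimes \Gamma_{G(d)}(\chi)$ for $\chi \in M(d)^+$.

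For the forward direction, I would fix $\chi \in M(d)^+$ with $\chi + \rho - \delta \in \mathbf{W}(d)$, and an arbitrary $\nu$ corresponding to a pair $(x,\lambda)$ with $x$ fixed by $\lambda$. By Weyl invariance of $\mathbf{W}(d)$ and $\delta$ I may assume $\lambda$ is antidominant. The pullback $\nu^*(\mathcal{O}_{\X(d)} \otimes \Gamma_{G(d)}(\chi))$ decomposes into $\mathbb{C}^*$-weight spaces indexed by the pairings $\langle\lambda,\mu\rangle$ for $\mu$ a weight of $\Gamma_{G(d)}(\chi)$, whose extremes are $\langle\lambda,\chi\rangle$ and $\langle\lambda, w_0\chi\rangle$ (for the longest Weyl element $w_0$) by the rearrangement inequality applied to weights of an irreducible $G(d)$-representation. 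On the polytope side, since $\mathbf{W}(d)$ is a Minkowski sum of segments $\tfrac{1}{2}[0,\beta]$,
\[
\langle\lambda, \mathbf{W}(d)\rangle = \tfrac{1}{2}\bigl[\langle\lambda, R(d)^{\lambda<0}\rangle,\ \langle\lambda, R(d)^{\lambda>0}\rangle\bigr].
\]
The key explicit identity $\langle\lambda, \rho\rangle = \tfrac{1}{2}\langle\lambda, \mathfrak{g}(d)^{\lambda>0}\rangle$ (valid for antidominant $\lambda$), combined with the symmetry of the weights of $R(d)$ and the definition~\eqref{nlambdadef} of $n_\lambda$, rearranges the polytope condition into the pair of bounds
\[
\langle\lambda, \chi\rangle \geq -\tfrac{1}{2}n_\lambda + \langle\lambda,\delta\rangle, \qquad \langle\lambda, w_0\chi\rangle \leq \tfrac{1}{2}n_\lambda + \langle\lambda,\delta\rangle,
\]
which is exactly the weight window at $\nu$.

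The main obstacle is the converse: given $F \in D^b(\X(d))$ whose pullback along every such $\nu$ satisfies the weight window, one must deduce $F \in \mathbb{M}(d;\delta)$. My plan would be to resolve $F$ by a complex of tilting bundles $\mathcal{O}_{\X(d)} \otimes \Gamma_{G(d)}(\chi')$ with $\chi' \in M(d)^+$ arbitrary (as these generate $D^b(\X(d))$) and to use the weight window at a sufficiently rich family of cocharacters to force every surviving $\chi'$ to satisfy $\chi' + \rho - \delta \in \mathbf{W}(d)$. Controlling the extraneous summands produced by such a resolution is precisely what \v{S}penko--Van den Bergh's noncommutative crepant resolution framework handles, and in this setting is packaged by~\cite[Corollary~3.11]{PTquiver}, which the lemma cites. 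Thus the full proof should reduce to the explicit polytope/weight dictionary above combined with an invocation of that result.
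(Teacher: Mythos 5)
The paper does not actually prove this lemma: it is quoted verbatim from \cite[Corollary 3.11]{PTquiver}, so there is no internal argument to compare against, and the only question is whether your sketch would stand on its own. The forward inclusion is fine in outline: the generators $\mathcal{O}_{\X(d)}\otimes\Gamma_{G(d)}(\chi)$ restrict along any $\nu$ to the representation $\Gamma_{G(d)}(\chi)$ with extreme $\lambda$-weights $\langle\lambda,\chi\rangle$ and $\langle\lambda,w_0\chi\rangle$, the polytope condition translates into exactly the two bounds you write by pairing $\mathbf{W}(d)$ against antidominant cocharacters, and the weight-window condition cuts out a thick subcategory, so the generated category is contained in it. One detail to fix: for antidominant $\lambda$ the identity is $\langle\lambda,\rho\rangle=-\tfrac{1}{2}\langle\lambda,\mathfrak{g}(d)^{\lambda>0}\rangle$ (the $\lambda$-positive roots are then negative roots), not $+\tfrac{1}{2}$; with your sign the bookkeeping matching $n_\lambda$ does not close up.

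The converse is where the proposal has a genuine gap. Invoking \cite[Corollary~3.11]{PTquiver} is circular, since that corollary \emph{is} the statement being proved; and the fallback plan --- resolve $F$ by arbitrary bundles $\mathcal{O}_{\X(d)}\otimes\Gamma_{G(d)}(\chi')$ and use the windows at ``a sufficiently rich family of cocharacters'' to force every surviving $\chi'$ into the polytope --- does not work as stated. The terms of an arbitrary resolution of $F$ are not weight-bounded by the weights of $\nu^*F$ (cancellation in the complex destroys any such control), so the window condition on $F$ says nothing directly about the $\chi'$ appearing in a chosen resolution. The real content of the hard direction is to produce, for any $F$ satisfying the window condition, a resolution (or a generation statement) whose terms lie in the prescribed set of weights; this is the \v{S}penko--Van den Bergh / Halpern-Leistner--Sam ``magic window'' machinery, typically run by induction over Kempf--Ness (Hesselink) strata with explicitly constructed equivariant complexes, and none of that argument is supplied or replaced in your sketch. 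So the easy inclusion is essentially right, but the equality asserted by the lemma is not established.
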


The quasi-BPS category for quiver
with potential is defined as follows: 
\begin{defn}
    Let $(Q, W)$ be a symmetric quiver with potential. 
The \textit{quasi-BPS category} for $(Q, W)$ is defined by 
    \begin{align}\label{def:quasiBPS}
\mathbb{S}(d; \delta):=\mathrm{MF}\left(\mathbb{M}(d; \delta), \mathrm{Tr}\,W\right) \subset 
\mathrm{MF}(\X(d), \Tr W). 
\end{align}
\end{defn}
\begin{remark}
If there is a grading, the graded quasi-BPS 
category 
\begin{align*}
    \mathbb{S}^{\rm{gr}}(d; \delta) \subset 
    \mathrm{MF}^{\rm{gr}}(\X(d), \Tr W)
\end{align*}
is defined in a similar way, see Subsection~\ref{subsection:matrixfactorizations} for details about categories of graded matrix factorizations. 
\end{remark}
If $\delta=v\tau_d$, we use the notations:
\[\mathbb{M}(d)_v:=\mathbb{M}(d; v\tau_d), \ 
\mathbb{S}(d)_v:=\mathbb{S}(d; v\tau_d).\]

\subsection{Quasi-BPS categories for preprojective algebras}\label{subsec:qbps:pre}
Let $Q^{\circ}$ be a quiver and $Q^{\circ, d}$ its double as in 
Subsection~\ref{subsec26}. 
There is a subcategory, 
called \textit{preprojective quasi-BPS category}:
\begin{align*}\mathbb{T}(d; \delta)\subset D^b(\mathscr{P}(d)),
\end{align*}
see also \cite[Definition 2.14]{PTquiver} for its 
definition. 
We only use the property that, under the Koszul equivalence \eqref{Koszulequivalence}, 
we have an equivalence :
\begin{equation}\label{kappamagic}
\kappa\colon \mathbb{T}(d; \delta)\xrightarrow{\sim} \mathbb{S}^{\mathrm{gr}}(d; \delta). 
\end{equation}

Let 
$\X(d)^{\mathrm{red}}$ be defined by 
\begin{align*}
\X(d)^{\mathrm{red}}:=(T^* R^\circ(d)\oplus \mathfrak{g}(d)_0)/G(d).
\end{align*}
By the Koszul equivalence in Theorem~\ref{thm:Koszul}, we have 
an equivalence 
\[\kappa'\colon D^b(\mathscr{P}(d)^{\mathrm{red}})\xrightarrow{\sim} \mathrm{MF}^{\mathrm{gr}}\big(\X(d)^{\mathrm{red}}, \mathrm{Tr}\,W\big).\]
Similarly to above, 
there is a subcategory 
called \textit{reduced quasi-BPS category}
\begin{align*}
 \mathbb{T}(d; \delta)^{\mathrm{red}}\subset D^b(\mathscr{P}(d)^{\mathrm{red}})   
\end{align*}
such that, under the Koszul equivalence $\kappa'$, we have an equivalence 
\[\kappa'\colon \mathbb{T}(d; \delta)^{\mathrm{red}}\xrightarrow{\sim} \mathbb{S}^{\mathrm{gr}}(d; \delta)^{\mathrm{red}}.\]
Here the right hand side is defined similarly to (\ref{def:quasiBPS}). 

We next discuss the compatibility between reduced and non-reduced quasi-BPS categories. 
We have 
the decomposition 
$\mathfrak{g}(d)= \mathfrak{g}(d)_0\oplus \mathbb{C}$
of $G(d)$-representation, and the projection onto the first factor induces a map $t\colon \X(d)\to \X(d)^{\mathrm{red}}$. Note that $\mathrm{Tr}\,W \circ t=\mathrm{Tr}\,W$. Let $l'\colon \mathscr{P}(d)^{\mathrm{red}}\hookrightarrow \mathscr{P}(d)$
be the natural closed immersion. The next proposition follows from \cite[Lemma 2.4.4]{T}:

\begin{prop}\label{redunred}
    The following diagram commutes:
    \begin{equation*}
        \begin{tikzcd}
            D^b(\mathscr{P}(d)^{\mathrm{red}})\arrow[d, "\sim" {anchor=south, rotate=90}, "\kappa'"]\arrow[r, "l'_*"]& D^b(\mathscr{P}(d))\arrow[d, "\sim" {anchor=south, rotate=90}, "\kappa"]\\
            \mathrm{MF}^{\mathrm{gr}}(\X(d)^{\mathrm{red}}, \mathrm{Tr}\,W)\arrow[r, "t^*"]& \mathrm{MF}^{\mathrm{gr}}(\X(d), \mathrm{Tr}\,W).
        \end{tikzcd}
    \end{equation*}
    It induces a commutative diagram:
    \begin{equation*}
        \begin{tikzcd}
            \mathbb{T}(d; \delta)^{\mathrm{red}}\arrow[d, "\sim" {anchor=south, rotate=90}, "\kappa'"]\arrow[r, "l'_*"]& \mathbb{T}(d; \delta)\arrow[d, "\sim" {anchor=south, rotate=90}, "\kappa"]\\
            \mathbb{S}^{\mathrm{gr}}(d; \delta)^{\mathrm{red}}\arrow[r, "t^*"]& \mathbb{S}^{\mathrm{gr}}(d; \delta).
        \end{tikzcd}
    \end{equation*}
\end{prop}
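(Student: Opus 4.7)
The plan is to deduce the first commutative diagram from the explicit Koszul factorization formula, and to obtain the second diagram by matching the generators of the magic subcategories.

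First I would spell out the geometric picture. The splitting $\mathfrak{g}(d) = \mathfrak{g}(d)_0 \oplus \mathbb{C}$ into the traceless part and scalars exhibits $\X(d)$ as the total space of a trivial rank-one vector bundle, with $G(d)$-trivial fiber, over $\X(d)^{\mathrm{red}}$, with $t\colon \X(d)\to \X(d)^{\mathrm{red}}$ being the canonical projection. On the derived side, since $\mathrm{Im}(\mu)\subset \mathfrak{g}(d)_0$, the section $\mu$ equals $(\mu_0,0)$ under the splitting, so $\mathscr{P}(d)$ is obtained from $\mathscr{P}(d)^{\mathrm{red}}$ by an extra derived zero-locus construction for the vanishing section of the $\mathbb{C}$-factor, and $l'$ is the resulting closed immersion. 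In particular, $\mathrm{Tr}\,W$ on $\X(d)$ is the $t$-pullback of the corresponding function on $\X(d)^{\mathrm{red}}$: the pairing formula $\mathrm{Tr}\,W(x,v) = \langle \mu(x), v\rangle$ is pulled back from $\langle \mu_0(x), v_0\rangle$ via the projection of the $v$-variable.

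Second, for the first (upper) diagram I would apply the explicit tensor-product-with-Koszul-factorization description of $\kappa$ from \cite[Section 2.3.2]{T}. The Koszul factorization is multiplicative with respect to direct sums of bundles, and the factorization associated to the zero section of a trivial line bundle is the monoidal unit of $\mathrm{MF}^{\mathrm{gr}}$. Hence the Koszul factorization for $\mu = (\mu_0,0)$ is identified, via $t^*$, with the Koszul factorization for $\mu_0$, which yields the natural isomorphism $\kappa \circ l'_* \simeq t^*\circ \kappa'$. This is the content of \cite[Lemma 2.4.4]{T}, and its proof is analogous to, and compatible with, the formula $\kappa \circ l_*\simeq j'_*\eta'^*l_*$ of Proposition \ref{prop511}.

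Third, for the second diagram I would check that $t^*$ maps $\mathbb{S}^{\mathrm{gr}}(d;\delta)^{\mathrm{red}}$ into $\mathbb{S}^{\mathrm{gr}}(d;\delta)$; the analogous claim for $l'_*$ then follows from the first diagram by transport of structure through the Koszul equivalences. Both $\mathbb{M}(d;\delta)^{\mathrm{red}}\subset D^b(\X(d)^{\mathrm{red}})$ and $\mathbb{M}(d;\delta)\subset D^b(\X(d))$ are generated by the vector bundles $\mathcal{O}\otimes \Gamma_{G(d)}(\chi)$ indexed by the same combinatorial set of highest weights $\chi$ with $\chi+\rho-\delta\in \mathbf{W}(d)$, since the polytope $\mathbf{W}(d)$ depends only on $(Q,d)$. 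As $t$ is a $G(d)$-equivariant projection onto a $G(d)$-trivial summand, $t^*(\mathcal{O}_{\X(d)^{\mathrm{red}}}\otimes \Gamma_{G(d)}(\chi)) = \mathcal{O}_{\X(d)}\otimes \Gamma_{G(d)}(\chi)$, so $t^*$ sends generators to generators and therefore sends the corresponding quasi-BPS subcategories of matrix factorizations to each other. The hard point is the identification in the second step, which is packaged into \cite[Lemma 2.4.4]{T}; once it is granted, the restriction to quasi-BPS subcategories is a direct comparison of generators.
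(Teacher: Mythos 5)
Your proposal is correct and follows essentially the same route as the paper, which simply records the splitting $\mathfrak{g}(d)=\mathfrak{g}(d)_0\oplus\mathbb{C}$, the map $t$, and the compatibility $t^{\ast}\mathrm{Tr}\,W=\mathrm{Tr}\,W$, and then cites \cite[Lemma 2.4.4]{T} for the commutativity. The paper leaves the restriction to the quasi-BPS subcategories implicit; your comparison of generators (using that the extra $\mathbb{C}$-factor carries trivial $T(d)$-weight, so the polytope $\mathbf{W}(d)$ and the resulting set of highest weights are unchanged) is exactly the intended, straightforward check and adds nothing that contradicts the paper's argument.
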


\subsection{Semiorthogonal decompositions}\label{subsec:sod}

We recall (a simplified version of) the semiorthogonal decompositions 
of $D^b(\X(d))$, see~\cite[Theorem 1.1]{P}. 
Note that the diagonal cocharacter 
$1_d \colon \mathbb{C}^{\ast} \to G(d)$ acts on $R(d)$ trivially. 
Therefore, there is an orthogonal decomposition 
\begin{align*}
    D^b(\X(d))=\bigoplus_{v\in \mathbb{Z}}D^b(\X(d))_v,
\end{align*}
where $D^b(\X(d))_v$ is the subcategory of 
weight $v$ with respect to the diagonal cocharacter. 

\begin{thm}\label{theorem266}\emph{(\cite{P})}
Let $Q=(I, E)$ be a symmetric quiver, 
and take a dimension vector $d\in \mathbb{N}^I$. 
Let $\delta\in M(d)^{W_d}_\mathbb{R}$ 
such that $\langle 1_d, \delta\rangle=v \in \mathbb{Z}$.
The category $\mathbb{M}(d; \delta)$ is right admissible in $D^b(\X(d))_v$,
so there exist $X(d)$-linear semiorthogonal decompositions:
\begin{equation}\label{SODtheorem266}
D^b(\X(d))_v=\langle \mathbb{B}(d;\delta), \mathbb{M}(d; \delta) \rangle. 
\end{equation}

\end{thm}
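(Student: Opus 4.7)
The plan is to deduce this from the magic window theorem of Halpern--Leistner and \v{S}penko--Van den Bergh, via the alternative description of $\mathbb{M}(d;\delta)$ given in Lemma~\ref{lemma:alt}.

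First I would verify the inclusion $\mathbb{M}(d;\delta)\subset D^b(\X(d))_v$ by a weight computation on generators. Since $Q$ is symmetric, the multiset $\mathscr{A}$ of $T(d)$-weights of $R(d)$ is stable under $\beta\mapsto-\beta$, so the polytope $\mathbf{W}(d)$ is centered at the origin and lies in the hyperplane $\{\langle 1_d,-\rangle=0\}$ (every weight of $R(d)$ has the form $\beta^i_a-\beta^j_b$ and therefore pairs trivially with the diagonal cocharacter $1_d$). Because $\rho$ is a sum of roots, $\langle 1_d,\rho\rangle=0$ as well, so the defining condition $\chi+\rho-\delta\in\mathbf{W}(d)$ forces $\langle 1_d,\chi\rangle=\langle 1_d,\delta\rangle=v$. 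Thus every generator $\mathcal{O}_{\X(d)}\otimes\Gamma_{G(d)}(\chi)$ of $\mathbb{M}(d;\delta)$ has diagonal weight $v$, giving $\mathbb{M}(d;\delta)\subset D^b(\X(d))_v$.

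Next I would establish right-admissibility by invoking the window theorem. By Lemma~\ref{lemma:alt}, $\mathbb{M}(d;\delta)$ consists of those $F\in D^b(\X(d))$ such that, for every point $x\in R(d)$ and every cocharacter $\lambda$ of $T(d)$ fixing $x$, the $\lambda$-weights of $\nu^*F$ lie in the interval $[-\tfrac{1}{2}n_\lambda+\langle\lambda,\delta\rangle,\ \tfrac{1}{2}n_\lambda+\langle\lambda,\delta\rangle]$, where $n_\lambda=\langle\lambda,(R(d)^\vee)^{\lambda>0}\rangle-\langle\lambda,(\mathfrak{g}(d)^\vee)^{\lambda>0}\rangle$ is exactly the Halpern--Leistner width. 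This is precisely the magic window condition for the quasi-smooth quotient stack $\X(d)$. The theorem then yields that $\mathbb{M}(d;\delta)$ is a right-admissible subcategory of $D^b(\X(d))$, and since it sits inside the weight-$v$ summand, it is also right-admissible in $D^b(\X(d))_v$. Setting $\mathbb{B}(d;\delta):=\mathbb{M}(d;\delta)^{\perp}\cap D^b(\X(d))_v$ then gives the claimed semiorthogonal decomposition.

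Finally, $X(d)$-linearity follows because the window condition in Lemma~\ref{lemma:alt} is local over $X(d)$: tensoring a complex in $\mathbb{M}(d;\delta)$ by the pullback of any perfect complex on $X(d)$ does not change the $\lambda$-weights at any $\lambda$-fixed point of $R(d)$, so $\mathbb{M}(d;\delta)$ is closed under $\pi_d^*\mathrm{Perf}(X(d))$-action, and hence so is its right orthogonal. The main technical ingredient, and the step I expect to be the most delicate, is the input from Halpern--Leistner's $\Theta$-stratification machinery for quasi-smooth stacks over an affine base---in particular the verification that the width $n_\lambda$ matches the length of $\mathbf{W}(d)$ in the direction of $\lambda$ after the $\rho$-shift---but this matching is exactly what the symmetry of $Q$ ensures, and is the core computation underlying \cite[Theorem~1.1]{P}.
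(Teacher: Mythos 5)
The paper does not actually prove Theorem~\ref{theorem266}: it is recalled verbatim from \cite{P}, so your argument has to be measured against the proof in that reference. Your first and last steps are fine. Every weight of $R(d)$ and every root of $\mathfrak{g}(d)$ pairs to zero with $1_d$, so $\chi+\rho-\delta\in\mathbf{W}(d)$ indeed forces $\langle 1_d,\chi\rangle=v$ and $\mathbb{M}(d;\delta)\subset D^b(\X(d))_v$; likewise the weight bounds of Lemma~\ref{lemma:alt} are unchanged by tensoring with pullbacks from $X(d)$, which gives the $\mathrm{Perf}(X(d))$-linearity, and the formal passage from right admissibility of $\mathbb{M}(d;\delta)$ to the decomposition \eqref{SODtheorem266} is standard.

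The gap is the central step: right admissibility does not follow from ``the magic window theorem.'' The results of Halpern-Leistner--Sam and \v{S}penko--Van den Bergh \cite{hls, SVdB} assert, under genericity hypotheses, that a magic window category restricts to an equivalence with $D^b$ of a GIT quotient, respectively that its generator gives a (twisted) noncommutative crepant resolution of $X(d)$; neither statement says that the window category is an admissible subcategory of $D^b(\X(d))_v$. Moreover $\mathbb{M}(d;\delta)$ is not the window category of any single $\Theta$-stratification of $\X(d)$: the linearization here is trivial (all of $R(d)$ is semistable), and the bound in Lemma~\ref{lemma:alt} is imposed simultaneously at every pair $(x,\lambda)$ with $\lambda$ fixing $x$, not at the strata of one Kempf--Ness stratification, so Halpern-Leistner's machinery does not directly produce a right adjoint to the inclusion. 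Constructing that adjoint is exactly the content of \cite[Theorem~1.1]{P}: one builds a full semiorthogonal decomposition of $D^b(\X(d))_v$ whose summands are categorical Hall products $p_{\lambda*}q_{\lambda}^{*}\bigl(\boxtimes_i\,\mathbb{M}(d_i;\delta_i)\bigr)$ indexed by partitions of $d$ with suitable weights and ordering, and proves generation and semiorthogonality by explicit weight (Borel--Weil--Bott type) estimates; $\mathbb{M}(d;\delta)$ is the summand for the trivial partition, and its admissibility is read off from this decomposition. Your sketch replaces this inductive construction by an appeal to a theorem that does not contain it, so as written the key claim is unsupported.
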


\begin{remark}\label{rmk:version}
    Applying a version of Theorem~\ref{theorem266}
    for the categories of matrix factorizations associated with 
    tripled quivers with potential and 
    using the Koszul equivalence, the category $\mathbb{T}(d; \delta)$ is right admissible in $D^b(\mathscr{P}(d))_v$, so there is a semiorthogonal decomposition:
\begin{equation}\label{SODtheorem266Pd}
D^b(\mathscr{P}(d))_v=\langle \mathbb{A}(d; \delta), \mathbb{T}(d; \delta)\rangle.
\end{equation}
Similarly the reduced quasi-BPS category $\mathbb{T}(d; \delta)^{\mathrm{red}}$ is
right admissible in $D^b(\mathscr{P}(d)^{\mathrm{red}})_v$. 
\end{remark}

The following framed version is proved in~\cite{PTquiver} 
under some assumption on $Q$ and $\delta$, but with explicit description 
of the semiorthogonal summands. In this paper, we only use the right admissibility, 
which is deduced from Theorem~\ref{theorem266}.
The proof follows by arguments which already appeared in several 
references~\cite{PTzero, T, KoTo, PTquiver}, but
the statement is not mentioned so we will include the proof in 
Subsection~\ref{subsec:sod2}. 
\begin{prop}\label{prop226}
    In the same setting of Theorem~\ref{theorem266}, 
    there is a $X(d)$-linear semiorthogonal decomposition 
    \begin{align*}
        D^b(\X^{\alpha f}(d)^{\rm{ss}})=\langle \mathbb{B}(d; \delta)', \mathbb{M}(d; \delta) \rangle. 
    \end{align*}
\end{prop}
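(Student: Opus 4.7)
The plan is to reduce Proposition \ref{prop226} to Theorem \ref{theorem266} via a Halpern-Leistner magic window argument on the GIT quotient $\X^{\alpha f}(d)\to \X^{\alpha f}(d)^{\mathrm{ss}}$ with respect to the stability character $\sigma_d$, following the strategy from the references cited by the authors. Write $p\colon \X^{\alpha f}(d)\to \X(d)$ for the forgetful projection (a $G(d)$-equivariant vector bundle with fiber $V^{\alpha}(d)$) and $j\colon \X^{\alpha f}(d)^{\mathrm{ss}}\hookrightarrow \X^{\alpha f}(d)$ for the inclusion of the GIT-semistable locus, and define $\mathbb{M}(d;\delta)\subset D^b(\X^{\alpha f}(d)^{\mathrm{ss}})$ as the pre-triangulated hull of the bundles $j^*p^*(\mathcal{O}_{\X(d)}\otimes \Gamma_{G(d)}(\chi))$ with $\chi+\rho-\delta\in \mathbf{W}(d)$.

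The key step is to verify that $p^*\mathbb{M}(d;\delta)$ lies inside a Halpern-Leistner magic window $\mathcal{W}\subset D^b(\X^{\alpha f}(d))$ whose restriction along $j$ is an equivalence $\mathcal{W}\xrightarrow{\sim} D^b(\X^{\alpha f}(d)^{\mathrm{ss}})$. The Kempf-Ness stratification of the unstable locus $R^{\alpha f}(d)^{\mathrm{us}}$ is indexed by antidominant cocharacters $\lambda$ of $T(d)$ corresponding to proper subrepresentations of the underlying $Q$-representation (see \cite{EnRei}), and the window condition along $\lambda$ uses the attracting weights $n_\lambda^{\alpha f}=n_\lambda+\langle \lambda, V^{\alpha}(d)^{\lambda>0}\rangle\geq n_\lambda$. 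Hence the polytope weight bounds of Lemma \ref{lemma:alt} for $\mathbb{M}(d;\delta)$ sit inside the framed windows, giving a fully faithful embedding $j^*p^*\colon \mathbb{M}(d;\delta)\hookrightarrow D^b(\X^{\alpha f}(d)^{\mathrm{ss}})$.

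To conclude, define $\mathbb{B}(d;\delta)'$ as the right orthogonal of $\mathbb{M}(d;\delta)$ in $D^b(\X^{\alpha f}(d)^{\mathrm{ss}})$; semiorthogonality and right admissibility follow from the weight-window description as in the proof of Theorem \ref{theorem266}, and the $X(d)$-linearity is inherited from the fact that the generators are pulled back from $\X(d)$. The main obstacle is the combinatorial weight verification of the preceding paragraph, namely checking that the polytope $\mathbf{W}(d)$ is contained inside the Halpern-Leistner windows of the framed GIT problem along each Kempf-Ness 1-parameter subgroup; this is a standard but non-trivial computation analogous to the ones performed in \cite{PTzero, T, KoTo, PTquiver}, where the positivity of the framing contribution $\langle \lambda, V^{\alpha}(d)^{\lambda>0}\rangle$ ensures that passing from the unframed polytope to the framed window only enlarges the admissible weight region.
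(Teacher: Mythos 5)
Your strategy mirrors the paper's: identify a Halpern--Leistner window $\mathbb{G}^\alpha(d;\delta)\subset D^b(\X^{\alpha f}(d))$ equivalent to $D^b(\X^{\alpha f}(d)^{\mathrm{ss}})$, and observe that $\pi_{\alpha f}^*\mathbb{M}(d;\delta)$ lands inside it because the framing widens the allowed weight range. Two points need fixing, however. First, your formula $n_\lambda^{\alpha f}=n_\lambda+\langle\lambda, V^{\alpha}(d)^{\lambda>0}\rangle$ has the framing contribution on the wrong side of a dual; the width of the window is governed by $(V^\alpha(d)^{\vee})^{\lambda>0}$, not $V^\alpha(d)^{\lambda>0}$. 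Since the Kempf--Ness cocharacters $\lambda_i$ for the stability $\sigma_d$ act with strictly negative weights on $V^\alpha(d)$ (see~\cite[Lemma~5.1.9]{T}), your expression evaluates to zero and gives no widening at all, contradicting your own appeal to "the positivity of the framing contribution." The correct quantity $\langle\lambda_i, (V^\alpha(d)^{\vee})^{\lambda_i>0}\rangle=\langle\lambda_i, V^\alpha(d)^\vee\rangle$ is strictly positive, which is exactly what yields $n_{\lambda_i}^\alpha>n_{\lambda_i}$.

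More substantively, the assertion that right admissibility "follows from the weight-window description as in the proof of Theorem~\ref{theorem266}" leaves the key step unproved: Theorem~\ref{theorem266} is established in~\cite{P} by a magic-window analysis on $\X(d)$ itself, and transporting that argument to the framed GIT quotient would require re-doing a significant portion of it. The paper avoids this entirely by treating Theorem~\ref{theorem266} as a black box: it constructs an explicit right adjoint $\Phi=i^R\circ p_v\circ\pi_{\alpha f*}$ of $\pi_{\alpha f}^*\colon \mathbb{M}(d;\delta)\to\mathbb{G}^\alpha(d;\delta)$, where $\pi_{\alpha f*}$ is pushforward from the window to $D^b(\X(d))$, $p_v$ projects onto the weight-$v$ part, and $i^R$ is the projection onto $\mathbb{M}(d;\delta)$ from the decomposition~\eqref{SODtheorem266}. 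The identity $\Phi\circ\pi_{\alpha f}^*\cong\mathrm{id}$ then gives full faithfulness and right admissibility in one stroke, with no further combinatorics. You should replace the hand-wave here with this (or an equivalent) explicit construction of the right adjoint.
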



\section{Topological K-theory of dg-categories}\label{sec:topK:dg}
In this section, we review the topological K-theory 
of dg-categories due to Blanc~\cite{Blanc}
and its relative version due to Moulinos~\cite{Moulinos}. 
We also prove some lemmas which are not available 
in the references. 
\subsection{Topological K-theory}\label{subsec:topK:review}
We use the notion of \textit{spectrum}, which is an object 
representing a generalized cohomology theory and is a central object of study in stable homotopy theory. 
Basic references are~\cite{Adams2, May}. 
A spectrum consists of a sequence of pointed spaces $E=\{E_n\}_{n\in \mathbb{N}}$ 
together with maps $\Sigma E_n \to E_{n+1}$ where $\Sigma$ is the suspension 
functor. 
There is a notion of homotopy groups of a spectrum 
denoted by $\pi_{\bullet}(E)$.
The $\infty$-category of spectra is 
constructed in~\cite[Section~1.4]{LHAG} and 
denoted by $\mathrm{Sp}$.
A map of spectra 
$E \to F$ 
in $\mathrm{Sp}$
is called an \textit{equivalence} if it induces
isomorphisms on homotopy groups. Below, for an $\infty$-category $\mathcal{C}$, we 
denote by $\mathrm{Ho}(\mathcal{C})$ its homotopy category, e.g. $\mathrm{Ho}(\mathrm{Sp})$ is the 
homotopy category of spectra.

For a topological space $X$, we denote by $K^{\rm{top}}_{i}(X)$ its $i$-th
topological K-group.
There is a Chern character map 
\begin{align}\label{ch:Ktheory}
\ch \colon 
    K^{\rm{top}}_i(X) \to H^{i+2\ast}(X, \mathbb{Q})
\end{align}
which is an isomorphism over $\mathbb{Q}$ 
if $X$ is a finite CW complex
by the Atiyah-Hirzebruch theorem. 
The K-groups $K^{\rm{top}}_{\bullet}(X)$ 
are obtained as homotopy groups of the spectrum $KU=\{BU\times \mathbb{Z}, U, \ldots\}$, 
called \textit{the topological K-theory spectrum}. 
For $KU_X:=[\Sigma^{\infty}X, KU]$, we have that (see~\cite[Sections~22-24]{May}):
\begin{align*}
	K_{\bullet}^{\rm{top}}(X)=\pi_{\bullet}KU_X. 
\end{align*}
Here $[-, KU]$ is the mapping spectrum with target $KU$
and $\Sigma^{\infty}(-)$ is the infinite suspension functor. 
We call $K^{\rm{top}}(X)$ the \textit{topological K-theory spectrum of }$X$. 

For a dg-category $\mathscr{D}$,
Blanc \cite{Blanc}
defined the
\textit{topological K-theory spectrum
	of $\mathscr{D}$}, denoted by
\begin{align}\label{def:topK:dg}K^{\mathrm{top}}(\mathscr{D}) \in \mathrm{Sp}.
\end{align}
For $\mathcal{D}=\mathrm{Perf}(X)$, where $X$ is a separated $\mathbb{C}$-scheme of finite type, 
there is an isomorphism 
\[K^{\rm{top}}(\mathcal{D}) \simeq KU_X.\]
For $i\in \mathbb{Z}$, we consider the $i$-th rational homotopy groups of~\eqref{def:topK:dg}, which is a $\mathbb{Q}$-vector space:
\begin{align*} 
K^{\mathrm{top}}_{i}(\mathscr{D}):=\pi_i(K^{\mathrm{top}}(\mathscr{D})),\,\,K^{\mathrm{top}}_i(\mathscr{D})_{\mathbb{Q}}:=K^{\mathrm{top}}_i(\mathscr{D})\otimes_\mathbb{Z}\mathbb{Q}.
\end{align*}
There is an isomorphism $K^{\mathrm{top}}_i(\mathscr{D})\cong K^{\mathrm{top}}_{i+2}(\mathscr{D})$ for every $i\in \mathbb{Z}$ given by multiplication with a Bott element, see \cite[Definition 1.6]{Blanc}.
The topological K-theory spectrum sends exact triangles of dg-categories to exact triangles of spectra \cite[Theorem 1.1(c)]{Blanc}.
We denote the $\mathbb{Z}/2$-graded total rational 
topological K-theory of $\mathscr{D}$ by
\begin{align*}K^{\mathrm{top}}_{\ast}(\mathscr{D})_{\mathbb{Q}}:=K^{\mathrm{top}}_0(\mathscr{D})_{\mathbb{Q}}\oplus K^{\mathrm{top}}_1(\mathscr{D})_{\mathbb{Q}}.
\end{align*}
Given a filtration (indexed by integers) on $K^{\mathrm{top}}_i(\mathscr{D})$ for some $i\in \mathbb{Z}$, we consider the associated graded pieces $\mathrm{gr}_\bullet K^{\mathrm{top}}_i(\mathscr{D})$ for $\bullet\in\mathbb{Z}$ and we let \[\mathrm{gr}_{\ast}K_i^{\mathrm{top}}(\mathscr{D}):=\bigoplus_{j\in \mathbb{Z}}\mathrm{gr}_j K_i^{\mathrm{top}}(\mathscr{D}).\]

\subsection{Relative topological K-theory}\label{subsec:reltopKth}
Here we review the relative version of topological
K-theory developed by Moulinos in~\cite{Moulinos}. 
Below we use the notation in~\cite[Section~2]{GS}
for sheaves of spectra. 

For a complex variety $M$ and an $\infty$-category with arbitrary 
small limits, we denote by 
$\mathrm{Sh}_{\mathcal{C}}(M^{\rm{an}})$ the $\infty$-category of
$\mathcal{C}$-valued sheaves on $M^{\rm{an}}$
as in~\cite[Section~1.3.1]{LSAG}, 
where $M^{\rm{an}}$ is the underlying complex analytic 
space. Note that the $\infty$-category $\mathrm{Sh}_{\mathcal{C}}(M^{\rm{an}})$ satisfies the hyperdescent as 
$M^{\rm{an}}$ is finite dimensional, see~\cite[Cor.~7.2.1.12]{Ltopos}, \cite[Prop.~A.20]{GS}.
There is a rationalization functor, see~\cite[Definition~2.6]{GS}:
\begin{align*}
	\mathrm{Rat} \colon \mathrm{Sh}_{\mathrm{Sp}}(M^{\rm{an}}) \to 
	\mathrm{Sh}_{D(\mathbb{Q})}(M^{\rm{an}})=D(\mathrm{Sh}_{\mathbb{Q}}(M^{\rm{an}}))
\end{align*}
given by $\mathcal{F} \mapsto \mathcal{F} \wedge H\mathbb{Q}$. 
Here, $H\mathbb{Q}$ is the Eilenberg-Maclane spectrum
of $\mathbb{Q}$ and 
the right hand side is the derived category of sheaves of $\mathbb{Q}$-vector 
spaces on $M^{\rm{an}}$.
We will write $\mathcal{F}_{\mathbb{Q}} :=\mathrm{Rat}(F)$. 
It satisfies that 
$\pi_{\bullet}(\mathcal{F}_{\mathbb{Q}})=\pi_{\bullet}(\mathcal{F})\otimes \mathbb{Q}$. 

By~\cite[Lemma~2.7]{GS}, 
the sheaf of topological K-theory spectra $\underline{KU}_M$ on $M^{\rm{an}}$ satisfies
\begin{align}\label{isom:KU}
	(\underline{KU}_{M})_{\mathbb{Q}} \cong \mathbb{Q}_M[\beta^{\pm 1}]
	=\bigoplus_{n\in \mathbb{Z}} \mathbb{Q}_M[2n],
\end{align}
where $\beta$ is of degree $2$. 
The above isomorphism follows from the degeneration of 
Atiyah--Hirzebruch spectral sequence. 
The natural morphism 
\begin{align*}
    \ch \colon \underline{KU}_M \to (\underline{KU}_M)_{\mathbb{Q}} \cong \mathbb{Q}_M[\beta^{\pm 1}]
\end{align*}
induces the Chern character map (\ref{ch:Ktheory}) by taking global sections. 

Let 
$\mathrm{Cat}^{\rm{perf}}(M)$ be
the $\infty$-category of $\mathrm{Perf}(M)$-linear stable 
$\infty$-categories. 
By~\cite{Moulinos}, there is a functor 
\begin{align}\label{def:reltopK:dg}
	\mathcal{K}_{M}^{\rm{top}} \colon 
	\mathrm{Cat}^{\rm{perf}}(M) \to \mathrm{Sh}_{\mathrm{Sp}}(M^{\rm{an}})  
\end{align}
such that, for a Brauer class $\alpha$ on $M$, 
we have 
\begin{align}\label{isom:ktopalpha}
	\mathcal{K}_M^{\rm{top}}(\mathrm{Perf}(M, \alpha))=\underline{KU}^{\hat{\alpha}}_M. 
\end{align}
In the above, 
$\mathrm{Perf}(M, \alpha)$ is the dg-category of $\alpha$-twisted 
perfect complexes, 
$\hat{\alpha} \in H^3(M, \mathbb{Z})$ is the class associated with $\alpha$
via the natural map $\mathrm{Br}(M) \to H^3(M, \mathbb{Z})$, 
and the right hand side is the sheaf of $\hat{\alpha}$-twisted topological K-theory spectrum.

For $M=\Spec \mathbb{C}$, 
\eqref{def:reltopK:dg} 
agrees with (\ref{def:topK:dg}), that is
$\mathcal{K}_{\mathrm{Spec}\,\mathbb{C}}^{\rm{top}}(\mathscr{D})=K^{\rm{top}}(\mathscr{D})$. 

\subsection{Topological G-theory}\label{subsec:topG}
Let $M$ be a quasi-projective scheme. 
There is an embedding $\mathrm{Perf}(M) \subset D^b(M)$ which 
is not an equivalence unless $M$ is smooth. 
A version of topological K-theory for $D^b(M)$ is 
called \textit{topological K-homology}. It is 
introduced by Thomason~\cite{ThomasonAS}, 
and we denote it by $G^{\rm{top}}_{\bullet}(M)$. 
There is a Chern character map 
\begin{align}\label{ch:Gtheory}
    \ch \colon G_i^{\rm{top}}(M) \to H^{\mathrm{BM}}_{i+2\ast}(M, \mathbb{Q})
\end{align}
which is an isomorphism over $\mathbb{Q}$, 
see \cite[Section 3.6 and Section 6]{MR2762553} or \cite[Section 11]{MR0679698}. 

There is also a spectrum 
version $KU_{M, c}^{\vee}$, 
called \textit{locally compact supported K-homology}, see~\cite[Lemma~2.6]{HLP}, 
satisfying 
\begin{align*}
G_{\bullet}^{\rm{top}}(M)=
    \pi_{\bullet}(KU_{M, c}^{\vee}).
\end{align*}
The spectrum $KU_{M, c}^{\vee}$ is 
characterized by the property that $KU_{M, c}^{\vee}=KU_M$ if $M$ is smooth, 
and for a closed immersion $M\hookrightarrow A$ for a smooth $A$ 
with complement $U=A\setminus M$,
there is a fiber sequence of spectra, see~\cite[Lemma~2.6]{HLP}
\begin{align}\label{fib:Kdual}
    KU_{M, c}^{\vee} \to KU_{A, c}^{\vee} \to KU_{U, c}^{\vee}.
\end{align}
In~\cite[Theorem~2.10]{HLP}, it is proved 
that there is an equivalence 
\begin{align}\label{equiv:csupp}
	K^{\rm{top}}(D^b(M)) \simeq KU_{M, c}^{\vee}. 
\end{align}
We denote by $\underline{KU}_{M, c}^{\vee}$ the presheaf of locally 
compact supported K-homology.  
\begin{lemma}\label{lem:sheaf}
The presheaf $\underline{KU}_{M, c}^{\vee}$ is a sheaf. 
\end{lemma}
\begin{proof}
We take a closed immersion $i\colon M\hookrightarrow A$ for a smooth $A$
with complement $j\colon U:=A\setminus M \hookrightarrow A$.
The presheaf $\underline{KU}_{A, c}^{\vee}$ is a sheaf since it equals
$\underline{KU}_A$ and the latter is a sheaf. The same applies to $\underline{KU}_{U, c}^{\vee}$. 
Let $\widetilde{\underline{KU}}_{M, c}^{\vee}$ be the fiber of the map 
\begin{align*}
    \underline{KU}_{A, c}^{\vee} \to j_{*}\underline{KU}_{U, c}^{\vee}
\end{align*}
in $\mathrm{Sh}_{\mathrm{Sp}}(A^{\rm{an}})$. Since the global section functor 
$\Gamma \colon \mathrm{Sh}_{\mathrm{Sp}}(M^{\rm{an}}) \to \mathrm{Sp}$ preserves finite limits, 
we have $\Gamma(\underline{KU}_{M, c}^{\vee})\simeq KU_{M, c}^{\vee}$, hence 
$\underline{KU}_{M, c}^{\vee}$ is a sheaf. 
\end{proof}

If $M$ is smooth, then 
we have $\underline{KU}_{M, c}^{\vee} \cong \underline{KU}_M$. There is a natural map
\begin{align*}
    \ch \colon \underline{KU}_{M, c}^{\vee} \to 
    (\underline{KU}_{M, c}^{\vee})_{\mathbb{Q}} \cong 
    \omega_M[\beta^{\pm 1}]
\end{align*}
which induces the Chern character map (\ref{ch:Gtheory})
by taking the global section, 
where recall that $\omega_M=\mathbb{D} \mathbb{Q}_M$ is the dualizing 
complex. 
The following is the sheaf version of the equivalence (\ref{equiv:csupp}):
\begin{lemma}\label{lem:KBM}
	For a quasi-projective scheme $M$, 
	there is a natural isomorphism in $\mathrm{Ho}(\mathrm{Sh}_{\mathrm{Sp}}(M^{\rm{an}}))$:
	\begin{align}\label{equiv:singular}
		\mathcal{K}_M^{\rm{top}}(D^b(M)) \stackrel{\cong}{\to} \underline{KU}_{M, c}^{\vee}. 
	\end{align}
	In particular, there is an isomorphism 
			$\mathcal{K}_M^{\rm{top}}(D^b(M))_{\mathbb{Q}} \cong \omega_M[\beta^{\pm 1}]$. 
		\end{lemma}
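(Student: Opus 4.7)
The plan is to upgrade the Halpern--Leistner--Pomerleano global equivalence \eqref{equiv:csupp} to a sheaf-level equivalence using hyperdescent. Both $\mathcal{K}_M^{\rm{top}}(D^b(M))$ and $\underline{KU}_{M, c}^{\vee}$ are hypersheaves of spectra on $M^{\rm{an}}$: for the former this is part of the construction of Moulinos' functor \eqref{def:reltopK:dg}, and for the latter it is built into the definition of the locally compact supported K-homology sheaf. So it suffices to construct a natural morphism between them and check it is an equivalence on a basis of opens.

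Concretely, for every Zariski open $V \subset M$ (which remains quasi-projective), the sections of $\mathcal{K}_M^{\rm{top}}(D^b(M))$ over $V^{\rm{an}}$ identify with $\mathcal{K}_V^{\rm{top}}(D^b(V))$, whose global sections recover $K^{\rm{top}}(D^b(V))$, while the sections of $\underline{KU}_{M, c}^{\vee}$ over $V^{\rm{an}}$ recover $KU_{V, c}^{\vee}$. Applying \eqref{equiv:csupp} to each such $V$ yields a compatible family of equivalences $K^{\rm{top}}(D^b(V)) \simeq KU_{V, c}^{\vee}$, and assembling these via hyperdescent gives the desired equivalence of sheaves of spectra. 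The rational consequence $\mathcal{K}_M^{\rm{top}}(D^b(M))_{\mathbb{Q}} \simeq \omega_M[\beta^{\pm 1}]$ then follows from the identification $(\underline{KU}_{M, c}^{\vee})_{\mathbb{Q}} \simeq \omega_M[\beta^{\pm 1}]$, which is a consequence of the degeneration of the Atiyah--Hirzebruch spectral sequence for Borel--Moore homology.

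The main obstacle is ensuring the naturality of \eqref{equiv:csupp} with respect to Zariski open restrictions, so that the pointwise equivalences glue coherently. An alternative route, which may be cleaner, avoids this issue by embedding $M$ via a closed immersion $i \colon M \hookrightarrow N$ into a smooth quasi-projective $N$, with open complement $j \colon U \hookrightarrow N$. Smoothness of $N$ gives $\mathrm{Perf}(N) = D^b(N)$, and pushforward yields an equivalence of $\mathrm{Perf}(N)$-linear categories $D^b(M) \simeq \mathrm{Perf}_M(N)$, where $\mathrm{Perf}_M(N) \subset \mathrm{Perf}(N)$ denotes perfect complexes set-theoretically supported on $M$. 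Applying $\mathcal{K}_N^{\rm{top}}$ to the Verdier localization sequence $\mathrm{Perf}_M(N) \to \mathrm{Perf}(N) \to \mathrm{Perf}(U)$, using \eqref{isom:ktopalpha} for $N$ and $U$, produces a fiber sequence $\mathcal{K}_N^{\rm{top}}(\mathrm{Perf}_M(N)) \to \underline{KU}_N \to j_*\underline{KU}_U$, whose fiber is identified with $i_* i^! \underline{KU}_N$ by the recollement formalism. Poincaré--Lefschetz duality for topological K-theory on the smooth $N$ then gives $i^! \underline{KU}_N \simeq \underline{KU}_{M, c}^{\vee}$, while a base-change compatibility of Moulinos' construction along the closed immersion $i$ identifies $i_* \mathcal{K}_M^{\rm{top}}(D^b(M)) \simeq \mathcal{K}_N^{\rm{top}}(\mathrm{Perf}_M(N))$; combining these yields the stated equivalence. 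The trickiest step in this second approach is precisely the base-change compatibility, which must be traced through Moulinos' definition of the relative topological K-theory sheaf.
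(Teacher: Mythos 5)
Your second (``alternative route'') approach is essentially the paper's own proof: take a closed immersion $i\colon M \hookrightarrow N$ into a smooth quasi-projective $N$, push forward the localization sequence, and use Poincar\'{e} duality on the smooth ambient to identify the fiber with $\underline{KU}_{M,c}^\vee$. The differences from the paper's writeup are mostly cosmetic --- you package the identification of the fiber as recollement plus Poincar\'{e}--Lefschetz duality, where the paper directly invokes the Halpern-Leistner--Pomerleano exact triangle \eqref{triangle:A} together with the smooth-case identification $\underline{KU}_N \cong \underline{KU}_{N,c}^\vee$ --- but a few steps need tightening. First, $i_* \colon D^b(M) \to \mathrm{Perf}_M(N)$ is fully faithful but not an equivalence of dg-categories; what one actually needs is that $\mathcal{K}_N^{\rm{top}}$ satisfies devissage (the paper cites \cite[Example~2.3]{HLP}). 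Second, $\mathcal{K}_N^{\rm{top}}(j_*\mathrm{Perf}(U))$ is not covered by \eqref{isom:ktopalpha}, which concerns $\mathrm{Perf}(N,\alpha)$ over $N$ itself; one needs the open-immersion base change of Lemma \ref{lem:openimm}, which the paper proves separately via stable motivic homotopy theory in Subsection \ref{subsec:motivic}. Your ``base-change compatibility along $i$'' is Theorem \ref{thm:phiproper}. With those two fixes, the argument runs exactly as in the paper.

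Your first (hyperdescent) approach, by contrast, has a gap beyond the naturality concern you already flag. Both $\mathcal{K}_M^{\rm{top}}(D^b(M))$ and $\underline{KU}_{M,c}^\vee$ are hypersheaves on $M^{\rm{an}}$ with the \emph{analytic} topology, but your gluing data is indexed only by Zariski opens $V \subset M$, and these do not form a basis of the analytic topology. A compatible family of equivalences $K^{\rm{top}}(D^b(V)) \simeq KU_{V,c}^\vee$ over Zariski opens therefore does not assemble into a map of sheaves on $M^{\rm{an}}$, let alone determine that such a map is an equivalence. One would have to extend the comparison to genuinely analytic opens, where there is no ambient dg-category $D^b(-)$ available --- which is exactly the difficulty the closed-immersion argument is designed to sidestep. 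So the hyperdescent route does not work as written; only your alternative route succeeds.
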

The proof of the above lemma is not
available in references. 
We prove it after discussing some 
functoriality properties of relative 
topological K-theories in the next subsection. 

\subsection{Push-forward of relative topological K-theories}
We will use the following property of 
topological K-theory under proper push-forward. 

\begin{thm}\emph{(\cite[Proposition~7.8]{Moulinos}, \cite[Theorem~2.12]{GS})}\label{thm:phiproper}
	Let $\phi \colon M \to M'$ be a proper morphism.
	Then, for $\mathscr{D} \in \mathrm{Cat}^{\rm{perf}}(M)$, we have 
	\begin{align*}
		\mathcal{K}_{M'}^{\rm{top}}(\phi_{\ast}\mathscr{D}) \cong \phi_{\ast}\mathcal{K}_M^{\rm{top}}(\mathscr{D}), 
	\end{align*}
	where $\phi_{\ast}\mathscr{D}$ is the category $\mathscr{D}$
	with $\mathrm{Perf}(M')$-linear structure induced by 
	$\phi^{\ast} \colon \mathrm{Perf}(M') \to \mathrm{Perf}(M)$. 
\end{thm}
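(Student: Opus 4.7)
The plan is to construct a canonical comparison morphism between the two sheaves of spectra and then verify it is a stalk-wise equivalence on $M'^{\mathrm{an}}$, exploiting properness of $\phi$ to invoke a proper base change argument.

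\emph{Construction of the comparison map.} By definition, $\phi_{\ast}\mathscr{D}$ is the dg-category $\mathscr{D}$ equipped with the $\mathrm{Perf}(M')$-linear structure obtained by restriction of scalars along $\phi^{\ast} \colon \mathrm{Perf}(M') \to \mathrm{Perf}(M)$. Functoriality of the Moulinos construction \eqref{def:reltopK:dg} in the base, applied to the underlying map $\phi \colon M^{\mathrm{an}} \to M'^{\mathrm{an}}$ of analytic spaces, produces a canonical morphism
$$c \colon \mathcal{K}_{M'}^{\mathrm{top}}(\phi_{\ast}\mathscr{D}) \to \phi_{\ast}\mathcal{K}_M^{\mathrm{top}}(\mathscr{D})$$
of sheaves of spectra on $M'^{\mathrm{an}}$. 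Concretely, for an analytic open $U' \subset M'^{\mathrm{an}}$ with preimage $U := \phi^{-1}(U')$, the natural base-change from the analytic localization of $\phi_{\ast}\mathscr{D}$ over $U'$ to that of $\mathscr{D}$ over $U$ induces the map on sections defining $c$.

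\emph{Reduction to stalks and proper base change.} It suffices to show that $c$ is an equivalence after passing to stalks at every $x \in M'^{\mathrm{an}}$. Fix such an $x$ and write $F_x := \phi^{-1}(x) \subset M^{\mathrm{an}}$. Because $\phi$ is proper, $F_x$ is compact and the family $\{\phi^{-1}(U') : U' \ni x\}$ is cofinal in the system of open neighborhoods of $F_x$ in $M^{\mathrm{an}}$. The classical proper base change theorem, applied level-wise to a sheaf of spectra, therefore computes the stalk of the right-hand side as
$$\bigl(\phi_{\ast}\mathcal{K}_M^{\mathrm{top}}(\mathscr{D})\bigr)_x \simeq \Gamma\bigl(F_x, \mathcal{K}_M^{\mathrm{top}}(\mathscr{D})|_{F_x}\bigr).$$
On the other side, Moulinos's description of $\mathcal{K}_{M'}^{\mathrm{top}}$ via the Betti realization of $\mathrm{Perf}(M')$ identifies the stalk of $\mathcal{K}_{M'}^{\mathrm{top}}(\phi_{\ast}\mathscr{D})$ at $x$ with the topological K-theory spectrum of the analytic restriction of $\phi_{\ast}\mathscr{D}$ near $x$. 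The key identification is that, along $\phi^{\ast}$, this analytic restriction agrees as a linear category with the analytic restriction of $\mathscr{D}$ to a tubular neighborhood of $F_x$, matching the two stalk formulas.

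\emph{Main obstacle.} The technical heart of the argument is the compatibility of the Moulinos construction $\mathcal{K}^{\mathrm{top}}_{(-)}$ with algebraic proper pushforward in the base, which, after unpacking the Betti realization of $\mathrm{Perf}$, reduces to standard proper base change for sheaves of spectra on complex analytic spaces together with the behavior of $\mathrm{Perf}$-linear structures under restriction of scalars along $\phi^{\ast}$. This matching of structures is precisely what the proofs of \cite[Proposition~7.8]{Moulinos} and \cite[Theorem~2.12]{GS} establish, and we would directly appeal to those results after carrying out the reduction above.
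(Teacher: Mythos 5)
There is nothing in the paper to compare your argument against: Theorem~\ref{thm:phiproper} is imported verbatim from \cite[Proposition~7.8]{Moulinos} and \cite[Theorem~2.12]{GS}, and the paper supplies no proof of its own. Your sketch correctly identifies the standard strategy --- build the canonical comparison map $c\colon \mathcal{K}_{M'}^{\rm{top}}(\phi_{\ast}\mathscr{D}) \to \phi_{\ast}\mathcal{K}_M^{\rm{top}}(\mathscr{D})$ and check it is an equivalence, using properness --- and you are right that the computation of the stalk of $\phi_{\ast}\mathcal{K}_M^{\rm{top}}(\mathscr{D})$ at $x$ as sections over the compact fiber $F_x$ is ordinary proper base change for sheaves of spectra. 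But the step you label as ``the key identification'' --- that the stalk of $\mathcal{K}_{M'}^{\rm{top}}(\phi_{\ast}\mathscr{D})$ at $x$ is the K-theory of an ``analytic restriction'' of $\phi_{\ast}\mathscr{D}$ near $x$, matching the other side --- is not a formal consequence of the definition; there is no notion of analytic localization of a $\mathrm{Perf}(M')$-linear category at play here, and $\mathcal{K}^{\rm{top}}_{M'}$ is not defined by such a localization. In \cite{Moulinos} this is exactly where the real work happens: $\mathcal{K}^{\rm{top}}_{M'}(-)=\mathrm{Betti}_{M'}(KGL_{M'}(-))$, the identification $KGL_{M'}(\phi_{\ast}\mathscr{D})\simeq \phi_{\ast}KGL_M(\mathscr{D})$ is checked at the motivic level, and the statement then reduces to the commutation of the Betti realization with $\phi_{\ast}$ for proper $\phi$ (a theorem of Ayoub \cite{Ayoub}); compare the proof of Lemma~\ref{lem:openimm} in Subsection~\ref{subsec:motivic}, where the analogous but harder non-proper case is treated and constructibility hypotheses are needed. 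Since your sketch defers precisely this step back to the cited references, it does not constitute an independent proof; as a gloss on why the citation is believable it is fine, but the honest treatment --- and the one the paper adopts --- is simply to quote the result.
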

Below we often write $\mathcal{K}_{M'}^{\rm{top}}(\phi_{\ast}\mathscr{D})$ as 
$\mathcal{K}_{M'}^{\rm{top}}(\mathscr{D})$ when $\phi$ is clear from the context. 
An isomorphism as in Theorem~\ref{thm:phiproper} is not known for 
non-proper morphisms in general. 
The following is a version of Theorem~\ref{thm:phiproper}
for open immersions. 
As the proof uses motivic stable homotopy theory as in~\cite[Proposition~7.7]{Moulinos} and is not self-contained, 
we postpone the proof to Subsection~\ref{subsec:motivic}.

\begin{lemma}\label{lem:openimm}
	Let $j \colon U \hookrightarrow M$ be an open immersion. 
	Then there is a natural isomorphism in $\mathrm{Ho}(\mathrm{Sh}_{\rm{Sp}}(M^{\rm{an}}))$ 
	\begin{align}\label{equiv:open}
		\mathcal{K}_M^{\rm{top}}(j_{\ast}\mathrm{Perf}(U)) \stackrel{\sim}{\to} 
		j_{\ast}\mathcal{K}_U^{\rm{top}}(\mathrm{Perf}(U)). 
	\end{align}
\end{lemma}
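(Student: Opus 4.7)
The plan is to realize both sides of (\ref{equiv:open}) as hypersheafifications on the analytic site of $M$ of presheaves that visibly coincide, using the flat base change property for perfect complex categories along open immersions.

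First, I would recall that Moulinos's construction of $\mathcal{K}_M^{\rm{top}}$ proceeds via non-commutative motives and Betti realization, and that the resulting sheaf can be identified with the hypersheafification of a presheaf whose value on a smooth map $V \to M$ is (after Betti realization) $K^{\mathrm{top}}(\mathscr{D} \otimes_{\mathrm{Perf}(M)} \mathrm{Perf}(V))$. Applying this to $\mathscr{D} = j_{\ast} \mathrm{Perf}(U)$ and using flat base change along the open immersion $j$,
\[
(j_{\ast} \mathrm{Perf}(U)) \otimes_{\mathrm{Perf}(M)} \mathrm{Perf}(V) \simeq \mathrm{Perf}(U \times_M V),
\]
the presheaf underlying $\mathcal{K}_M^{\rm{top}}(j_{\ast} \mathrm{Perf}(U))$ will send $V \to M$ to $K^{\mathrm{top}}(\mathrm{Perf}(U \cap V))$, which by Blanc's theorem equals $KU(U \cap V)$.

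On the other hand, $j_{\ast} \mathcal{K}_U^{\rm{top}}(\mathrm{Perf}(U)) = j_{\ast} \underline{KU}_U$ is, by definition, the sheaf sending an analytic open $W \subset M^{\rm{an}}$ to $\underline{KU}_U(W \cap U^{\rm{an}})$. Restricting to analytifications of Zariski opens $V \subset M$, this also gives the presheaf $V \mapsto KU(U \cap V)$. Thus the natural comparison map
\[
\mathcal{K}_M^{\rm{top}}(j_{\ast} \mathrm{Perf}(U)) \to j_{\ast} \mathcal{K}_U^{\rm{top}}(\mathrm{Perf}(U)),
\]
constructed via the adjunction $(j^{\ast}, j_{\ast})$ and the naturality of $\mathcal{K}^{\rm{top}}$ under pullback, realizes this agreement at the presheaf level; passing to hypersheaves yields the desired equivalence.

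The main obstacle will be justifying the identification of Moulinos's functor with the stated hypersheafification, or equivalently, establishing the smooth base change compatibility $j^{\ast} \mathcal{K}_M^{\rm{top}}(-) \simeq \mathcal{K}_U^{\rm{top}}(j^{\ast}(-))$. This is precisely where stable motivic homotopy theory enters: the Betti realization commutes with $j^{\ast}$ for an open immersion by general six-functor compatibility, but translating this through the non-commutative motive functor requires care in handling the $\mathrm{Perf}(M)$-linear structure. Once this base change is in place, the rest of the argument is formal, paralleling the proper case established in Theorem~\ref{thm:phiproper}.
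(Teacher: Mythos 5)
Your proposal correctly identifies the flat base change computation
\[
(j_{\ast}\mathrm{Perf}(U)) \otimes_{\mathrm{Perf}(M)} \mathrm{Perf}(V) \simeq \mathrm{Perf}(U\times_M V),
\]
which is exactly how the paper proves the motivic identification $j_*KGL_U(\mathrm{Perf}(U))\simeq KGL_M(j_*\mathrm{Perf}(U))$. So far you are on the same track as the paper. However, there is a genuine gap in the last step, and it is precisely the difficulty your final paragraph gestures at without resolving.

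The issue is that checking agreement of presheaf values on Zariski opens (or on smooth schemes over $M$) does \emph{not} determine a sheaf of spectra on $M^{\rm{an}}$: the analytic topology is strictly finer, and Zariski opens do not generate it. Passage from the motivic/Nisnevich side to the analytic side is the whole content of the Betti realization, and the relevant non-formal input is that $\mathrm{Betti}$ commutes with $j_*$. For an open immersion, this is \emph{not} a six-functor formality --- you correctly note that $\mathrm{Betti}$ commutes with $j^{\ast}$, but the statement you need is for $j_{\ast}$, i.e.\ a natural transformation $\mathrm{Betti}_M\circ j_* \to j_*\circ\mathrm{Betti}_U$, and for a non-proper map this fails in general. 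The paper resolves this by invoking Ayoub's theorem that this transformation is an equivalence on $\Lambda$-constructible objects, applied to $KGL_U(\mathrm{Perf}(U))=\phi^*KGL_{\mathrm{pt}}(\mathrm{Perf}(\mathrm{pt}))$, which is constructible since it is pulled back along the structure morphism. Alternatively phrased: if you tried to argue via the unit $\mathcal{K}_M^{\rm{top}}(j_*\mathrm{Perf}(U))\to j_*j^*\mathcal{K}_M^{\rm{top}}(j_*\mathrm{Perf}(U))$ using a $j^*$-compatibility, you would still need to know the unit is an equivalence, which amounts to controlling the stalks of both sides along $M\setminus U$; that is not formal. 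You should replace the ``passing to hypersheaves'' step with the explicit appeal to Ayoub's constructibility theorem.
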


We give a proof of Lemma~\ref{lem:KBM} using the 
above results. 
\begin{proof}[Proof of Lemma~\ref{lem:KBM}]
	Let $i \colon M \hookrightarrow A$ be a closed immersion 
	for a smooth $A$
	with complement $j \colon U \hookrightarrow A$. 
	We have an exact sequence of dg-categories
	\begin{align*}
		D_M^b(A) \to D^b(A) \stackrel{j^{\ast}}{\to} D^b(U). 
	\end{align*}
 Here $D_M^b(A) \subset D^b(A)$ is the subcategory 
 of objects supported on $M$. 
	Then there is a fiber sequence
	\begin{align*}
		\mathcal{K}_A^{\rm{top}}(D^b(M)) \to \mathcal{K}_A^{\rm{top}}(D^b(A)) \to \mathcal{K}_A^{\rm{top}}(D^b(U)). 
	\end{align*}
	The above fiber sequence exists since $\mathcal{K}_A^{\rm{top}}(-)$ satisfies 
	devissage, which follows since algebraic K-theory satisfies devissage, see~\cite[Example~2.3]{HLP}. 
	By Theorem~\ref{thm:phiproper} and Lemma~\ref{lem:openimm}, 
	the above sequence is 
	\begin{align*}
		i_{\ast}\mathcal{K}_M^{\rm{top}}(D^b(M)) \to \mathcal{K}_A^{\rm{top}}(D^b(A)) \to j_{\ast}\mathcal{K}_{U}^{\rm{top}}(D^b(U)). 
	\end{align*}
	
	On the other hand, there is also a fiber sequence,
	see the proof of Lemma~\ref{lem:sheaf}
	\begin{align}\label{triangle:A}
		i_{\ast}\underline{KU}_{M, c}^{\vee} \to \underline{KU}_{A, c}^{\vee} \to 
		\underline{KU}_{U, c}^{\vee}.
	\end{align}
	We have the diagram (cf.~\cite[Equation (2)]{HLP}):
	\begin{align}\label{dia:Ktop:KU}
		\xymatrix{
			i_{\ast}\mathcal{K}_M^{\rm{top}}(D^b(M)) \ar[r] \ar@{.>}[d]_-{\rho_{M\subset A}} & \mathcal{K}_A^{\rm{top}}(D^b(A)) 
			\ar[r] \ar[d]_-{\simeq} & j_{\ast}K_U^{\rm{top}}(D^b(U)) \ar[d]_-{\simeq} \\
			i_{\ast}\underline{KU}_{M, c}^{\vee} \ar[r] & \underline{KU}_{A, c}^{\vee} \ar[r] & 
			j_{\ast}\underline{KU}_{U, c}^{\vee}    
		}
	\end{align}
    which is commutative in $\mathrm{Ho}(\mathrm{Sh}_{\mathrm{Sp}}(M^{\rm{an}}))$. 
	The middle vertical arrow is an equivalence 
	since $A$ is smooth, so $D^b(A)=\mathrm{Perf}(A)$ and
	$\underline{KU}_{A} \cong \underline{KU}_{A, c}^{\vee}$. Similarly, the right vertical arrow is also 
 an equivalence. 
	Therefore, we obtain an equivalence $\rho_{M\subset A}$ for the left vertical 
    arrow. 
    
	We need to show that $\rho_{M\subset A}$ is independent of a choice of $M \hookrightarrow A$ 
in $\mathrm{Ho}(\mathrm{Sh}_{\mathrm{Sp}}(M^{\rm{an}}))$, which is given in Lemma~\ref{lem:rho} below, and whose proof is postponed to Subsection~\ref{subsec:lem:rho}. 
\end{proof}

We have the following lemma. Since its proof is 
the same~\cite[Lemma~2.9]{HLP}, a proof will be given in Subsection~\ref{subsec:lem:rho}. 
\begin{lemma}\label{lem:rho}
An equivalence $\rho_{M\subset A}$ is independent of a choice of $M \hookrightarrow A$ 
in $\mathrm{Ho}(\mathrm{Sh}_{\mathrm{Sp}}(M^{\rm{an}}))$. 
\end{lemma}
 
\subsection{Relative topological K-theories of semiorthogonal summands}
We also need a version of Theorem for non-proper $\phi$.  We prove it for semiorthogonal summands of $\mathrm{Perf}(M)$, $D^b(M)$, or 
of categories of matrix factorization. 
We first discuss a preliminary lemma: 

\begin{lemma}\label{lem:gsection}
	Let $B$ be a quasi-projective scheme with a closed immersion 
    $i\colon B\hookrightarrow A$ for a smooth quasi-projective scheme $A$.
	Then, for $\phi \colon B \to \Spec \mathbb{C}$
	and for $\mathscr{D} \in \mathrm{Cat}^{\rm{perf}}(B)$, there is a morphism
	\begin{align}\label{nat:eta}
		\eta_{B\subset A} \colon K^{\rm{top}}(\mathscr{D}) \to \phi_{\ast}\mathcal{K}_{B}^{\rm{top}}(\mathscr{D}). 
	\end{align}
    Moreover, it is a natural transformation: for any $B$-linear functor $F\colon \mathscr{D}_1 \to \mathscr{D}_2$, there is a commutative diagram in $\mathrm{Ho}(\mathrm{Sp})$
\begin{align}\label{nat:trans}
    \xymatrix{
    K^{\rm{top}}(\mathscr{D}_1) \ar[r]^-{F_{*}} \ar[d]_-{\eta_{B\subset A}}
    &K^{\rm{top}}(\mathscr{D}_2) \ar[d]_-{\eta_{B\subset A}} \\
    \phi_{*}\mathcal{K}_{B}^{\mathrm{top}}(\mathscr{D}_1) \ar[r]^-{F_{*}} &
     \phi_{*}\mathcal{K}_{B}^{\mathrm{top}}(\mathscr{D}_2). 
    }
\end{align}
\end{lemma}
\begin{proof}
Let $\phi' \colon A \to \Spec \mathbb{C}$. 
Since $A$ is smooth, there is a natural transformation
\begin{align}\label{nat:trans2}
    K^{\rm{top}}(\mathscr{D}) \to \phi'_{*}\mathcal{K}_A^{\rm{top}}(i_{*}\mathcal{D})
\end{align}
which is constructed in~\cite[Subsection~7.4]{Moulinos}, 
	also see~\cite[Proposition~2.13]{GS}. Then using 
	$i_{\ast}\mathcal{K}_B^{\rm{top}}(\mathscr{D})=\mathcal{K}_A^{\rm{top}}(i_{\ast}\mathscr{D})$ by Theorem~\ref{thm:phiproper},
	a desired morphism (\ref{nat:eta}) is obtained. The commutative diagram (\ref{nat:trans}) is obvious since (\ref{nat:trans2}) is a natural transformation.
\end{proof}

\begin{prop}\label{prop:gsection}
Let $h \colon M \to B$ be a proper morphism and consider the constant map $\phi\colon B\to \mathrm{Spec}\,\mathbb{C}$. 
	Let $\mathscr{D}$ be either $\mathrm{Perf}(M)$, $D^b(M)$ 
	or $\mathrm{MF}(M, f)$ for a non-zero function $f$ on $M$, 
	where we assume that $M$ is smooth in the last case. 
	Let $\mathscr{D}=\langle \mathcal{C}_1, \mathcal{C}_2 \rangle$ be a 
	$B$-linear semiorthogonal decomposition. 
	Then the maps 
	\begin{align*}
		\eta_i \colon K^{\rm{top}}(\mathcal{C}_i) \to \phi_{\ast}\mathcal{K}_B^{\rm{top}}(\mathcal{C}_i)
	\end{align*}
	in Lemma~\ref{lem:gsection} are isomorphisms in $\mathrm{Ho}(\mathrm{Sp})$, and independent 
    of a choice of a closed immersion $B\hookrightarrow A$. 
\end{prop}
\begin{proof}
	By Lemma~\ref{lem:gsection}, by taking a closed immersion $B\hookrightarrow A$ into smooth $A$, 
    there is a map 
	\begin{align}\label{nat:etaK}
		K^{\rm{top}}(\mathscr{D}) \to 
		\phi_{\ast}\mathcal{K}^{\rm{top}}_B(h_{\ast}\mathscr{D}). 
	\end{align}
	We first show that the above map is an equivalence and independent of a choice of 
    $B\hookrightarrow A$
    as morphisms in $\mathrm{Ho}(\mathrm{Sp})$.
	Suppose that $\mathscr{D}=\mathrm{Perf}(M)$. 
	Then, by Theorem~\ref{thm:phiproper} and 
	(\ref{isom:ktopalpha}), the map (\ref{nat:etaK}) is identified 
	with the natural map 
	\begin{align*}
		KU_M \to (\phi \circ h)_{\ast}\underline{KU}_M
	\end{align*}
	which is an equivalence as $\underline{KU}_M$ is a sheaf, and 
	independent of
    $B\hookrightarrow A$ in $\mathrm{Ho}(\mathrm{Sp})$.
    
	Suppose that $\mathscr{D}=D^b(M)$. 
	By (\ref{equiv:csupp}) and Lemma~\ref{lem:KBM} the map (\ref{nat:etaK}) is identified with 
	\begin{align}\label{Khomology}
		KU_{M, c}^{\vee} \to (\phi \circ h)_{\ast}\underline{KU}_{M, c}^{\vee}. 
	\end{align}
	Since $\underline{KU}_{M, c}^{\vee}$ is a sheaf by Lemma~\ref{lem:sheaf}, 
	the map (\ref{Khomology}) is an equivalence and independent of  
    $B\hookrightarrow A$ in $\mathrm{Ho}(\mathrm{Sp})$.
	
	Suppose that $\mathscr{D}=\mathrm{MF}(M, f)$. 
	We set $M_0=f^{-1}(0)$. 
	The category $\mathrm{MF}(M, f)$ is equivalent to 
	the singularity category of $M_0$, see
 Theorem~\ref{thm:orlov}. 
	Therefore, there is an exact sequence of dg-categories 
	\begin{align*}
		\mathrm{Perf}(M_0) \to D^b(M_0) \to \mathscr{D}. 
	\end{align*}
	There is a commutative diagram 
	\begin{align*}
		\xymatrix{
			K^{\rm{top}}(\mathrm{Perf}(M_0)) \ar[r]\ar[d] & K^{\rm{top}}(D^b(M_0)) \ar[r] \ar[d] & 
			K^{\rm{top}}(\mathscr{D}) \ar[d] \\
			\phi_{\ast}\mathcal{K}_B^{\rm{top}}(\mathrm{Perf}(M_0)) \ar[r] & \phi_{\ast}\mathcal{K}_B^{\rm{top}}(D^b(M_0))
			\ar[r] & \phi_{\ast}\mathcal{K}_B^{\rm{top}}(\mathscr{D}), 
		}
	\end{align*}
	where each horizontal sequence is a fiber sequence.
	By arguments as above, the left and middle vertical maps 
	are equivalences and independent of
    $B\hookrightarrow A$ in $\mathrm{Ho}(\mathrm{Sp})$. Therefore, the right vertical map is also an equivalence. 
	
	By Lemma~\ref{lem:sod:Ktheory} and the diagram (\ref{nat:trans}), 
 the map (\ref{nat:etaK}) is identified with 
	\begin{align*}
		\eta_1 \oplus \eta_2 
		\colon K^{\rm{top}}(\mathcal{C}_1)\oplus
		K^{\rm{top}}(\mathcal{C}_2)
		\to \phi_{\ast}\mathcal{K}_B^{\rm{top}}(\mathcal{C}_1) \oplus
		\phi_{\ast}\mathcal{K}_B^{\rm{top}}(\mathcal{C}_2)
	\end{align*}
    in $\mathrm{Ho}(\mathrm{Sp})$.
	Therefore both of $\eta_1$, $\eta_2$ are equivalences. 
\end{proof}

\section{The Chern character maps of quotient stacks}\label{s3}

In this section, we recall the definition of the Chern character maps for quotient stacks. The main tool we use for their construction is the approximation of cohomology theories of quotient stacks by varieties \cite{Totaro, EdidinGraham}. We also construct a Chern character map for quasi-smooth quotient stacks and discuss a Grothendieck-Riemann-Roch theorem for quasi-smooth morphisms. The results are most probably well-known to the experts, but we do not know a reference for them. 

In Appendix~A, we will derive the versions of topological GRR theorem from 
references in motivic stable homotopy theory. In this section, we assume known versions of the topological GRR theorem for maps of varieties, and we 
extend the topological GRR theorem to quotient stacks via approximations by varieties.

\subsection{The Chern character map}\label{subsec:chernmap}

Let $\X=X/G$ be a quotient stack such that $G$ is reductive and $X^{\mathrm{cl}}$ is quasi-projective. Let $M\subset G$ be a compact Lie group such that $G$ is the complexification of $M$.
We denote by \begin{align*}
 K^{\mathrm{top}}(\X):=K^{\mathrm{top}}_{M}(X), \ 
 G^{\rm{top}}(\X):=G^{\rm{top}}_{M}(\X)
 \end{align*}
 the $M$-equivariant topological K-theory of $X$ defined by Atiyah-Segal \cite{Segal}, 
 the $M$-equivariant topological K-homology of $X$ defined by Thomason \cite{ThomasonAS} (which is also referred to as the dual of compactly supported equivariant topological K-theory in the literature), respectively.  
We refer to \cite{BF02392091} for a brief review of properties of topological K-theory and K-homology of varieties and of the Grothendieck-Riemann-Roch theorems for varieties. 
For references on topological K-homology, see \cite{MR0679698} for the non-equivariant case and \cite[Subsection 2.1.2]{HLP} for the equivariant case.
By \cite[Theorem C and the remark following it]{HLP}, we have equivalences:
\begin{align}\label{HLPSiso}
    K^{\mathrm{top}}(\mathrm{Perf}(\X))\simeq K^{\mathrm{top}}(\X),\
   K^{\mathrm{top}}(D^b(\X))\simeq G^{\mathrm{top}}(\X).
\end{align}
Note that $K^{\mathrm{top}}(\X)=G^{\mathrm{top}}(\X)$ if $\X$ is smooth.

For a quotient stack $\X$, we have Chern character maps for $i\in\mathbb{Z}$:
\begin{align*}
    \mathrm{ch}\colon K^{\mathrm{top}}_i(\X)\to \prod_{j\in\mathbb{Z}}H^{i+2j}(\X),\
    \mathrm{ch}\colon G^{\mathrm{top}}_i(\X)\to \prod_{j\in\mathbb{Z}}H_{i+2j}^{\mathrm{BM}}(\X).
\end{align*}
The construction of the above Chern character map will be recalled in 
Subsection~\ref{section31}. 
As we mentioned in the previous section, 
the above Chern character maps are 
isomorphisms over $\mathbb{Q}$ 
when $\X$ is a complex variety, 
or more generally for a scheme of finite 
type over $\mathbb{C}$. 
However, for a quotient stack $\X$, the 
above Chern character maps are not necessarily isomorphisms. 
In Section \ref{section31}, we review the approximation of the Chern character of quotient stacks by Chern characters for varieties following Edidin--Graham \cite{EdidinGraham}.

\begin{remark}
The above Chern character maps can be also obtained from the Chern character 
maps
\[\ch \colon K^{\mathrm{top}}_i(-)\to \mathrm{HP}_i(-)\] from topological K-theory to periodic cyclic homology applied to the dg-categories $\mathrm{Perf}(\X)$ and $D^b(\X)$, respectively, see \cite[Section 4.4]{Blanc}.
\end{remark}

\subsection{Approximation of stacks by varieties}\label{subsub25}

In the study of cohomology theories for quotient stacks, it is useful to 
approximate quotient stacks by varieties. We use the method of Totaro \cite{Totaro}, Edidin--Graham \cite{EdidinGraham}. 
We exemplify the method for Borel-Moore homology and singular cohomology, but it can be applied in many other situations such as equivariant Chow groups, see loc. cit., approximation of the algebraic or topological Chern character, see loc. cit. and Subsection \ref{section31}, or vanishing cohomology, see \cite[Subsection 2.2]{DM}.

Let $\X=X/G$ be a quotient stack with $G$ an algebraic group and $X$ quasi-projective scheme with a $G$-linearized
action. 
We choose $G$-representations
$\{V_n\}_{n\in \mathbb{N}}$ with surjections 
$V_n\twoheadrightarrow V_{n-1}$ such that
the closed subset $S_n \subset V_n$ 
of points with non-trivial stabilizers 
has codimension at least $n$. Further we may choose $V_n$ such that, for $U_n:=V_n\setminus S_n$, the quotient $U_n/G$ is a scheme~\cite[Lemma 9]{EdidinGraham2}. 
Then the quotient $(X\times U_n)/G$ is also a scheme because $X$ is quasi-projective \cite[Proposition 23]{EdidinGraham2}.
For $\ell$ fixed and for $n$ large enough, there are isomorphisms of Borel-Moore homologies 
induced by pullbacks
\begin{align}\label{isom:BM:pullback}
H^{\mathrm{BM}}_\ell(\mathscr{X})\xrightarrow{\cong}
 H^{\mathrm{BM}}_{\ell+2\dim V_n}((X\times V_n)/G)\xrightarrow{\cong}  H^{\mathrm{BM}}_{\ell+2\dim V_n}\left((X\times U_n)/G\right).
 \end{align}
 We also have isomorphisms of cohomologies 
 induced by pull-backs
 \begin{align}\label{isom:coh:pullback}
H^{\ell}(\mathscr{X})\xrightarrow{\cong} H^\ell((X\times V_n)/G)\xrightarrow{\cong} H^\ell\left((X\times U_n)/G\right).
\end{align}

\subsection{The Chern character map for a classical quotient stack}\label{section31}

Let $\X=X/G$ be a quotient stack, 
 where $G$ is a connected reductive group and $X$ is a classical quasi-projective scheme with an action of $G$. Let $M$ be a compact Lie group such that $G$ is the complexification of $M$.
Let $EM$ be a contractible CW complex with a free action of $M$. For $i\in \mathbb{Z}$, 
the Chern character map of the CW complex $EM\times_M X$
defines the Chern character map for $\X$:
\begin{align}\label{chernstack}
\mathrm{ch}\colon G^{\mathrm{top}}_i(\mathscr{X})&=G^{\mathrm{top}}_i(EM\times_M X) \\
\notag&\to \widetilde{H}^{\mathrm{BM}}_i(EM\times_M X)=\widetilde{H}^{\mathrm{BM}}_i(\mathscr{X}):=\prod_{j\in\mathbb{Z}}H^{\mathrm{BM}}_{i+2j}(\mathscr{X}).
\end{align}

\begin{remark}
The above Chern character map is, in general, neither injective nor surjective. It becomes an isomorphism when we take the completion of the 
the K-group with respect to an augmentation ideal by theorems of Atiyah--Segal \cite{AtiyahSegal}, and Edidin--Graham \cite{EdidinGraham} in the algebraic case. 
\end{remark}

The Chern character map for a stack can be approximated by the Chern character map for varieties as follows \cite{EdidinGraham}, see also Subsection \ref{subsub25}.
For $V$ a representation of $G$, denote by $S\subset V$ the closed set of points with non-trivial stabilizers. Let $U:=V\setminus S$. We may choose $V$ such that $U/G$ and $(X\times U)/G$ are schemes.
Then the following diagram commutes, where the vertical maps are pullback maps and the bottom map is an isomorphism
over $\mathbb{Q}$ by the Atiyah-Hirzebruch theorem:
\begin{equation*}
    \begin{tikzcd}
        G^{\mathrm{top}}_i(\mathscr{X})\arrow[r, "\mathrm{ch}"]\arrow[d, "\sim" {anchor=south, rotate=90}]\arrow[dd,bend right=75, "\mathrm{res}_V"']& \widetilde{H}^{\mathrm{BM}}_i(\mathscr{X})\arrow[d, "\sim" {anchor=south, rotate=90}]\arrow[dd,bend left=75, "\mathrm{res}_V"]\\
        G^{\mathrm{top}}_i((X\times V)/G)\arrow[r, "\mathrm{ch}"]\arrow[d, "\mathrm{res}"]& \widetilde{H}^{\mathrm{BM}}_i((X\times V)/G)\arrow[d, "\mathrm{res}"]\\
        G^{\mathrm{top}}_i\left((X\times U)/G\right)\arrow[r, "\mathrm{ch}_V"]& \widetilde{H}^{\mathrm{BM}}_i\left((X\times U)/G\right).
    \end{tikzcd}
\end{equation*}
Let $\{V_n\}_{n\in \mathbb{N}}$ 
be $G$-representations 
with surjections $V_n\twoheadrightarrow V_{n-1}$, 
and take closed subsets $S_n \subset V_n$ as in Subsection~\ref{subsub25}.
For $\ell$ fixed and for $n$ large enough, recall that 
we have isomorphisms (\ref{isom:BM:pullback}) induced by pullbacks. 
Then $\mathrm{ch}(y)$ 
for $y \in G_i^{\rm{top}}(\X)$
equals the limit 
\begin{align}\label{equal:limit}
    \ch(y)=\lim_{n\to \infty} \ch_{V_n}(\mathrm{res}_{V_n}(y)).
\end{align}
\begin{remark}
    In the algebraic case, Edidin--Graham show in \cite[Proposition 3.1]{EdidinGraham} that the limit of $\mathrm{ch}_{V_n}(\mathrm{res}_{V_n}(y))$ is well-defined and use it to define the Chern character. 
\end{remark}
In a similar way, there is also a Chern character map for $i\in\mathbb{Z}$:
\begin{equation}\label{chernstack2}
\mathrm{ch}\colon K^{\mathrm{top}}_i(\mathscr{X})\to \widetilde{H}^{i}(\mathscr{X}):=\prod_{j\in\mathbb{Z}}H^{i+2j}(\mathscr{X}).
\end{equation} 
As above, the Chern character map \eqref{chernstack2} can be approximated by Chern character maps of varieties.
The Chern character maps \eqref{chernstack} and \eqref{chernstack2} are compatible 
by the following commutative diagram. 
\begin{equation}\label{chernKG}
    \begin{tikzcd}
        K^{\mathrm{top}}_i(\X)\arrow[d, "\mathrm{ch}"']\arrow[r, "\varepsilon'"]& G^{\mathrm{top}}_i(\X)\arrow[d, "\mathrm{ch}"]\\
        \widetilde{H}^i(\X)\arrow[r, "\varepsilon"]& \widetilde{H}^{\mathrm{BM}}_{i}(\X).
    \end{tikzcd}
\end{equation}
Here $\varepsilon$ and $\varepsilon'$ are the maps induced by intersecting with the fundamental class, see \cite[Section 5 and Property 2 from Section 4.1]{BF02392091}, or the diagram of spectra (\ref{com:fiber}). 


\subsection{The cycle maps for quotient stacks}

Let $\X$ be a classical quotient stack as in the previous subsection.
For $i\in\mathbb{Z}$, we consider the increasing filtration 
\begin{equation}\label{def:filtration}
E_{ \ell}G^{\mathrm{top}}_i(\mathscr{X}):=\mathrm{ch}^{-1}\left(H^{\mathrm{BM}}_{\leq i+2\ell}(\mathscr{X})\right)\subset G^{\mathrm{top}}_i(\mathscr{X}).
\end{equation}
Note that $E_{\ell}G^{\mathrm{top}}_i(\mathscr{X})=G^{\mathrm{top}}_i(\mathscr{X})$ for $\ell$ large enough and that $E_{\ell}G^{\mathrm{top}}_i(\mathscr{X})$ contains the kernel of the Chern character for all $\ell$. 
The associated graded of the above filtration is denoted by 
\[\mathrm{gr}_\ell G^{\mathrm{top}}_i(\mathscr{X}):=E_{\ell}G^{\mathrm{top}}_i(\mathscr{X})/E_{\ell-1}G^{\mathrm{top}}_i(\mathscr{X}).\]
The Chern character induces a map, which we call \textit{the cycle map}:
\begin{equation}\label{cherngraded}
\mathrm{c}\colon \mathrm{gr}_\ell G^{\mathrm{top}}_i(\mathscr{X})\to H^{\mathrm{BM}}_{i+2\ell}(\mathscr{X}).
\end{equation}
Note that the cycle map is injective by construction.
\begin{example}\label{exam:BGL}
Let $\X=BGL(d)$. Then the Chern character map is 
\begin{align*}
    \ch \colon G_0^{\rm{top}}(\X)=\mathbb{Z}[q_1^{\pm 1}, \ldots, q_d^{\pm 1}]^{\mathfrak{S}_d}
    \to \widetilde{H}_0^{\rm{BM}}(\X)=\mathbb{Q}[[h_1, \ldots, h_d]]^{\mathfrak{S}_d}
\end{align*}
given by $q_i \mapsto e^{h_i}$. 
Let $I \subset \mathbb{Z}[q_1^{\pm 1}, \ldots, q_d^{\pm 1}]$ be the ideal 
generated by $q_i-1$ for $1\leq i\leq d$. 
The induced filtration on 
$G_0^{\rm{top}}(\X)$ is 
\begin{align*}
    \cdots \subset I^2 \cap G_0^{\rm{top}}(\X) \subset 
    I \cap G_0^{\rm{top}}(\X) \subset G_0^{\rm{top}}(\X). 
\end{align*}
\end{example}

For $\X$ a quotient stack as above, recall that there is an intersection product 
(\ref{intersect}). We note the following immediate statement.

\begin{prop}\label{propchprime}
Let $\alpha\in \widetilde{H}^0(\X)$ 
be of the form $\alpha=1+\alpha'$ for $\alpha'\in \prod_{i\geq 1}H^i(\X)$. 
The twisted Chern character map 
$\mathrm{ch}'(-):=\mathrm{ch}(-)\cdot\alpha$
induces a map
\begin{align*}
    c' \colon \mathrm{gr}_{\ell}G_i^{\rm{top}}(\X) \to H_{i+2\ell}^{\rm{BM}}(\X)
\end{align*}
which equals the cycle map \eqref{cherngraded}.
\end{prop}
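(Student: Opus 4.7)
The plan is to unwind the definitions: the twisted Chern character $\mathrm{ch}'(y) = \mathrm{ch}(y) \cdot \alpha = \mathrm{ch}(y) + \mathrm{ch}(y) \cdot \alpha'$ differs from $\mathrm{ch}(y)$ only by lower-order terms, where ``lower-order'' is measured with respect to the Borel--Moore grading.

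More precisely, since $\alpha \in \widetilde{H}^0(\X) = \prod_{j \in \mathbb{Z}} H^{2j}(\X)$ and $\alpha' = \alpha - 1$ has no component in degree zero, the class $\alpha'$ lies in $\prod_{k \geq 1} H^{2k}(\X)$. The intersection product \eqref{intersect} satisfies $H^{2k}(\X) \cdot H^{\mathrm{BM}}_{m}(\X) \subset H^{\mathrm{BM}}_{m-2k}(\X)$, so for any $y \in G_i^{\mathrm{top}}(\X)$ with $\mathrm{ch}(y) \in H^{\mathrm{BM}}_{\leq i+2\ell}(\X)$, the correction $\mathrm{ch}(y) \cdot \alpha'$ lies in $H^{\mathrm{BM}}_{\leq i+2\ell - 2}(\X)$. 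The first step is therefore to observe that
\[
\mathrm{ch}'(y) - \mathrm{ch}(y) \in H^{\mathrm{BM}}_{\leq i+2\ell - 2}(\X) \subset H^{\mathrm{BM}}_{\leq i+2\ell}(\X),
\]
and symmetrically (using $\alpha^{-1} = 1 - \alpha' + (\alpha')^2 - \cdots$, which makes sense componentwise because $\alpha'$ raises cohomological degree), $\mathrm{ch}(y) - \mathrm{ch}'(y) \in H^{\mathrm{BM}}_{\leq i+2\ell-2}(\X)$ whenever $\mathrm{ch}'(y) \in H^{\mathrm{BM}}_{\leq i+2\ell}(\X)$.

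Next, I would deduce that the filtration $E'_\ell G^{\mathrm{top}}_i(\X) := \mathrm{ch}'^{-1}(H^{\mathrm{BM}}_{\leq i+2\ell}(\X))$ defined by $\mathrm{ch}'$ agrees with the filtration $E_\ell G^{\mathrm{top}}_i(\X)$ defined by $\mathrm{ch}$ in \eqref{def:filtration}. Hence $\mathrm{ch}'$ induces a well-defined map $c'\colon \mathrm{gr}_\ell G^{\mathrm{top}}_i(\X) \to H^{\mathrm{BM}}_{i+2\ell}(\X)$. Finally, for $y \in E_\ell G^{\mathrm{top}}_i(\X)$, the image of $c'(y)$ in the quotient $H^{\mathrm{BM}}_{i+2\ell}(\X) = H^{\mathrm{BM}}_{\leq i+2\ell}(\X)/H^{\mathrm{BM}}_{\leq i+2\ell-2}(\X)$ coincides with that of $c(y)$ by the first step, so $c' = c$.

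There is essentially no obstacle; the statement is a formal consequence of the fact that $\alpha'$ strictly increases cohomological degree and that the cycle map only records the top-degree piece of the Chern character. The only minor point worth being careful about is interpreting $\mathrm{ch}(y) \cdot \alpha'$ as a (possibly infinite) sum in $\prod_{j} H^{\mathrm{BM}}_{i+2j}(\X)$; this is fine since $\mathrm{ch}(y)$ has bounded-above Borel--Moore degree by hypothesis, so in each Borel--Moore degree only finitely many terms of $\alpha'$ contribute.
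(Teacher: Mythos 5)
Your proof is correct, and it is exactly the ``immediate'' formal argument the paper has in mind: the paper states Proposition \ref{propchprime} without proof, and your observation that $\alpha'$ strictly lowers Borel--Moore degree under the product \eqref{intersect} (plus invertibility of $\alpha$ via the geometric series) is precisely why the filtrations and the induced maps on $\mathrm{gr}_\ell$ coincide.
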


\subsection{The Grothendieck-Riemann-Roch theorem}

A morphism $f:X\to Y$ of schemes over $\mathbb{C}$ is called a smoothable lci morphism if it factors $X\xrightarrow{j} Z\xrightarrow{p} Y$, where $j$ is a regular closed immersion and $p$ is a smooth morphism.
We state the (topological) Grothendieck-Riemann-Roch (GRR) theorem for smoothable lci morphisms of (classical and possibly singular) stacks (for varieties, we refer to Appendix~\ref{subsec:append}), which extends the works of Baum--Fulton--MacPherson \cite{MR0412190, BF02392091} to higher topological $G$-theory.

\begin{thm}\label{GRRS}
Let $\X$ and $\mathscr{Y}$ be classical quotient stacks and let $f\colon \X\to\mathscr{Y}$ be a representable morphism. 

(i) The following diagrams commute, where $f$ is assumed to be smoothable lci in the first diagram:
\begin{equation}\label{diagpullb}
    \begin{tikzcd}
        G^{\mathrm{top}}_{\bullet}(\mathscr{Y})\arrow[d, "\mathrm{ch}_\mathscr{Y}"']\arrow[r, "f^*"]& G^{\mathrm{top}}_{\bullet}(\mathscr{X})\arrow[d, "\mathrm{ch}_\mathscr{X}"]\\
        \widetilde{H}^{\mathrm{BM}}_{\bullet}(\mathscr{Y})\arrow[r, "f^*"]& \widetilde{H}^{\mathrm{BM}}_{\bullet}(\mathscr{X}).
    \end{tikzcd}
    \text{ and }
    \begin{tikzcd}
        K^{\mathrm{top}}_{\bullet}(\mathscr{Y})\arrow[d, "\mathrm{ch}_\mathscr{Y}"']\arrow[r, "f^*"]& K^{\mathrm{top}}_{\bullet}(\mathscr{X})\arrow[d, "\mathrm{ch}_\mathscr{X}"]\\
        \widetilde{H}^{\bullet}(\mathscr{Y})\arrow[r, "f^*"]& \widetilde{H}^{\bullet}(\mathscr{X}).
    \end{tikzcd}
\end{equation}

(ii) Assume that $f$ is proper and smoothable lci. 
Let $\mathbb{T}_f$ be the tangent complex of $f$ and consider its Todd class $\mathrm{Td}(\mathbb{T}_f)\in \widetilde{H}^0(\X)$. 
By setting 
\[f'_*(-):=f_*(\mathrm{Td}(\mathbb{T}_f)\cdot(-)),\] 
the following diagrams commute, where $\X$ and $\mathscr{Y}$ are assumed to be smoothable lci in the first diagram:
\begin{equation}\label{diagpushf}
    \begin{tikzcd}
        G^{\mathrm{top}}_{\bullet}(\mathscr{X})\arrow[d, "\mathrm{ch}_\mathscr{X}"']\arrow[r, "f_*"]& G^{\mathrm{top}}_{\bullet}(\mathscr{Y})\arrow[d, "\mathrm{ch}_\mathscr{Y}"]\\
        \widetilde{H}^{\mathrm{BM}}_{\bullet}(\mathscr{X})\arrow[r, "f'_*"]& \widetilde{H}^{\mathrm{BM}}_{\bullet}(\mathscr{Y}).
    \end{tikzcd} \text{ and }
    \begin{tikzcd}
        K^{\mathrm{top}}_{\bullet}(\mathscr{X})\arrow[d, "\mathrm{ch}_\mathscr{X}"']\arrow[r, "f_*"]& K^{\mathrm{top}}_{\bullet}(\mathscr{Y})\arrow[d, "\mathrm{ch}_\mathscr{Y}"]\\
        \widetilde{H}^{\bullet}(\mathscr{X})\arrow[r, "f'_*"]& \widetilde{H}^{\bullet}(\mathscr{Y}).
    \end{tikzcd}
\end{equation}
\end{thm}

\begin{proof}
When $\X$ and $\mathscr{Y}$ are schemes, both of (i) and (ii) follow from the corresponding diagrams for spectra (\ref{GRR:top}). 
For quotient stacks, these results follow by the approximations
by varieties. 
We can write $\X=X/G$ and $\mathscr{Y}=Y/G$ for varieties $X$ and $Y$ as $f$ is representable. 
We choose
$G$-representations $V_n$ with open subsets $U_n \subset V_n$ as in
Subsection~\ref{section31}. Set
\[
\X_n:=(X\times U_n)/G, \qquad \mathscr{Y}_n:=(Y\times U_n)/G.
\]
For $n \gg 0$, the morphism $f\colon X \to Y$ induces a morphism
\[
f_n \colon \X_n \to \mathscr{Y}_n
\]
of schemes of the same type as $f$. By the scheme case proved above, the corresponding Grothendieck--Riemann--Roch
diagram for $f_n$ is commutative.

Moreover, the transition morphisms in the approximation system are induced by pull-backs
along vector bundle maps, hence the identifications
\[
H^\ast(\mathscr{Y})\cong H^\ast(\mathscr{Y}_n), \quad H^\ast(\X)\cong H^\ast(\X_n),
\]
\[
H^{\mathrm{BM}}_\ast(\mathscr{Y})\cong H^{\mathrm{BM}}_{\ast+2\dim V_n}(\mathscr{Y}_n), \quad
H^{\mathrm{BM}}_\ast(\X)\cong H^{\mathrm{BM}}_{\ast+2\dim V_n}(\X_n)
\]
for $n \gg 0$ are compatible with pull-backs, proper push-forwards, and products by
Todd classes. By the construction of the Chern character for quotient stacks in Subsection~\ref{section31}
as the stable limit of the Chern characters for the schemes $X_n$ and $Y_n$, the commutative
diagrams for $f_n$ pass to the limit and yield the desired commutative diagrams for $f$.

\end{proof}

\subsection{The Chern character map for quasi-smooth stacks}\label{subsec331}

Let $\X=X/G$ be a quotient stack with $X$ a quasi-smooth derived 
scheme. 
We define 
\begin{align*}
K^{\mathrm{top}}(\X):=K^{\mathrm{top}}(\mathrm{Perf}(\X)), \ 
    G^{\mathrm{top}}(\X):=K^{\mathrm{top}}(D^b(\X)). 
\end{align*}
Note that the closed immersion $\iota \colon \X^{\mathrm{cl}} \hookrightarrow \X$ induces an equivalence 
\begin{align*}
\iota_{*} \colon G^{\mathrm{top}}(\X^{\mathrm{cl}}) 
\stackrel{\sim}{\to} G^{\mathrm{top}}(\X).
\end{align*}
We define the Chern character from the classical truncation $\X^{\mathrm{cl}}$;
\begin{align}\label{chquasismooth}
&\ch \colon G_{\bullet}^{\mathrm{top}}(\X) \simeq G_{\bullet}^{\mathrm{top}}(\X^{\mathrm{cl}})\to \widetilde{H}_{\bullet}^{\mathrm{BM}}(\X):=\widetilde{H}_{\bullet}^{\mathrm{BM}}(\X^{\mathrm{cl}}), \\
&\notag \ch \colon K_{\bullet}^{\mathrm{top}}(\X) \stackrel{\iota^*}{\to} K_{\bullet}^{\mathrm{top}}(\X^{\mathrm{cl}})\to \widetilde{H}^{\bullet}(\X):=\widetilde{H}^{\bullet}(\X^{\mathrm{cl}}).
\end{align}
We have the following: 

\begin{prop}\label{propo36}
Let $\X$ and $\mathscr{Y}$ be quasi-smooth stacks and let $f\colon \X\to\mathscr{Y}$ be a representable morphism. 

(i) Suppose that $f$ is quasi-smooth and proper. Let $\mathbb{T}_f$ be the tangent complex of $f$ and consider the Todd class $\mathrm{Td}(\mathbb{T}_f)\in \widetilde{H}^0(\X)$. By setting \[f'_*(-):=f_*(\mathrm{Td}(\mathbb{T}_f)\cdot(-)),\] 
the following diagram commutes:
\begin{equation}\label{GRR}
    \begin{tikzcd}
        G^{\mathrm{top}}_\bullet(\X)\arrow[d, "\mathrm{ch}_\X"']\arrow[r, "f_*"]& G^{\mathrm{top}}_\bullet(\mathscr{Y})\arrow[d, "\mathrm{ch}_{\mathscr{Y}}"]\\
        \widetilde{H}^{\mathrm{BM}}_\bullet(\X)\arrow[r, "f'_*"]& \widetilde{H}^{\mathrm{BM}}_\bullet(\mathscr{Y}).
    \end{tikzcd}
\end{equation}

(ii) Suppose that $f$ is smooth. Then the following diagram commutes:
\begin{equation}\label{diagpull}
    \begin{tikzcd}
        G^{\mathrm{top}}_\bullet(\mathscr{Y})\arrow[d, "\mathrm{ch}_\mathscr{Y}"']\arrow[r, "f^*"]& G^{\mathrm{top}}_\bullet(\mathscr{X})\arrow[d, "\mathrm{ch}_{\mathscr{X}}"]\\
        \widetilde{H}^{\mathrm{BM}}_\bullet(\mathscr{Y})\arrow[r, "f^*"]& \widetilde{H}^{\mathrm{BM}}_\bullet(\mathscr{X}).
    \end{tikzcd}
\end{equation}
\end{prop}
\begin{proof}
    When $\X$ and $\mathscr{Y}$ are quasi-smooth derived schemes, both of (i) and (ii) follow from the corresponding diagrams for spectra (\ref{GRR2}). 
    For quotient stacks, these follow by the approximations by varieties
    of the classical truncations of $\X$ and $\mathscr{Y}$
    as in the proof of Theorem~\ref{GRRS}. 
\end{proof}

Similarly to (\ref{def:filtration}), 
the Chern character map (\ref{chquasismooth})
defines a filtration 
\begin{align*}
    E_{\ell}G_i^{\rm{top}}(\X) \subset G_i^{\rm{top}}(\X)
\end{align*}
and the cycle map 
from the associated graded
\begin{align}\label{cherngraded3}
    c \colon \mathrm{gr}_{\ell}G_i^{\rm{top}}(\X) \to H_{i+2\ell}^{\rm{BM}}(\X). 
\end{align}

We record the following corollary of the diagrams \eqref{GRR} and \eqref{diagpull}, see also Proposition \ref{propchprime}.

\begin{cor}\label{functoriality:Ktop2}
Let $f\colon \X\to \mathscr{Y}$ be a quasi-smooth morphism of quasi-smooth quotient stacks of relative dimension $d$. Let $i, \ell\in\mathbb{Z}$.
If $f$ is smooth, then it induces a pullback map:
\begin{align*}
    f^*\colon \mathrm{gr}_\ell G^{\mathrm{top}}_i(\mathscr{Y})\to \mathrm{gr}_{\ell+d} G^{\mathrm{top}}_i(\X).
\end{align*}
If $f$ is proper, then it induces a pushforward map:
\begin{align*}
    f_*\colon \mathrm{gr}_\ell G^{\mathrm{top}}_i(\X)\to \mathrm{gr}_\ell G^{\mathrm{top}}_i(\mathscr{Y}).
\end{align*}
\end{cor}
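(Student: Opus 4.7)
The plan is to deduce this corollary directly from the GRR-type compatibilities in Proposition~\ref{propo36}, together with the observation that the Todd class is a unit of the form $1 + (\text{positive degree})$ in the cohomology of $\X$, which is the content of Proposition~\ref{propchprime}. Concretely, I would show that $f^{\ast}$ respects the filtration $E_{\bullet}G_i^{\rm{top}}$ with a shift of $d$, and that $f_{\ast}$ respects it without shift, and then pass to the associated graded.

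For the smooth case, suppose $f \colon \X \to \mathscr{Y}$ is smooth of relative dimension $d$. The pullback on Borel–Moore homology $f^{\ast} \colon H^{\mathrm{BM}}_m(\mathscr{Y}) \to H^{\mathrm{BM}}_{m+2d}(\X)$ shifts degree by $+2d$. If $y \in E_\ell G_i^{\rm{top}}(\mathscr{Y})$, then $\mathrm{ch}_{\mathscr{Y}}(y) \in \prod_{j \leq \ell} H^{\mathrm{BM}}_{i+2j}(\mathscr{Y})$, so $f^{\ast}(\mathrm{ch}_{\mathscr{Y}}(y)) \in \prod_{k \leq \ell+d} H^{\mathrm{BM}}_{i+2k}(\X)$. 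By the commutativity of~\eqref{diagpull} in Proposition~\ref{propo36}(ii), we conclude $\mathrm{ch}_\X(f^{\ast}(y)) \in H^{\mathrm{BM}}_{\leq i+2(\ell+d)}(\X)$, i.e.\ $f^{\ast}(y) \in E_{\ell+d}G_i^{\rm{top}}(\X)$. The analogous inclusion $f^{\ast}(E_{\ell-1}) \subset E_{\ell+d-1}$ holds by the same argument, so $f^{\ast}$ descends to a well-defined map on associated gradeds.

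For the proper case, Proposition~\ref{propo36}(i) gives $\mathrm{ch}_{\mathscr{Y}}(f_{\ast}(y)) = f_{\ast}\bigl(\mathrm{td}(T_f) \cdot \mathrm{ch}_\X(y)\bigr)$. The virtual tangent bundle $T_f$ has rank $0$ in the smooth-minus-obstruction sense, but regardless the Todd class lies in $\widetilde{H}^0(\X) = \prod_{j \geq 0} H^{2j}(\X)$ and has leading term $1$; write $\mathrm{td}(T_f) = 1 + \alpha'$ with $\alpha' \in \prod_{j \geq 1} H^{2j}(\X)$. Since the cap product with $H^{2j}(\X)$ decreases BM degree by $2j$, multiplication by $\mathrm{td}(T_f)$ preserves the ``$\leq$'' BM filtration: if $\mathrm{ch}_\X(y) \in H^{\mathrm{BM}}_{\leq i+2\ell}(\X)$ then so is $\mathrm{td}(T_f) \cdot \mathrm{ch}_\X(y)$. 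Proper pushforward $f_{\ast}$ preserves BM degree, hence $f_{\ast}(y) \in E_\ell G_i^{\rm{top}}(\mathscr{Y})$, and analogously for $E_{\ell-1}$. Therefore $f_{\ast}$ descends to the associated graded; Proposition~\ref{propchprime} further ensures that under the identification of $\mathrm{gr}_\ell$ via the cycle map, this descended pushforward equals $f_{\ast}$ on the corresponding BM summand.

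There is no genuine obstacle here beyond the bookkeeping: the result is a formal consequence of (a) the GRR compatibilities already established in Proposition~\ref{propo36}, (b) the degree behavior of $f^{\ast}$ and $f_{\ast}$ on Borel–Moore homology under the hypotheses, and (c) the fact that the Todd class of a virtual bundle is unipotent with respect to the BM degree filtration, which is precisely Proposition~\ref{propchprime}. The only point worth double-checking is that the pullback on topological K-theory in the smooth case really produces the expected $2d$ shift on the cohomological side, but this follows from the commutativity of~\eqref{diagpull} together with the standard degree shift of Gysin pullback for smooth morphisms of quasi-smooth stacks.
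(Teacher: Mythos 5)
Your argument is correct and is exactly the paper's intended route: the corollary is deduced from the commutative diagrams of Proposition~\ref{propo36} together with the unipotence of the Todd class as in Proposition~\ref{propchprime}. (Your aside that $T_f$ has rank $0$ is off — its rank is the relative dimension $d$ — but this is harmless since only the fact that $\mathrm{td}(T_f)=1+\alpha'$ with $\alpha'$ in positive degrees is used.)
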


We also note the following lemma to be used later: 
\begin{lemma}\label{prop4zero}
Let $\X$ be a smooth quotient stack and set
\[
\X[\epsilon]:=\X\times \Spec \C[\epsilon],
\qquad \deg \epsilon=-1.
\]
Then the natural map
\[
\varepsilon' \colon
K_i^{\mathrm{top}}(\X[\epsilon])
\to
G_i^{\mathrm{top}}(\X[\epsilon])
\]
induced by the inclusion
\[
j\colon \mathrm{Perf}(\X[\epsilon])\hookrightarrow D^b(\X[\epsilon])
\]
is zero.
\end{lemma}

\begin{proof}
Let
$
i\colon \X\hookrightarrow \X[\epsilon]$
be the closed immersion defined by $\epsilon=0$.
Since $\C[\epsilon]=\C\oplus \C[-1]$ is a square-zero extension, for any
$P\in \mathrm{Perf}(\X[\epsilon])$, there is a functorial cofiber sequence in
$D^b(\X[\epsilon])$
\[
i_*i^*P[1]\to P\to i_*i^*P.
\]
Hence, in the stable $\infty$-category
\[
\mathrm{Fun}^{\mathrm{ex}}(\mathrm{Perf}(\X[\epsilon]),D^b(\X[\epsilon])),
\]
there is a cofiber sequence of exact functors
\[
i_*i^*[1]\to j\to i_*i^*.
\]
Applying the additive invariant $K^{\mathrm{top}}$ and using additivity for
pointwise cofiber sequences of exact functors, we obtain
\[
[K^{\mathrm{top}}(j)]
=
[K^{\mathrm{top}}(i_*i^*[1])]
+
[K^{\mathrm{top}}(i_*i^*)]
\]
in
\[
\pi_0\mathrm{Map}(
K^{\mathrm{top}}(\mathrm{Perf}(\X[\epsilon])),
K^{\mathrm{top}}(D^b(\X[\epsilon]))
).
\]
Since the shift functor $[1]$ acts by $-1$ on any additive invariant, we have
\[
[K^{\mathrm{top}}(i_*i^*[1])]
=
-[K^{\mathrm{top}}(i_*i^*)].
\]
Hence
$
[K^{\mathrm{top}}(j)]=0$,
so $K^{\mathrm{top}}(j)$ is nullhomotopic.
\end{proof}

\section{Topological K-theory of categories of matrix factorizations}\label{s4}

In this section, we compute the topological K-theory of categories of matrix factorizations in terms of the monodromy invariant vanishing cycle cohomologies. The results are inspired by the work of Preygel \cite[Section 6]{Preygelthesis}, Efimov \cite{Eff}, Blanc--Robalo--Toën--Vesozzi \cite{BRTV}, and Brown--Dyckerhoff \cite{BD}.
\smallskip

\subsection{Monodromy invariant vanishing cycles}\label{subsection:vanishing}
In this subsection, we discuss the monodromy invariant vanishing cycles. 
Let $\X$ be a quotient stack 
with a regular function $f \colon \X \to \mathbb{C}$. 
We denote by 
$\iota \colon \X_0 \hookrightarrow \X$
the (derived) zero locus of $f$. 
Recall the vanishing cycle functor $\varphi_f$ and 
the monodromy operator $\mathrm{T}$, see Subsection~\ref{subsec:nearby}. 
\begin{defn}\label{def:moninv}
For $F \in D(\mathrm{Sh}_{\mathbb{Q}}(\X))$, 
we define its
\textit{monodromy invariant vanishing cycle}
$\varphi_f^{\rm{inv}}(F) \in D(\mathrm{Sh}_{\mathbb{Q}}(\X_0))$ to be the cone of $1-\mathrm{T}$:
\begin{align}\label{thirdcolumn}
    \varphi_f(F)\xrightarrow{1-\mathrm{T}}\varphi_f(F)\to \varphi_f^{\mathrm{inv}}(F)\to \varphi_f(F)[1].
\end{align}
\end{defn}
The push-forward
of $\varphi_f^{\rm{inv}}(F)$ 
via $\iota \colon \X_0 \hookrightarrow \X$
is also denoted by $\varphi_f^{\mathrm{inv}}(F)$. 
For $F=\mathbb{Q}_{\X}$, we write 
$\varphi_f^{\rm{inv}}:=\varphi_f^{\rm{inv}}(\mathbb{Q}_{\X})$
for simplicity. 
It is supported on $\mathrm{Crit}(f) \subset \X$ 
if $\X$ is smooth. 

\begin{defn}
We define $H^\bullet(\X, \varphi_f(F))^{\mathrm{inv}}$
and $H^\bullet(\X, \varphi_f(F))_{\mathrm{inv}}$ to be 
\begin{align*}
    H^\bullet(\X, \varphi_f(F))^{\mathrm{inv}}&:=\text{ker}(1-\mathrm{T})\subset H^\bullet(\X, \varphi_f(F)),\\
    H^\bullet(\X, \varphi_f(F))_{\mathrm{inv}}&:=H^\bullet(\X, \varphi_f(F))/\text{image}(1-\mathrm{T}).
\end{align*}
\end{defn}
\begin{remark}
Directly from the above definition, there are 
  short exact sequences:
\begin{equation}\label{SESC}
0\to H^i(\X, \varphi_f(F))_{\mathrm{inv}}\to H^i(\X, \varphi_f^{\mathrm{inv}}(F))\to  H^{i+1}(\X, \varphi_f(F))^{\mathrm{inv}}\to 0.
\end{equation}
Moreover, there is a (non-canonical) isomorphism 
\begin{align}\label{minv:isom}
    H^i(\X, \varphi_f(F))_{\rm{inv}} \cong H^i(\X, \varphi_f(F))^{\rm{inv}}.
\end{align}
\end{remark}

Below we assume that $\X$ is smooth, and without loss of generality we assume that $\X$ is connected. If $f$ is non-zero, 
$\X_0$ is a classical lci stack. In this case, 
let $\alpha$ be the map \[\alpha\colon \mathbb{Q}_{\X_0}=\iota^*\mathbb{Q}_{\X}\to \iota^!\mathbb{Q}_{\X}[2]=\omega_{\X_0}[-2\dim \X_0]\] given by capping with the fundamental class of the lci stack $\X_0$. 
If $f$ is the zero map, 
then $\X_0\cong \X\times \Spec \mathbb{C}[\epsilon]$ 
for $\epsilon$ in $\deg \epsilon=-1$. In this case, 
the map $\alpha$ is defined to be the zero map. 
Or equivalently, it equals to the capping with the virtual fundamental 
class of $\X_0$, which is zero. 

\begin{remark}\label{rmk:zero}
In this paper, we often treat the case of $f=0$ separately. This is because
that, for a connected quotient stack $\X$, the stack $\X_0$ is a classical lci stack if and only if $f\neq 0$. Otherwise, it is a quasi-smooth stack, 
and some references do not cover such a case. Nevertheless, it is 
a very simple quasi-smooth stack, namely $\X_0=\X\times \Spec \mathbb{C}[\epsilon]$, and the required properties will be easily checked separately. 
\end{remark}

\begin{remark}\label{rmk:comzero}
By Lemma~\ref{prop4zero} together with the above definition of $\alpha$, the diagram (\ref{chernKG}) also commutes for $\X[\epsilon]$, indeed both horizontal arrows are zero in this case.
\end{remark}

\begin{lemma}\label{lem:dist:minv}
There is a distinguished triangle 
\begin{equation}\label{defC}
\iota_*\mathbb{Q}_{\X_0}\xrightarrow{\alpha}\iota_*\iota^!\mathbb{Q}_{\X}[2]\to \varphi_f^{\mathrm{inv}}\to \iota_*\mathbb{Q}_{\X_0}[1].
\end{equation}
\end{lemma}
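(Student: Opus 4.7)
The plan is to derive the triangle by applying the octahedral axiom to a standard factorization of the endomorphism $1-\mathrm{T}$ of $\varphi_f$. On the smooth quotient stack $\X$, one has the canonical (defining) triangle of Subsection~\ref{subsec:nearby},
\[
\iota_*\mathbb{Q}_{\X_0}\longrightarrow \psi_f \xrightarrow{\mathrm{can}} \varphi_f \longrightarrow \iota_*\mathbb{Q}_{\X_0}[1],
\]
together with its Verdier-dual variation triangle
\[
\varphi_f \xrightarrow{\mathrm{var}} \psi_f \longrightarrow \iota_*\iota^!\mathbb{Q}_\X[2] \longrightarrow \varphi_f[1],
\]
obtained by applying $\mathbb{D}$ to the first, using that $\psi_f[-1]$ and $\varphi_f[-1]$ commute with $\mathbb{D}$ together with the identification $\mathbb{D}\iota_*\mathbb{Q}_{\X_0} = \iota_*\iota^!\omega_\X = \iota_*\iota^!\mathbb{Q}_\X[2\dim\X]$. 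The key input I will use is the classical identity $\mathrm{can}\circ\mathrm{var} = 1-\mathrm{T}$ as an endomorphism of $\varphi_f$.

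Given these ingredients, I apply the octahedral axiom to the factorization $\varphi_f \xrightarrow{\mathrm{var}} \psi_f \xrightarrow{\mathrm{can}} \varphi_f$ of $1-\mathrm{T}$. Substituting the identifications $\mathrm{Cone}(\mathrm{var}) = \iota_*\iota^!\mathbb{Q}_\X[2]$, $\mathrm{Cone}(\mathrm{can}) = \iota_*\mathbb{Q}_{\X_0}[1]$, and $\mathrm{Cone}(1-\mathrm{T}) = \varphi_f^{\mathrm{inv}}$ (by Definition~\ref{def:moninv}), I obtain a distinguished triangle
\[
\iota_*\iota^!\mathbb{Q}_\X[2]\longrightarrow \varphi_f^{\mathrm{inv}}\longrightarrow \iota_*\mathbb{Q}_{\X_0}[1]\longrightarrow \iota_*\iota^!\mathbb{Q}_\X[3].
\]
Rotating this once to the left produces a distinguished triangle of the shape claimed in the statement, with some connecting morphism $\tilde{\alpha}\colon \iota_*\mathbb{Q}_{\X_0}\to \iota_*\iota^!\mathbb{Q}_\X[2]$ in the place of $\alpha$.

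The main obstacle is to identify $\tilde{\alpha}$ with the fundamental class map $\alpha$. My plan is to compare the two morphisms locally: at any smooth point $x\in\X_0\setminus\mathrm{Crit}(f)$, the stalk $(\varphi_f^{\mathrm{inv}})_x$ vanishes, so the triangle forces $\tilde{\alpha}_x$ to be the tautological isomorphism $\mathbb{Q}\cong\mathbb{Q}$, which is also how $\alpha_x$ acts (Poincar\'e duality for the smooth Cartier divisor $\X_0$ near $x$). After fixing signs once and for all, the equality $\tilde{\alpha}=\alpha$ of constructible morphisms follows by promoting this stalk-wise identification from the dense open $\X_0\setminus\mathrm{Crit}(f)$; more robustly, one may redo the octahedral construction in local coordinates (reducing to $\X=\mathbb{C}$ with $f(z)=z$, and unwinding $\mathrm{var}$ as the Verdier dual of $\mathrm{can}$) and observe that the resulting $\tilde{\alpha}$ is literally given by capping with the fundamental class. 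The degenerate case $f\equiv 0$ on some irreducible component is handled separately using the convention in the statement: there $\alpha$ is declared to be zero, $\varphi_f$ vanishes, and the triangle holds trivially as a split distinguished triangle.
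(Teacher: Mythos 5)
Your construction is essentially the paper's argument in different packaging: the paper assembles the triangles \eqref{defcan} and \eqref{defvar} together with $\mathrm{can}\circ\mathrm{var}=1-\mathrm{T}$ into the $3\times 3$ diagram \eqref{diagsec4}, whose first row is the claimed triangle, and your application of the octahedral axiom to the factorization $\varphi_f\xrightarrow{\mathrm{var}}\psi_f\xrightarrow{\mathrm{can}}\varphi_f$ of $1-\mathrm{T}$ uses exactly the same two triangles and the same identity.

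The weak point is your identification of the connecting morphism $\tilde{\alpha}$ with $\alpha$. First, $\X_0\setminus\mathrm{Crit}(f)$ need not be dense in $\X_0$: the paper only assumes $\mathrm{Crit}(f)\subset\X_0$ set-theoretically, and $\mathrm{Crit}(f)$ can be all of $\X_0$ (already for $f=z^2$ on $\mathbb{C}$, or whenever a component of the zero fibre is non-reduced), in which case your comparison locus is empty. Second, even when it is dense, agreement of two maps $\iota_*\mathbb{Q}_{\X_0}\to\iota_*\iota^!\mathbb{Q}_\X[2]$ on an open subset does not force equality: the kernel of the restriction is cohomology of $\X$ supported on $\mathrm{Crit}(f)$ in degree $2$, which is nonzero when the critical locus has codimension one. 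Third, reducing ``in local coordinates'' to $\X=\mathbb{C}$, $f(z)=z$ presupposes $f$ is a submersion, i.e.\ it avoids exactly the critical points where the content of the statement lies. The repair is simpler than any of this: unwinding your octahedron, the connecting map is by construction (up to sign and rotation) the composite $\iota_*\mathbb{Q}_{\X_0}\to\psi_f\to\iota_*\iota^!\mathbb{Q}_\X[2]$ of the canonical map from \eqref{defcan} with its Verdier dual from \eqref{defvar}, and the identification of this composite with capping with the fundamental class of $\X_0$ is the standard compatibility of specialization with its dual (cf.\ the reference to Massey); this is precisely what the paper uses when it writes $\alpha$ in the first row of \eqref{diagsec4}. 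With that substitution your proof coincides with the paper's. One small slip in your degenerate case: for $f\equiv 0$ one has $\psi_f=0$ but $\varphi_f\mathbb{Q}_\X\cong\iota_*\mathbb{Q}_{\X_0}[1]\neq 0$ and $\mathrm{T}=\mathrm{id}$; the triangle still splits with $\alpha=0$ as you conclude, but not because $\varphi_f$ vanishes.
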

\begin{proof}
We recall two distinguished triangles relating the vanishing and cycle functors applied to the constant sheaf. 
A reference is \cite[Chapter 3]{Massey}, especially \cite[pages 24-28]{Massey}. The results in loc. cit. are stated for varieties, but they also hold for quotient stacks using the argument in \cite[Subsection 2.2]{DM}.
By the definition of vanishing cycle sheaves, 
there is an exact triangle in $D^b(\mathrm{Sh}_{\mathbb{Q}}(\X))$: 
\begin{equation}\label{defcan}
\iota_*\mathbb{Q}_{\X_0}[-1]\to \psi_f[-1]:=\psi_f\mathbb{Q}_\X[-1]\xrightarrow{\mathrm{can}} \varphi_f[-1]:=\varphi_f\mathbb{Q}_\X[-1]\rightarrow \iota_*\mathbb{Q}_{\X_0}.
\end{equation}
By taking the Verdier dual of the above triangle, we obtain the distinguished triangle:
\begin{equation}\label{defvar}
\varphi_f[-1]\xrightarrow{\mathrm{var}} \psi_f[-1]\to \iota_*\iota^!\mathbb{Q}_{\X}[1]\rightarrow \varphi_f.
\end{equation}
We have that $\mathrm{can}\circ\mathrm{var}=1-\mathrm{T}$, where $\mathrm{T}$ is the monodromy operator acting on $\varphi_f$.

By combining the above 
sequences, we obtain the 
following diagram:
\begin{equation}\label{diagsec4}
    \begin{tikzcd}
        \iota_*\mathbb{Q}_{\X_0}\arrow[r, "\alpha"]& \iota_*\iota^!\mathbb{Q}_{\X}[2]\arrow[r]& \varphi_f^{\mathrm{inv}}\\
        \iota_*\mathbb{Q}_{\X_0}\arrow[u, "\text{id}"]\arrow[r]& \psi_f \arrow[u]\arrow[r, "\mathrm{can}"]& \varphi_f\arrow[u]\\
        0\arrow[u]\arrow[r]& \varphi_f\arrow[u, "\mathrm{var}"]\arrow[r, "\sim"]& \varphi_f\arrow[u, "1-\mathrm{T}"'].
    \end{tikzcd}
\end{equation}
In the above diagram, (up to shift) the second row is \eqref{defcan}, the second column is \eqref{defvar}, and the third column is \eqref{thirdcolumn}.
Therefore the first row is also a distinguished triangle. 
\end{proof}

\begin{remark}\label{rmk:cofib}
By Lemma~\ref{lem:dist:minv}, we have 
\begin{align*}
    \varphi_f^{\mathrm{inv}} \simeq \mathrm{Cofib}(\alpha)=\omega_{\X_0}^{\mathrm{sg}}
\end{align*}
where $\omega_{\X_0}^{\mathrm{sg}}$ is given in (\ref{omega:sg}). 
    
\end{remark}

\subsection{The Chern character map for matrix factorizations}\label{subsec:ChernMF}
Let $\X$ be a smooth quotient stack with a regular 
function $f \colon \X \to \mathbb{C}$. 
In this subsection, we construct a Chern character map
for the category of matrix factorizations, where 
the target is the cohomology of monodromy invariant vanishing cycles. 

We first consider the case that $\X$ is a smooth quasi-projective variety. 
The category $\mathrm{MF}(\X, f)$ is linear over $\mathrm{Perf}(\X)$, 
and we consider its relative topological K-theory 
\begin{align*}
	\mathcal{K}_{\X}^{\rm{top}}(\mathrm{MF}(\X, f)) \in \mathrm{Sh}_{\mathrm{Sp}}(\mathcal{X}^{\mathrm{an}}).
	\end{align*}
 Its rationalization gives an object in $D(\mathrm{Sh}_{\mathbb{Q}}(\X^{\rm{an}}))$. 
 We note the following lemma: 
\begin{lemma}\label{lem:sheaf:K}
	Suppose that $\X$ is a smooth quasi-projective variety. 
	There is a canonical equivalence
 in $D(\mathrm{Sh}_{\mathbb{Q}}(\X))$:
	\begin{align}\label{equiv:KMF}
		\mathcal{K}_{\X}^{\rm{top}}(\mathrm{MF}(\X, f))_{\mathbb{Q}} \simeq \varphi_f^{\rm{inv}}[\beta^{\pm 1}]. 
	\end{align}
	\end{lemma}
\begin{proof}
From the equivalence (\ref{dsgmf}), there is an exact sequence of dg-categories 
\begin{align}\label{exact:dgMF}
	\mathrm{Perf}(\X_0) \to D^b(\X_0) \to \mathrm{MF}(\X, f). 
	\end{align}
Therefore, we obtain the commutative diagram 
\begin{align*}
	\xymatrix{
	\mathcal{K}_{\X}^{\rm{top}}(\mathrm{Perf}(\X_0))_{\mathbb{Q}} \ar[r] \ar[d]_-{\sim} & \mathcal{K}_{\X}^{\rm{top}}(D^b(\X_0))_{\mathbb{Q}} \ar[r] \ar[d]_-{\sim} 
	& \mathcal{K}_{\X}^{\rm{top}}(\mathrm{MF}(\X, f))_{\mathbb{Q}} \ar@{.>}[d] \\
	\iota_{\ast}\mathbb{Q}_{\X_0}[\beta^{\pm 1}] \ar[r] & \iota_{\ast}\iota^! \mathbb{Q}_{\X}[\beta^{\pm 1}] \ar[r] & \varphi_{f}^{\rm{inv}}[\beta^{\pm 1}]. 
}
	\end{align*}
 Here, the left square commutes by the diagram (\ref{chernKG}) 
 (and Lemma~\ref{prop4zero} if $f=0$ case on some component) and 
 the vertical isomorphisms 
 follow from (\ref{isom:ktopalpha}) and Lemma~\ref{lem:KBM}.
By taking cofibers of the top arrows in the $\infty$-category 
$D(\mathrm{Sh}_{\mathbb{Q}}(\X))$, we obtain the equivalence (\ref{equiv:KMF}). 
	\end{proof}

When $\X$ is a smooth quasi-projective variety, 
we define the Chern character map of $\mathrm{MF}(\X, f)$
by taking the cohomologies of the global sections of (\ref{equiv:KMF}) and using Proposition~\ref{prop:gsection}:
\begin{align}\label{cmap:MF:var}
	\ch \colon K_i^{\rm{top}}(\mathrm{MF}(\X, f)) 
 \to K_i^{\rm{top}}(\mathrm{MF}(\X, f))_{\mathbb{Q}}\stackrel{\cong}{\to} H^{i+2\ast}(\X, \varphi_f^{\rm{inv}}). 
 	\end{align}
  
  In the case that $\X$ is a smooth quotient stack, the Chern character map 
is defined via approximation by varieties. 
Let $\X=X/G$ and $V_n$ be $G$-representations with open subset 
$U_n \subset V_n$ 
as in Subsection~\ref{subsub25}. 
Below we use the same symbol $f$ to denote the 
composition $(X\times U_n)/G \to \X \stackrel{f}{\to} \mathbb{C}$. 

\begin{lemma}\label{lem:Hl}
For a fixed $\ell$, there are isomorphisms for $n\gg 0$: 
\begin{align*}
    H^{\ell}(\X, \varphi_f^{\rm{inv}}) \stackrel{\cong}{\to} H^{\ell}((X\times U_n)/G, \varphi_f^{\rm{inv}}). 
\end{align*}
\end{lemma}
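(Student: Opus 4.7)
The plan is to reduce the statement to the known approximation isomorphisms \eqref{isom:BM:pullback} and \eqref{isom:coh:pullback} by applying the five lemma to the long exact sequences arising from the distinguished triangle of Lemma~\ref{lem:dist:minv}.

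Let $p_n\colon (X\times U_n)/G \to \X$ be the natural smooth morphism, of relative dimension $\dim V_n$, and set $f_n := f\circ p_n$. By smooth base change for vanishing cycle functors (as reviewed in \cite[Subsection 2.2]{DM} in the setting of quotient stacks), there is a canonical isomorphism $p_n^*\varphi_f \cong \varphi_{f_n}$ compatible with the monodromy operator $\mathrm{T}$. Taking cones of $1-\mathrm{T}$ yields $p_n^*\varphi_f^{\mathrm{inv}} \cong \varphi_{f_n}^{\mathrm{inv}}$. Moreover, writing $\iota_n\colon (X\times U_n)/G)_0\hookrightarrow (X\times U_n)/G$ for the derived zero locus of $f_n$, the whole distinguished triangle \eqref{defC} pulls back along $p_n^*$ to the analogous triangle on $(X\times U_n)/G$, because the map $\alpha$ is defined via the fundamental class and smooth pullback carries the fundamental class of $\X_0$ to that of $((X\times U_n)/G)_0$ (up to the Thom isomorphism of the vector-bundle-like map).

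Now apply $H^{\bullet}(-,-)$ to both distinguished triangles and compare them via the morphism induced by $p_n^*$. The relevant terms in the first sequence are $H^k(\X_0)$ and
\[
H^k(\X, \iota_*\iota^!\mathbb{Q}_\X[2]) = H^{k-2\dim\X_0}(\X_0,\omega_{\X_0}) = H_{2\dim\X_0-k}^{\mathrm{BM}}(\X_0),
\]
using $\iota^!\mathbb{Q}_\X[2] = \omega_{\X_0}[-2\dim\X_0]$. The corresponding terms in the second sequence are $H^k((X_0\times U_n)/G)$ and $H_{2\dim\X_0 + 2\dim V_n - k}^{\mathrm{BM}}((X_0\times U_n)/G)$. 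By \eqref{isom:coh:pullback} and \eqref{isom:BM:pullback} applied to $\X_0$, the pullback $p_n^*$ induces isomorphisms on these four terms (in the degrees surrounding $\ell$) once $n$ is sufficiently large.

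The five lemma then gives the desired isomorphism on $H^\ell(\X,\varphi_f^{\mathrm{inv}})$. The main technical point is the naturality of the distinguished triangle \eqref{defC} under smooth pullback: the crucial step is checking that the Thom isomorphism for the vector bundle $p_n$ at the level of Borel-Moore homology is compatible with capping with the fundamental class, but this follows from the standard functoriality of the fundamental class of a quasi-smooth quotient stack under smooth pullback.
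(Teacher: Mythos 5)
Your proposal is correct and follows essentially the same route as the paper: the paper's proof simply invokes the distinguished triangle \eqref{defC} together with the approximation isomorphisms \eqref{isom:BM:pullback} and \eqref{isom:coh:pullback}, which is exactly your five-lemma comparison of the two long exact sequences, with the naturality of \eqref{defC} under smooth pullback (which you verify via compatibility of $\alpha$ with the fundamental class) left implicit in the paper.
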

\begin{proof}
    The lemma follows from the distinguished triangle (\ref{defC})
    together with the isomorphisms (\ref{isom:BM:pullback}), (\ref{isom:coh:pullback}). 
\end{proof}
Let $\mathrm{res}_{V_n}$ be the restriction map 
\begin{align*}
    \mathrm{res}_{V_n} \colon K_i^{\rm{top}}(\mathrm{MF}(\X, f))
    \to K_i^{\rm{top}}(\mathrm{MF}((X\times U_n)/G, f))
\end{align*}
and define 
$\ch_{V_n}$ to be 
\begin{align*}
    \ch_{V_n} \colon 
    K_i^{\rm{top}}(\mathrm{MF}((X\times U_n)/G, f)) 
    \stackrel{\ch}{\to} H^{i+2\ast}((X\times U_n)/G, \varphi_f^{\rm{inv}}). 
\end{align*}
Here the last map is the Chern character map (\ref{cmap:MF:var}) for the 
variety $(X\times U_n)/G$. 
We then define the Chern character map for the stack $\X$
\begin{align}\label{cmap:quot}
	\ch \colon K_i^{\rm{top}}(\mathrm{MF}(\X, f)) \to
 	\widetilde{H}^i(\X, \varphi_f^{\rm{inv}})
	\end{align}
 to be the limit as in (\ref{equal:limit}):
 \[\ch(y):= \lim_{n\to \infty} \ch_{V_n}(\mathrm{res}_{V_n}(y)).\] 

Let $d:=\dim_\mathbb{C}\X$.
Similarly to (\ref{def:filtration}), we
define the filtration 
\begin{equation}\label{deffiltrationKsg}
E_\ell K_i^{\rm{top}}(\mathrm{MF}(\X, f)):=\mathrm{ch}^{-1}\left(H^{\geq 2d-i-2\ell}(\X, \varphi^{\mathrm{inv}}_f)\right).
\end{equation}
We obtain cycle maps on the associated graded pieces:
\begin{equation}\label{cycleKsg}
    \mathrm{c}\colon \mathrm{gr}_\ell K_i^{\rm{top}}(\mathrm{MF}(\X, f))\to H^{2d-i-2\ell}(\X, \varphi^{\mathrm{inv}}_f).
\end{equation}


We give some compatibility of Chern character maps. 
From the exact sequence (\ref{exact:dgMF}),
there is a long exact sequence of abelian groups:
\begin{equation}\label{LESKtheory}
\cdots\to K^{\mathrm{top}}_i(\X_0)\to G^{\mathrm{top}}_i(\X_0)\to K_i^{\rm{top}}(\mathrm{MF}(\X, f))\to K^{\mathrm{top}}_{i-1}(\X_0)\to G^{\mathrm{top}}_{i-1}(\X_0)\to\cdots.
\end{equation}
By the long exact sequence of cohomologies associated with the triangle (\ref{defC}), 
there is a long exact sequence of $\mathbb{Q}$-vector spaces:
\begin{align}\label{LESBM}
\cdots\to H^{2d-i-2}(\X_0)&\xrightarrow{\alpha} H^{\mathrm{BM}}_i(\X_0)\to H^{2d-i}(\X, \varphi_f^{\mathrm{inv}}[-2])= H^{2d-i-2}(\X, \varphi_f^{\mathrm{inv}})\\\notag&\to H^{2d-i-1}(\X_0)\to H^{\mathrm{BM}}_{i-1}(\X_0)\to\cdots.
\end{align}

The following compatibility of the Chern character maps
is direct from the construction of (\ref{cmap:MF:var})
and Lemma~\ref{prop4zero}. 

\begin{prop}\label{prop413}
    The following diagram commutes, where the top sequence is \eqref{LESKtheory} and the bottom sequence is \eqref{LESBM}:
    \begin{equation}\label{diagprop412}
        \begin{tikzcd}
           \cdots\arrow[r]& G^{\mathrm{top}}_i(\X_0)\arrow[d, "\mathrm{ch}"]\arrow[r]& K^{\mathrm{top}}_i(\mathrm{MF}(\X, f))\arrow[d, "\mathrm{ch}"]\arrow[r]& K^{\mathrm{top}}_{i-1}(\X_0)\arrow[d, "\mathrm{ch}"]\arrow[r]& \cdots\\
            \cdots\arrow[r]&\widetilde{H}^{\mathrm{BM}}_i(\X_0)\arrow[r]& \widetilde{H}^i(\X, \varphi_f^{\mathrm{inv}})\arrow[r]& \widetilde{H}^{i-1}(\X_0)\arrow[r]&\cdots.
        \end{tikzcd}
    \end{equation}
\end{prop}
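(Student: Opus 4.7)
The plan is to deduce the commutative diagram \eqref{diagprop412} from a sheaf-theoretic refinement on $\X$ that is essentially already constructed in the proof of Lemma~\ref{lem:sheaf:K}. Recall that the Chern character \eqref{cmap:MF:var} was defined as the induced map on cofibers of a commutative square whose first two columns are the sheaf-level Chern character maps for $\mathrm{Perf}(\X_0)$ and $D^b(\X_0)$, whose top row is the image of the exact sequence \eqref{exact:dgMF} under $\mathcal{K}_\X^{\mathrm{top}}(-)_\mathbb{Q}$, and whose bottom row is (up to twist by $\beta^{\pm 1}$) the distinguished triangle \eqref{defC}. In other words, the sheaf-level version of the compatibility is built into the construction of the Chern character for matrix factorizations.

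First, I would reduce to the case where $\X$ is a smooth quasi-projective variety. By Lemma~\ref{lem:Hl} together with the isomorphisms \eqref{isom:BM:pullback} and \eqref{isom:coh:pullback}, for any fixed $i$ and $n \gg 0$ each term in the bottom row of \eqref{diagprop412} for $\X$ is identified with the corresponding term for the variety $(X \times U_n)/G$; and the restriction maps $\mathrm{res}_{V_n}$ are compatible with the long exact sequence \eqref{LESKtheory}, which is simply the image of \eqref{exact:dgMF} under $K^{\mathrm{top}}$. Since \eqref{cmap:quot} is defined as the limit of the Chern character maps of these approximations, it suffices to verify commutativity of \eqref{diagprop412} for each $(X \times U_n)/G$ separately.

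Second, for a smooth quasi-projective variety $\X$, I would take global sections of the morphism of exact triangles in $D(\mathrm{Sh}_{\mathbb{Q}}(\X))$ displayed in the proof of Lemma~\ref{lem:sheaf:K}, using Proposition~\ref{prop:gsection} to identify the spectra $K^{\mathrm{top}}(\mathcal{C})$ with the global sections of $\mathcal{K}_\X^{\mathrm{top}}(\mathcal{C})$ for $\mathcal{C} \in \{\mathrm{Perf}(\X_0), D^b(\X_0), \mathrm{MF}(\X, f)\}$. The resulting long exact sequences of cohomologies are precisely the top and bottom rows of \eqref{diagprop412}, the middle vertical arrow agrees with \eqref{cmap:MF:var} by definition, and the left and right vertical arrows agree with the Chern characters of Section~\ref{s3} by Remark~\ref{rmk:chclassical} together with Lemma~\ref{lem:KBM}.

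The one subtle point, and in my view the main obstacle, is the case $f = 0$, where the map $\alpha$ in \eqref{defC} is declared to be zero by convention. Here one must check that the composite
\[ K^{\mathrm{top}}_i(\X_0) \xrightarrow{\varepsilon'} G^{\mathrm{top}}_i(\X_0) \xrightarrow{\mathrm{ch}} \widetilde{H}^{\mathrm{BM}}_i(\X_0) \]
also vanishes, so that the left square of the sheaf-theoretic diagram commutes in a manner compatible with the convention $\alpha = 0$. This is exactly the content of Lemma~\ref{prop4zero}. Granted this, the entire argument reduces to a diagram chase in $D(\mathrm{Sh}_{\mathbb{Q}}(\X))$.
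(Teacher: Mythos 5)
Your proposal is correct and follows essentially the same route as the paper: the paper's (one-line) proof simply observes that commutativity is direct from the construction of the Chern character \eqref{cmap:MF:var} — i.e.\ the sheaf-level morphism of exact triangles in Lemma~\ref{lem:sheaf:K}, global sections via Proposition~\ref{prop:gsection}, and approximation by varieties — together with Lemma~\ref{prop4zero} for the $f=0$ convention. Your write-up just makes these steps explicit.
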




\subsection{The Grothendieck-Riemann-Roch theorem for matrix factorizations}

We discuss a version of the Grothendieck-Riemann-Roch theorem for categories of matrix factorizations. 

We first state functorial properties of monodromy invariant vanishing cycles. 
Let $h\colon \X\to\mathscr{Y}$ be a morphism of smooth quotient stacks, 
$f \colon \mathscr{Y} \to \mathbb{C}$ a regular function, 
and set $g:=f\circ h$. 
There is a natural map $h^* \varphi_f \to \varphi_g$ which 
is compatible with monodromy operators. 
Hence it induces a map $h^* \varphi_f^{\mathrm{inv}} \to 
\varphi_g^{\mathrm{inv}}$, and after taking global sections:
\begin{align*}
    h^* \colon H^k(\mathscr{Y}, \varphi_f^{\mathrm{inv}}) \to H^k(\X, \varphi_g^{\mathrm{inv}}). 
\end{align*}

Suppose that $h$ is a proper, smoothable lci morphism of relative dimension $d$. Then, as $h_{*}=h_{!}$, there is a natural map $h_{*}\mathbb{Q}_{\X} \to \mathbb{Q}_{\mathscr{Y}}[-2d]$. 
Since the vanishing cycle functor commutes with $h_{*}$ for 
a proper map $h$, we obtain 
\begin{align*}
    h_{*}\varphi_g(\mathbb{Q}_{\X})=\varphi_g(h_{*}\mathbb{Q}_{\X}) \to\varphi_f(\mathbb{Q}_{\mathscr{Y}})[-2d].
\end{align*}
Since the above map is compatible with monodromy operators, 
we have the induced map $h_{*}\varphi_g^{\mathrm{inv}} \to \varphi_f^{\mathrm{inv}}[-2d]$, and 
\begin{align*}
    h_{*} \colon H^k(\X, \varphi_g^{\mathrm{inv}}) \to H^{k-2d}(\mathscr{Y}, \varphi_f^{\mathrm{inv}}). 
\end{align*}
The Grothendieck-Riemann-Roch theorem for matrix factorizations we will be using is the following: 
\begin{thm}\label{GRRMFtop}  
Let $\X, \mathscr{Y}$ be smooth quotient stacks and $h\colon \X \to \mathscr{Y}$ be a representable morphism. 

(i) The following diagram commutes:
\begin{equation*}
    \begin{tikzcd}
        K^{\mathrm{top}}_i(\mathrm{MF}(\mathscr{Y}, f))\arrow[d, "\mathrm{ch}"']\arrow[r, "h^*"]& K^{\mathrm{top}}_i(\mathrm{MF}(\mathscr{X}, g))\arrow[d, "\mathrm{ch}"]\\
        \widetilde{H}^i(\mathscr{Y}, \varphi^{\mathrm{inv}}_f)\arrow[r, "h^*"]& \widetilde{H}^i(\mathscr{X}, \varphi^{\mathrm{inv}}_g).
    \end{tikzcd}
    \end{equation*}

(ii) Assume that $h$ is proper and smoothable lci. Let 
$\mathrm{Td}(\mathbb{T}_h)\in \widetilde{H}^0(\X_0)$ be the Todd class 
    of the tangent complex $\mathbb{T}_h$ of $h$ and let $h'_*(-):=h_*(\mathrm{Td}(\mathbb{T}_h)\cdot(-))$. 
Then
the following diagram commutes:
\begin{equation*}
    \begin{tikzcd}
        K^{\mathrm{top}}_i(\mathrm{MF}(\X, g))\arrow[d, "\mathrm{ch}"']\arrow[r, "h_*"]& K^{\mathrm{top}}_i(\mathrm{MF}(\mathscr{Y}, f))\arrow[d, "\mathrm{ch}"]\\
        \widetilde{H}^i(\X, \varphi^{\mathrm{inv}}_g)\arrow[r, "h'_*"]& \widetilde{H}^i(\mathscr{Y}, \varphi^{\mathrm{inv}}_f).
    \end{tikzcd}
    \end{equation*}
\end{thm}

\begin{proof}
In the case that $\X$ and $\mathscr{Y}$ are smooth schemes, 
(i) and (ii) follow from the diagram of spectra (\ref{GRR3}), together with the identification of $\omega_{\X_0}^{\mathrm{sg}}$ with $\varphi_f^{\mathrm{inv}}$, see Remark~\ref{rmk:cofib}. 
Note that the corresponding maps $h^*, h_{*}$ for $\omega_{\X_0}^{\mathrm{sg}}$ and $\varphi_f^{\mathrm{inv}}$ are also 
naturally identified, since these maps are also compatible with the 
sequence (\ref{lem:dist:minv}) by the construction. 
A generalization to stacks follows from the approximation by varieties
  as in the proof of Theorem~\ref{GRRS}, using Lemma~\ref{lem:Hl}. 
\end{proof}

We note the following functoriality of 
the associated graded of 
topological K-theory of the category of matrix factorizations.

\begin{prop}\label{functoriality:Ktop}
Let $h\colon \X\to \mathscr{Y}$ be a representable morphism of smooth quotient stacks of relative dimension $d$. Let $f\colon \mathscr{Y}\to \mathbb{C}$ be a regular function, and set $g:=f\circ h$. 
Let $\X_0$ and $\mathscr{Y}_0$ be the (derived) zero loci of $g$ and $f$, respectively. 
Then for $i, \ell\in\mathbb{Z}$, 
the morphism $h$ induces a pullback map:
\begin{align*}
    h^*\colon \mathrm{gr}_\ell K^{\mathrm{top}}_i(\mathrm{MF}(\mathscr{Y}, f))\to \mathrm{gr}_{\ell+d} K^{\mathrm{top}}_i(\mathrm{MF}(\X, g)).
\end{align*}
If $h$ is proper and smoothable lci, then there is a pushforward map:
\begin{align*}
    h_*\colon \mathrm{gr}_\ell K^{\mathrm{top}}_i(\mathrm{MF}(\X, g))\to \mathrm{gr}_\ell K^{\mathrm{top}}_i(\mathrm{MF}(\mathscr{Y}, f)).
\end{align*}
\end{prop}

\begin{proof}
The claim follows from Theorem \ref{GRRMFtop}
and Proposition~\ref{propchprime}. 
\end{proof}

For future reference, we also state explicitly the compatibility of the Chern character maps with Kn\"orrer periodicity, which is a particular case of Theorem \ref{GRRMFtop}.

\begin{cor}\label{corollary410}
    Let $X$ be a smooth affine variety with an action of a reductive group $G$, let $\X:=X/G$ and consider a regular function $f\colon\X\to\mathbb{C}$. 
    Let $U$ be a finite dimensional representation of $G$ and consider the natural pairing $w\colon U\times U^\vee\to\mathbb{C}$. Let $\mathscr{Y}:=(X\times U\times U^\vee)/G$ and consider the regular function $f+w\colon\mathscr{Y}\to\mathbb{C}$, where $f$ and $w$ are pulled-back from $X$ and $U\times U^\vee$, respectively. Consider the natural maps:
    \[X \stackrel{v}{\twoheadleftarrow} X\times U \stackrel{s}{\hookrightarrow} X\times U\times U^\vee\]
    where $v$ is the projection and $s(x, u)=(x, u, 0)$. 
    Let $\mathrm{ch}':=\mathrm{ch}\cdot \mathrm{Td}(T_s)$, where $T_s$ is the relative tangent complex of $s$.
    Then the following diagram commutes:
    \begin{equation*}
        \begin{tikzcd}
            K^{\mathrm{top}}_i(\mathrm{MF}(\X, f))\arrow[d, "\mathrm{ch}'"]\arrow[r, "s_*v^*"]& K^{\mathrm{top}}_i(\mathrm{MF}(\mathscr{Y}, f+w))\arrow[d, "\mathrm{ch}"]\\
            \widetilde{H}^i(\X, \varphi_f^{\rm{inv}})\arrow[r, "s_*v^*"]& \widetilde{H}^i(\mathscr{Y}, \varphi_{f+w}^{\rm{inv}}).
        \end{tikzcd}
    \end{equation*}
\end{cor}

    Note that the horizontal maps are isomorphisms by the Thom-Sebastiani theorem, see the proofs of Propositions \ref{BPSprime} and \ref{quasiBPSprime}. The top horizontal map induces an isomorphism called Kn\"orrer periodicity \cite{OrLG, Hirano}

\subsection{Injectivity of the Chern character map}

The Chern character maps \eqref{chernstack}, \eqref{chernstack2}, or \eqref{cmap:quot} may not be injective when $\X$ is a stack. However, they are all isomorphisms over $\mathbb{Q}$ 
when $\X$ is a variety. 
In some cases of interest, we can show that \eqref{cmap:MF:var} is injective for $\X$ a stack using the following propositions.

\begin{prop}\label{cherninj}
    Let $\X$ be a smooth quotient stack and let $f\colon \X\to\mathbb{C}$ be a regular function. Let $\mathbb{S}$ be a subcategory of $\mathrm{MF}(\X, f)$. Assume that there exists a smooth variety $Y$ and a morphism $r\colon Y\to\X$ such that $r^*\colon \mathbb{S}\to \mathrm{MF}(Y, g)$ is fully faithful whose image is (left or right) admissible, where $g:=f\circ r$.
    Then for $i \in \mathbb{Z}$, the Chern character map 
    \[\mathrm{ch}\colon K^{\mathrm{top}}_i(\mathbb{S})\to K^{\mathrm{top}}_i(\mathrm{MF}(\X, f))
    \to \widetilde{H}^i(\X, \varphi_f^{\mathrm{inv}})\] is injective over $\mathbb{Q}$. 
    Moreover, we have $\dim_{\mathbb{Q}} K_i^{\rm{top}}(\mathbb{S})_{\mathbb{Q}}<\infty$. 
\end{prop}

\begin{proof}
    The pullback map $r^*\colon K^{\mathrm{top}}_i(\mathbb{S})\hookrightarrow K^{\mathrm{top}}_i(\mathrm{MF}(Y, g))$ is injective
    by the admissibility 
    assumption. The claim then follows from the commutative diagram:
    \begin{equation*}
        \begin{tikzcd}
          K^{\mathrm{top}}_i(\mathbb{S})_{\mathbb{Q}}\arrow[r]\arrow[rr, hook, bend left=20]\arrow[dr]&  K^{\mathrm{top}}_i(\mathrm{MF}(\X, f))_{\mathbb{Q}}\arrow[d, "\mathrm{ch}"']\arrow[r, "r^*"]& K^{\mathrm{top}}_i(\mathrm{MF}(Y, g))_{\mathbb{Q}}\arrow[d, "\mathrm{ch}", "\sim" {anchor=south, rotate=90}]\\
            &\widetilde{H}^i(\X, \varphi^{\mathrm{inv}}_f)\arrow[r, "r^*"]& \widetilde{H}^i(Y, \varphi^{\mathrm{inv}}_g).
        \end{tikzcd}
    \end{equation*}
    The last claim holds from the above diagram and 
    $\dim_{\mathbb{Q}}\widetilde{H}^i(Y, \varphi^{\mathrm{inv}}_g)<\infty$.
\end{proof}

\subsection{Monodromy invariant vanishing cycles under semiorthogonal decomposition}
We will use a variant of Lemma~\ref{lem:sheaf:K} for semiorthogonal 
summands. 
Let $\X$ be a smooth quasi-projective variety 
with a proper morphism $h \colon \X \to B$. 
Consider a regular function $g \colon B \to \mathbb{C}$ and 
its composition with $h$:
\begin{align*}
    f \colon \X \stackrel{h}{\to} B \stackrel{g}{\to} \mathbb{C}. 
\end{align*}
Set $\X_0:=f^{-1}(0)$.
Let $D^b(\X)=\langle \mathcal{C}_1, \mathcal{C}_2 \rangle$ be a 
$B$-linear semiorthogonal decomposition. 
It induces semiorthogonal decompositions 
\begin{align*}
D^b(\X_0)=\langle \mathcal{C}_1', \mathcal{C}_2'\rangle, \ 
\mathrm{Perf}(\X_0)=\langle \mathcal{C}_1'', \mathcal{C}_2'' \rangle, \ 
D^{\rm{sg}}(\X_0)=\langle \mathcal{C}_1^{\rm{sg}}, \mathcal{C}_2^{\rm{sg}} \rangle
\end{align*}
where $\mathcal{C}_i^{\rm{sg}}=\mathcal{C}_i'/\mathcal{C}_i''$, 
see~\cite[Proposition~2.5]{PTzero}.

\begin{lemma}\label{lem:sod:induce}
There is an isomorphism $\mathcal{K}_B^{\rm{top}}(\mathcal{C}_i^{\rm{sg}})_{\mathbb{Q}} \cong 
    \varphi_g^{\rm{inv}}\mathcal{K}_B^{\rm{top}}(\mathcal{C}_i)_{\mathbb{Q}}$. 
\end{lemma}
\begin{proof}
By Theorem~\ref{thm:phiproper}, Lemma~\ref{lem:sheaf:K}, and the fact that the vanishing cycle functor 
commutes with proper push-forward, there are isomorphisms: 
\begin{align*}
    \mathcal{K}_B^{\rm{top}}(D^{\rm{sg}}(\X_0))_{\mathbb{Q}} &\cong h_{\ast}\mathcal{K}_{\X}^{\rm{top}}(D^{\rm{sg}}(\X_0))_{\mathbb{Q}} \\
    &\cong h_{\ast}\varphi_f^{\rm{inv}}\mathcal{K}_{\X}^{\rm{top}}(D^b(\X))_{\mathbb{Q}} \\
    &\cong \varphi_g^{\rm{inv}}\mathcal{K}_B^{\rm{top}}(D^b(\X))_{\mathbb{Q}}. 
\end{align*}
It is straightforward to check that, under the above isomorphism,
    the decomposition 
    \begin{align*}
        \mathcal{K}_{\X}^{\rm{top}}(D^{\rm{sg}}(\X_0))_{\mathbb{Q}} \cong 
        \mathcal{K}_{\X}^{\rm{top}}(\mathcal{C}_1^{\rm{sg}})_{\mathbb{Q}} \oplus \mathcal{K}_{\X}^{\rm{top}}(\mathcal{C}_2^{\rm{sg}})_{\mathbb{Q}}
    \end{align*}
    is identified with the decomposition obtained by applying 
    $\varphi_g^{\rm{inv}}$ to the decomposition 
    \begin{align*}
        \mathcal{K}_B^{\rm{top}}(D^b(\X))_{\mathbb{Q}}=\mathcal{K}_B^{\rm{top}}(\mathcal{C}_1)_{\mathbb{Q}} \oplus \mathcal{K}_B^{\rm{top}}(\mathcal{C}_2)_{\mathbb{Q}}. 
    \end{align*}
    Therefore we obtain the desired isomorphism.     
\end{proof}

\section{Dimensional reduction}\label{s5}
In this section, we show the compatibility 
of Koszul equivalences with
the dimensional reduction in cohomology via the Chern character map.  
We will use these compatibilities in Subsection \ref{subsection:preproj} to compute the topological K-theory of preprojective quasi-BPS categories using the computation of the topological K-theory of quasi-BPS categories of tripled quivers with potential.

\subsection{Dimensional reduction in cohomology}\label{subsection:dimred}
In this subsection, we review the dimensional reduction theorem in cohomology \cite[Theorem A.1]{MR3667216}, \cite[Subsection 4.8]{MR2851153}. 
Recall the setting of Subsection \ref{subsectionKoszul}, 
i.e. $\X$ is a smooth quotient stack, 
$\mathscr{E} \to \X$ is a vector bundle of rank $r$
with a section $s$, and $\mathscr{P} \hookrightarrow \X$
is its derived zero locus. 
We will use the notations of the various maps from the diagram~\eqref{diagmapskoszul}:
\begin{equation}\label{diagmapskoszul2}
    \begin{tikzcd}
        \mathscr{E}^{\vee}|_{\mathscr{P}} \arrow[d, "\eta'"] \arrow[rr, bend left, hook, "j'"] \arrow[r, hook, "\tau"]  & \mathscr{E}_0^{\vee} \arrow[r, hook, "\iota"] & \mathscr{E}^{\vee} 
   \arrow[r, "f"] \arrow[d, "\eta"] & \mathbb{C} \\
    \mathscr{P} \arrow[rr, hook, "j"] & & \X.
    \end{tikzcd}
\end{equation}

\begin{thm}\emph{(\cite[Theorem~A.1]{MR3667216})}
There is a natural isomorphism for $\bullet\in D^b(\mathrm{Sh}_{\mathbb{Q}}(\X))$:
\begin{align}\label{dimred3}
\eta_!\varphi_f[-1]\eta^*\bullet\xrightarrow{\sim}\eta_!\eta^*j_*j^*\bullet.
\end{align}
\end{thm}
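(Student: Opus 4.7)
The plan is to follow the strategy of \cite[Theorem~A.1]{MR3667216}, which proves the dimensional reduction isomorphism via a careful analysis of the vanishing cycles of a linear function on a vector bundle. The geometric input is that $f$ is linear along the fibers of $\eta\colon\mathscr{E}^\vee\to\X$, so that its critical locus is contained in $j'(\mathscr{E}^\vee|_\mathscr{P})$, and the fiberwise scaling $\mathbb{C}^*$-action makes $f$ homogeneous of weight one, which will be used to trivialize the monodromy of $\varphi_f$ on this locus.

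First, I would construct the natural comparison map. Smooth base change for the Cartesian square in \eqref{diagmapskoszul2} gives $\eta^* j_* \simeq j'_*\eta'^*$, so, using $\eta\circ j' = j\circ\eta'$ together with properness of the closed embeddings $j$, $j'$, the right-hand side of \eqref{dimred3} reduces to $j_*\eta'_!\eta'^* j^*\bullet$. Since $\varphi_f$ is supported on $\mathrm{Crit}(f)\subset j'(\mathscr{E}^\vee|_\mathscr{P})$, the equivalence $j'_*j'^*\varphi_f\simeq\varphi_f$ lets us rewrite the left-hand side as $j_*\eta'_!j'^*\varphi_f[-1]\eta^*\bullet$. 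The desired map then comes from a natural transformation $j'^*\varphi_f[-1]\eta^*\bullet\to\eta'^*j^*\bullet$ arising from the structural triangle $\iota_*\iota^*\to\psi_f\to\varphi_f\to[+1]$ combined with the identity $j'^*\iota^*\eta^* = \eta'^*j^*$.

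Second, I would verify that the constructed map is an isomorphism after applying $\eta'_!$. By functoriality and smooth base change, the statement reduces to $\bullet = \mathbb{Q}_\X$, and it can be checked locally on $\X$, where one may assume $\mathscr{E}=\X\times V$ is trivial. Fiberwise over $x\in\X$, the function becomes the linear form $\langle s(x),-\rangle$ on $V^*$. When $s(x)\neq 0$ this form has trivial vanishing cycles, and $x\notin\mathscr{P}^{\mathrm{cl}}$, so both sides vanish; when $s(x)=0$, one identifies $\eta'_!\eta'^*\mathbb{Q}_\X|_x$ with $\eta'_!j'^*\varphi_f[-1]\eta^*\mathbb{Q}_\X|_x$ by a direct computation exploiting the $\mathbb{C}^*$-equivariant structure. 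Combined with proper base change, this yields the isomorphism globally.

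The hardest part is the derived/stacky aspect: the zero locus $\mathscr{P}$ is in general only quasi-smooth, and vanishing cycles on stacks require the framework developed in \cite[Subsection~2.2]{DM} and \cite[Subsection~2.2]{MR3667216}. One must check that the diagram \eqref{diagmapskoszul2} carries a well-defined base change isomorphism in this setting, and that the constructed morphism is compatible with the inclusion $\mathscr{P}^{\mathrm{cl}}\hookrightarrow\mathscr{P}$ needed to compare constructible sheaves on the classical and derived stacks. The approximation scheme of Subsection \ref{subsub25} reduces these compatibilities to the case of varieties, where the argument is classical.
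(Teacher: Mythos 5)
The paper does not reprove this theorem --- it is attributed to \cite[Theorem~A.1]{MR3667216}, and the surrounding text only explains how the comparison morphism in \eqref{dimred3} is constructed, not why it is an isomorphism. Concretely, the paper's construction is: take the unit $\eta^*\bullet\to\eta^*j_*j^*\bullet$ of the adjunction $(j^*,j_*)$, apply $\varphi_f[-1]$, and observe via \eqref{varphiiota} that $\varphi_f[-1]$ acts as the identity on $\eta^*j_*j^*\bullet$ because the latter, by smooth base change $\eta^*j_*\simeq j'_*\eta'^*$, is pushed forward from $\mathscr{E}^\vee|_{\mathscr{P}}\subset\mathscr{E}_0^\vee$. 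Then apply $\eta_!$.

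Your construction of the morphism is equivalent but routed differently: you start from the connecting arrow $\varphi_f[-1]\to\iota_*\iota^*$ in the nearby/vanishing triangle, restrict to $\mathscr{E}^\vee|_{\mathscr{P}}$ via $j'^*$ (using that $\mathrm{Crit}(f)$ lies there, so $j'_*j'^*\varphi_f\simeq\varphi_f$), and simplify using $j'^*\iota_*\iota^*\simeq j'^*$ and $j'^*\eta^*=\eta'^*j^*$. Unwinding both constructions shows they yield the same natural transformation: on a sheaf already supported in the zero locus, the connecting map $\varphi_f[-1]\to\iota_*\iota^*$ is exactly the inverse of \eqref{varphiiota}. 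So as far as constructing the map goes, you and the paper agree.

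Where you go further is in sketching a proof that the map is an isomorphism. The paper defers this entirely to \cite[Theorem A.1]{MR3667216}, which is the right thing to do here. Your local argument is in the right spirit (trivialize $\mathscr{E}$, separate the cases $s(x)\neq 0$ and $s(x)=0$), and the $s(x)\neq 0$ case is indeed easy, since a nonvanishing linear form has empty critical locus so both sides vanish after restriction. But the $s(x)=0$ case is not a ``direct computation'': the subtle point in the reference is that vanishing cycles do not commute with $\eta_!$ in general, and the proof needs a careful control of the nearby cycle functor along the non-proper fibers of $\eta$ --- for instance a fiberwise compactification to $\mathbb{P}(\mathscr{E}^\vee\oplus\mathcal{O})$ and an excision argument. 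Invoking ``the $\mathbb{C}^*$-equivariant structure'' and ``proper base change'' at this point does not close the gap, since $\eta$ is not proper. If you want a full proof rather than a citation, you must supply this analysis; otherwise, citing \cite[Theorem~A.1]{MR3667216} as the paper does is the cleaner route. A final minor point: the derived structure of $\mathscr{P}$ is not an additional difficulty for the constructible-sheaf statement, since all sheaf operations here are on classical truncations and the square $\eta j'=j\eta'$ is already a Cartesian square of classical stacks.
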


Below we explain how to construct an 
isomorphism (\ref{dimred3}). 
For $\bullet\in D^b(\mathrm{Sh}_{\mathbb{Q}}(\mathscr{E}^\vee_0))$, there is a natural isomorphism:
\begin{equation}\label{varphiiota}
\varphi_f[-1]\iota_*\bullet\xrightarrow{\cong} \iota_*\bullet.
\end{equation}
On the other hand, for $\bullet\in D^b(\mathrm{Sh}_{\mathbb{Q}}(\X))$,
there is a natural transformation 
\begin{equation}\label{pii}
    \eta^*\bullet\to \eta^*j_*j^*\bullet
\end{equation}
The natural transformations \eqref{pii} and \eqref{varphiiota} induce a natural transformation for $\bullet\in D^b(\mathrm{Sh}_{\mathbb{Q}}(\X))$: 
\[\varphi_f[-1]\eta^*\bullet\to\varphi_f[-1](\eta^*j_*j^*\bullet)=
\eta^*j_*j^*\bullet.\]
The isomorphism (\ref{dimred3}) is obtained by 
applying $\eta_{!}$ to the above natural transformation. 

The cohomological version of the dimensional 
reduction theorem is obtained from the 
isomorphism (\ref{dimred3}) as follows. 
By taking the Verdier dual of (\ref{dimred3}), we obtain 
\begin{equation}\label{dimred}
\eta_*j'_*j'^!\eta^!\bullet=
\eta_*\eta^!j_*j^!\bullet \xrightarrow{\cong} 
\eta_*\varphi_f[-1]\eta^!\bullet,
\end{equation} 
which alternatively can be described as applying the functor $\eta_*\varphi_f[-1]$ to the natural transformation \begin{align*}j'_*j'^!\eta^!\bullet\to \eta^!\bullet
\end{align*}
for $\bullet\in D^b(\mathrm{Sh}_{\mathbb{Q}}(\X))$.
%
By applying $\bullet=\mathbb{Q}_{\X}$ and 
taking the cohomology of the two sides in \eqref{dimred}, one obtains the \textit{dimensional reduction} isomorphism: 
\begin{equation}\label{dimred2}
j'_*\eta'^*\colon H^{\mathrm{BM}}_i(\mathscr{P})\xrightarrow{\cong} 
H^{\mathrm{BM}}_{i+2r}(\mathscr{E}^\vee|_\mathscr{P})\xrightarrow{\cong}
H^{2\dim \mathscr{E}-2r-i}(\mathscr{E}^\vee, \varphi_f\mathbb{Q}[-1]).
\end{equation}
Note that the monodromy action 
on the left hand side is trivial. 

We give some lemmas on 
vanishing cycle cohomologies in the above context. 
Recall that 
the monodromy invariant vanishing cycle 
$\varphi_f^{\mathrm{inv}}$ is the cone of the map $\alpha\colon \iota_{\ast}\mathbb{Q}_{\mathscr{E}^\vee_0}\to \iota_{\ast}\iota^!\mathbb{Q}_{\mathscr{E}^\vee}[2]$, see \eqref{defC}.
\begin{lemma}\label{lem:moninj}
There is a natural injective map:
\begin{equation}\label{gamma}
\gamma\colon H^\bullet(\mathscr{E}^\vee, \varphi_f[-1])\hookrightarrow H^\bullet(\mathscr{E}^\vee, \varphi_f^{\mathrm{inv}}[-2]).\end{equation}
\end{lemma}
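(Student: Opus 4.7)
The plan is to deduce the existence of $\gamma$ from a splitting of the distinguished triangle
\[
\varphi_f \xrightarrow{1-\mathrm{T}} \varphi_f \to \varphi_f^{\mathrm{inv}} \to \varphi_f[1]
\]
that defines $\varphi_f^{\mathrm{inv}}$. Concretely, I will argue that in the dimensional reduction setting the monodromy $\mathrm{T}$ acts as the identity on the vanishing cycle sheaf $\varphi_f\mathbb{Q}_{\mathscr{E}^\vee}$, so $1-\mathrm{T}$ is the zero morphism and the cone decomposes as $\varphi_f^{\mathrm{inv}} \cong \varphi_f \oplus \varphi_f[1]$. Shifting by $[-2]$ gives $\varphi_f^{\mathrm{inv}}[-2] \cong \varphi_f[-2] \oplus \varphi_f[-1]$, and the inclusion of the second summand $\varphi_f[-1] \hookrightarrow \varphi_f^{\mathrm{inv}}[-2]$ is a split injection of complexes. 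Taking cohomology then produces the required $\gamma$, which is natural and injective by construction (the projection onto the $\varphi_f[-1]$ summand gives a retraction).

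The key point is therefore the triviality of the monodromy. The function $f(x,v) = \langle s(x), v\rangle$ is linear in the fiber coordinate, hence $\mathbb{C}^*$-equivariant of weight one with respect to the fiberwise scaling action $t\cdot(x,v) = (x, tv)$ on $\mathscr{E}^\vee$; equivalently $f \circ \mu_t = t \cdot f$ for the scaling $\mu_t$. This equivariance yields a trivialization of the Milnor fibration of $f$ over $\mathbb{C}^*$: any loop around $0 \in \mathbb{C}$ can be deformed, via the scaling action, to a constant loop in $\mathbb{C}^*$. A standard argument then identifies the monodromy on $\varphi_f\mathbb{Q}_{\mathscr{E}^\vee}$ with the action of $t = e^{2\pi i} = 1$, so $\mathrm{T} = \mathrm{id}$ on $\varphi_f\mathbb{Q}_{\mathscr{E}^\vee}$.

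The main subtlety I anticipate is justifying that $\mathrm{T}$ is trivial as an endomorphism of the complex of constructible sheaves on the quotient stack $\mathscr{E}^\vee$, rather than only on its cohomology; this can be handled by reducing to the scheme level using the Totaro--Edidin--Graham approximation recalled in Subsection~\ref{subsub25}, after which the classical argument for the triviality of monodromy of a weight-one $\mathbb{C}^*$-equivariant regular function applies verbatim. Once the splitting of the defining triangle of $\varphi_f^{\mathrm{inv}}$ is in place, the construction of $\gamma$ and its injectivity are immediate.
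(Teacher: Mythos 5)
Your proposal is correct in substance but takes a genuinely different route from the paper. The paper never claims that the monodromy is trivial on $\varphi_f\mathbb{Q}_{\mathscr{E}^\vee}$ itself: it uses the dimensional reduction isomorphism \eqref{dimred} to produce a section $\beta^\diamond$ of the connecting map of the triangle \eqref{thirdcolumn} \emph{after} pushing forward along $\eta$, so that the splitting \eqref{notdagger} holds only for $\eta_*\varphi_f^{\mathrm{inv}}$, and $\gamma$ is then obtained by taking global sections. You instead prove the stronger statement that $1-\mathrm{T}=0$ already on $\varphi_f\mathbb{Q}_{\mathscr{E}^\vee}$, using that $f(x,v)=\langle s(x),v\rangle$ has weight one for the fiberwise scaling; this splits the defining triangle on $\mathscr{E}^\vee$ itself and gives a split injection $\gamma$. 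That monodromy statement is a true and standard fact, and your reduction to schemes via the approximations of Subsection \ref{subsub25} is the right way to handle the stack issue, so your argument does establish the lemma. What your route buys is a sheaf-level splitting without invoking dimensional reduction; what the paper's route buys is that its $\gamma$ is by construction compatible with the dimensional-reduction maps, namely $\gamma\circ j'_*\eta'^*=\beta^\diamond$, which is exactly what is used later in Proposition \ref{prop522} — with your definition of $\gamma$ this compatibility is not automatic and would need a separate check (or you should take the section of the connecting map to be the one induced by $\beta^\diamond$, which also removes the ambiguity in choosing a splitting of the cone of the zero map in the bare triangulated category).

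Two small corrections to your justification of the monodromy claim. The sentence that a loop around $0\in\mathbb{C}$ ``can be deformed, via the scaling action, to a constant loop in $\mathbb{C}^*$'' is not literally correct, since the loop generates $\pi_1(\mathbb{C}^*)$; the actual argument is that the weight-one action trivializes the family over the loop, so the geometric monodromy is realized by the action of the endpoint $e^{2\pi i}=1\in\mathbb{C}^*$, and this acts as the identity on $\varphi_f\mathbb{Q}$ because the equivariant structure on the constant sheaf is the trivial one. The equivariance of the coefficient sheaf is genuinely needed here: for $f=x$ on $\mathbb{C}$ (weight one) and $F=j_!L$ with $L$ a nontrivial rank-one local system on $\mathbb{C}^*$, the monodromy on $\psi_f F$ is nontrivial. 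Since you only apply the claim to $\mathbb{Q}_{\mathscr{E}^\vee}$, this does not affect your conclusion, but the justification should be phrased this way.
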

\begin{proof}
The isomorphism \eqref{dimred} factors through:
\[\eta_*j'_*j'^!\eta^!\bullet\to \eta_*\iota_*\iota^!\eta^!\bullet
=\eta_*\varphi_f[-1]\iota_*\iota^!\eta^!\bullet
\to
\eta_*\varphi_f[-1] \eta^!\bullet.\]
The second map applied to $\bullet=\mathbb{Q}_{\X}$ gives 
a map 
\begin{align*}
    \eta_*\iota_*\iota^!\mathbb{Q}_{\mathscr{E}^\vee}[2]\to \eta_*\varphi_f\mathbb{Q}_{\mathscr{E}^\vee}[1].
\end{align*}
From the diagram \eqref{diagsec4}, the above 
map factors through $\eta_*\varphi_f^{\mathrm{inv}}$, and thus
there are maps whose composition is an isomorphism:
\begin{equation}\label{betaprime}
\beta'\colon \eta_*j'_*\omega_{\mathscr{E}^\vee|_\mathscr{P}}\to \eta_*\iota_*\iota^!\omega_{\mathscr{E}^\vee}\to \eta_*\varphi_f^{\mathrm{inv}}[2\dim \mathscr{E}^\vee-2]\xrightarrow{\beta} \eta_*\varphi_f\omega_{\mathscr{E}^\vee}[-1].
\end{equation}
We let 
$\beta^\diamond$ be a composition 
in the above sequence 
\begin{align*}\beta^\diamond\colon \eta_*j'_*\omega_{\mathscr{E}^\vee|_\mathscr{P}}\to \eta_*\iota_*\iota^!\omega_{\mathscr{E}^\vee}\to \eta_*\varphi_f^{\mathrm{inv}}[2\dim \mathscr{E}^\vee-2].
\end{align*}
The map $\beta^\diamond$ provides a splitting of the map $\beta$, thus the triangle \eqref{thirdcolumn} 
canonically splits to give natural isomorphism:
\begin{equation}\label{notdagger}
\eta_*\varphi_f^{\mathrm{inv}}[-2]\cong \eta_*\varphi_f[-2]\oplus \eta_*\varphi_f[-1].
\end{equation}
By taking global sections of this isomorphism, 
we obtain the desired injection. 
\end{proof}
We also note the following: 
\begin{lemma}
The isomorphism (\ref{dimred2}) 
is given by the composition 
\begin{align}\label{splittingmap}
 \beta'\colon H^{\mathrm{BM}}_i(\mathscr{P})&\xrightarrow{\cong} H^{\mathrm{BM}}_{i+2r}(\mathscr{E}^\vee|_\mathscr{P})
 \to H^{\mathrm{BM}}_{i+2r}(\mathscr{E}^\vee_0)\to 
 H^{2\dim\mathscr{E}-2r-i}
 (\mathscr{E}^\vee, \varphi_f^{\mathrm{inv}}[-2])\\
 \notag&\to 
 H^{2\dim\mathscr{E}-2r-i}
 (\mathscr{E}^\vee, \varphi_f\mathbb{Q}[-1])=H^{2\dim\X-i}(\mathscr{E}^\vee, \varphi_f\mathbb{Q}[-1]).\end{align}
 Here the composition of the maps in the top row of \eqref{splittingmap} is given by $\beta^\diamond$.
 \end{lemma}
 \begin{proof}
The lemma is obtained by taking global sections of the complexes in \eqref{betaprime}. 
      \end{proof}

\subsection{The Chern character for graded matrix factorizations}


The purpose of this subsection is to construct a Chern character map:
\begin{equation}\label{def:cherncharactergr}
\mathrm{ch}\colon K^{\mathrm{top}}_i(\mathrm{MF}^{\mathrm{gr}}(\mathscr{E}^\vee, f))\to \widetilde{H}^i(\mathscr{E}^\vee, \varphi_f[-1])
\end{equation}
compatible with the Chern character map \eqref{chquasismooth} for $\mathscr{P}$ and the Chern character map \eqref{cmap:quot} for $\mathrm{MF}(\mathscr{E}^\vee, f)$, see Proposition \ref{prop522}. 

We begin with a few preliminaries. 
Recall the Koszul equivalence from Theorem~\ref{thm:Koszul}:
\begin{align}\label{koszul:recall}
    \kappa \colon D^b(\mathscr{P}) \stackrel{\sim}{\to}
    \mathrm{MF}^{\rm{gr}}(\mathscr{E}^{\vee}, f). 
\end{align}
Let $l$ be the natural closed immersion $l\colon \mathscr{P}^{\mathrm{cl}}\hookrightarrow \mathscr{P}$.
Note that the pushforward map induces an equivalence $l_*\colon G^{\mathrm{top}}(\mathscr{P}^{\mathrm{cl}})\xrightarrow{\sim} G^{\mathrm{top}}(\mathscr{P})$.
Recall the forget-the-grading functor: 
\begin{equation}\label{forgetthepotential}
\Theta\colon \mathrm{MF}^{\mathrm{gr}}(\mathscr{E}^\vee, f)\to \mathrm{MF}(\mathscr{E}^\vee, f)
\end{equation}
and the equivalence from Theorem~\ref{thm:orlov}:
\[\mathrm{MF}(\mathscr{E}^\vee, f)\xrightarrow{\sim} D_{\mathrm{sg}}(\mathscr{E}^{\vee}_0).\]
The following diagram commutes:
\begin{equation*}
    \begin{tikzcd}
        D^b(\mathscr{P}^{\mathrm{cl}})\arrow[r, "j'_*\eta'^*l_*"]\arrow[d, "\tau_*\eta'^*l_*"]&
    \mathrm{MF}^{\mathrm{gr}}(\mathscr{E}^\vee, f)
    \arrow[r, "\Theta"]&
    \mathrm{MF}(\mathscr{E}^\vee, f)\arrow[d, "\sim"{anchor=south, rotate=90}]\\
D^b(\mathscr{E}^{\vee}_0)\arrow[rr]&&
    D_{\mathrm{sg}}(\mathscr{E}^{\vee}_0).
    \end{tikzcd}
\end{equation*}
By the equivalence $l_*\colon G^{\mathrm{top}}(\mathscr{P}^{\mathrm{cl}})\xrightarrow{\sim} G^{\mathrm{top}}(\mathscr{P})$, we obtain a commutative diagram
\begin{equation}\label{psi}
    \begin{tikzcd}
        G^{\mathrm{top}}_\bullet(\mathscr{P})\arrow[r, "j'_*\eta'^*"]\arrow[d, "\tau_*\eta'^*"']& K^{\mathrm{top}}_\bullet(\mathrm{MF}^{\mathrm{gr}}(\mathscr{E}^\vee, f))\arrow[r, "\Theta"]& K^{\mathrm{top}}_\bullet(\mathrm{MF}(\mathscr{E}^\vee, f))\arrow[d]\\
          G^{\mathrm{top}}_\bullet(\mathscr{E}^\vee_0)\arrow[rr]& &K^{\mathrm{top}}_\bullet(D_{\mathrm{sg}}(\mathscr{E}_0)).
    \end{tikzcd}
\end{equation}
By Proposition~\ref{prop511}, the top left arrow is 
the isomorphism induced by the Koszul equivalence (\ref{koszul:recall}). 

Recall the Chern character \eqref{cmap:quot} and the splitting \eqref{splittingmap}.
Let $\mathbb{N}:=\mathbb{T}_{\mathscr{P}/\mathscr{E}^{\vee}}[1]$ be the normal complex, where 
$\mathbb{T}_{\mathscr{P}/\mathscr{E}^{\vee}}$ is the relative tangent 
complex for $\mathscr{P}\hookrightarrow \X \hookrightarrow \mathscr{E}^\vee$ (which is a vector bundle on $\X$ restricted to $\mathscr{P}$), where the second arrow $\X\hookrightarrow \mathscr{E}^{\vee}$ is the zero section. 
We set 
\begin{align*}\mathrm{ch}':=\mathrm{Td}(\mathbb{N}) \cdot \ch \colon 
G_i^{\mathrm{top}}(\mathscr{P}) \to \widetilde{H}_i^{\mathrm{BM}}(\mathscr{P}).
\end{align*}
By noting that the normal complex of $\mathscr{E}_0^{\vee} \hookrightarrow \mathscr{E}^{\vee}$ is a trivial line bundle, 
the following diagram commutes by Proposition \ref{propo36}:
\begin{equation}\label{diagramsec5}
    \begin{tikzcd}
        G^{\mathrm{top}}_i(\mathscr{P})
        \arrow[rr, bend left, "\Psi"]
        \arrow[d, "\mathrm{ch}'"]\arrow[r, "\tau_{\ast}\eta'^{\ast}"]& G^{\mathrm{top}}_i(\mathscr{E}_0^{\vee})\arrow[d, "\mathrm{ch}"]\arrow[r]& K^{\mathrm{top}}_i(\mathrm{MF}(\mathscr{E}^\vee, f))\arrow[d, "\mathrm{ch}"]\\
        \widetilde{H}^{\mathrm{BM}}_i(\mathscr{P})\arrow[r, "\tau_*\eta'^*"]\arrow[rr, bend right, "\beta^\diamond"]& \widetilde{H}^{\mathrm{BM}}_i(\mathscr{E}^\vee_0)\arrow[r]& \widetilde{H}^i(\mathscr{E}^\vee, \varphi_f^{\mathrm{inv}}).
    \end{tikzcd}
\end{equation}

\begin{prop}\label{prop522}
There is a Chern character 
map \eqref{def:cherncharactergr} 
which fits into the commutative diagram
\begin{equation}\label{dd1}
    \begin{tikzcd}
         K^{\mathrm{top}}_i(\mathrm{MF}^{\mathrm{gr}}(\mathscr{E}^\vee, f))\arrow[d, "\mathrm{ch}"]\arrow[r, hook, "\Theta"]& K^{\mathrm{top}}_i(\mathrm{MF}(\mathscr{E}^\vee, f))\arrow[d, "\mathrm{ch}"]\\
         \widetilde{H}^i(\mathscr{E}^\vee, \varphi_f[-1])\arrow[r, hook, "\gamma"]& \widetilde{H}^i(\mathscr{E}^\vee, \varphi_f^{\mathrm{inv}}[-2]). 
    \end{tikzcd}
\end{equation}
Here each horizontal arrow is injective over $\mathbb{Q}$, and $\widetilde{H}^i(\mathscr{E}^\vee, \varphi_f^{\mathrm{inv}}[-2])=\widetilde{H}^i(\mathscr{E}^\vee, \varphi_f^{\mathrm{inv}})$
by the 2-periodicity. 
Moreover the following diagram commutes as well for
the modified Chern character maps 
for the immersions of $\mathscr{E}^\vee|_\mathscr{P}$ and $\mathscr{E}^\vee_0$ in $\mathscr{E}^\vee$:
\begin{equation}\label{dd2}
    \begin{tikzcd}
         G^{\mathrm{top}}_i(\mathscr{P})\arrow[r, "j'_*\eta'^*", "\sim"']\arrow[d, "\mathrm{ch}'"]& K^{\mathrm{top}}_i(\mathrm{MF}^{\mathrm{gr}}(\mathscr{E}^\vee, f))\arrow[d, "\mathrm{ch}"]\\
         \widetilde{H}^{\mathrm{BM}}_i(\mathscr{P})\arrow[r, "j'_*\eta'^*", "\sim"']&
         \widetilde{H}^i(\mathscr{E}^\vee, \varphi_f[-1]).
    \end{tikzcd}
\end{equation}
\end{prop}

\begin{proof}
We define the Chern character map (\ref{def:cherncharactergr}) 
to be the unique 
map
such that the diagram \eqref{dd2} commutes. 
Let $\Psi$ be the map in the diagram (\ref{diagramsec5}). 
We have that $\gamma\circ j'_*\eta'^*=\beta^\diamond$ and $\Theta\circ j'_*\eta'^*=\Psi$, so the diagram (\ref{dd1}) commutes as well. 
By Proposition~\ref{prop:forg} below, 
the map $\Theta$ is an embedding into the 
direct summand, hence it is injective. 
\end{proof}

The topological K-group $K^{\mathrm{top}}_{\ast}(\mathrm{MF}(\mathscr{E}^\vee, f))$ is a
module over $K^{\mathrm{top}}_{\ast}(\Spec \mathbb{C}, 0)=\Lambda$, where 
$\Lambda=\mathbb{Z}[\epsilon]$
with $\deg \epsilon=-1$, see Subsection~\ref{subsec:exterior}. 
We have used the following computation of a $\Lambda$-module structure on the topological K-theory of a category of matrix factorizations,
which will be proved in Subsection~\ref{subsec:exterior}. 

\begin{prop}\label{prop:forg}
    The forget-the-grading functor induces an isomorphism 
    \begin{equation}\label{isolambdatheta}
        K^{\mathrm{top}}_{\ast}(\mathrm{MF}^{\mathrm{gr}}(\mathscr{E}^\vee, f))\otimes_{\mathbb{Z}}\Lambda\xrightarrow{\cong} K^{\mathrm{top}}_{\ast}(\mathrm{MF}(\mathscr{E}^\vee, f))
    \end{equation} of $\Lambda$-modules. 
    Thus for an admissible subcategory 
    $\mathbb{M} \subset D^b_{\mathbb{C}^*}(\mathscr{E}^{\vee})$,
    there is an isomorphism of $\Lambda$-modules:
    \[K^{\mathrm{top}}_{\ast}(\mathrm{MF}^{\mathrm{gr}}(\mathbb{M}, f))\otimes_\mathbb{Z}\Lambda\xrightarrow{\cong} K^{\mathrm{top}}_{\ast}(\mathrm{MF}(\mathbb{M}, f)).\]
    \end{prop}

Similarly to (\ref{def:filtration}), we define an increasing filtration \begin{align*}E_\ell K^{\mathrm{top}}_i(\mathrm{MF}^{\mathrm{gr}}(\mathscr{E}^\vee, f))\subset K^{\mathrm{top}}_i(\mathrm{MF}^{\mathrm{gr}}(\mathscr{E}^\vee, f))
\end{align*}
by the following
\[E_\ell K^{\mathrm{top}}_i(\mathrm{MF}^{\mathrm{gr}}(\mathscr{E}^\vee, f)):=\mathrm{ch}^{-1}\left(H^{\geq 2\dim\mathscr{E}^\vee-i-2\ell}(\mathscr{E}^\vee, \varphi_f[-1])\right).\]
We obtain the cycle map 
\[\mathrm{c}\colon\mathrm{gr}_\ell K^{\mathrm{top}}_i(\mathrm{MF}^{\mathrm{gr}}(\mathscr{E}^\vee, f))\to H^{2\dim \mathscr{E}^\vee-i-2\ell}(\mathscr{E}^\vee, \varphi_f[-1])\]
which is an isomorphism by the isomorphism (\ref{cherngraded3}) together with 
the commutative diagram (\ref{dd2}).
The following is a corollary of Proposition \ref{cherninj} and Proposition \ref{prop522}:

\begin{cor}\label{prop53}
The following diagram commutes:
\begin{equation*}
    \begin{tikzcd}
        \mathrm{gr}_\ell G^{\mathrm{top}}_i(\mathscr{P})\arrow[d, "\mathrm{c}"]\arrow[r, "\sim"', "j'_*\eta'^*"]& \mathrm{gr}_{\ell+r} K^{\mathrm{top}}_i(\mathrm{MF}^{\mathrm{gr}}(\mathscr{E}^\vee, f))\arrow[d, "\mathrm{c}"]\arrow[r, hook]& \mathrm{gr}_{\ell+r+1} K^{\mathrm{top}}_i(\mathrm{MF}(\mathscr{E}^\vee, f))\arrow[d, "\mathrm{c}"]\\
        H^{\mathrm{BM}}_{i+2\ell}(\mathscr{P})\arrow[r, "\sim"', "j'_*\eta'^*"]& H^{2\dim \mathscr{X}-i-2\ell}(\mathscr{E}^\vee, \varphi_f[-1])\arrow[r, hook, "\gamma"]& H^{2\dim \mathscr{X}-i-2\ell}(\mathscr{E}^\vee, \varphi_f^{\mathrm{inv}}[-2]).
    \end{tikzcd}
\end{equation*}
\end{cor}

\begin{proof}
The Chern character maps in the diagram (\ref{diagramsec5}) induce the cycle maps $\mathrm{c}$ on the associated graded, see Proposition \ref{propchprime}. 
\end{proof}

\section{Topological K-theory of quasi-BPS categories for quivers with potential}\label{s6}

In this section, we 
give statements of the main results in this paper
on
topological K-theory of quasi-BPS categories
in terms of BPS cohomologies, see Theorem \ref{thm1}. The main step in the proof of Theorem \ref{thm1} is the construction of the cycle map from topological K-theory of quasi-BPS categories to BPS cohomology, see Theorem \ref{thm2}. 
In this section, we prove Theorem~\ref{thm1} 
assuming Theorem~\ref{thm2}. 
The proof of Theorem~\ref{thm2} will be given 
in the next section.

Before we state Theorem \ref{thm1}, we introduce notation related to quasi-BPS categories and BPS sheaves. 
Below $(Q, W)$ is a symmetric quiver with
potential $W$. 
We use the notation of Subsection~\ref{section:prelim}.

\subsection{Sets of partitions}\label{subsec612}
In this subsection, we introduce certain sets of partitions. In the next subsection, we use these sets to define constructible sheaves whose cohomology are the target of the cycle map from topological K-theory of quasi-BPS categories. We also present some exampels of explicit computations of such sets of partitions.

Let $d=(d^j)_{j\in I}$ be a dimension vector
and let $\delta \in M(d)_{\mathbb{R}}^{W_d}$. 
For a cocharacter $\lambda$ of $T(d)$, recall 
the definition of $n_{\lambda}$ in~\eqref{nlambdadef}. 
We introduce the following definition: 
\begin{defn}
For $\lambda$ an antidominant cocharacter of $T(d)$, we define 
\begin{equation}\label{varepsilondeltalambda}
\varepsilon_{\lambda, \delta}:=\begin{cases}
1,\text{ if }n_\lambda/2+\langle \lambda, \delta\rangle\in\mathbb{Z},\\
0, \text{ otherwise}.
\end{cases}
\end{equation}
For a partition $\mathbf{d}=(d_i)_{i=1}^k$ of $d$, we define $\varepsilon_{\mathbf{d}, \delta}=1$ if $\varepsilon_{\lambda, \delta}=1$ for any cocharacter $\lambda$ with associated partition $\mathbf{d}$, and define $\varepsilon_{\mathbf{d}, \delta}=0$ otherwise.
We define $S^d_\delta$ to be the set of partitions $\mathbf{d}=(d_i)_{i=1}^k$ of $d$
such that $\varepsilon_{\mathbf{d}, \delta}=1$. If $\delta=v\tau_d$, we use the notation $S^d_v$ instead of $S^d_{v\tau_d}$.
\end{defn}
We have the following another 
characterization of $\varepsilon_{\mathbf{d}, \delta}$. 
\begin{lemma}\label{lem:epsilon}
Let $\mathbf{d}=(d_i)_{i=1}^k$ be a partition of $d$, and $\lambda$ an antidominant cocharacter 
with associated partition $\mathbf{d}$. 
Let $\theta_i\in \frac{1}{2}M(d_i)$ and $\delta_i \in M(d_i)_{\mathbb{R}}^{W_{d_i}}$ be defined 
by (which are independent of $\lambda$ with associated partition $\mathbf{d}$)
\[\sum_{i=1}^k \theta_i=-\frac{1}{2}R(d)^{\lambda>0}+\frac{1}{2}\mathfrak{g}(d)^{\lambda>0}, \quad 
\sum_{i=1}^k \delta_{i}=\delta.\]
Then we have $\varepsilon_{\mathbf{d}, \delta}=1$ if 
and only if $\langle 1_{d_i}, \theta_i+\delta_i \rangle \in \mathbb{Z}$ for all $i$. 
\end{lemma}
\begin{proof}
    By the definition, we have 
    $\varepsilon_{\mathbf{d}, \delta}=1$ if and only 
    if 
    $\langle \lambda, \sum_{i=1}^k (\theta_i+\delta_i)\rangle \in \mathbb{Z}$
    for all antidominant cocharacter $\lambda$
    with associated partition $(d_i)_{i=1}^k$. 
    Therefore, the claim follows. 
\end{proof}

Below we explain the meaning of the above definition 
in terms of categorical Hall product of quasi-BPS categories. 
Recall the definition of 
$\mathbb{M}(d; \delta)$, $\mathbb{S}(d; \delta)$ from Subsection~\ref{subsec27}. 
\begin{prop}\label{prop:partition}
Let $(d_i)_{i=1}^k$ be a partition 
of $d$ and define $\theta_i$, $\delta_i$ as in 
Lemma~\ref{lem:epsilon}. 
Then the categorical Hall products (\ref{def:Hallprod}), (\ref{def:Hallprod2})
restrict to the functors
\begin{align}\label{hall:restrict}
&m_{\lambda} \colon 
\boxtimes_{i=1}^k\mathbb{M}(d_i; \theta_i+\delta_{i})\to \mathbb{M}(d; \delta), \\
&\notag m_{\lambda} \colon \boxtimes_{i=1}^k \mathbb{S}(d_i; \theta_i+\delta_{i}) \to 
\mathbb{S}(d; \delta). 
\end{align}
\end{prop}
\begin{proof}
Let $\chi_i \in M(d_i)$ be dominant weights such that 
$\chi_i+\rho_i-(\theta_i+\delta_{i}) \in \mathbf{W}(d_i)$. 
Here $\rho_i$ is half the sum of positive weights of $\mathfrak{g}(d_i)$. 
Let $\chi=\sum_{i=1}^k \chi_i$. 
Then we have 
\begin{align*}
    \chi+\rho-\delta=-\frac{1}{2}R(d)^{\lambda>0}+\gamma
\end{align*}
where $\gamma \in \sum_{i=1}^k \mathbf{W}(d_i)$. 
The above element lies in the boundary of $\mathbf{W}(d)$. 
Then applying the argument of~\cite[Propositions 3.5 and 3.6]{P}, 
one concludes that the categorical Hall product
$m_{\lambda}\left(\boxtimes_{i=1}^k \Gamma_{G(d_i)}(\chi_i)\otimes \mathcal{O}_{\X(d_i)}\right)$
is generated by $\Gamma_{G(d)}(\chi') \otimes \mathcal{O}_{\X(d)}$ 
where $\chi'+\rho-\delta \in \mathbf{W}(d)$. 
(Here in loc. cit. and using the notations used there, \cite[Proposition 3.6]{P} is stated 
in the case that the $r$-invariant is bigger than $1/2$, 
but the case when the $r$-invariant is $1/2$ still works).
 \end{proof}

 \begin{remark}\label{rmk:partition}
 The left hand sides in (\ref{hall:restrict}) are non-zero 
 only if $\langle 1_{d_i}, \theta_i+\delta_i \rangle \in \mathbb{Z}$. 
 Therefore by Proposition~\ref{prop:partition}, the condition 
    $(d_i)_{i=1}^k \in S_{\delta}^d$ is a necessary 
     condition of the left hand sides of (\ref{hall:restrict}) to be non-zero. 
     Indeed, this is also a sufficient condition 
     if $X(d_i)^{\circ} \neq \emptyset$ for all $i$. 
 \end{remark}

We next compute the set of partitions $S_{\delta}^d$ in some particular examples. 

\begin{prop}\label{computationsetsdv}
    Let $Q=(I,E)$ be a quiver with an even number of edges between 
    any two different vertices and an odd number of loops at every vertex. 
    Let $(d,v)\in \mathbb{N}^I\times\mathbb{Z}$. 
    The set $S^d_v$ consists of partitions $\mathbf{d}=(d_i)_{i=1}^k$ such that
    \begin{align*}
        v \cdot \frac{\dd_i}{\dd} \in \mathbb{Z} \ \mbox{ for all } 1\leq i\leq k. 
    \end{align*}
    In particular, if $\gcd(v,\dd)=1$, then $S^d_v$ contains only the one term partition of $d$.
\end{prop}

\begin{proof}
Let $d=(d^a)_{a\in I}\in \mathbb{N}^I$.
    Note that $n_\lambda=\langle \lambda, \mathbb{L}^{\lambda>0}_{\X(d)}|_0\rangle$ is even by the assumption of $Q$. Then $\varepsilon_{\mathbf{d},v\tau_d}=1$ if and only if $\langle \lambda, v\tau_d\rangle\in\mathbb{Z}$ for all cocharacters $\lambda$ with associated partition $\mathbf{d}$.
        We write $\lambda=(\lambda^a)_{a\in I}$, where $\lambda^a\colon\mathbb{C}^*\to T(d^a)$ is a cocharacter
\begin{align}\notag
    \lambda^{a}(t)=(t^{m_1}, \ldots, t^{m_1}, t^{m_2}, \ldots, t^{m_2}, \ldots, t^{m_k}),
\end{align}
where $m_i \in \mathbb{Z}$ (independent of $a$) appears $d^a_i$-times, and 
$m_1>\cdots>m_k$. 
Then the condition $\langle \lambda, v \tau_{d} \rangle \in \mathbb{Z}$
is equivalent to 
\[\frac{v}{\dd} \cdot \sum_{i=1}^k m_i \dd_i \in \mathbb{Z}\] for all tuples of integers $(m_i)_{i=1}^k\in\mathbb{Z}^k$
with $m_1>\cdots>m_k$, 
which implies the desired conclusion.
\end{proof}

\begin{lemma}\label{lem:onevertex}
Let $Q$ be a quiver with one vertex and $2e$-loops for some $e \geq 1$. 
Then $S_v^d$ consists of partitions $\mathbf{d}=(d_i)_{i=1}^k$ such that, for all $1\leq i\leq k$, we have 
\begin{align*}
    \frac{1}{2}d_i\left( \sum_{j<i}d_j-\sum_{j>i}d_j \right)+\frac{vd_i}{d} \in \mathbb{Z}. 
\end{align*}
Moreover,
$S_{v}^d=\{d\}$ if and only if $\gcd(d, v)=1$ and $d \not\equiv 2 \pmod 4$, 
    or if $\gcd(d, v)=2$ and $d \equiv 2 \pmod 4$. 
\end{lemma}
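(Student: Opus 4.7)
The strategy is to apply Lemma~\ref{lem:epsilon} to extract the arithmetic condition characterizing $S^d_v$, and then to perform a case analysis on $d \bmod 4$ and the parity of $v$ for the second assertion.

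First, I would compute the weights $\theta_i$ of Lemma~\ref{lem:epsilon} explicitly. Writing $\sigma_i$ for the sum of the standard weights $\beta_a$ in the $i$-th block of $\lambda$, the identifications $R(d)=\mathfrak{gl}(d)^{2e}$ and $\mathfrak{g}(d)=\mathfrak{gl}(d)$ give
\[
-\tfrac{1}{2}R(d)^{\lambda>0}+\tfrac{1}{2}\mathfrak{g}(d)^{\lambda>0}
=-\tfrac{2e-1}{2}\sum_{i=1}^{k}\sigma_i\Big(\sum_{j<i}d_j-\sum_{j>i}d_j\Big),
\]
so $\theta_i$ is the $i$-th summand. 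Using $\langle 1_{d_i},\sigma_i\rangle=d_i$ and $\delta_i=(v/d)\sigma_i$, one obtains
\[
\langle 1_{d_i},\theta_i+\delta_i\rangle=-\tfrac{2e-1}{2}d_i\Big(\sum_{j<i}d_j-\sum_{j>i}d_j\Big)+\frac{v d_i}{d}.
\]
Since $2e-1$ is odd and $A_i:=d_i(\sum_{j<i}d_j-\sum_{j>i}d_j)$ is an integer, the quantities $(2e-1)A_i/2$ and $A_i/2$ differ by $(e-1)A_i\in\mathbb{Z}$; hence $\langle 1_{d_i},\theta_i+\delta_i\rangle\in\mathbb{Z}$ is equivalent to the condition stated in the lemma, proving the first assertion.

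For the second assertion, I would use the identity $\sum_{j<i}d_j-\sum_{j>i}d_j=2\sum_{j<i}d_j+d_i-d$ to rewrite the condition, modulo integers, as
\[
\frac{d_i(d_i-d)}{2}+\frac{v d_i}{d}\in\mathbb{Z}\quad \text{for every }i.
\]
If $d$ is odd, $d_i$ and $d-d_i$ have opposite parities, so the first term is always integral and the condition reduces to $(d/\gcd(d,v))\mid d_i$ for all $i$, which forces the trivial partition if and only if $\gcd(d,v)=1$. If $d=2m$ is even, then $d_i(d_i-d)/2$ is integral exactly when $d_i$ is even, and congruent to $\tfrac{1}{2}$ mod $\mathbb{Z}$ when $d_i$ is odd; correspondingly, the second term must be integral (i.e.\ $d\mid vd_i$) or half-integral (i.e.\ $2vd_i/d$ an odd integer) in the two cases.

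The main obstacle is the case $d\equiv 2\pmod 4$, i.e., $m$ odd. Here the partition $(m,m)$ has odd parts with $2vd_i/d=v$, so it lies in $S^d_v$ whenever $v$ is odd; hence $v$ must be even. Writing $v=2v'$, the half-integer condition $2v'd_i/m\in 1+2\mathbb{Z}$ for odd $d_i$ is impossible since the left side is always even, so every part $d_i$ is forced to be even. Writing $d_i=2a_i$ reduces the surviving condition to $m\mid v'a_i$ for all $i$ in a partition $(a_i)$ of the odd integer $m$; by the already-treated ``$d$ odd'' case, only the trivial partition occurs iff $\gcd(m,v')=1$, equivalently $\gcd(d,v)=2$. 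In the case $d\equiv 0\pmod 4$, the same symmetric partition $(m,m)$ (now with $m$ even) satisfies the condition whenever $v$ is even, so $v$ must be odd; conversely, when $v$ is odd one has $\gcd(v,d)=\gcd(v,m)=1$, and the parity analysis above then forces every even part to equal $d$ (impossible for non-trivial partitions) and precludes any odd part, so $v$ odd is sufficient. Combining the three cases yields the stated dichotomy.
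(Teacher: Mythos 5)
Your derivation of the membership criterion (the first assertion) is correct and essentially equivalent to the paper's: you route the computation through Lemma~\ref{lem:epsilon}, while the paper computes $n_\lambda/2+\langle\lambda,v\tau_d\rangle$ directly for cocharacters with pairwise distinct weights, but it is the same calculation, and your reduction modulo $\mathbb{Z}$ using that $2e-1$ is odd is fine. Your rewriting of the per-part condition as $\frac{d_i(d_i-d)}{2}+\frac{vd_i}{d}\in\mathbb{Z}$ and your treatment of the cases $d$ odd and $d\equiv 2\pmod 4$ are also correct. For the second assertion the paper proceeds differently: it first observes that membership passes to the length-two coarsening $d=d_1+(d_2+\cdots+d_k)$ and then invokes the separate arithmetic Lemma~\ref{lem:decom} about decompositions $(d,v)=(d_1,v_1)+(d_2,v_2)$; your direct residue/parity analysis is a legitimate alternative route.

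However, your case $d\equiv 0\pmod 4$ contains a genuine error. The assertion ``when $v$ is odd one has $\gcd(v,d)=\gcd(v,m)=1$'' is false: $d$ may have odd prime factors dividing $v$. Concretely, take $d=12$, $v=3$: then $v$ is odd but $\gcd(d,v)=3$, and the partition $(4,8)$ lies in $S^{12}_{3}$, since both parts are even and $12\mid 3\cdot 4$, $12\mid 3\cdot 8$; so $S^{12}_{3}\neq\{12\}$, in accordance with the lemma but contradicting your conclusion that ``$v$ odd is sufficient.'' As written, your argument only shows that $S^d_v=\{d\}$ forces $v$ odd (via the partition $(m,m)$), not that it forces $\gcd(d,v)=1$, and the sufficiency direction is proved only under the unjustified identification of ``$v$ odd'' with ``$\gcd(d,v)=1$.'' The repair is short: for necessity, if $g=\gcd(d,v)>1$ and $v$ is odd, then $g$ is odd, so $x=d/g$ is even (because $4\mid d$), it satisfies $d\mid vx$ since $g\mid v$, and hence $(d/g,\,d-d/g)$ is a nontrivial element of $S^d_v$; for sufficiency, assume $\gcd(d,v)=1$ (not merely $v$ odd) and run your parity argument, which then indeed forces every even part to equal $d$ and excludes odd parts. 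With this correction your case analysis does yield the stated dichotomy.
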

\begin{proof}
In the notation of the proof of Proposition~\ref{computationsetsdv}, 
we have $n_{\lambda}/2+\langle \lambda, v\tau_d\rangle \in \mathbb{Z}$
if and only if
\begin{align*}
     \frac{1}{2}m_i (2e-1)d_i\left( \sum_{j<i}d_j-\sum_{j>i}d_j \right)+\frac{vm_i d_i}{d}
     \in \mathbb{Z}
\end{align*}
for all tuples of pairwise distinct integers $(m_i)_{i=1}^k \in \mathbb{Z}^k$, 
which implies the first result.    

By the above description of $S_{v}^d$,
if $d=d_1+\cdots+d_k$ is an element of $S_v^d$, then 
the 2-length partition $d=d_1+d_2'$
with $d_2'=d_2+\cdots+d_k$ is also an element of $S_v^d$. 
Therefore the second claim follows from Lemma~\ref{lem:decom} below. 
\end{proof}
\begin{lemma}\label{lem:decom}
Let $m \in \mathbb{Z}$ be an odd number. 
    For $(d, v) \in \mathbb{Z}^2$ with $d>0$, 
    there is no decomposition 
    $(d, v)=(d_1, v_1)+(d_2, v_2)$ 
    in $\mathbb{Z}^2$ with 
    $d_i>0$
    satisfying 
    \begin{align}\label{decom:d}
\frac{w_1}{d_1}=\frac{w_2}{d_2}, \ 
w_1 :=v_1-\frac{m}{2}d_1 d_2, \ 
w_2 :=v_2+\frac{m}{2}d_1 d_2
    \end{align}
    if and only if
    $\gcd(d, v)=1$ and $d \not\equiv 2 \pmod 4$, 
    or if $\gcd(d, v)=2$ and $d \equiv 2 \pmod 4$.
\end{lemma}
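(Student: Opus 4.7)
My plan is to reduce this to a purely elementary number-theoretic enumeration. First, I would observe that a decomposition satisfying $w_1/d_1 = w_2/d_2$ is parameterized by $a := d_1 \in \{1,\dots,d-1\}$: the constraints $w_1 + w_2 = v_1+v_2 = v$ and $w_1 d_2 = w_2 d_1$ force $w_1 = va/d$, and hence
\[
v_1 \;=\; \frac{va}{d} + \frac{m\,a(d-a)}{2}, \qquad v_2 = v - v_1,
\]
so the existence of a valid decomposition is equivalent to the existence of $a \in \{1,\dots,d-1\}$ for which this $v_1$ lies in $\mathbb{Z}$. Since $m$ is odd, writing $m=2t+1$ absorbs the integer $ta(d-a)$, so the condition is equivalent to
\[
\frac{va}{d} + \frac{a(d-a)}{2}\;\in\;\mathbb{Z},
\qquad\text{i.e.}\qquad 2d\,\bigm|\,2va + a(d-a)\,d.
\]

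Since $d$ divides $a(d-a)d$ automatically, the condition splits cleanly into a divisibility part $d\mid 2va$ and, given that, a parity part: $k + a(d-a)$ must be even where $k := 2va/d$.

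The core of the proof is then the case analysis, organized by $g=\gcd(d,v)$ and by the parity of $d$. Writing $d=gd_0$, $v=gv_0$ with $\gcd(d_0,v_0)=1$, the divisibility $d\mid 2va$ becomes $d_0\mid 2v_0 a$; since $\gcd(d_0,v_0)=1$ this is $d_0\mid a$ when $d_0$ is odd, and $(d_0/2)\mid a$ when $d_0$ is even (forcing $v_0$ odd). For each such $a$ I would then compute $k = v_0 \cdot (a/(d_0/2))$ or $k = 2v_0(a/d_0)$ and check the parity condition $k+a(d-a)\equiv 0\pmod 2$, using that $a(d-a)$ is always even when $d$ is odd, and has the same parity as $a$ (equivalently $d-a$) when $d$ is even. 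After enumerating the sub-cases (i) $d$ odd; (ii) $d$ even with $v$ odd, split further by whether $d\equiv 2\pmod 4$ or $d\equiv 0\pmod 4$; (iii) $d,v$ both even, split further by whether $d/g$ is odd or even, the conclusion is that a valid $a$ fails to exist exactly in two scenarios: $g=1$ with $d\not\equiv 2\pmod 4$, and $g=2$ with $d\equiv 2\pmod 4$.

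The main obstacle is simply keeping the case analysis organized without missing sub-cases; the bookkeeping is the only subtle part, and the correct parameter to stratify on turns out to be the pair $(g,\;\alpha)$ where $\alpha=v_2(d)$ is the $2$-adic valuation, together with the relative $2$-adic valuations of $d$ and $v$. I would conclude by verifying a few small sanity checks, e.g.\ $(d,v)=(2,1)$ admits $(1,1)+(1,0)$ (here $g=1$, $d\equiv 2\pmod 4$, consistent with decomposition existing); $(d,v)=(2,2)$ with $g=2$, $d\equiv 2\pmod 4$ admits none; and $(d,v)=(6,1)$ admits $(3,\cdot)+(3,\cdot)$ via $a=3$, matching the prediction of the case analysis.
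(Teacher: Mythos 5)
Your reduction is correct, and it takes a genuinely different (and arguably more systematic) route than the paper's. You parameterize decompositions by the single integer $a := d_1 \in \{1,\dots,d-1\}$ and, using that $m$ is odd, boil the existence question down to a clean integrality condition $\tfrac{va}{d}+\tfrac{a(d-a)}{2}\in\mathbb{Z}$, equivalently $2d \mid 2va + a(d-a)d$, which you correctly split as $d \mid 2va$ together with the parity of $k + a(d-a)$ for $k := 2va/d$. I checked your case analysis (stratified by $g=\gcd(d,v)$ and the $2$-adic data of $d_0 = d/g$); it does land exactly on the two exceptional scenarios, matching the lemma. The paper instead rescales to the integral relation $(2d,2v)=(2d_1,2w_1)+(2d_2,2w_2)$, writes $(d,v)=k(d_0,v_0)$ with $(d_0,v_0)$ primitive, and disposes of $k\geq 3$ immediately by an explicit decomposition before handling $k=1$ and $k=2$ by hand via a step parameter $j$. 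Your approach buys uniformity (one integrality criterion analyzed in all cases) and makes the role of $m$'s parity explicit and early; the paper's approach buys a very quick dispatch of $\gcd(d,v)\geq 3$ at the cost of treating the remaining cases somewhat ad hoc. Both are elementary and valid; yours is a reasonable and self-contained alternative. The only caution is that, as written, you state the enumeration plan rather than carry it through — but the plan is sound and I verified that carrying it out yields exactly the stated dichotomy.
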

\begin{proof}
Note that there is a decomposition (\ref{decom:d})
if and only if we have the decomposition in $\mathbb{Z}^2$:
\begin{align}\label{decom:d2}
    (2d, 2v)=(2d_1, 2w_1)+(2d_2, 2w_2)
\end{align}
such that 
$w_1/d_1=v/d$ and 
$v_1=w_1+m d_1 d_2/2 \in \mathbb{Z}$. 
We write $(d, v)=k(d_0, v_0)$ for $k>0$ and $(d_0, v_0) \in \mathbb{Z}^2$
such that $(d_0, v_0)$
is coprime. If $k\geq 3$, there is always
a decomposition (\ref{decom:d2})
by $(2d_1, 2w_1)=4(d_0, v_0)$, 
$(2d_2, 2w_2)=(2k-4)(d_0, v_0)$. 
So we focus on the case that $k=1, 2$. 

Suppose that $k=1$. 
Then the right hand side of (\ref{decom:d2}) 
has divisibility $2$, so if there is a decomposition (\ref{decom:d2})
then 
$(2d_i, 2w_i)=(d, v)$. 
It follows that $d$ is even and $v$ is odd. 
Then we have  
\begin{align*}v_1=\frac{v}{2}+\frac{md^2}{8}
\end{align*}
and it is an integer if and only if $d \equiv 2 \pmod 4$. 

Suppose that $k=2$ and there is a decomposition (\ref{decom:d2}). 
Then there is $1\leq j\leq 3$ 
such that $(2d_1, 2w_1)=j(d_0, w_0)$
and $(2d_2, 2w_2)=(4-j)(d_0, w_0)$. We have 
\begin{align*}v_1=\frac{jv_0}{2}+\frac{j(4-j)md_0^2}{8}
\end{align*}
and it is an integer only if $d_0$ is even. Conversely, if $d_0$ is even, 
then $v_1 \in \mathbb{Z}$ for $j=2$. 
\end{proof}

\begin{prop}\label{prop:delta}
For any symmetric quiver $Q=(I, E)$, there is $\delta \in M(d)_{\mathbb{R}}^{W_d}$ such that 
$S_{\delta}^d=\{d\}$. 
\end{prop}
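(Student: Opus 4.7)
\textbf{Plan for the proof of Proposition~\ref{prop:delta}.}

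The plan is to reduce the condition $\varepsilon_{\mathbf{d},\delta}=1$ to a system of linear congruences on $\delta$, then separate the non-trivial partitions of $d$ into two classes and handle each by a different mechanism. Writing $\delta=\sum_{i\in I}c_i\sigma_{d^i}$ and evaluating $n_\lambda/2+\langle\lambda,\delta\rangle$ on an antidominant $\lambda$ with associated partition $\mathbf{d}=(d_l)_{l=1}^k$ and weights $m_1<\cdots<m_k$, one checks that the expression equals $\sum_{l} m_l\,(R_l+L_l(\delta))$, where $L_l(\delta):=\langle 1_{d_l},\delta\rangle$ and $R_l\in\tfrac{1}{2}\mathbb{Z}$ depends only on $(Q,\mathbf{d})$ through the formulas of Subsection~\ref{subsec612}. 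Since the differences of strictly increasing integer tuples $(m_l)$ span all of $\mathbb{Z}^k$, the condition $\varepsilon_{\mathbf{d},\delta}=1$ is equivalent to $R_l+L_l(\delta)\in\mathbb{Z}$ for every $l$; in particular $\{d\}\in S^d_\delta$ iff $n:=\langle 1_d,\delta\rangle\in\mathbb{Z}$.

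Call $\mathbf{d}$ \emph{degenerate} if every $d_l$ is a rational multiple of $d$. Writing $d=p\,d_0$ with $d_0$ primitive, the non-trivial degenerate partitions of $d$ correspond bijectively to non-trivial partitions of the positive integer $p$. For a non-degenerate $\mathbf{d}$, some $(d_l^i)_{i\in I}$ is not proportional to $(d^i)_{i\in I}$, so the linear functional $\delta\mapsto L_l(\delta)$ is not constant on the affine hyperplane $H_n:=\{\delta:\langle 1_d,\delta\rangle=n\}$ for any $n\in\mathbb{Z}$. Consequently the locus $\{\delta\in H_n:\varepsilon_{\mathbf{d},\delta}=1\}$ is a countable union of proper affine subspaces of $H_n$, hence has measure zero.

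For a degenerate $\mathbf{d}=(k_l\,d_0)$, $L_l(\delta)=(k_l/p)\,n$ depends on $\delta$ only through $n$, so $\varepsilon_{\mathbf{d},\delta}$ is determined by $n$ alone. A direct computation gives $R_l=\tfrac{K\,k_l}{2}(s_{l-1}+s_l-p)$, where $s_l=k_1+\cdots+k_l$ and $K:=\dim R(d_0)-\dim\mathfrak{g}(d_0)$. The problem of finding $n\in\mathbb{Z}$ so that $\varepsilon_{\mathbf{d},n\tau_d}=0$ for every non-trivial degenerate $\mathbf{d}$ thus depends only on $p$ and $K$, in exact parallel with the one-vertex case treated in Lemma~\ref{lem:onevertex} (where $K=2e-1$). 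When $K$ is even, any $n$ with $\gcd(n,p)=1$ works: then $R_l\in\mathbb{Z}$ while $nk_l/p\notin\mathbb{Z}$ for $0<k_l<p$. When $K$ is odd, the case split on $p\bmod 4$ from the proof of Lemma~\ref{lem:onevertex}, combined with Lemma~\ref{lem:decom}, produces the required $n$.

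Fixing such an $n$, the finite union over non-trivial non-degenerate $\mathbf{d}$ of the bad loci in $H_n$ is still measure zero; if $\#I=1$ there are no non-degenerate partitions, and if $\#I\geq 2$ then $H_n$ has positive dimension, so an admissible $\delta\in H_n$ exists outside this union. By construction such $\delta$ satisfies $S^d_\delta=\{d\}$. The main obstacle is the number-theoretic step for the degenerate partitions, where one must ensure that the finite collection of ``bad'' arithmetic progressions in $\mathbb{Z}$ does not cover $\mathbb{Z}$; this is precisely the content and crux of Lemma~\ref{lem:onevertex}.
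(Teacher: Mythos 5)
Your proposal follows essentially the same strategy as the paper's proof, and it is correct. Both arguments split the non-trivial partitions of $d$ into those proportional to $d$ (your ``degenerate'' partitions, corresponding to partitions of $m=p$ after writing $d=p\,d_0$ with $d_0$ primitive) and the rest; both eliminate the non-degenerate ones by choosing $\delta$ outside a countable union of proper affine subspaces of the hyperplane $\{\langle 1_d,\delta\rangle = n\}$ (the paper says ``very general $\delta'$''); and both reduce the degenerate case to the single-vertex number-theoretic statement, whose crux is Lemma~\ref{lem:decom} packaged as Lemma~\ref{lem:onevertex}. Your parametrization by $(p,K)$ with $K=\dim R(d_0)-\dim\mathfrak{g}(d_0)$ is exactly the paper's $e=K+1$ loops.

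A minor point in your favour: by recasting $\varepsilon_{\mathbf{d},\delta}=1$ as the block-wise linear congruences $R_l+L_l(\delta)\in\mathbb{Z}$ (which is the content of Lemma~\ref{lem:epsilon}) and then, for a non-degenerate $\mathbf{d}$, exhibiting a single index $l$ with $d_l$ not proportional to $d$ so that $L_l$ is already nonconstant on $H_n$, you sidestep a subtlety in the paper's phrasing: the paper asserts ``$H_\lambda=\mathbb{R}^{|I|-1}$ iff the associated partition is proportional to $d$,'' which as stated is only correct if read as ``for \emph{all} cocharacters with that associated partition,'' since for individual $\lambda$ with a non-proportional partition the functional $\sum_l\lambda_l L_l$ can still vanish identically on $H_n$ (e.g.\ when the $L_l$ satisfy a linear relation matched by the $\lambda_l$). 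Your formulation makes the genericity step cleanly independent of any choice of cocharacter. The explicit formula $R_l=\tfrac{Kk_l}{2}(s_{l-1}+s_l-p)$ is also correct and matches the one implicit in the paper's reduction and in the proof of Lemma~\ref{lem:onevertex} (where $K=2e-1$ is odd and can be replaced by $1$ modulo $\mathbb{Z}$ when divided by $2$). In short: same proof, slightly more explicit bookkeeping.
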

\begin{proof}
    The condition $\{d\} \in S_{\delta}^d$ is equivalent to that 
    $\langle 1_d, \delta \rangle \in \mathbb{Z}$. 
    In the case that $\lvert I \rvert=1$, then $\delta=v\tau_d$ for some $v \in \mathbb{Z}$. 
    By Proposition~\ref{computationsetsdv} and Lemma~\ref{lem:onevertex}, we have 
    $S_{v}^d=\{d\}$ for either $v=1$ or $v=2$. 
    
    In the case that $\lvert I \rvert \geq 2$, 
    we first take $\delta \in M(d)_{\mathbb{R}}^{W_d}$
    such that $v:=\langle 1_d, \delta \rangle \in \mathbb{Z}$, and set $\delta'=\delta-v\tau_d$. 
    Then $\langle 1_d, \delta' \rangle =0$. 
Note that the space of 
    $\delta' \in M(d)_{\mathbb{R}}^{W_d}$ satisfying $\langle 1_d, \delta' \rangle=0$
    is $\mathbb{R}^{\lvert I \rvert-1}(:=M(d)_{\mathbb{R}, 0}^{W_d})$, hence uncountable. 
Therefore for a very general choice of $\delta'$, 
we have $n_{\lambda}/2+\langle \lambda, \delta' \rangle \notin \mathbb{Z}$
for any $\lambda$ such that the hyperplane 
\begin{align*}H_{\lambda}:=\{x\in M(d)_{\mathbb{R}, 0}^{W_d}: \langle \lambda, x \rangle=0\} 
\subset M(d)_{\mathbb{R}, 0}^{W_d}=\mathbb{R}^{\lvert I \rvert-1}
\end{align*}
is of codimension one. We have 
$H_{\lambda}=\mathbb{R}^{\lvert I \rvert-1}$ if and only 
if the associated partition 
$d=d_1+\cdots+d_k$ is proportional to $d$, i.e. 
by writing $d=m d_0$ with $d_0$ a primitive dimension 
vector and $m \in \mathbb{Z}$, we have $d_i=m_i d_0$
where $m=m_1+\cdots+m_k$ is a partition of $m$. 
Then we have 
\begin{align}\label{compute:lambda}
    \frac{n_{\lambda}}{2}+\langle \lambda, \delta \rangle 
    =\frac{1}{2}\left\langle \nu, (R(d_0)-\mathfrak{g}(d_0))
    \otimes \mathrm{End}(W)^{\nu >0} \right\rangle +\langle \nu, v\tau_m
    \rangle. 
\end{align}
Here $W$ is a $m$-dimensional vector space 
and $\nu$ is an antidominant cocharacter of the maximal 
torus of $GL(W)$ with associated partition $(m_i)_{i=1}^k$. 
Therefore, we are reduced to the same computation
for the one-vertex case
with $e$-loops, where 
$e=\dim R(d_0)-\dim \mathfrak{g}(d_0)+1$,
and 
(\ref{compute:lambda}) is not an integer
for any such $\lambda$ for $v=1$ if $e$ is odd, for $v=1$ if $e$ is even and  
$m \not\equiv 2 \pmod 4$, for $v=2$ if 
$e$ is even and $m\equiv 2 \pmod 4$. 
\end{proof}

\subsection{BPS sheaves}\label{sub613}
Let us consider the stack of $Q$-representations of 
dimension $d$ and its good moduli space:
\[\pi_d\colon \X(d):=R(d)/G(d)\to X(d):=R(d)\ssslash G(d).\]
Recall that the potential $W$ gives a 
function $\Tr W \colon \X(d) \to \mathbb{C}$. 
It factors through the good moduli space map
\begin{align*}
\Tr W \colon \X(d) \stackrel{\pi_d}{\to} X(d) \stackrel{\Tr W}{\to} \mathbb{C}. 
\end{align*}
The affine variety $X(d)$ parametrizes 
semisimple $Q$-representations. 
Let $X(d)^{\circ} \subset X(d)$ 
be the open subset consisting of 
simple $Q$-representations. 
Following~\cite{DM}, the BPS sheaf
is defined as follows: 
\begin{defn}(\cite{DM})
The BPS sheaf $\mathcal{BPS}_d\in \mathrm{Perv}(X(d))$ is defined by 
\begin{align*}\mathcal{BPS}_d:=\begin{cases}
     \varphi_{\mathrm{Tr}\,W} \mathrm{IC}_{X(d)}[-1],\text{ if }X(d)^{\circ}\neq \emptyset,\\
     0,\text{ if }X(d)^{\circ}=\emptyset.
 \end{cases}
 \end{align*}
 \end{defn}
Recall the definition of monodromy invariant 
vanishing cycle in Definition~\ref{def:moninv}. 
We define the \textit{monodromy invariant BPS sheaf}
as follows: 
\begin{defn}
We define $\mathcal{BPS}_d^{\rm{inv}} \in D^b(\mathrm{Sh}_{\mathbb{Q}}(X(d)))$ to be 
\begin{align*}\mathcal{BPS}_d^{\rm{inv}}:=\begin{cases}
     \varphi^{\rm{inv}}_{\mathrm{Tr}\,W} \mathrm{IC}_{X(d)}[-1],\text{ if }X(d)^{\circ}\neq \emptyset,\\
     0,\text{ if }X(d)^{\circ}=\emptyset.
 \end{cases}
 \end{align*}

\end{defn}


We define some direct sums of BPS sheaves 
associated with partitions of $d$. 
For a partition $A=(d_i)_{i=1}^k$ of $d$, its length is defined to be $\ell(A):=k$. We write
\begin{align*}\{d_1, \ldots, d_k\}=\{e_1,\ldots, e_s\}, \ 
e_i \neq e_j \mbox{ for } i\neq j
\end{align*}
and that, for each $1\leq i\leq s$, there are $m_i$ elements in $\{d_1, \ldots, d_k\}$ equal to $e_i$. 
We define the following maps, 
given by the direct sums of semisimple 
$Q$-representations  
\begin{align*}\oplus_i\colon X(e_i)^{\times m_i}\to X(m_ie_i), \
\oplus'\colon \times_{i=1}^s X(m_ie_i)\to X(d).
\end{align*}
The above maps are finite maps.
We define the following perverse sheaves: 
\begin{align}\label{BPSAsheaf}
    \notag\mathrm{Sym}^{m_i}\big(\mathcal{BPS}_{e_i}\big)&:=(\oplus_{i})_{*}\left(\mathcal{BPS}_{e_i}^{\boxtimes m_i}\right)^{\mathfrak{S}_{m_i}}\in \mathrm{Perv}(X(m_ie_i)),\\
    \mathcal{BPS}_{A}&:=\oplus'_{\ast}\left(\boxtimes_{i=1}^s \mathrm{Sym}^{m_i}(\mathcal{BPS}_{e_i})\right)\in \mathrm{Perv}(X(d)).
\end{align}

\begin{defn}\label{def:BPS:sum}
For $\delta \in M(d)_{\mathbb{R}}^{W_d}$, 
we define the following direct sum of symmetric 
products of BPS sheaves:
\begin{align}\label{defBPSddelta}
\mathcal{BPS}_{d,\delta}&:=\bigoplus_{A\in S^d_\delta}\mathcal{BPS}_A[-\ell(A)]\in D^b(\mathrm{Sh}_{\mathbb{Q}}(X(d))). 
\end{align}
For $\delta=v \tau_d$, we write 
$\mathcal{BPS}_{d, v}:=\mathcal{BPS}_{d,v\tau_d}$ for simplicity. 
\end{defn}
As we will see in \eqref{DMW}, the complexes $\mathcal{BPS}_A$ and $\mathcal{BPS}_{d,\delta}$
are direct summands of $\pi_{d\ast}\varphi_{\Tr W}\mathrm{IC}_{\X(d)}[-1]$ preserved by $1-\mathrm{T}$. 
We define the monodromy invariant versions as follows: 
\begin{defn}
We define 
$\mathcal{BPS}_A^{\mathrm{inv}}, \mathcal{BPS}_{d,\delta}^{\mathrm{inv}}\in D^b(\mathrm{Sh}_{\mathbb{Q}}(X(d)))$ by the exact triangles:
\begin{align*}
    &\mathcal{BPS}_A^{\mathrm{inv}}[-1]\to \mathcal{BPS}_A\xrightarrow{1-\mathrm{T}} \mathcal{BPS}_A\to \mathcal{BPS}_A^{\mathrm{inv}},\\
    &\mathcal{BPS}_{d,\delta}^{\mathrm{inv}}[-1]\to \mathcal{BPS}_{d,\delta}\xrightarrow{1-\mathrm{T}} \mathcal{BPS}_{d,\delta}\to \mathcal{BPS}_{d,\delta}^{\mathrm{inv}}.
\end{align*}
\end{defn}

\begin{remark}\label{rmk:alt}
Alternatively, by the Thom-Sebastiani theorem
we have another description. Let $\mathrm{IC}_A$ be the sheaf defined similarly to $\mathcal{BPS}_A$ 
without potential. Then 
we have, see Lemma~\ref{lem:Q=P}
\begin{align*}
\mathcal{BPS}_A=\varphi_{\mathrm{Tr}\,W}\mathrm{IC}_A[-1], \ 
\mathcal{BPS}_A^{\rm{inv}}=\varphi_{\Tr W}^{\rm{inv}} 
\mathrm{IC}_A[-1]. 
\end{align*}
If $A$ is a trivial partition $A=\{d\}$, then 
$\mathcal{BPS}_A^{\rm{inv}}=\mathcal{BPS}_d^{\rm{inv}}$
by definition. 
\end{remark}

\subsection{Statement of the main theorems}\label{subsec61}
In this subsection, we state the main theorems of the paper and we note a few corollaries.
We use the notation from the previous subsection. 

Let us consider the Chern character map \eqref{cmap:quot}:
 \begin{align}\label{chsdv}
 \mathrm{ch}\colon K^{\mathrm{top}}_i(\mathbb{S}(d; \delta))
 \to K^{\mathrm{top}}_i(\mathrm{MF}(\X(d), \mathrm{Tr}\,W))\to \widetilde{H}^i(\X(d), \varphi_{\mathrm{Tr}\,W}^{\mathrm{inv}}).\end{align}
Here the first map is injective over $\mathbb{Q}$
by Proposition~\ref{cherninj}. 
Recall \eqref{deffiltrationKsg} and define the filtration:
\[E_\ell K^{\mathrm{top}}_i(\mathbb{S}(d; \delta)):=K^{\mathrm{top}}_i(\mathbb{S}(d; \delta))\cap E_\ell K^{\mathrm{top}}_i(\mathrm{MF}(\X(d), \mathrm{Tr}\,W))\subset K^{\mathrm{top}}_i(\mathbb{S}(d; \delta)).\]
There is an injective cycle map on the associated graded pieces:
\begin{align}\label{cyclesdv}
    \mathrm{c}\colon \mathrm{gr}_\ell K^{\mathrm{top}}_i(\mathbb{S}(d; \delta))&\to H^{2 \dim\X(d)-2\ell-i}(\X(d), \varphi_{\mathrm{Tr}\,W}^{\mathrm{inv}})\\
    &\notag=
    H^{\dim \X(d)-2\ell-i}(\X(d), \varphi_{\mathrm{Tr}\,W}^{\mathrm{inv}}\,\mathrm{IC}_{\X(d)}),
\end{align}
where we used $\varphi_{\mathrm{Tr}\,W}\mathrm{IC}_{\X(d)}=\varphi_{\mathrm{Tr}\,W}[\dim \X(d)]$ in the computation 
of the cohomological degree.
The following is the main result of this section:

\begin{thm}\label{thm1}
The cycle map \eqref{cyclesdv} induces an isomorphism for $i, \ell\in\mathbb{Z}$:
\begin{align}\label{cyclesdw0}
 \mathrm{c}\colon \mathrm{gr}_\ell K_i^{\mathrm{top}}\left(\mathbb{S}(d; \delta)\right)_{\mathbb{Q}}\xrightarrow{\cong}H^{\dim\X(d)-2\ell-i}(X(d), \mathcal{BPS}_{d, \delta}^{\mathrm{inv}}[1]). 
\end{align}
\end{thm}
Note that, by Proposition~\ref{prop:delta}, there exists $\delta$ satisfying 
$S_{\delta}^d=\{d\}$. 
In this case, 
 we regard $\mathbb{S}(d; \delta)$ as a categorification of the monodromy invariant BPS cohomology of $(Q,W)$. 
The main part of proving Theorem \ref{thm1} is the construction of a cycle map from the topological K-theory of quasi-BPS categories to BPS cohomologies, 
whose proof will be given in the next section.

\begin{thm}\label{thm2}
Let $(Q, W)$ be a symmetric quiver with 
potential. 
Let $d\in \mathbb{N}^I$, $\delta\in M(d)^{W_d}_\mathbb{R}$, and $i, \ell\in\mathbb{Z}$.
The cycle map \eqref{cyclesdv} induces a map:  
\begin{equation}\label{cyclesdw2}
\mathrm{c}\colon \mathrm{gr}_\ell K_i^{\mathrm{top}}\left(\mathbb{S}(d; \delta)\right)\to 
H^{\dim \X(d)-2\ell-i}\left(X(d), \mathcal{BPS}_{d,\delta}^{\mathrm{inv}}[1]\right).
\end{equation} 
\end{thm}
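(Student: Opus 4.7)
My plan is to prove Theorem~\ref{thm2} by showing that the cycle map \eqref{cyclesdv} lands inside the direct summand $H^\bullet(X(d), \mathcal{BPS}_{d,\delta}^{\mathrm{inv}}[1])$ of the full monodromy invariant vanishing cycle cohomology of $\X(d)$. The starting point is the Beilinson--Bernstein--Deligne--Gabber (BBDG) decomposition applied to the proper pushforward $\pi_{d,*}\varphi_{\mathrm{Tr}\,W}\mathrm{IC}_{\X(d)}[-1]$, which by Davison--Meinhardt exhibits $\mathcal{BPS}_A$ as a (monodromy-preserved) perverse direct summand for each partition $A$ of $d$. Restricting to $A \in S_\delta^d$ and taking the monodromy invariants identifies $\mathcal{BPS}_{d,\delta}^{\mathrm{inv}}$ as the desired summand of the target, so the task becomes showing that the cycle map on the K-theory of $\mathbb{S}(d;\delta)$ is orthogonal to every $\mathcal{BPS}_A$-summand with $A \notin S_\delta^d$.

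For each antidominant cocharacter $\lambda$ of $T(d)$ with associated partition $(d_a)_{a=1}^k$, I would construct coproduct-like maps $\Delta_\lambda$ on both sides using the attracting diagram \eqref{diagram:natural} applied to categories of matrix factorizations, with target $\boxtimes_{a=1}^k K^{\mathrm{top}}(\mathrm{MF}(\X(d_a), \mathrm{Tr}\,W))$ on the K-theoretic side and the analogous box product of vanishing cycle cohomologies on the cohomological side. Using the Grothendieck--Riemann--Roch theorem for matrix factorizations (Theorem~\ref{GRRMFtop}) together with the functoriality of Proposition~\ref{functoriality:Ktop}, I would verify that the cycle map intertwines these two versions of $\Delta_\lambda$ up to the standard Todd-class twist. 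The projection onto each $\mathcal{BPS}_A$-summand in the BBDG decomposition can then be realized, after symmetrization over automorphisms of the partition, as a composition of $\Delta_\lambda$ with the elementary BPS cycle maps on the factors $X(d_a)$.

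The main obstacle, and the crucial geometric input, is to prove that $\Delta_\lambda$ vanishes rationally on $K^{\mathrm{top}}_\bullet(\mathbb{S}(d;\delta))_{\mathbb{Q}}$ whenever $\varepsilon_{\lambda,\delta}=0$. By Lemma~\ref{lemma:alt}, the generators of $\mathbb{S}(d;\delta)$ have $T(d)$-weights at every cocharacter-fixed point inside the interval of length $n_\lambda$ centered at $\langle\lambda,\delta\rangle$. After the operations defining $\Delta_\lambda$, the only classes that can land in an integer-graded (i.e.\ $1_{d_a}$-equivariant) summand of $K^{\mathrm{top}}(\mathrm{MF}(\X(d_a), \mathrm{Tr}\,W))$ come from the endpoints $\pm n_\lambda/2 + \langle\lambda,\delta\rangle$; when $\varepsilon_{\lambda,\delta}=0$ these endpoints are not integers, so no such class survives and the image vanishes. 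This is the analogue for matrix factorizations of the weight and Weyl-symmetry computations of \cite{P, PTzero, PTquiver}. Combining this vanishing with the previously established compatibility forces the image of the cycle map to annihilate all $\mathcal{BPS}_A$-summands with $A \notin S_\delta^d$, and the stated shift $\dim\X(d)-2\ell-i$ in \eqref{cyclesdw2} follows by bookkeeping of the cohomological degrees along the BBDG decomposition.
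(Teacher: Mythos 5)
Your overall strategy is the same as the paper's: reduce to showing the cycle map image is orthogonal to each $\mathcal{BPS}_A$-summand with $A\notin S^d_\delta$ in the Davison--Meinhardt decomposition of $\pi_{d\ast}\varphi_{\mathrm{Tr}\,W}\mathrm{IC}_{\X(d)}[-1]$, and detect this by comparing a coproduct-like operation $\Delta_\lambda$ on K-theory and on cohomology via GRR for matrix factorizations. However, the key vanishing step, as you've stated it, has a genuine gap. You claim that $\Delta_\lambda$ (taken to be the restriction $a_\lambda^*$ followed by the Hall-coproduct-like identification with a box product) ``vanishes rationally on $K^{\mathrm{top}}_\bullet(\mathbb{S}(d;\delta))_\mathbb{Q}$ whenever $\varepsilon_{\lambda,\delta}=0$,'' with the justification that the relevant classes come from the interval endpoints $\pm n_\lambda/2+\langle\lambda,\delta\rangle$, which are then non-integers. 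This is not correct: when $\varepsilon_{\lambda,\delta}=0$ the weight window $S_{\lambda,\delta}$ still contains $n_\lambda$ consecutive integers, and the K-theoretic pullback $a_\lambda^*$ of a class in $\mathbb{S}(d;\delta)$ is generally nonzero; only the integer endpoint is absent. The vanishing is not of the coproduct itself, but of its image after composing with a perverse truncation $\mathrm{S}$ (which kills $h$-degrees $0,\dots,c_\lambda-1$). The mechanism the paper uses is an Atiyah--Hirzebruch filtration bound: the $\lambda$-weights lying in an interval of $n_\lambda+\varepsilon_{\lambda,\delta}$ integers implies divisibility by $(q-1)$ at most $(n_\lambda+\varepsilon_{\lambda,\delta}-1)$ times (cf.\ Example~\ref{exam:BGL}), which under $q\mapsto e^h$ bounds the $h$-degree of $\mathrm{c}(a_\lambda^*(-))$ by $c_{\lambda,\delta}-1$; when $\varepsilon_{\lambda,\delta}=0$ this bound equals $c_\lambda-1$, so the truncation $\mathrm{S}$ kills the whole image. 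Your endpoint heuristic does not by itself make this chain of implications.

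There are two further omissions you should address. First, for partitions of length $k\geq 3$ the naive cocharacter choice gives $n_\lambda>c_\lambda$, so the bound $c_{\lambda,\delta}-1$ is not automatic; the paper's Proposition~\ref{prop06} picks $\lambda$ by a nontrivial perturbation $\lambda_1\gg\lambda_2>\cdots>\lambda_k$ to achieve the required order of divisibility by $f_\lambda$, and also to ensure $\varepsilon_{\lambda,\delta}=\varepsilon_{\mathbf d,\delta}$. Your proposal does not engage with this at all. Second, you omit the reduction (Proposition~\ref{reduce}, via Kn\"{o}rrer periodicity and $(Q^\sharp,W^\sharp)$) to quivers with at least two loops per vertex; this hypothesis is needed to guarantee $c_\lambda>0$ for all proper $\lambda$ (Lemma~\ref{clambdazero}), without which the perverse-truncation picture degenerates. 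Finally, your claim that the projection onto each $\mathcal{BPS}_A$ factors through $\Delta_\lambda$ ``after symmetrization'' is the content of Propositions~\ref{prop310} and~\ref{prop310bis}, which themselves require an \'etale-slice local computation to show the induced map $P_A\to P_A$ is nonzero (hence an isomorphism of simple perverse sheaves); this is a real step and cannot be taken for granted.
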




We note that the cycle map (\ref{cyclesdw2}) is injective, 
as (\ref{cyclesdv}) is injective
and the map (\ref{cyclesdw2}) is 
a map to the direct summand in the right hand side 
of (\ref{cyclesdv}). 
We mention a numerical corollary of Theorems \ref{thm1} and \ref{thm2}. 
Before this, we have the following lemma: 
\begin{lemma}\label{thm2inj}
There is an equality
    \begin{align}\label{dim:equal}
\dim_{\mathbb{Q}}K_i^{\rm{top}}(\mathbb{S}(d; \delta))_{\mathbb{Q}}
=\dim_{\mathbb{Q}}\mathrm{gr}_{\ast}K_i^{\rm{top}}(\mathbb{S}(d; \delta))_{\mathbb{Q}}. 
\end{align}
\end{lemma}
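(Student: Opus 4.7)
\begin{pf}

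\textbf{Proof proposal.} The plan is to observe that the two required inputs---finite dimensionality of $K_i^{\mathrm{top}}(\mathbb{S}(d;\delta))_{\mathbb{Q}}$ and injectivity of the Chern character map over $\mathbb{Q}$---both follow from Proposition \ref{cherninj} applied to the quasi-BPS category $\mathbb{S}(d;\delta)$, and then to deduce the dimension equality from the general fact that an exhaustive separated filtration of a finite-dimensional vector space stabilizes at both ends.

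More precisely, by Proposition \ref{prop226}, the magic category $\mathbb{M}(d;\delta)$ embeds as an admissible subcategory of $D^b(\X^{\alpha f}(d)^{\mathrm{ss}})$, where $\X^{\alpha f}(d)^{\mathrm{ss}}$ is a smooth quasi-projective variety. This induces an admissible embedding of $\mathbb{S}(d;\delta)$ into $\mathrm{MF}(\X^{\alpha f}(d)^{\mathrm{ss}}, \Tr W \circ r)$, so that Proposition \ref{cherninj} applies. We conclude that the Chern character map
\begin{align*}
\mathrm{ch}\colon K^{\mathrm{top}}_i(\mathbb{S}(d;\delta))_{\mathbb{Q}}\to \widetilde{H}^i(\X(d), \varphi^{\mathrm{inv}}_{\Tr W})
\end{align*}
is injective and has finite-dimensional source.

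Next I would check the filtration behaves well. By definition
\begin{align*}
E_\ell K^{\mathrm{top}}_i(\mathbb{S}(d;\delta))=\mathrm{ch}^{-1}\bigl(H^{\geq 2\dim\X(d)-i-2\ell}(\X(d),\varphi^{\mathrm{inv}}_{\Tr W})\bigr),
\end{align*}
so the family $\{E_\ell\}_{\ell \in \mathbb{Z}}$ is increasing in $\ell$. For $\ell$ sufficiently large, $H^{\geq 2\dim\X(d)-i-2\ell}$ equals the entire target, so $E_\ell= K^{\mathrm{top}}_i(\mathbb{S}(d;\delta))_{\mathbb{Q}}$; in other words the filtration is exhaustive. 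For the separation, observe that
\begin{align*}
\bigcap_{\ell} E_\ell \subset \mathrm{ch}^{-1}(0),
\end{align*}
which is zero over $\mathbb{Q}$ by the injectivity just established.

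Since $K_i^{\mathrm{top}}(\mathbb{S}(d;\delta))_{\mathbb{Q}}$ is finite-dimensional and the increasing filtration $\{E_\ell\}$ is both exhaustive and separated, it stabilizes at the top and the bottom: there exist integers $\ell_{\min}\leq \ell_{\max}$ such that $E_\ell=0$ for $\ell<\ell_{\min}$ and $E_\ell=K^{\mathrm{top}}_i(\mathbb{S}(d;\delta))_{\mathbb{Q}}$ for $\ell\geq \ell_{\max}$. The desired equality \eqref{dim:equal} then follows from the telescoping identity
\begin{align*}
\dim_{\mathbb{Q}} K^{\mathrm{top}}_i(\mathbb{S}(d;\delta))_{\mathbb{Q}} = \sum_{\ell=\ell_{\min}}^{\ell_{\max}} \dim_{\mathbb{Q}}\bigl(E_\ell/E_{\ell-1}\bigr) = \sum_{\ell \in \mathbb{Z}} \dim_{\mathbb{Q}} \mathrm{gr}_\ell K^{\mathrm{top}}_i(\mathbb{S}(d;\delta))_{\mathbb{Q}}.
\end{align*}

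There is no real obstacle here; the statement is essentially a formal consequence of Proposition \ref{cherninj} combined with elementary linear algebra. The only point meriting care is verifying that the hypothesis of Proposition \ref{cherninj} genuinely applies via Proposition \ref{prop226}.
\end{pf}
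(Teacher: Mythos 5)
Your proposal is correct and follows essentially the same route as the paper: the paper's proof also deduces injectivity (and hence finite-dimensionality and vanishing of $E_\ell$ for $\ell\ll 0$) from Proposition \ref{cherninj}, with the required admissible embedding into matrix factorizations over the smooth framed moduli space supplied by Theorem \ref{theorem266} (equivalently, Proposition \ref{prop226}, which is the form you cite), and then concludes by the same elementary filtration argument. The only difference is that you spell out the exhaustiveness/separatedness bookkeeping that the paper leaves implicit.
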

\begin{proof}
The Chern 
character map (\ref{chsdv}) is injective 
over $\mathbb{Q}$ by 
Proposition \ref{cherninj} and Theorem \ref{theorem266}.
Thus $E_{\ell}K_i^{\rm{top}}(\mathbb{S}(d; \delta))_{\mathbb{Q}}=0$ for $\ell \ll 0$, 
so we obtain the equality (\ref{dim:equal}). 
\end{proof}

\begin{cor}\label{corollarytheorem61}
Let $(Q, W)$ be a symmetric quiver with
potential. Let $d \in \mathbb{N}^I$ and $\delta \in M(d)_{\mathbb{R}}^{W_d}$. 
Then there is an equality
\begin{equation}\label{ineqq}
\dim_{\mathbb{Q}} K^{\mathrm{top}}_i(\mathbb{S}(d; \delta))_{\mathbb{Q}}= \dim_{\mathbb{Q}} H^{\ast}(X(d), \mathcal{BPS}_{d, \delta})^{\mathrm{inv}}.
\end{equation}
\end{cor}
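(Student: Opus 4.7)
The plan is to derive the corollary formally from Theorem~\ref{thm1}, using Lemma~\ref{thm2inj} to pass from total dimension to dimension of the associated graded, and then using the short exact sequence \eqref{SESC} together with the non-canonical identification \eqref{minv:isom} to convert cohomology of $\mathcal{BPS}^{\mathrm{inv}}_{d,\delta}$ into the monodromy-invariant part of cohomology of $\mathcal{BPS}_{d,\delta}$.

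First, by Lemma~\ref{thm2inj}, the filtration on $K_i^{\mathrm{top}}(\mathbb{S}(d;\delta))_{\mathbb{Q}}$ induced by the Chern character is bounded below, so
\begin{equation*}
\dim_{\mathbb{Q}} K_i^{\mathrm{top}}(\mathbb{S}(d;\delta))_{\mathbb{Q}} = \sum_{\ell \in \mathbb{Z}} \dim_{\mathbb{Q}} \mathrm{gr}_\ell K_i^{\mathrm{top}}(\mathbb{S}(d;\delta))_{\mathbb{Q}}.
\end{equation*}
Applying the isomorphism of Theorem~\ref{thm1} to each graded piece and reindexing, this becomes
\begin{equation*}
\sum_{\ell \in \mathbb{Z}} \dim_{\mathbb{Q}} H^{\dim \X(d) - 2\ell - i + 1}(X(d), \mathcal{BPS}^{\mathrm{inv}}_{d,\delta}) \;=\; \sum_{\substack{j \in \mathbb{Z} \\ j \equiv \epsilon \,(\mathrm{mod}\, 2)}} \dim_{\mathbb{Q}} H^{j}(X(d), \mathcal{BPS}^{\mathrm{inv}}_{d,\delta}),
\end{equation*}
where $\epsilon \in \{0,1\}$ is the parity of $\dim \X(d) - i + 1$.

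Second, by Remark~\ref{rmk:alt}, $\mathcal{BPS}^{\mathrm{inv}}_{d,\delta}$ is a direct sum of shifted sheaves of the form $\varphi^{\mathrm{inv}}_{\mathrm{Tr}\,W} \mathrm{IC}_A[-\ell(A)-1]$, and it is the cone of $1 - \mathrm{T}$ acting on $\mathcal{BPS}_{d,\delta}$. Consequently, the short exact sequence \eqref{SESC} together with the dimension identity \eqref{minv:isom} yields, for every $j \in \mathbb{Z}$,
\begin{equation*}
\dim_{\mathbb{Q}} H^{j}(X(d), \mathcal{BPS}^{\mathrm{inv}}_{d,\delta}) \;=\; \dim_{\mathbb{Q}} H^{j}(X(d), \mathcal{BPS}_{d,\delta})^{\mathrm{inv}} \;+\; \dim_{\mathbb{Q}} H^{j+1}(X(d), \mathcal{BPS}_{d,\delta})^{\mathrm{inv}}.
\end{equation*}
Summing this identity over all $j$ in one fixed parity class telescopes to the total dimension $\sum_{j\in\mathbb{Z}} \dim_{\mathbb{Q}} H^{j}(X(d), \mathcal{BPS}_{d,\delta})^{\mathrm{inv}}$, which is in particular independent of both $i$ and the parity class $\epsilon$. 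Combining the two steps yields the desired equality.

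The proof is essentially formal once Theorem~\ref{thm1} and Lemma~\ref{thm2inj} are available; the only point requiring attention is the bookkeeping with parities and the verification that \eqref{SESC}--\eqref{minv:isom} apply termwise to the direct sum $\mathcal{BPS}_{d,\delta}$, which follows from Remark~\ref{rmk:alt} together with the fact that the operator $1-\mathrm{T}$ and the finite-dimensionality of each cohomology space are preserved under finite direct sums and shifts.
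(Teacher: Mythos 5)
Your proof is correct and follows essentially the same route as the paper's: the paper's own argument likewise invokes Theorem~\ref{thm1}, Lemma~\ref{thm2inj}, and the non-canonical isomorphism between invariants and coinvariants of the monodromy, leaving the parity reindexing and the telescoping sum implicit. Your proposal simply spells out that bookkeeping explicitly, which is valid.
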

\begin{proof}[Proof of Corollary~\ref{corollarytheorem61} from Theorems~\ref{thm1},~\ref{thm2}]
Note that there is a (non-canonical) isomorphism
\begin{align}\label{isom:noncan}
    H^\bullet(X(d), \mathcal{BPS}_{d,\delta})^{\mathrm{inv}} \cong H^\bullet(X(d), \mathcal{BPS}_{d,\delta})_{\mathrm{inv}}.
    \end{align}
The claim then follows from Theorem~\ref{thm1} and Lemma~\ref{thm2inj}.  
\end{proof}

The result of 
 Corollary \ref{corintro} follows easily from Theorem \ref{thm1}, as follows: 

 \begin{proof}[Proof of Corollary \ref{corintro}]
 Note that Proposition \ref{computationsetsdv} implies that $\mathbf{d}=(d_i)_{i=1}^k\in S^d_v$ if and only if $\dd/\gcd(\dd,v)$ divides $\dd_i$ for $1\leq i\leq k$. Then
 $S^d_v=S^d_{v'}$ for $v,v'\in\mathbb{Z}$ such that $\gcd(\dd,v)=\gcd(\dd,v')$. 
     The statement then follows from Theorem \ref{thm1}.
 \end{proof}


We finish this subsection by the discussion of the zero potential case of Theorem \ref{thm1}.
In this case $\mathcal{BPS}_d=\mathrm{IC}_{X(d)}$, 
and we denote by $I\!H^\bullet(X(d)):=H^\bullet\big(X(d), \mathrm{IC}_{X(d)}\big)$. By Theorem \ref{thm1} and Proposition \ref{Lambdastructure}, we obtain the following:

\begin{thm}\label{thmWzero}
Let $Q=(I, E)$ be a symmetric quiver, 
let $d\in \mathbb{N}^I$
and $\delta \in M(d)_{\mathbb{R}}^{W_d}$
such that $S_{\delta}^d=\{d\}$. 
For $\ell\in\mathbb{Z}$, the cycle map induces an isomorphism:
   \[\mathrm{c}\colon \mathrm{gr}_\ell K^{\mathrm{top}}_0(\mathbb{M}(d; \delta))_{\mathbb{Q}}\xrightarrow{\cong} I\!H^{\dim \X(d)-2\ell-1}(X(d)).\]
   In particular, we have 
    \[\dim_\mathbb{Q} K^{\mathrm{top}}_0(\mathbb{M}(d; \delta))_{\mathbb{Q}}=\dim_\mathbb{Q} I\!H^{\ast}(X(d)).\]   
\end{thm}
\begin{remark}
  Note that $H^{\mathrm{odd}}(\X(d))=I\!H^{\mathrm{odd}-\dim \X(d)}(\X(d))=0$. We then have that $I\!H^{\mathrm{even}-\dim \X(d)}(X(d))=0$ because $\mathrm{IC}_{X(d)}[-1]$ is a direct summand of $R\pi_{d\ast}\mathrm{IC}_{\X(d)}$, see \eqref{DM}; alternatively, the vanishing $I\!H^{\mathrm{even}-\dim \X(d)}(X(d))=0$ follows from Kirwan surjectivity.   
\end{remark}



For $\delta \in M(d)_{\mathbb{R}}^{W_d}$
with $S_{\delta}^d=\{d\}$, we 
regard $\mathbb{M}(d; \delta)$ as a categorification of the intersection cohomology of $X(d)$. Note that, in general, $X(d)$ is a singular variety, and it is not straightforward to construct (natural) categorifications of intersection cohomology for a singular variety.

\begin{example}\label{exam:loop}
We discuss the isomorphism in Theorem~\ref{thm1} in a simple example. 
Let $Q$ be a quiver with one vertex and $(2e+1)$-loops, and 
consider the case of $d=2$ and $\delta=\tau_2$. Let $V$ be a two dimensional $\mathbb{C}$-vector space. 
The moduli stack is \[\X(2)=\mathfrak{gl}(2)^{\oplus (2e+1)}/GL(2)=\mathrm{End}(V)^{\oplus (2e+1)}/\mathrm{Aut}(V),\] 
and we have 
\begin{align*}
    K_0^{\rm{top}}(\X(2))=\mathbb{Z}[q_1^{\pm 1}, q_2^{\pm 1}]^{\mathfrak{S}_2}. 
\end{align*}
The quasi-BPS category $\mathbb{M}(2)_1$ is generated by 
$\mathrm{Sym}^{2a+1}(V) \otimes (\det V)^{-a}$ for $0\leq a\leq e-1$, 
so we have 
\begin{align*}
    K^{\rm{top}}_0(\mathbb{M}(2)_1)=\bigoplus_{a=0}^{e-1}
    \mathbb{Z}\cdot(q_1^{a+1} q_2^{-a}+q_1^{a}q_2^{1-a}+\cdots+q_1^{-a}q_2^{a+1}). 
\end{align*}
The associated graded has the description: 
\begin{align}\label{summandsgr}
     \mathrm{gr}_{\ast}K^{\rm{top}}_0(\mathbb{M}(2)_1)=
     \bigoplus_{a=0}^{e-1}\mathbb{Z}\cdot q_1^{-a}q_2^{-a}(q_1+q_2)(q_1-q_2)^{2a}. 
\end{align}
On the other hand, we will recall
in Theorem~\ref{Dav-Mein:deco}
the decomposition theorem of intersection cohomologies of 
moduli stacks of representations of quivers by Davison-Meinhardt~\cite{DM}. 
In the case of this example, the above decomposition is 
\begin{align*}
I\!H^{\ast}(\X(2))=&I\!H^{\ast}(X(2)) \otimes (\mathbb{Q}[-1]\oplus \mathbb{Q}[-3]\oplus \cdots ) \\
&\oplus \mathrm{Sym}^2(I\!H^{\ast}(X(1)) \otimes (\mathbb{Q}[-1] \oplus \mathbb{Q}[-3] \oplus \cdots)).
\end{align*}
We have $I\!H^{\ast}(\X(2))=\mathbb{Q}[\hbar_1, \hbar_2]^{\mathfrak{S}_2}$ and 
the above decomposition is 
(ignoring grading)
\begin{align*}
    \mathbb{Q}[\hbar_1, \hbar_2]^{\mathfrak{S}_2}=&\oplus_{a=0}^{e-1}\mathbb{Q}\cdot (\hbar_1-\hbar_2)^{2a}
    \otimes (\mathbb{Q}\oplus \mathbb{Q}(\hbar_1+\hbar_2) \oplus \cdots) \\
    &\oplus_{m_1 \geq m_2 \geq 0} \mathbb{Q}(\hbar_1-\hbar_2)^{2e}(\hbar_1^{m_1}\hbar_2^{m_2}+\hbar_1^{m_2}\hbar_2^{m_1}),  
\end{align*}
where $I\!H^{\ast}(X(2))$ is 
\begin{align}\label{summandsIH}
    I\!H^{\ast}(X(2))=\bigoplus_{a=0}^{e-1}\mathbb{Q} \cdot (\hbar_1-\hbar_2)^{2a}.
\end{align}
The cycle map sends summands of \eqref{summandsgr} to summands of \eqref{summandsIH} as follows:
\begin{align*}
     c\colon q_1^{-a}q_2^{-a}(q_1+q_2)(q_1-q_2)^{2a}
     \mapsto 2(\hbar_1-\hbar_2)^{2a}. 
\end{align*}
Note that we have 
\begin{align*}
\mathrm{Sym}^2(I\!H^{\ast}(X(1))[-1])[-2m_1-2m_2]=\mathbb{Q}(\hbar_1-\hbar_2)^{2e}(\hbar_1^{m_1}\hbar_2^{m_2}+\hbar_1^{m_2}\hbar_2^{m_1})
\end{align*}
via cohomological Hall product (\ref{coha}), see the explicit formula in~\cite[Theorem~2]{MR2851153} for the cohomological Hall product. 
\end{example}

\subsection{The decomposition theorem}\label{subsection:decompositiontheorem}

Let $\alpha\in\mathbb{N}$ and recall the construction of framed quivers $Q^{\alpha f}$ from Subsection \ref{subsec:sod}.

We review the explicit computation of summands in the BBDG decomposition theorem \cite{BBD} for the pushforward of the constant sheaf along the maps: 
\begin{align*}
    \pi_{\alpha f,d} \colon \X^{\alpha f}(d)^{\text{ss}}\to X(d),\
    \pi_d \colon \X(d)\to X(d)
\end{align*} due to Meinhardt--Reineke \cite{MeRe} and Davison--Meinhardt \cite{DM}.
The maps $\pi_{\alpha f,d}$ ``approximate" the map $\pi_d$, see \cite[Subsection 4.1]{DM}.
The computation of $\pi_{d*}\mathrm{IC}_{\X(d)}$ is deduced from the computation of $\pi_{\alpha f, d *}\mathbb{Q}_{\X^{\alpha f}(d)^{\mathrm{ss}}}[\dim \X(d)]$.

We introduce some objects of $D^b(\mathrm{Sh}_{\mathbb{Q}}(X(d)))$, 
which generalize the construction in Subsection~\ref{sub613}. 
Let $A$ be a tuplet $(e_i, m_{i,a})$ for $1\leq i\leq s$ and for $a\geq 0$, with $(e_i)_{i=1}^s\in \mathbb{Z}_{\geq 1}^s$ pairwise distinct and $m_{i,a}\geq 0$ such that 
\begin{align*}\sum_{i=1}^s \sum_{a\geq 0}e_im_{i,a}=d.
\end{align*}
Let $\mathscr{P}$ be the set of all such tuplets $A$ and let $\mathscr{P}_\alpha\subset \mathscr{P}$ be the subset of such tuplets with $m_{i,a}=0$ for $a\geq \alpha e_i$. Note that each $A$ has a corresponding partition with terms $e_i$ with multiplicity $\sum_{a\geq 0}m_{i,a}$ for $1\leq i\leq s$.
Below we often regard a partition 
$\{d_1, \ldots, d_k\}=\{e_1, \ldots, e_s\}$ of $d$ 
where $e_i$ appears $m_i$-times
as an element $(e_i, m_{i, a})$ of
$\mathscr{P}$ such that $m_{i, 0}=m_i$ and $m_{i, a}=0$ 
for $a\geq 1$. 
We denote by $\mathscr{P}^{\circ} \subset \mathscr{P}$ 
the subset of such tuplets. 

We consider the addition maps:
\begin{align}
    &\oplus_{i,a}\colon X(e_i)^{\times m_{i,a}}\to X(m_{i,a}e_i),\\
    &\notag \oplus'\colon \times_{i=1}^s\times_{a\geq 0} X(m_{i,a}e_i)\to X(d).
\end{align}
Using the above addition maps, we 
define the following complexes of 
constructible sheaves:
\begin{align*}
    \mathrm{Sym}^{m_{i,a}}\left(\mathrm{IC}_{X(e_i)}[-2a-1]\right)&:=\oplus_{i,a *} \left(\left(\mathrm{IC}_{X(e_i)}[-2a-1]\right)^{\boxtimes m_{i,a}}\right)^{\mathfrak{S}_{m_{i,a}}},\\
\mathrm{P}_A&:=\oplus'_*\left(\boxtimes_{1\leq i\leq s, a\geq 0} \mathrm{Sym}^{m_{i,a}} \left(\mathrm{IC}_{X(e_i)}[-2a-1]\right)\right).
\end{align*}
Then $\mathrm{P}_A$ is supported on the image of $\oplus'$ and is a shifted perverse sheaf of degree
\[p_A:=\sum_{i=1}^s\sum_{a\geq 0}m_{i,a}(2a+1),\] 
namely we have that $\mathrm{P}_A[p_A]\in \mathrm{Perv}(X(d))$.

Let $\alpha$ be an even positive natural number.
The following explicit form of the BBDG decomposition theorem for $\pi_{\alpha f,d}$ was determined by Meinhardt--Reineke:
\begin{thm}\emph{(\cite[Proposition 4.3]{MeRe})}\label{thm:MeiRei}
There is a decomposition:
\begin{equation}\label{MR}
    \pi_{\alpha f,d*}\left(\mathbb{Q}_{\X^{\alpha f}(d)^{\text{ss}}}[\dim \X(d)]\right)=\bigoplus_{A\in \mathscr{P}_\alpha}P_A.
\end{equation}
\end{thm}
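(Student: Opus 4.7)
The plan is to deduce this explicit form of the BBDG decomposition theorem by analyzing the proper map $\pi_{\alpha f, d}$ stratum by stratum on $X(d)$ and identifying each summand via the cohomology of its Grassmannian fibers. First, since $\X^{\alpha f}(d)^{\text{ss}}$ is a smooth quasi-projective variety (a GIT quotient of a smooth affine variety by a free $G(d)$-action on the $\sigma_d$-semistable locus described in Subsection \ref{subsec:framedquiver}) and $\pi_{\alpha f, d}$ is projective (obtained by composing the projective GIT map with the finite affinization map $R^{\alpha f}(d) \ssslash G(d) \to X(d)$ that forgets the framing), the BBDG decomposition theorem guarantees that $\pi_{\alpha f, d *}(\mathbb{Q}[\dim \X(d)])$ decomposes as a direct sum of shifted simple perverse sheaves on $X(d)$.

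Next, I would stratify $X(d)$ by the Luna polystable type: a stratum is indexed by a tuple $(e_i, m_i)$ of distinct simple dimension vectors with positive multiplicities and parameterizes $x \in X(d)$ with $V_x \cong \bigoplus_i V_i^{\oplus m_i}$, where $V_i$ is a simple $Q$-representation of dimension $e_i$. The key geometric computation is that of the fiber $\pi_{\alpha f, d}^{-1}(x)$: a framing of $V_x$ is stable if and only if the $\alpha$ framing vectors at each vertex generate $V_x$, and Schur's lemma gives $\Aut(V_x) \cong \prod_i GL(m_i)$, so the fiber becomes a product $\prod_i \mathrm{Gr}(m_i, N_i)$ of Grassmannians for integers $N_i$ determined by $\alpha$ and $e_i$. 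The cohomology of these fibers will then govern the restriction of the decomposition to each stratum.

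Third, the cohomology of each $\mathrm{Gr}(m_i, N_i)$ has a Schubert basis concentrated in even degrees, with Poincaré polynomial equal to the Gaussian binomial $\binom{N_i}{m_i}_q$. Tracking shifts: a Schubert class in degree $2a$ produces, via proper base change along the stratum inclusion, a local contribution isomorphic to $\mathrm{IC}_{X(e_i)}[-2a-1]$, with the overall offset accounting for both the Schubert degree and the discrepancy between $[\dim \X(d)]$ in the statement and the natural perverse shift $[\dim \X^{\alpha f}(d)^{\text{ss}}] = [\dim \X(d) + \alpha \underline{d}]$ of the constant sheaf on the smooth source. The truncation $m_{i,a} = 0$ for $a \geq \alpha \underline{e_i}$ defining $\mathscr{P}_\alpha$ reflects precisely the bounded cohomological range of $\mathrm{Gr}(m_i, N_i)$. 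Because isomorphic copies of a simple $V_i$ in $V_x$ can be permuted, the residual $\mathfrak{S}_{m_{i,a}}$-symmetry forces each local contribution to descend through the finite addition map $\oplus_{i,a} \colon X(e_i)^{\times m_{i,a}} \to X(m_{i,a} e_i)$, producing the $\mathrm{Sym}^{m_{i,a}}(\mathrm{IC}_{X(e_i)}[-2a-1])$ factors appearing in $\mathrm{P}_A$.

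The main obstacle is verifying that these local Grassmannian analyses assemble coherently into the global $\oplus'$-pushforwards with exactly the multiplicities listed and without additional summands supported on deeper strata or hidden corrections. This can be resolved either by an induction on $\underline{d}$ leveraging compatibility with the categorical Hall product of Proposition \ref{prop:partition} (which matches the Hall-product structure on both sides of the decomposition), or by a direct semismallness-type estimate comparing Grassmannian fiber dimensions with stratum codimensions to rigidify all shifts; both approaches appear in the original proof of \cite{MeRe}.
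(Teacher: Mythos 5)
Your key geometric step is incorrect, and the error propagates into the multiplicity count. The fiber of $\pi_{\alpha f,d}$ over a polystable point $x$ with $V_x\cong\bigoplus_i V_i^{\oplus m_i}$ is \emph{not} $\prod_i\mathrm{Gr}(m_i,N_i)$: a point of that fiber is a stable framed representation $(M,f)$ whose underlying representation $M$ merely has semisimplification $V_x$ (the map to $X(d)$ factors through semisimplification), so whenever the simples admit nontrivial (self-)extensions — the generic situation for symmetric quivers with loops, in particular the tripled quivers relevant to this paper — the fiber contains framed non-semisimple modules and is strictly larger than the locus of generating framings of $V_x$ itself, which is the only part that is a Grassmannian product; only over the simple locus is the fiber the projective space $\mathbb{P}^{\alpha\underline{d}-1}$. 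Accordingly, your Schubert-class bookkeeping contradicts the formula you are proving: summing the right-hand side of \eqref{MR} over all $A\in\mathscr{P}_\alpha$ with fixed underlying partition $(e_i,m_i)$ produces, over the corresponding stratum, $\boxtimes_i\,\mathrm{Sym}^{m_i}\bigl(\mathrm{IC}_{X(e_i)}[-1]\otimes H^{\ast}(\mathbb{P}^{\alpha\underline{e}_i-1})\bigr)$, a \emph{symmetric} power of projective-space cohomology whose rank is of binomial type $\binom{N_i+m_i-1}{m_i}$ when $\mathrm{IC}_{X(e_i)}$ sits in even degree (e.g. an odd number of loops; compare the $\mathrm{Sym}^2(\mathrm{IH}^{\ast}(X(1))\otimes(\mathbb{Q}[-1]\oplus\mathbb{Q}[-3]\oplus\cdots))$ term in Example \ref{exam:loop}), not $H^{\ast}(\mathrm{Gr}(m_i,N_i))$ of rank $\binom{N_i}{m_i}$; the two agree only in degenerate cases such as quivers without arrows. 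For the same reason the map is far from semismall (already over the open stratum the fibers have dimension $\alpha\underline{d}-1$), so the proposed "semismallness-type estimate" cannot rigidify the shifts $[-2a-1]$.

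Note also that the paper does not prove this statement at all: Theorem \ref{thm:MeiRei} is quoted from \cite{MeRe}, where it is established as an identity in the Grothendieck group of (mixed Hodge) constructible sheaves via the framed wall-crossing/integrality formula together with purity of $H^{\ast}(\X^{\alpha f}(d)^{\mathrm{ss}})$ and support arguments, and the upgrade to a genuine direct-sum decomposition of complexes is the argument of \cite[proof of Theorem 4.10]{DM}, as the remark following the theorem records. A self-contained proof would have to follow one of those routes (Grothendieck-group identity plus purity and splitting, or the cohomological Hall algebra/PBW mechanism you allude to at the end), rather than a stratum-by-stratum identification of fibers, which as written is false.
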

\begin{remark}
The result in loc. cit. is stated as an equality in the Grothendieck group of constructible sheaves, but the above stronger statement holds by the argument in \cite[Proof of Theorem 4.10]{DM}.
\end{remark}
Using Theorem~\ref{thm:MeiRei}, 
the following decomposition is obtained by Davison--Meinhardt:
\begin{thm}\emph{(\cite[Theorem~C]{DM})}\label{Dav-Mein:deco}
There is a decomposition: 
\begin{equation}\label{DM}
\pi_{d*}\mathrm{IC}_{\X(d)}=\bigoplus_{A\in\mathscr{P}}\mathrm{P}_A.
\end{equation}
\end{thm}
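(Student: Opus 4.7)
The strategy is to deduce the unframed decomposition from its framed counterpart Theorem \ref{thm:MeiRei} by sending the number of framings $\alpha$ to infinity, following the original argument of Davison--Meinhardt. The key geometric input is that the forgetful morphism $p_\alpha \colon \X^{\alpha f}(d) = (R(d)\oplus V^\alpha(d))/G(d) \to \X(d)$ is smooth of relative dimension $\alpha \dd$, being the vector bundle associated to the $G(d)$-representation $V^\alpha(d)$, and the unstable complement $\X^{\alpha f}(d)\setminus \X^{\alpha f}(d)^{\mathrm{ss}}$ has codimension tending to $+\infty$ with $\alpha$, by the standard GIT estimates for $\sigma_d$-semistability on framed quiver representations (cf.~\cite[Lemma 5.1.9]{T}).

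First, since $\X(d)$ is smooth of dimension $\dim \X(d)$, one has $p_\alpha^{\ast}\mathrm{IC}_{\X(d)} = \mathbb{Q}_{\X^{\alpha f}(d)}[\dim \X(d)]$, and restricting to the stable locus yields exactly the shifted constant sheaf $\mathbb{Q}_{\X^{\alpha f}(d)^{\mathrm{ss}}}[\dim \X(d)]$ appearing on the left hand side of \eqref{MR}. The plan is then to apply sheaf-theoretic approximation: for any fixed integer $N$, there exists $\alpha_{0}(N)$ such that for all $\alpha \geq \alpha_{0}(N)$, smooth base change along $p_\alpha$, combined with the fact that removing a subset of codimension $\gg N$ does not affect the lowest $N$ perverse cohomology sheaves of the relevant pushforward, produces natural isomorphisms
\[
{}^p\mathcal{H}^{k}(\pi_{d\ast}\mathrm{IC}_{\X(d)}) \xrightarrow{\;\cong\;} {}^p\mathcal{H}^{k}\bigl(\pi_{\alpha f,d\ast}(\mathbb{Q}_{\X^{\alpha f}(d)^{\mathrm{ss}}}[\dim \X(d)])\bigr)
\]
for all $|k| \leq N$, stratum by stratum on $X(d)$ (via proper base change). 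Applying Theorem~\ref{thm:MeiRei} to the right hand side identifies these perverse cohomology sheaves with $\bigoplus_{A \in \mathscr{P}_\alpha,\, p_A = k}\, {}^p\mathcal{H}^{k}(\mathrm{P}_{A})$. Since every $A \in \mathscr{P}$ lies in $\mathscr{P}_\alpha$ for $\alpha$ sufficiently large (by the very definition of $\mathscr{P}_\alpha$), letting $\alpha \to \infty$ gives the decomposition of each perverse cohomology sheaf of $\pi_{d\ast}\mathrm{IC}_{\X(d)}$, and hence the global decomposition \eqref{DM}.

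The main obstacle lies in the stabilization step: one must verify, stratum by stratum on $X(d)$, that the pullback $p_\alpha^{\ast}$ induces isomorphisms on the bounded range of perverse cohomology sheaves for $\alpha$ large enough. Concretely, on a stratum where $\pi_d$ has stacky fiber $\X(d)_U$ with finite-type stabilizer, proper base change computes the stalk of $\pi_{d\ast}\mathrm{IC}_{\X(d)}$ as the equivariant hypercohomology of $\mathrm{IC}$ on $\X(d)_U$, whereas the framed side computes the cohomology of an open subset in the vector bundle $V^\alpha(d)\times_{G(d)}(-)$ over this fiber with complement of codimension growing in $\alpha$. Controlling these comparisons uniformly as $\alpha$ varies, and ensuring the splitting \eqref{MR} is compatible with the limit (i.e.\ that it upgrades from perverse cohomology to an honest direct sum decomposition in $D^b(\mathrm{Sh}_{\mathbb{Q}}(X(d)))$), constitutes the technical heart of the proof and is where the (known) input from~\cite{DM} is essential.
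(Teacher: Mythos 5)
The paper does not prove this statement: it is quoted from Davison--Meinhardt \cite[Theorem C]{DM}, with only the remark that it is obtained from the framed decomposition of Theorem \ref{thm:MeiRei}. Your sketch is precisely that approximation argument (pullback along the vector bundle $\X^{\alpha f}(d)\to\X(d)$ plus the growing codimension of the unstable locus as $\alpha\to\infty$), so it follows the same route as the cited source; note only that the stabilization/splitting step you defer to \cite{DM} is exactly the content being cited, so what you have is a faithful sketch of the known proof rather than an independent argument.
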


In the case that there is a potential $W$, we define  
\begin{equation}\label{defqa}
\mathrm{Q}_A:=\oplus'_*\left(\boxtimes_{1\leq i\leq s, a\geq 0} \mathrm{Sym}^{m_{i,a}} \left(\varphi_{\mathrm{Tr}\,W}\mathrm{IC}_{X(e_i)}[-2a-2]\right)\right).
\end{equation}
\begin{lemma}\label{lem:Q=P}
We have 
\[\mathrm{Q}_A=\varphi_{\mathrm{Tr}\,W}\mathrm{P}_A[-1].\]
\end{lemma}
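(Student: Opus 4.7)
The plan is to reduce the identity $\mathrm{Q}_A = \varphi_{\mathrm{Tr}\,W}\mathrm{P}_A[-1]$ to iterated applications of Thom--Sebastiani and proper base change for vanishing cycles. The key geometric input is that $\mathrm{Tr}\,W$ is additive with respect to direct sums of semisimple representations: pulling $\mathrm{Tr}\,W$ back along $\oplus'$ or along each $\oplus_{i,a}$ yields the sum of the restrictions to each factor.

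First, since $\oplus'$ and each $\oplus_{i,a}$ are finite, vanishing cycles commute with their pushforwards. Applying $\varphi_{\mathrm{Tr}\,W}[-1]$ to $\mathrm{P}_A$ and moving past $\oplus'_{*}$, I reduce to computing the vanishing cycles of $(\oplus')^{*}\mathrm{Tr}\,W = \sum_{i,a} p_{i,a}^{*}\mathrm{Tr}\,W$ on the external product $\boxtimes_{i,a}\mathrm{Sym}^{m_{i,a}}(\mathrm{IC}_{X(e_i)}[-2a-1])$, where $p_{i,a}$ denotes the projection to the $(i,a)$--factor.

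Second, by iterating the Thom--Sebastiani theorem for vanishing cycles (in the version that commutes $\varphi_{f\boxplus g}[-1]$ with external tensor products on the common zero locus, which is where the output is supported), I distribute $\varphi_{\mathrm{Tr}\,W}[-1]$ through the box product to obtain
\[
\varphi_{\sum p_{i,a}^{*}\mathrm{Tr}\,W}[-1]\Bigl(\boxtimes_{i,a} F_{i,a}\Bigr) \cong \boxtimes_{i,a}\varphi_{\mathrm{Tr}\,W}[-1]F_{i,a}.
\]
The overall cohomological shift is $[-1]$, not one per factor, which is precisely what the statement demands. A similar argument for each $\oplus_{i,a}$ then commutes $\varphi_{\mathrm{Tr}\,W}[-1]$ inside the symmetric products:
\[
\varphi_{\mathrm{Tr}\,W}[-1]\mathrm{Sym}^{m_{i,a}}\bigl(\mathrm{IC}_{X(e_i)}[-2a-1]\bigr) \cong \mathrm{Sym}^{m_{i,a}}\bigl(\varphi_{\mathrm{Tr}\,W}\mathrm{IC}_{X(e_i)}[-2a-2]\bigr),
\]
using that the $\mathfrak{S}_{m_{i,a}}$--invariants survive because $\mathrm{Tr}\,W$ is symmetric in the permutation of isomorphic summands. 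Pushing forward via $\oplus'_{*}$ and collecting everything reproduces the definition of $\mathrm{Q}_A$.

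The only real point to watch is the bookkeeping of shifts: one must check that iterating $\varphi_{f\boxplus g}[-1]$ over many factors introduces only a single overall $[-1]$, which follows from t-exactness of $\varphi_f[-1]$ and the shift-free form of Thom--Sebastiani when expressed in terms of $\varphi[-1]$. No deep new ingredient is needed; the result is a formal consequence of functoriality of vanishing cycles once the additivity of $\mathrm{Tr}\,W$ under the direct sum maps is recorded.
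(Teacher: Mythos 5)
Your proposal is correct and is essentially the paper's own (extremely terse) proof, which simply cites the Thom--Sebastiani theorem; you have correctly spelled out the three ingredients the paper leaves implicit: commuting $\varphi_{\mathrm{Tr}\,W}[-1]$ past the finite pushforwards $\oplus'_*$ and $\oplus_{i,a*}$, the $\mathfrak{S}_{m_{i,a}}$-equivariant Thom--Sebastiani isomorphism for the symmetric powers, and the resulting shift $[-2a-1]\mapsto[-2a-2]$ on the inner factors.
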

\begin{proof}
The lemma follows from the Thom-Sebastiani theorem. 
\end{proof}
The proper pushforward commutes with the vanishing cycle functor. The map $\pi_d$ can be approximated by the proper maps $\pi_{\alpha f, d}$, thus $\pi_{d*}$ also commutes with the vanishing cycle functor.
From the above observations and 
using 
the results of Theorem~\ref{thm:MeiRei}, Theorem~\ref{Dav-Mein:deco}, 
the following vanishing cycle versions of these 
are also obtained in~\cite{DM}:
\begin{thm}\emph{(\cite{DM})}
There are decompositions: 
\begin{align}\label{MRW}
    &\pi_{\alpha f,d*}\varphi_{\mathrm{Tr}\,W}\left(\mathbb{Q}_{\X^{\alpha f}(d)^{\text{ss}}}[\dim \X(d)-1]\right)=\bigoplus_{A\in \mathscr{P}_\alpha}Q_A, \\
    \label{DMW}
    &\pi_{d*}\varphi_{\mathrm{Tr}\,W}\mathrm{IC}_{\X(d)}[-1]=\bigoplus_{A\in\mathscr{P}}Q_{A}.
\end{align}
The summands in all the above decompositions are induced via cohomological Hall product. 
\end{thm}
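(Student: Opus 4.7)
My plan is to deduce both vanishing cycle decompositions from the non-potential decompositions (Theorem \ref{thm:MeiRei} and Theorem \ref{Dav-Mein:deco}) by applying the functor $\varphi_{\Tr W}$, together with Lemma \ref{lem:Q=P} which identifies $\varphi_{\Tr W}\mathrm{P}_A[-1]$ with $\mathrm{Q}_A$ via Thom-Sebastiani.

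For \eqref{MRW}, the map $\pi_{\alpha f,d}\colon \X^{\alpha f}(d)^{\mathrm{ss}}\to X(d)$ is proper, hence $\varphi_{\Tr W}$ commutes with $\pi_{\alpha f,d*}$. Applying $\varphi_{\Tr W}$ to the $[-1]$-shift of the Meinhardt-Reineke decomposition \eqref{MR} gives
\[
\pi_{\alpha f,d*}\varphi_{\Tr W}\bigl(\mathbb{Q}_{\X^{\alpha f}(d)^{\mathrm{ss}}}[\dim\X(d)-1]\bigr)
=\varphi_{\Tr W}\Bigl(\bigoplus_{A\in\mathscr{P}_\alpha}\mathrm{P}_A[-1]\Bigr)
=\bigoplus_{A\in\mathscr{P}_\alpha}\mathrm{Q}_A,
\]
where the last equality uses Lemma \ref{lem:Q=P}.

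For \eqref{DMW}, the morphism $\pi_d$ is not proper, so one cannot directly commute $\varphi_{\Tr W}$ past $\pi_{d*}$. I would follow the approximation strategy of Davison-Meinhardt \cite[Subsection 4.1]{DM}: write $X(d)$ as an increasing union of closed subvarieties $X(d)_{\leq N}$ on which the computation of $\pi_{d*}\mathrm{IC}_{\X(d)}$ agrees with that of $\pi_{\alpha f,d*}\mathbb{Q}[\dim\X(d)]$ for $\alpha\gg 0$ (depending on $N$). Because each summand $\mathrm{P}_A$ has support equal to the image of the direct-sum map $\times_{i,a}X(e_i)^{\times m_{i,a}}\to X(d)$, and this family of supports is locally finite on $X(d)$, the decomposition \eqref{DM} can be refined locally by the framed ones, which shows that $\varphi_{\Tr W}$ does commute with $\pi_{d*}$ when applied to $\mathrm{IC}_{\X(d)}$. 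Then applying $\varphi_{\Tr W}[-1]$ to \eqref{DM} and invoking Lemma \ref{lem:Q=P} summand-by-summand yields \eqref{DMW}.

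The final assertion that each summand is induced by the cohomological Hall product is built into the definitions: by construction $\mathrm{Q}_A$ is the direct image along the addition maps $\oplus_{i,a}$ and $\oplus'$ of external tensor products of shifted (symmetric powers of) $\varphi_{\Tr W}\mathrm{IC}_{X(e_i)}$, and these addition maps factor through the attracting-fixed stack diagram \eqref{diagram:natural} underlying $m_\lambda^{\mathrm{co}}$ in \eqref{coha}; hence $\mathrm{Q}_A$ lies in the image of the cohomological Hall product applied to BPS sheaves of smaller dimension vectors.

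The main obstacle is the rigorous verification of the approximation argument for \eqref{DMW}: one must confirm that $\varphi_{\Tr W}$ commutes with $\pi_{d*}\mathrm{IC}_{\X(d)}$ despite $\pi_d$ being non-proper, and that the right-hand side infinite direct sum is well-defined as a constructible complex. Both points reduce to the local finiteness on $X(d)$ of the supports of the $\mathrm{Q}_A$'s and to base change along the proper framed approximations; once these are granted the argument becomes formal.
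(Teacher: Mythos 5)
Your proposal is correct and is essentially the same argument the paper uses: Thom--Sebastiani (Lemma~\ref{lem:Q=P}) to identify $\mathrm{Q}_A$ with $\varphi_{\Tr W}\mathrm{P}_A[-1]$, proper base change for the framed case, and the Davison--Meinhardt approximation of $\pi_d$ by the proper maps $\pi_{\alpha f,d}$ to get commutation with $\varphi_{\Tr W}$ in the unframed case, then applying $\varphi_{\Tr W}[-1]$ to Theorems~\ref{thm:MeiRei} and~\ref{Dav-Mein:deco}. You flag the local-finiteness and non-properness caveats more explicitly than the paper (which cites \cite{DM} for the details), but the route is the same.
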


\subsection{Relative topological K-theory of quasi-BPS categories}\label{subsec:reltop}
In this subsection, we prove Theorem~\ref{thm1}
assuming Theorem~\ref{thm2}. 
A key point is to 
study the relative topological K-theory 
of quasi-BPS categories over $X(d)$, and show that 
it contains the BPS sheaf as a direct summand. 
Then the dimension of the K-theory of the quasi-BPS category is bigger 
than or equal to the dimension of the BPS 
cohomology, thus Theorem~\ref{thm2} implies 
Theorem~\ref{thm1}. 

\begin{proof}[Proof of Theorem \ref{thm1} assuming Theorem \ref{thm2}]
By a standard argument (whose proof is postponed to Proposition~\ref{reduce} and Lemma~\ref{clambdazero}), 
	we may assume that $Q$ has at least two loops at each vertex
 so that $X(d) \neq \emptyset$. 
	Let $\alpha$ be an even positive integer and consider 
	the proper morphism 
	\begin{align*}
		\pi_{\alpha, d} \colon \X^{\alpha f}(d)^{\rm{ss}} \to \X(d) \stackrel{\pi_d}{\to} X(d). 
		\end{align*}
	For simplicity, we write $\X^{\alpha f}(d)^{\rm{ss}}=\X^{\dag}$ and 
	$\pi^{\dag}=\pi_{\alpha, d}$. 
		The local Chern character map is an isomorphism, see Section~\ref{sec:topK:dg}:
	\[\ch \colon \mathcal{K}_{\X^{\dag}}(D^b(\X^{\dag}))_{\mathbb{Q}}\stackrel{\cong}{\to}
	\mathbb{Q}_{\X^{\dag}}[\beta^{\pm 1}].\] 
	By considering its push-forward to $X(d)$ and
 using Theorem~\ref{thm:phiproper} and the isomorphism (\ref{isom:KU}), 
 we obtain 
	the sheaf-theoretic Chern character map 
	\begin{align*}
		\ch \colon 
		\mathcal{K}_{X(d)}^{\rm{top}}(D^b(\X^{\dag}))_{\mathbb{Q}} \cong
  \pi^{\dag}_{\ast}\mathcal{K}_{\X^{\dag}}(D^b(\X^{\dag}))_{\mathbb{Q}} \stackrel{\cong}{\to}
  \pi^{\dag}_{\ast}\mathbb{Q}_{\X^{\dag}}[\beta^{\pm 1}]. 
		\end{align*}
Together with Theorem~\ref{thm:MeiRei}, there is an isomorphism 
	\begin{align}
		\ch \colon \label{KX(d)}
			\mathcal{K}_{X(d)}^{\rm{top}}(D^b(\X^{\dag}))_{\mathbb{Q}}\stackrel{\cong}{\to} \bigoplus_{A \in \mathscr{P}_{\alpha}}P_A[\beta^{\pm 1}][-\dim \X(d)]. 
		\end{align}
  
	By Theorem~\ref{theorem266}, 
	the  subcategory $\mathbb{M}(d; \delta) \subset D^b(\X^{\dag})$ 
	is right admissible and $X(d)$-linear. 
	Therefore, there is a decomposition by Lemma~\ref{lem:sod:Ktheory}:
	\begin{align}\label{decomp1}
		\mathcal{K}_{X(d)}^{\rm{top}}(D^b(\X^{\dag}))_{\mathbb{Q}}=
		\mathcal{K}_{X(d)}^{\rm{top}}(\mathbb{M}(d; \delta))_{\mathbb{Q}} \oplus K'. 		
		\end{align}
In particular, 
$\mathcal{K}_{X(d)}^{\rm{top}}(\mathbb{M}(d; \delta))_{\mathbb{Q}}$
is a direct summand of 
the right hand side of (\ref{KX(d)}), 
therefore it 
 splits into the direct sum of its perverse cohomologies. 
As it is also $2$-periodic, there exists a subset $\mathscr{P}_{\alpha}' \subset \mathscr{P}_{\alpha}$
such that the isomorphism (\ref{KX(d)}) restricts 
to the isomorphism
\begin{align}\label{Ktop:Mdelta}
	\ch \colon 
	\mathcal{K}_{X(d)}^{\rm{top}}(\mathbb{M}(d; \delta))_{\mathbb{Q}} \stackrel{\cong}{\to}
	 \bigoplus_{A \in \mathscr{P}_{\alpha}'}
	P_A[\beta^{\pm 1}][-\dim \X(d)]. 
	\end{align}

For a partition $A \in \mathcal{S}_{\delta}^{d}$, we regard
it as an element of $\mathscr{P}^{\circ} \subset \mathscr{P}$, 
see Subsection~\ref{subsection:decompositiontheorem}. 
Below we show that $\mathcal{S}_{\delta}^{d} \subset \mathscr{P}_{\alpha}'$. 
We first show the case that $A$
is the 1-length partition $A=\{d\}$. 
Recall that the open subset 
$X(d)^{\circ} \subset X(d)$ of simple 
representations is non-empty. 
The maps 
\begin{align*}
    &\X(d)^{\circ}:=\pi_d^{-1}(X(d)^{\circ}) \to X(d)^{\circ}, \\ 
    &\X^{\dag\circ}:=(\pi^{\dag})^{-1}(X(d)^{\circ}) \to X(d)^{\circ}
\end{align*}
are a $\mathbb{C}^{\ast}$-gerbe and a 
$\mathbb{P}^{N-1}$-bundle for $N=\alpha \dd$, respectively. 
Note that, as we assumed $\{d\} \in S_{\delta}^{d}$, 
we have $v=\langle 1_d, \delta \rangle \in \mathbb{Z}$.
There are semiorthogonal decomposition from the projective bundle formula 
\begin{align}\label{sod:pbundle}
	D^b(\X^{\dag\circ})=\big\langle D^b(\X(d)^{\circ})_{w} \,\big|\, v\leq w <N+v \big\rangle. 
	\end{align}
The weight $w$-part is equivalent to 
$D^b(X(d)^{\circ}, \alpha^w)$ for a Brauer class $\alpha$. 
Since the associated element $
w\hat{\alpha} \in H^3(X(d)^{\circ}, \mathbb{Z})$ is torsion, 
it does not affect the K-theory after rationalization. 
It follows that 
\begin{align*}
	\mathcal{K}_{X(d)}^{\rm{top}}(D^b(\X^{\dag}))_{\mathbb{Q}}|_{X(d)^{\circ}}\cong
	\mathcal{K}_{X(d)^{\circ}}^{\mathrm{top}}(\X^{\dag\circ})_{\mathbb{Q}}\cong
	\bigoplus_{w=v}^{N+v-1}\mathbb{Q}_{X(d)^{\circ}}[\beta^{\pm 1}]. 
	\end{align*}
On the other hand, the base change of the semiorthogonal 
decomposition in Theorem~\ref{theorem266} via $X(d)^{\circ}\subset X(d)$ 
is the semiorthogonal decomposition (\ref{sod:pbundle}), and 
$\mathbb{M}(d; \delta)$ corresponds to $D^b(\X(d)^{\circ})_{v}$ under the 
base change. Here, we refer to Subsection~\ref{basechange}
for the base change of semiorthogonal decomposition. 
Therefore, we have 
\begin{align*}
	\mathcal{K}_{X(d)}^{\mathrm{top}}(\mathbb{M}(d; \delta))_{\mathbb{Q}}|_{X(d)^{\circ}} \cong 
	\mathbb{Q}_{X(d)^{\circ}}[\beta^{\pm 1}]. 
	\end{align*}
By comparing with (\ref{Ktop:Mdelta}),
there exists $A \in \mathscr{P}_{\alpha}'$ such that 
$P_A=\mathrm{IC}_{X(d)}[-1]$, i.e. $\{d\} \in \mathscr{P}_{\alpha}'$.  
In particular, 
we have an isomorphism of the form 
\begin{align}\label{decomp2}
	\ch \colon 
	\mathcal{K}_{X(d)}^{\rm{top}}(\mathbb{M}(d; \delta))_{\mathbb{Q}} \stackrel{\cong}{\to} \mathrm{IC}_{X(d)}[\beta^{\pm 1}][1-\dim \X(d)] \oplus K''. 
	\end{align}

We next show that any $A \in S_{\delta}^d$ is contained in 
$\mathscr{P}_{\alpha}'$. 
Let $A$ correspond to a partition 
$\mathbf{d}=\{d_i\}_{i=1}^k$, and $\lambda$ an antidominant 
cocharacter with associated partition $\mathbf{d}$. 
By Proposition~\ref{prop:partition}, the categorical 
Hall product restricts to the functor
\begin{align*}
	\boxtimes_{i=1}^k \mathbb{M}(d_i; \theta_i+\delta_i) \to \mathbb{M}(d; \delta). 
	\end{align*}
The above functor is linear over $X(d)$, 
so it induces the morphism in $D(\mathrm{Sh}_{\mathbb{Q}}(X(d)))$:
\begin{align*}
	i_{\lambda\ast}\boxtimes_{i=1}^k \mathcal{K}_{X(d_i)}^{\rm{top}}(\mathbb{M}(d_i; \theta_i+\delta_i))_{\mathbb{Q}} \to \mathcal{K}_{X(d)}^{\rm{top}}(\mathbb{M}(d; \delta))_{\mathbb{Q}}. 
	\end{align*}
Here we have used the notation of the diagram (\ref{diagram:natural}). 
From Remark~\ref{rmk:partition}, we have $\langle 1_{d_i}, \theta_i+\delta_i \rangle \in \mathbb{Z}$. 
Therefore we have the decomposition (\ref{decomp2})
for $(d, \delta)$ replaced by $(d_i, \theta_i+\delta_i)$:
\begin{align}\notag
	\ch \colon 
	\mathcal{K}_{X(d_i)}^{\rm{top}}(\mathbb{M}(d_i; \theta_i+\delta_i))_{\mathbb{Q}} \stackrel{\cong}{\to} \mathrm{IC}_{X(d_i)}[\beta^{\pm 1}][1-\dim \X(d_i)] \oplus K_i. 
	\end{align}
We have the following diagram 
\begin{align*}
	\xymatrix{
i_{\lambda\ast}\boxtimes_{i=1}^k \mathcal{K}_{X(d_i)}^{\rm{top}}(\mathbb{M}(d_i; \theta_i+\delta_i)) \ar[r]^-{p_{\lambda\ast}q_{\lambda}^{\ast}} \ar[d]_-{\ch} & \mathcal{K}_{X(d)}^{\rm{top}}(\mathbb{M}(d; \delta)) \ar[d]_-{\ch} \\
i_{\lambda\ast}(\boxtimes_{i=1}^k \mathrm{IC}_{X(d_i)}[-1])[-\dim \X(d)^{\lambda}][\beta^{\pm 1}] \oplus K'''	& \bigoplus_{A' \in \mathscr{P}_{\alpha}'}P_{A'}[-\dim \X(d)][\beta^{\pm 1}]. 
}
	\end{align*}
 Here $K'''$ is of the form $K'''=i_{\lambda\ast}\boxtimes_{i=1}^k K_i'$
 such that $K_i'=K_i$ for some $i$. 
By taking the total cohomologies, we obtain 
the commutative diagram 

\begin{align*}
	\xymatrix{
		\mathcal{H}^{\ast}(i_{\lambda\ast}\boxtimes_{i=1}^k \mathcal{K}_{X(d_i)}^{\rm{top}}(\mathbb{M}(d_i; \theta_i+\delta_i))_{\mathbb{Q}}) \ar[r]^-{p_{\ast}q^{\ast}} \ar[d]_-{c} & 
		\mathcal{H}^{\ast}(\mathcal{K}_{X(d)}^{\rm{top}}(\mathbb{M}(d; \delta)) \ar[d]_-{c}) \\
		\mathcal{H}^{\ast-\dim \X(d)^{\lambda}}(i_{\lambda\ast}(\boxtimes_{i=1}^k \mathrm{IC}_{X(d_i)}[-1]))[\beta^{\pm 1}] \oplus \mathcal{H}^{\ast}(K''')\ar[r] \ar[d]	& \bigoplus_{A' \in \mathscr{P}_{\alpha}'}\mathcal{H}^{\ast-\dim \X(d)}(P_{A'})[\beta^{\pm 1}] \ar[d] \\
 \mathcal{H}^{\ast-\dim \X(d)^{\lambda}}(i_{\lambda\ast} \pi_{\ast}\mathrm{IC}_{\X(d)^{\lambda}} )
 \ar[r]^-{\mathcal{H}^{\ast}(m_{\lambda}^{\rm{co}})}[\beta^{\pm 1}] &
 \mathcal{H}^{\ast-\dim \X(d)}(\pi_{\ast}\mathrm{IC}_{\X(d)})[\beta^{\pm 1}]. 
	}
\end{align*}
Here 
the bottom vertical arrows are embeddings into 
direct summands, and 
$m_{\lambda}^{\rm{co}}$ is the 
 sheaf-theoretic cohomological Hall algebra (\ref{coha}). 
By~\cite[Theorem~C]{DM}, the image of 
$i_{\lambda\ast}\boxtimes_{i=1}^k \mathrm{IC}_{X(d_i)}[-1]$
in ${}^p \mathcal{H}^{\ast}(\pi_{d\ast}\mathrm{IC}_{X(d)})$
under 
${}^p \mathcal{H}^{\ast}(m_{\lambda}^{\rm{co}})$ 
is $P_A$. 
Therefore, noting that 
$\mathcal{H}^{\ast}({}^p \mathcal{H}^{\ast}(\pi_{d\ast}\mathrm{IC}_{\X(d)}))=\mathcal{H}^{\ast}(\pi_{d\ast}\mathrm{IC}_{\X(d)})$, 
we see that the image of the bottom right arrow in the above commutative diagram
contains elements with non-zero elements from 
the summand $\mathcal{H}^{\ast}(P_A)$. 
Therefore, we must have $A \in \mathscr{P}_{\alpha}'$. 

It follows that there is an isomorphism of the form 
\begin{align}\label{decomp2.5}
	\mathcal{K}_{X(d)}^{\rm{top}}(\mathbb{M}(d; \delta))_{\mathbb{Q}} \stackrel{\cong}{\to}
	\bigoplus_{A \in S_{\delta}^d}P_A[\beta^{\pm 1}][1-\dim \X(d)]\oplus K'''. 
	\end{align}
By applying the functor $\varphi_{\Tr W}^{\rm{inv}}$ to (\ref{decomp2.5}), we have 
\begin{align*}
	\varphi_{\Tr W}^{\rm{inv}}	\mathcal{K}_{X(d)}^{\rm{top}}(\mathbb{M}(d; \delta))_{\mathbb{Q}}
	\stackrel{\cong}{\to}\mathcal{BPS}^{\rm{inv}}_{d, \delta}[\beta^{\pm 1}][-\dim \X(d)] \oplus 
	\varphi_{\Tr W}^{\rm{inv}}(K'''). 
	\end{align*}
 On the other hand, by Lemma~\ref{lem:sod:induce}, there is
 an isomorphism 
\begin{align*}
	\varphi_{\Tr W}^{\rm{inv}}\mathcal{K}_{X(d)}^{\rm{top}}(\mathbb{M}(d; \delta))_{\mathbb{Q}}
	\cong \mathcal{K}_{X(d)}^{\rm{top}}(\mathbb{S}(d; \delta))_{\mathbb{Q}}. 	
\end{align*}
 It follows that 
$\mathcal{BPS}^{\rm{inv}}_{d, \delta}[\beta^{\pm 1}][-\dim \X(d)]$ is a direct 
summand of $\mathcal{K}_{X(d)}^{\rm{top}}(\mathbb{S}(d; \delta))_{\mathbb{Q}}$, therefore 
\begin{align}\label{ineq:BPS}
	\dim H^{\ast}(X(d), \mathcal{BPS}_{d, \delta})^{\rm{inv}} \leq 
	\dim_{\mathbb{Q}}K_i^{\rm{top}}(\mathbb{S}(d; \delta))_{\mathbb{Q}}. 
	\end{align}
 However, note that Theorem~\ref{thm2} and Lemma~\ref{thm2inj} imply that 
 \[\dim H^{\ast}(X(d), \mathcal{BPS}_{d, \delta})^{\rm{inv}} \geq 
	\dim_{\mathbb{Q}}K_i^{\rm{top}}(\mathbb{S}(d; \delta))_{\mathbb{Q}}.\]
Thus equality holds in \eqref{ineq:BPS} and the conclusion follows from Theorem~\ref{thm2}.
	\end{proof}

The above proof of Theorem~\ref{thm1}
also implies the following sheaf-theoretic version: 
\begin{thm}\label{thm:Ksheaf2}
There is an isomorphism in $D(\mathrm{Sh}_{\mathbb{Q}}(X(d)))$:
\begin{align*}
    \mathcal{K}_{X(d)}^{\rm{top}}(\mathbb{S}(d; \delta))_{\mathbb{Q}}
    \cong \mathcal{BPS}_{d, \delta}^{\rm{inv}}[\beta^{\pm 1}][-\dim \X(d)]. 
\end{align*}
    \end{thm}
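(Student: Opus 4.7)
The plan is to refine the global equality of dimensions from Theorem~\ref{thm1} into a local, sheaf-theoretic isomorphism by leveraging the direct-summand decomposition produced inside the proof of Theorem~\ref{thm1}. That proof already constructs a splitting
\[
\mathcal{K}_{X(d)}^{\rm{top}}(\mathbb{S}(d;\delta))_{\mathbb{Q}} \;\cong\; \mathcal{BPS}^{\rm{inv}}_{d,\delta}[\beta^{\pm 1}][-\dim \X(d)] \;\oplus\; R,
\]
where $R = \varphi_{\Tr W}^{\rm{inv}}(K''')$ is the image under the monodromy-invariant vanishing-cycle functor of the complement $K'''$ to the $S_\delta^d$-summands inside $\mathcal{K}_{X(d)}^{\rm{top}}(\mathbb{M}(d;\delta))_{\mathbb{Q}}$. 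The entire content of Theorem~\ref{thm:Ksheaf2} is therefore to show that $R = 0$ as an object of $D(\mathrm{Sh}_{\mathbb{Q}}(X(d)))$.

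First, I would extract a dimension vanishing for global cohomology. Since Theorem~\ref{thm2} supplies an injective cycle map and Lemma~\ref{thm2inj} gives the equality $\dim_{\mathbb{Q}} K_i^{\rm{top}}(\mathbb{S}(d;\delta))_{\mathbb{Q}} = \dim_{\mathbb{Q}} \mathrm{gr}_\ast K_i^{\rm{top}}(\mathbb{S}(d;\delta))_{\mathbb{Q}}$, taking $R\Gamma(X(d),-)$ of both sides of the direct-summand decomposition and invoking Corollary~\ref{corollarytheorem61} yields $H^{\ast}(X(d), R) = 0$.

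Second, I would promote this global vanishing to a stalkwise vanishing via étale-local descent. Around any closed point $x \in X(d)$ corresponding to a semisimple representation $\bigoplus_i V_i^{\oplus n_i}$, the good moduli space map $\pi_d\colon \X(d) \to X(d)$ admits an étale-local description by the Luna slice theorem in terms of a smaller symmetric quiver with potential $(Q_x, W_x)$ whose dimension vector is governed by $(n_i)$. All the data of the theorem -- the magic category $\mathbb{M}(d;\delta)$, the quasi-BPS category $\mathbb{S}(d;\delta)$, the perverse sheaves $\mathcal{BPS}_A$ and thus $\mathcal{BPS}^{\rm{inv}}_{d,\delta}$, and the semiorthogonal decompositions of Theorem~\ref{theorem266} producing $K'''$ -- are compatible with this étale pullback. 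Combined with the base-change compatibility of relative topological K-theory (Theorem~\ref{thm:phiproper} and the functoriality of $\mathcal{K}^{\rm{top}}_{(-)}$ recalled in Subsection~\ref{subsec:reltopKth}), one obtains the same direct-summand decomposition on the étale-local model, and thus a local copy of the sheaf $R$ which again has vanishing global cohomology by Corollary~\ref{corollarytheorem61} applied to $(Q_x, W_x)$. By passing to an étale cover of $X(d)$ by such local models and then further to a stratification compatible with the partition type of $x$, one concludes that the stalk of $R$ at every point of $X(d)$ is zero, hence $R = 0$.

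The principal obstacle is the second step: making the étale-local compatibility precise, in particular checking that the splitting produced in the proof of Theorem~\ref{thm1} is itself étale-local in nature, rather than only existing after taking global sections. This hinges on the fact that the semiorthogonal decompositions in Theorem~\ref{theorem266} and Proposition~\ref{prop226} are $X(d)$-linear (so they base-change under étale maps $U \to X(d)$), on Lemma~\ref{lem:sod:induce} guaranteeing that $\varphi_{\Tr W}^{\rm{inv}}$ respects the splittings of relative topological K-theory, and on the explicit identification of $S_{\delta_x}^{n}$ for the local quiver with a suitable subset of $S_\delta^d$ under the Luna local model. Once these compatibilities are in place, the argument reduces to iterating the dimension count of Corollary~\ref{corollarytheorem61} across étale local slices, which forces $R$ to have zero stalks.
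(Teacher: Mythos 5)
Your overall skeleton is the same as the paper's: reduce via the splitting \eqref{decomp2.5} to showing $\varphi_{\Tr W}^{\rm{inv}}(K''')=0$, observe that the equality of dimensions forced by Theorem~\ref{thm2} kills the global cohomology of this complement, and then upgrade the global vanishing to vanishing of the complex by an étale-local argument. The first two steps are exactly right.

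The gap is in your localization mechanism. You propose to pass to Luna-slice local models, replacing $(Q,W)$ near a point $x$ by an Ext-quiver with potential $(Q_x,W_x)$, and you assert that the magic category, the quasi-BPS category, the sets $S^d_\delta$, and the semiorthogonal decompositions are compatible with this identification. None of that is available: matching $\mathbb{M}(d;\delta)$ and $\mathbb{S}(d;\delta)$ étale-locally with the quasi-BPS categories of the local quiver (and matching the weight polytope $\mathbf{W}(d)$ and hence $S^d_\delta$ with $S^{n}_{\delta_x}$) is essentially the local-to-global gluing problem that the introduction flags as open, and it is not needed here. The paper's proof uses a much weaker statement: for an \emph{arbitrary} étale map $g\colon U\to X(d)$ one keeps the same quiver and simply base-changes, defining $\mathbb{M}_U(d;\delta)\subset D^b(\X(d)\times_{X(d)}U)$ as the subcategory classically generated by pullbacks; Proposition~\ref{propSODetale} (using only the $X(d)$-linearity of the semiorthogonal decomposition in Theorem~\ref{theorem266}) gives the semiorthogonal decomposition over $U$, and the whole argument of Theorem~\ref{thm1} — framed moduli, decomposition-theorem summands, coproduct maps, the relative K-theory inequality — runs verbatim over $U$ with the pulled-back sheaf $g^{\ast}\mathcal{BPS}_{d,\delta}$, since everything in sight is $X(d)$-linear. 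This yields $H^{\ast}(g^{\ast}\varphi_{\Tr W}^{\rm{inv}}(K'''))=0$ for every étale $U\to X(d)$, hence $\varphi_{\Tr W}^{\rm{inv}}(K''')=0$. If you replace your Luna-slice step by this base-change step (no new quiver, no re-identification of BPS sheaves or of $S^d_\delta$), your proof closes; as written, the unproved slice compatibility is a genuine hole, and you correctly identified it as the principal obstacle without resolving it.
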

\begin{proof}
From the isomorphism (\ref{decomp2.5})
in the proof of Theorem~\ref{thm1}, it is enough to show 
that $\varphi_{\Tr W}^{\rm{inv}}(K''')=0$. 
The inequality (\ref{ineq:BPS}) is indeed an equality, 
thus we have $H^{\ast}(\varphi_{\Tr W}^{\rm{inv}}(K'''))=0$. 
Let $g \colon U \to X(d)$ be an étale map. 
One can apply the same argument in Theorem~\ref{thm1} to show 
that the cycle map 
\begin{align*}
    c \colon \mathrm{gr}_{\ell}K_i^{\rm{top}}(\mathbb{S}_U(d; \delta))_{\mathbb{Q}}
    \to H^{\dim \X(d)-2\ell-i}(U, g^{\ast}\mathcal{BPS}^{\mathrm{inv}}_{d, \delta}[1])
\end{align*}
is an isomorphism. 
Here $\X_U(d)=\X(d) \times_{X(d)} U$, 
and 
\begin{align*}\mathbb{S}_U(d; \delta)
:=\mathrm{MF}(\mathbb{M}_U(d; \delta), g^{\ast}\Tr W)
\end{align*}
where $\mathbb{M}_U(d; \delta) \subset D^b(\X_U(d))$
is the base-change of $\mathbb{M}(d; \delta)$
under $U \to X(d)$, see Subsection~\ref{basechange}.
Then we have $H^{\ast}(g^{\ast}\varphi_{\Tr W}^{\rm{inv}}(K'''))=0$. 
As it holds for any étale map $g \colon U \to X(d)$, 
we have that $\varphi_{\Tr W}^{\rm{inv}}(K''')=0$.
    \end{proof}

\section{The cycle maps on quasi-BPS categories}\label{sec:proof}
In this section, we construct the cycle map from topological K-theory of quasi-BPS categories to BPS cohomology, see Theorem \ref{thm2}. 
The construction is based on the fact that the weight conditions for complexes in $\mathbb{S}(d;\delta)$ restrict the possible perverse degree of their image under the cycle map, see Proposition \ref{prop06} and Corollary~\ref{onesupport2}.
In Subsection \ref{reduce}, we reduce the proof for a general symmetric quiver to that of a quiver with at least two loops at every vertex.
In Subsection \ref{subsec64}, we prove a restriction statement of the image under the cycle map of an object in a quasi-BPS category. In Subsection \ref{subsec65}, we combine the above restriction with the decomposition theorems \eqref{DM} to prove Theorem \ref{thm2}.

\subsection{Reduction to quivers with enough loops}\label{subsec:reduce}
In this subsection, 
we reduce Theorem~\ref{thm2} to the case of quivers 
with potentials where there exist at least
two loops at each vertex. 
Let $Q=(I, E)$ be a symmetric quiver 
with potential $W$. 
We produce another quiver with potential 
$(Q^{\sharp}, W^{\sharp})$ as follows. 
For each vertex $i\in I$, let $\omega_i, \omega'_i$ be two loops at $i$. 
We then set 
\begin{align*}
E^\sharp:=E\sqcup\{\omega_i, \omega'_i\mid i\in I\}, \ W^q:=\sum_{i\in I}\omega_i\omega'_i.
\end{align*}
The quiver with potential $(Q^{\sharp}, W^{\sharp})$
is defined by 
    \[Q^\sharp:=(I,E^\sharp),\, W^\sharp:=W+W^q.\]

\begin{prop}\label{reduce}
Assume Theorem \ref{thm2} holds for $(Q^\sharp, W^\sharp)$. Then Theorem \ref{thm2} holds for $(Q,W)$.
\end{prop}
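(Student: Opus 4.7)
The plan is to apply Kn\"orrer periodicity (Corollary~\ref{corollary410}) to the extra representation space arising from the new loops. Writing $U := \bigoplus_{i \in I}\mathrm{End}(V^i)$, the loops $\omega_i$ contribute $U$ and the loops $\omega_i'$ contribute $U^\vee$, with $W^q = \sum_i \omega_i\omega_i'$ realizing the natural $G(d)$-equivariant trace pairing (note that $U\cong U^\vee$ as $G(d)$-representations via the trace form). Since $R^\sharp(d) = R(d) \oplus U \oplus U^\vee$ and $\mathrm{Tr}\,W^\sharp = \mathrm{Tr}\,W + W^q$, Corollary~\ref{corollary410} produces maps
\[\X(d) \xleftarrow{v} (R(d) \times U)/G(d) \xrightarrow{s} \X^\sharp(d),\]
an equivalence $s_*v^*\colon \mathrm{MF}(\X(d), \mathrm{Tr}\,W) \xrightarrow{\sim} \mathrm{MF}(\X^\sharp(d), \mathrm{Tr}\,W^\sharp)$, and a compatibility of Chern character maps (with twist $\mathrm{ch}' = \mathrm{ch} \cdot \mathrm{td}(T_s)$) under the pushforward $s_* v^*$ on monodromy invariant vanishing cycle cohomology.

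The first step is to verify that $s_*v^*$ restricts to an equivalence $\mathbb{S}(d; \delta) \xrightarrow{\sim} \mathbb{S}^\sharp(d; \delta)$ for the same weight $\delta$. The polytope enlarges as $\mathbf{W}^\sharp(d) = \mathbf{W}(d) + \Sigma$, where $\Sigma = \sum_{i,a,b}[0, \beta^i_a - \beta^i_b]$ is the Minkowski contribution of the new loops. If $E\in\mathbb{M}(d;\delta)$ is generated by $\Gamma_{G(d)}(\chi)\otimes\mathcal{O}$ with $\chi+\rho-\delta\in\mathbf{W}(d)$, then the Koszul resolution of $s$ shows that $s_*v^*E$ is generated by $\Gamma_{G(d)}(\chi+\mu)\otimes\mathcal{O}$ where $\mu$ runs over subset-sums of the weights of $U^\vee$; since these weights lie in the vertex set of $\Sigma$, we have $\mu\in\Sigma$, so $\chi+\mu+\rho-\delta\in\mathbf{W}^\sharp(d)$. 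Combined with Lemma~\ref{lemma:alt}, this gives the desired restriction. Moreover, since $U$ and $U^\vee$ contribute equal weight sums under any cocharacter $\lambda$, we have $n^\sharp_\lambda - n_\lambda = 2\langle\lambda, U^{\lambda>0}\rangle\in 2\mathbb{Z}$, so the partition sets $S^d_\delta$ of Subsection~\ref{subsec612} coincide for $Q$ and $Q^\sharp$.

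Next, by Thom--Sebastiani applied to $\mathrm{Tr}\,W^\sharp = \mathrm{Tr}\,W + W^q$ and the non-degeneracy of the $G(d)$-equivariant quadratic form $W^q$ on $U \oplus U^\vee$, the monodromy invariant vanishing cycle sheaf $\varphi^{\mathrm{inv}}_{W^q}\mathbb{Q}$ is the (shifted) constant sheaf on the zero section with trivial monodromy. Pushing forward to good moduli spaces along the closed immersion $\iota\colon X(d) \hookrightarrow X^\sharp(d)$ induced by the zero section of $U \oplus U^\vee \to R(d)$, and unwinding Definition~\ref{def:BPS:sum} together with \eqref{BPSAsheaf}, yields
\[\mathcal{BPS}^{\sharp,\mathrm{inv}}_{d, \delta} \cong \iota_* \mathcal{BPS}^{\mathrm{inv}}_{d, \delta}[c]\]
up to an overall cohomological shift $c$ matching $\dim \X^\sharp(d) - \dim \X(d) = 2\dim U$, which compensates exactly for the shift on the K-theory side coming from the Todd twist in Corollary~\ref{corollary410}.

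Combining the two steps, the cycle map \eqref{cyclesdv} for $(Q, W)$ identifies via $s_*v^*$ with (a shift of) the cycle map for $(Q^\sharp, W^\sharp)$; by hypothesis the latter lands in $\mathcal{BPS}^{\sharp,\mathrm{inv}}_{d,\delta}$-cohomology, so the former lands in $\mathcal{BPS}^{\mathrm{inv}}_{d,\delta}$-cohomology as required. The main obstacle is the careful verification of the first step---that Kn\"orrer periodicity restricts to an equivalence of quasi-BPS categories on the nose---requiring the precise matching of polytope conditions through the Koszul resolution of $s$; a similar reduction appears in~\cite{PTquiver}.
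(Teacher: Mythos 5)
Your overall strategy — apply Kn\"orrer periodicity via Corollary~\ref{corollary410}, transport the cycle map, and match BPS cohomologies — is exactly the paper's. The difference is that the paper simply cites Proposition~\ref{quasiBPSprime} (i.e.\ \cite[Proposition~2.14]{PTquiver}) for the equivalence $s_*v^*\colon\mathbb{S}(d;\delta)\xrightarrow{\sim}\mathbb{S}^\sharp(d;\delta)$ and Corollary~\ref{corBPSprime} for the BPS matching, whereas you attempt to re-derive the categorical equivalence from scratch, and your derivation has gaps.

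The weight argument in your first step is imprecise. The Koszul resolution of $s_*\mathcal{O}$ gives terms $\Gamma_{G(d)}(\chi)\otimes\wedge^k\mathfrak{g}(d)$, and the irreducible constituents of $\Gamma_{G(d)}(\chi)\otimes\wedge^k\mathfrak{g}(d)$ are \emph{not} simply $\Gamma_{G(d)}(\chi+\mu)$ for $\mu$ a weight of $\wedge^k\mathfrak{g}(d)$; the Littlewood--Richardson decomposition is more subtle, and ``$\mu\in\Sigma$ implies $\chi+\mu+\rho-\delta\in\mathbf{W}^\sharp(d)$'' is not by itself a proof that each constituent is a generator of $\mathbb{M}^\sharp(d;\delta)$. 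The correct route (and the one implicit in the citation you make) is to use the characterization of Lemma~\ref{lemma:alt} directly: at any $\nu\colon B\mathbb{C}^*\to\X^\sharp(d)$ fixing a point, the stalk $\nu^*(s_*v^*E)$ has $\lambda$-weights in (weights of $\nu^*E$) $+$ (weights of $\wedge^\bullet\mathfrak{g}(d)$), and the latter lie in $[-\langle\lambda,\mathfrak{g}(d)^{\lambda>0}\rangle, \langle\lambda,\mathfrak{g}(d)^{\lambda>0}\rangle]$, which combined with $(n^\sharp_\lambda-n_\lambda)/2 = \langle\lambda,\mathfrak{g}(d)^{\lambda>0}\rangle$ gives exactly the required interval bound. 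More seriously, your argument only shows $s_*v^*$ \emph{lands in} $\mathbb{S}^\sharp(d;\delta)$; you do not address why it is essentially surjective onto it (i.e.\ why the equivalence $\mathrm{MF}(\X(d),\Tr W)\simeq\mathrm{MF}(\X^\sharp(d),\Tr W^\sharp)$ restricts to an equivalence of the quasi-BPS subcategories rather than merely a fully faithful embedding). This is the content of \cite[Proposition~2.14]{PTquiver} and should be cited, not waved at.

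On the cohomology side your claims about $\varphi^{\mathrm{inv}}_{W^q}$ being (a shift of) the constant sheaf on the zero section with trivial monodromy, and the shift matching the Todd twist, are sound in spirit and correspond to the paper's Lemma~\ref{lem:BPSprime}, Proposition~\ref{BPSprime}, and Corollary~\ref{corBPSprime}; the paper derives the isomorphism of pushforwards via dimensional reduction applied to $W^q$ followed by Thom--Sebastiani, and tracks the preservation of the individual summands $\mathcal{BPS}_A$ through the decomposition theorem, a step you leave as ``unwinding Definition~\ref{def:BPS:sum}''. This is where the precise shift bookkeeping (the factor $\dim\mathfrak{g}(d)$ on the filtration index versus the cohomological degree) is confirmed.
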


    Recall that the stack of $Q$-representations 
    is given by $\X(d)=R(d)/G(d)$.
    For the quiver $Q^\sharp$ and for $d\in \mathbb{N}^I$, 
    the moduli stack of $Q^{\sharp}$-representations 
    and its good moduli space is given by
    \begin{align*}\pi^\sharp_d\colon\X^\sharp(d):=\left(R(d)\oplus \mathfrak{g}(d)^{\oplus 2}\right)/G(d)\to X^\sharp(d).\end{align*}
We have the 
 BPS sheaves $\mathcal{BPS}^\sharp_{d,v}$ on $X^{\sharp}(d)$ as defined in \eqref{defBPSddelta}, 
    the polytope $\mathbb{W}^\sharp(d)$ as in \eqref{defpolytope}, the integers $n^\sharp_\lambda$ as in \eqref{nlambdadef}, the quasi-BPS categories $\mathbb{M}^\sharp(d; \delta)$ from \eqref{defmdw} and  $\mathbb{S}^\sharp(d; \delta)$ from \eqref{def:quasiBPS}. 
Let $\mathscr{S}(d)$ be defined by 
\[\mathscr{S}(d):=(R(d)\oplus \mathfrak{g}(d))/G(d)\] and consider the following 
diagram 
  \begin{equation}\label{diagprop68}
       \begin{tikzcd}
           \X(d)\arrow[d, "\pi_d"] & \mathscr{S}(d)\arrow[r, "s"]\arrow[l, "v"'] & \X^\sharp(d)\arrow[d, "\pi^\sharp_d"]\\
             X(d) \arrow[rr, "u"]& &X^\sharp(d).
         \end{tikzcd}
     \end{equation}
Here $v$ is the natural projection and $s$ is the natural inclusion, 
the vertical maps are good moduli space morphisms and 
the bottom horizontal arrow is the induced map on good moduli spaces. 
We discuss some preliminary lemma and propositions. 


\begin{lemma}\label{lem:BPSprime}
For $i\in\mathbb{Z}$, 
there is an isomorphism
\begin{align}\label{isom:sharp}
    s_*v^*\colon H^i(\X(d), \varphi_{\mathrm{Tr}\,W}\mathrm{IC}_{\X(d)}[-1])&\xrightarrow{\cong} H^i(\X^\sharp(d), \varphi_{\mathrm{Tr}\,W^\sharp}\mathrm{IC}_{\X^\sharp(d)}[-1]).
    \end{align}
   \end{lemma}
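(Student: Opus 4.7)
The plan is to prove the isomorphism via equivariant Thom-Sebastiani, following exactly the Kn\"orrer periodicity pattern already used in Corollary~\ref{corollary410}. Set $U := \mathfrak{g}(d)$ and identify $U^\vee \cong \mathfrak{g}(d)$ via the trace pairing $(A,B) \mapsto \mathrm{tr}(AB)$. Under this identification, $\X^\sharp(d) = (R(d) \times U \times U^\vee)/G(d)$ and $\mathscr{S}(d) = (R(d) \times U)/G(d)$, so the upper row of \eqref{diagprop68} becomes the Kn\"orrer pattern $X \twoheadleftarrow X \times U \hookrightarrow X \times U \times U^\vee$ with $X = R(d)$, all quotiented by $G(d)$. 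Moreover, $\mathrm{Tr}\,W^\sharp = \mathrm{Tr}\,W + w$ where $w$ is the pullback of the non-degenerate pairing on $U \times U^\vee$.

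Since both stacks are smooth, $\mathrm{IC}_{\X(d)} = \mathbb{Q}_{\X(d)}[\dim \X(d)]$ and similarly for $\X^\sharp(d)$, so the statement reduces to an isomorphism of vanishing cycle cohomology of constant sheaves up to a shift of $2\dim\mathfrak{g}(d)$. Applying the Thom-Sebastiani theorem to the decomposition of the potential yields
\begin{align*}
\varphi_{\mathrm{Tr}\,W^\sharp}\mathbb{Q}_{\X^\sharp(d)} \simeq \varphi_{\mathrm{Tr}\,W}\mathbb{Q}_{\X(d)} \boxtimes \varphi_w \mathbb{Q}_{(U \times U^\vee)/G(d)}.
\end{align*}
The function $w$ is a non-degenerate quadratic form on $U\times U^\vee$ with unique critical point at the origin, whose Milnor fiber is homotopy-equivalent to the sphere $S^{2\dim\mathfrak{g}(d)-1}$. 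Hence $\varphi_w \mathbb{Q}_{(U\times U^\vee)/G(d)}$ is supported on the zero section $BG(d)\hookrightarrow (U\times U^\vee)/G(d)$ and equals the constant sheaf on $BG(d)$ placed in a single cohomological degree matching the promised shift of $2\dim\mathfrak{g}(d)$.

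The map $s_* v^*$ realizes this identification: $v^*$ propagates $\varphi_{\mathrm{Tr}\,W}\mathbb{Q}_{\X(d)}$ along the first $U$-factor, while $s_*$ enforces the support on the zero section of $U^\vee$, which is exactly where $\varphi_w$ is supported. To justify Thom-Sebastiani in the equivariant setting, we use the approximation of quotient stacks by varieties from Subsection~\ref{subsub25} (as in \cite[Subsection~2.2]{DM}), which reduces the claim to the classical Thom-Sebastiani theorem for varieties. The main delicacy lies in tracking the shifts carefully and verifying that the monodromy contribution from the quadratic factor is trivial; the latter holds because the monodromy of $\varphi_w$ at an $A_1$-type singularity acts only by a sign that cancels under the identification with the constant sheaf, so the monodromy-invariant statement required elsewhere in the paper (cf.\ the passage after Corollary~\ref{corollary410}) is unaffected. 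These verifications are routine once the conceptual decomposition is in place.
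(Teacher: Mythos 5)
Your argument is essentially the paper's: the computation you perform for the quadratic part (the vanishing cycles of the non-degenerate pairing on $\mathfrak{g}(d)\oplus\mathfrak{g}(d)^{\vee}$ give a shifted constant sheaf on the zero section, with honestly trivial monodromy because the number of variables $2\dim\mathfrak{g}(d)$ is even) is exactly what the paper extracts from the dimensional reduction isomorphism \eqref{dimred} applied to $\Tr W^q$, namely $i_*\mathrm{IC}_{\X(d)}\cong\varphi_{\Tr W^q}\mathrm{IC}_{\X^\sharp(d)}[-1]$, after which one applies $\varphi_{\Tr W}$ together with Thom--Sebastiani; this route also identifies the map as $s_*v^*$, since the dimensional-reduction isomorphism is $j'_*\eta'^*$ with $\mathscr{E}^{\vee}|_{\mathscr{P}}=\mathscr{S}(d)$. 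One imprecision to fix: the formula $\varphi_{\Tr W^\sharp}\simeq\varphi_{\Tr W}\boxtimes\varphi_w$ does not literally parse, because $\X^\sharp(d)$ is not a product of stacks but the total space of a bundle over $\X(d)$ (a fiber product over $BG(d)$), and the approximating varieties are not products either, so "classical Thom--Sebastiani via approximation" is not quite the right citation; what you need is the fiberwise (bundle) version of Kn\"orrer periodicity for vanishing cycle sheaves, which is precisely the dimensional-reduction statement of Section~\ref{s5} that the paper invokes. Note also that the paper proves the stronger sheaf-level isomorphism over $X^\sharp(d)$ after pushforward, which is what is actually needed for Proposition~\ref{BPSprime}; your cohomological argument suffices for the lemma as stated.
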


\begin{proof}
   It is enough to show the isomorphism in $D^b(\mathrm{Sh}_{\mathbb{Q}}(\X^\sharp(d)))$:
\begin{equation}\label{isopidstar}
    s_*v^*\colon u_*\pi_{d*}\varphi_{\mathrm{Tr}\,W}\mathrm{IC}_{\X(d)}[-1]\xrightarrow{\cong} \pi^\sharp_{d*}\varphi_{\mathrm{Tr}\,W^\sharp}\mathrm{IC}_{\X^\sharp(d)}[-1].
\end{equation}
By the dimensional reduction isomorphism (\ref{dimred}), 
we have the isomorphism 
\begin{align*}
    i_{\ast}\mathrm{IC}_{\X(d)}
    \stackrel{\cong}{\to} \varphi_{\Tr W^q}\mathrm{IC}_{\X^{\sharp}(d)}[-1]. 
\end{align*}
Here $i \colon \X(d) \hookrightarrow \X^{\sharp}(d)$
is the natural inclusion. 
By applying $\pi_{d\ast}$ to both sides, 
we obtain the isomorphism 
in $D^b(\mathrm{Sh}_{\mathbb{Q}}(\X^\sharp(d)))$:
\begin{equation}\label{isopprop699}
    s_*v^*\colon u_*\pi_{d*}\mathrm{IC}_{\X(d)}\xrightarrow{\cong} \pi^\sharp_{d*}\varphi_{\mathrm{Tr}\,W^q}\mathrm{IC}_{\X^\sharp(d)}[-1].
\end{equation}

By abuse of notation, we write 
$\Tr W \colon \X^{\sharp}(d) \to \mathbb{C}$ the 
pull-back of $\Tr W \colon \X(d) \to \mathbb{C}$
via the projection $\X^{\sharp}(d) \to \X(d)$. 
Note that $\pi_{d*}$ commutes with $\varphi_{\mathrm{Tr}\,W}$ because $\pi_d$ can be approximated with the proper maps $\pi_{\alpha f,d}$, see Subsection \ref{subsection:decompositiontheorem}. Further, $\varphi_{\mathrm{Tr}\,W}$ commutes with proper pushforward and smooth pullback. 
By applying $\varphi_{\mathrm{Tr}\,W}$ to both sides of \eqref{isopprop699} and using the Thom-Sebastiani theorem for vanishing cycles, we obtain the isomorphisms
\begin{align*}
    s_*v^*\colon u_*\pi_{d*}\varphi_{\mathrm{Tr}\,W}\mathrm{IC}_{\X(d)}[-1]&\xrightarrow{\cong} \pi^\sharp_{d*}\varphi_{\mathrm{Tr}\,W}\left(\varphi_{\mathrm{Tr}\,W^q}\mathrm{IC}_{\X^\sharp(d)}[-1]\right)[-1]\\
    &\cong \pi^\sharp_{d*}\varphi_{\mathrm{Tr}\,W^\sharp}\mathrm{IC}_{\X^\sharp(d)}[-1].
\end{align*} 
Therefore we obtain the isomorphism (\ref{isopidstar}). 
  \end{proof}

\begin{prop}\label{BPSprime}
The isomorphism (\ref{isom:sharp}) in Lemma~\ref{lem:BPSprime}
restricts to the isomorphism 
    \begin{align*}
    H^i(X(d), \mathcal{BPS}_{d,\delta})&\stackrel{\cong}{\to}  H^i(X^\sharp(d), \mathcal{BPS}^\sharp_{d,\delta}).
\end{align*}
\end{prop}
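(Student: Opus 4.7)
The plan is to show that the isomorphism \eqref{isopidstar} respects the Davison--Meinhardt decompositions \eqref{DMW} for $(Q,W)$ and $(Q^\sharp,W^\sharp)$, then restrict to the summands indexed by $S^d_\delta$. First, I would verify that $S^d_\delta=S^{d,\sharp}_\delta$: directly from \eqref{nlambdadef},
\[
n^\sharp_\lambda-n_\lambda=2\bigl\langle\lambda,(\mathfrak{g}(d)^\vee)^{\lambda>0}\bigr\rangle\in 2\mathbb{Z},
\]
so the integrality condition in \eqref{varepsilondeltalambda} coincides for both quivers, giving $\varepsilon^\sharp_{\lambda,\delta}=\varepsilon_{\lambda,\delta}$ and hence the desired equality of partition sets.

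Second, the proof of Lemma \ref{lem:BPSprime} applied to every dimension vector $e\in\mathbb{N}^I$ produces isomorphisms
\[
u_{e,*}\pi_{e,*}\varphi_{\mathrm{Tr}\,W}\mathrm{IC}_{\X(e)}[-1]\;\cong\;\pi^\sharp_{e,*}\varphi_{\mathrm{Tr}\,W^\sharp}\mathrm{IC}_{\X^\sharp(e)}[-1]
\]
of shifted semisimple perverse sheaves on $X^\sharp(e)$. The decompositions \eqref{DMW} for $(Q,W)$ and $(Q^\sharp,W^\sharp)$ express both sides as sums of shifted simple perverse sheaves; by uniqueness of such a decomposition, matching the summands corresponding to the trivial partition $\{e\}$ yields $u_{e,*}\mathcal{BPS}_e\cong \mathcal{BPS}^\sharp_e$ for every $e$. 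In particular, if $X(e)^\circ=\emptyset$, the matching forces $\mathcal{BPS}^\sharp_e=0$ as well, even when $X^\sharp(e)^\circ\neq\emptyset$.

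Third, these identifications propagate through the external and symmetric products in \eqref{BPSAsheaf}, using that $u$ commutes with the addition maps $\oplus_i$ and $\oplus'$ on good moduli spaces. This gives $u_*\mathcal{BPS}_A\cong \mathcal{BPS}^\sharp_A$ for every partition $A$. Summing over $A\in S^d_\delta=S^{d,\sharp}_\delta$ (with the shifts $[-\ell(A)]$ from Definition \ref{def:BPS:sum}) and taking global cohomology—using that $u$ is a closed immersion, so $H^\bullet(X^\sharp(d),u_*-)=H^\bullet(X(d),-)$—delivers the required isomorphism of BPS cohomologies.

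The main obstacle is to show that the summand-matching obtained above is induced by the restriction of the ambient isomorphism \eqref{isopidstar} (and hence of $s_*v^*$ after taking global sections). This reduces to checking that the dimensional reduction plus Thom--Sebastiani isomorphism (which underlies \eqref{isopidstar}) commutes with the cohomological Hall product (which builds the summands in \eqref{DMW}); since both sides of \eqref{isopidstar} are assembled from BPS sheaves via the same Hall-product formula, the matching of summands is indeed induced by \eqref{isopidstar}.
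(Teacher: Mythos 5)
Your proposal follows a longer route than the paper's argument and contains a genuine gap in the final paragraph, one you flag yourself but do not close.

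Your first step (showing $S^d_\delta(Q)=S^d_\delta(Q^\sharp)$ via $n^\sharp_\lambda-n_\lambda\in 2\mathbb{Z}$) is correct and matches the paper. Your second paragraph is in effect the paper's core argument applied only to the trivial partition $\{e\}$ for each $e$: reduce to semisimple perverse sheaves, locate the unique summand of full support, and match. So far so good.

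The detour comes after that. You then propagate the $e$-level identifications $u_{e,*}\mathcal{BPS}_e\cong\mathcal{BPS}^\sharp_e$ through external and symmetric products to get $u_*\mathcal{BPS}_A\cong\mathcal{BPS}^\sharp_A$, and are then obliged to show this abstract identification coincides with the restriction of $s_*v^*$. That is exactly what the proposition demands: not an abstract isomorphism, but that the \emph{specific} map \eqref{isom:sharp} restricts. Your proposed fix --- that the dimensional-reduction-plus-Thom--Sebastiani isomorphism commutes with the cohomological Hall product, hence the matching is "induced" --- is asserted, not proved. Verifying that a Koszul-duality isomorphism intertwines the coproduct-like Hall structures on both sides is a nontrivial compatibility, and the sentence "since both sides of \eqref{isopidstar} are assembled from BPS sheaves via the same Hall-product formula, the matching of summands is indeed induced" presupposes the very thing you need to establish. (There is also a minor unaddressed point: the identifications must be $\mathfrak{S}_{m_i}$-equivariant before one can pass to $\mathrm{Sym}^{m_i}$.)

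The paper's proof sidesteps this entirely and is shorter. It first observes that since \eqref{isopidstar} is obtained from the $W=0$ isomorphism \eqref{isopprop699} by applying $\varphi_{\mathrm{Tr}\,W}$, and $\mathcal{BPS}_A=\varphi_{\mathrm{Tr}\,W}\mathrm{IC}_A[-1]$ (Remark~\ref{rmk:alt}), it suffices to treat $W=0$. Then, instead of building $\mathcal{BPS}_A$ from smaller $e$'s, it fixes the partition $A$ and the perverse degree $\ell(A)$ at dimension $d$. There, both sides of \eqref{isopprop699} are semisimple, and the decomposition theorem \eqref{DM} shows there is at most one summand of support $u(X_A)$ --- namely $u_*\mathcal{BPS}_A$ on the left and $\mathcal{BPS}^\sharp_A$ on the right. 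Since any isomorphism of semisimple perverse sheaves respects isotypic components, \eqref{isopprime699} restricts automatically to those summands; no compatibility with Hall products is needed. You should adopt this direct support argument at dimension $d$ rather than the bottom-up construction; as written, your final step does not constitute a proof.
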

\begin{proof}
      It is enough to show that the isomorphism \eqref{isopidstar} induces an isomorphism in $D^b(\mathrm{Sh}_{\mathbb{Q}}(\X^\sharp(d)))$
    \[u_*\mathcal{BPS}_{d,\delta}\xrightarrow{\cong} \mathcal{BPS}^\sharp_{d,\delta}.\]
   We first show that $S^d_\delta(Q)=S^d_\delta(Q^\sharp)$.
   For a cocharacter $\lambda$ of $T(d)$, let $n_\lambda$ and $n^\sharp_\lambda$ be the integers \eqref{nlambdadef} for $Q$ and $Q^\sharp$, respectively. Let $\varepsilon_{\lambda, \delta}$ and $\varepsilon_{\lambda, \delta}^\sharp$ be the integers \eqref{varepsilondeltalambda} for $Q$ and $Q^\sharp$, respectively.
Then
\[n^\sharp_\lambda-n_\lambda=2\langle \lambda, \mathfrak{g}(d)^{\lambda>0}\rangle \in 2\mathbb{Z}.\]
Therefore $\varepsilon_{\lambda, \delta}=\varepsilon_{\lambda, \delta}^\sharp$, so indeed $S^d_\delta(Q)=S^d_\delta(Q^\sharp)=:S^d_\delta$. 
    
    It suffices to check that \eqref{isopidstar} induces isomorphisms:
    \begin{equation}\label{BPSssheaves}
    u_*\mathcal{BPS}_A\xrightarrow{\cong}\mathcal{BPS}^\sharp_A
     \end{equation} for any $A\in S^d_\delta$.     
     The isomorphism \eqref{isopidstar} is obtained by applying the functor $\varphi_{\mathrm{Tr}\,W}$ to the isomorphism \eqref{isopprop699}. 
     Therefore it suffices to check \eqref{BPSssheaves} when $W=0$, so we assume that $W=0$ in the rest of the proof.
    Suppose that $A$ has a corresponding partition $(d_i)_{i=1}^k$ of $d$. Let $X_A$ be the image of the addition map $\oplus\colon \times_{i=1}^k X(d_i)\to X(d)$. 
    By (\ref{isopprop699}), there is an isomorphism:
    \[u_*{}^p\mathcal{H}^{k}(\pi_{d*}\mathrm{IC}_{\X(d)})\xrightarrow{\sim} {}^p\mathcal{H}^{k}(\pi^\sharp_{d*}\varphi_{\mathrm{Tr}\,W^q}\mathrm{IC}_{\X^\sharp(d)}[-1]).\]
    There are either no summands of support $X_A$ on both sides, case in which both $u_*\mathcal{BPS}_A$ and $\mathcal{BPS}^\sharp_A$ are zero, or there are unique summands of support $X_A$ on both sides, namely $u_*\mathcal{BPS}_A$ and $\mathcal{BPS}^\sharp_A$, and thus \eqref{BPSssheaves} follows.
\end{proof}

We note the following corollary of Lemma~\ref{lem:BPSprime} and Proposition \ref{BPSprime}
on the isomorphism of the monodromy invariants parts: 

\begin{cor}\label{corBPSprime}
Let $\delta\in M(d)^{W_d}_\mathbb{R}$ and $i\in\mathbb{Z}$. 
There is an isomorphism
\begin{align*}
    s_*v^*\colon H^i(\X(d), \varphi^{\mathrm{inv}}_{\mathrm{Tr}\,W}\mathrm{IC}_{\X(d)}[-1])\xrightarrow{\cong} H^i(\X^\sharp(d), \varphi^{\mathrm{inv}}_{\mathrm{Tr}\,W^\sharp}\mathrm{IC}_{\X^\sharp(d)}[-1]),
    \end{align*}
    which restricts to the isomorphism 
    \begin{align*}
    H^i(X(d), \mathcal{BPS}^{\mathrm{inv}}_{d,\delta})\xrightarrow{\cong} H^i(X^\sharp(d), \mathcal{BPS}^{\sharp, \mathrm{inv}}_{d,\delta}).
\end{align*}
\end{cor}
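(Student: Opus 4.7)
The plan is to reduce the corollary to verifying that the isomorphism of Lemma \ref{lem:BPSprime} is equivariant for the monodromy operator $\mathrm{T}$; the corollary will then follow by taking cones of $1-\mathrm{T}$ in the defining triangles that produce $\varphi^{\mathrm{inv}}$ and $\mathcal{BPS}^{\mathrm{inv}}$. First I would unpack the construction of $s_*v^*$ as carried out in the proof of Lemma \ref{lem:BPSprime}. It is the composition of two ingredients: (a) the dimensional reduction isomorphism \eqref{dimred} applied to the quadratic potential $W^q=\sum_{i\in I}\omega_i\omega'_i$, yielding \eqref{isopprop699}; and (b) the Thom--Sebastiani isomorphism $\varphi_{\Tr W}\varphi_{\Tr W^q}\cong\varphi_{\Tr W^\sharp}[-1]$ coming from the decomposition $\Tr W^\sharp=\Tr W+\Tr W^q$.

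The key observation, which I would verify first, is that the monodromy $\mathrm{T}_{W^q}$ on $\pi^\sharp_{d*}\varphi_{\Tr W^q}\mathrm{IC}_{\X^\sharp(d)}[-1]$ corresponds to the identity endomorphism on $u_*\pi_{d*}\mathrm{IC}_{\X(d)}$ under \eqref{isopprop699}. This follows from the naturality of the dimensional reduction isomorphism together with the absence of a vanishing cycle on the source; fiberwise, it amounts to the triviality of the monodromy on the vanishing cycle sheaf of the hyperbolic pairing $\mathrm{Tr}(AB)$ on $\mathfrak{g}(d)^{\oplus 2}$. Combining this triviality with Massey's explicit description of monodromy on iterated vanishing cycles (see the discussion following \eqref{defvar}), the monodromy $\mathrm{T}_{W^\sharp}$ on $\varphi_{\Tr W^\sharp}$ factors as $\mathrm{T}_W\otimes \mathrm{T}_{W^q}$ under Thom--Sebastiani and hence reduces to $\mathrm{T}_W$. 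This yields the $\mathrm{T}$-equivariance of $s_*v^*$, and passing to cones of $1-\mathrm{T}$ produces the first asserted isomorphism.

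For the BPS restriction, I would invoke Proposition \ref{BPSprime}, which already identifies $u_*\mathcal{BPS}_A$ with $\mathcal{BPS}^\sharp_A$ as corresponding direct summands under the decomposition \eqref{DMW}. Since $\mathrm{T}$ is a canonical natural transformation on vanishing cycles and the BBDG decomposition \eqref{DMW} is canonical, $\mathrm{T}$ preserves each summand $\mathcal{BPS}_A$ and $\mathcal{BPS}^\sharp_A$; the $\mathrm{T}$-equivariant isomorphism then descends, after taking cones of $1-\mathrm{T}$ and using the definition \eqref{defBPSddelta}, to the desired isomorphism on $\mathcal{BPS}^{\mathrm{inv}}_{d,\delta}$. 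The main obstacle I anticipate is the careful bookkeeping of monodromy through the Thom--Sebastiani step, in particular ensuring that the triviality of $\mathrm{T}_{W^q}$ established via dimensional reduction really does translate into identifying $\mathrm{T}_{W^\sharp}$ on the right-hand side with $\mathrm{T}_W$ on the left-hand side of \eqref{isopidstar} at the level of complexes, rather than merely on cohomology.
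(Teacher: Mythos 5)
Your proposal is correct and takes essentially the same route as the paper, which states the corollary as an immediate consequence of Lemma \ref{lem:BPSprime} and Proposition \ref{BPSprime}: the $\mathrm{T}$-equivariance of $s_*v^*$ that you verify (triviality of the monodromy of the quadratic potential $W^q$ under dimensional reduction, together with the Thom--Sebastiani compatibility $\mathrm{T}_{W^\sharp}=\mathrm{T}_W\otimes\mathrm{T}_{W^q}$) is exactly the implicit content there. After that, taking cones of $1-\mathrm{T}$ and using that the summands $\mathcal{BPS}_A$, $\mathcal{BPS}^\sharp_A$ are preserved by $\mathrm{T}$ yields both asserted isomorphisms, as you argue.
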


We recall the relation 
between quasi-BPS categories under Kn\"orrer periodicity: 
\begin{prop}\label{quasiBPSprime}\emph{(\cite[Proposition~2.14]{PTquiver})}
    There is an equivalence:
    \begin{align*}
        s_*v^*\colon \mathrm{MF}(\X(d), \mathrm{Tr}\,W)&\xrightarrow{\sim} \mathrm{MF}(\X^\sharp(d), \mathrm{Tr}\,W^\sharp),
    \end{align*}
    which restricts to the equivalence 
   $\mathbb{S}(d; \delta)\xrightarrow{\sim} \mathbb{S}^\sharp(d; \delta)$.
\end{prop}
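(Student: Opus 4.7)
The first equivalence is essentially Knörrer periodicity in the form of Corollary \ref{corollary410}. The observation is that
\[\X^\sharp(d) = (R(d) \oplus \mathfrak{g}(d)^{\oplus 2})/G(d)\]
can be viewed as the total space of $U \oplus U^\vee$ over $\X(d)$ with $U = \mathfrak{g}(d)$, which is self-dual as a $G(d)$-representation via the trace pairing on $\mathfrak{g}(d)$. Under this identification, the additional potential $W^q = \sum_{i \in I}\omega_i \omega_i'$ becomes the Knörrer quadratic $w\colon U \times U^\vee \to \mathbb{C}$, so that $\Tr W^\sharp = \Tr W + w$, where $\Tr W$ is pulled back along the projection $v\colon \mathscr{S}(d) \to \X(d)$. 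Applying Corollary \ref{corollary410} with this data yields the equivalence
\[s_*v^* \colon \mathrm{MF}(\X(d), \Tr W) \xrightarrow{\sim} \mathrm{MF}(\X^\sharp(d), \Tr W^\sharp).\]

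For the restriction to quasi-BPS subcategories, I would use the weight-theoretic characterization provided by Lemma \ref{lemma:alt}. Fix a cocharacter $\lambda$ of $T(d)$ and a $\lambda$-fixed point $x \in R^\sharp(d)$; its projection to $R(d)$ is also $\lambda$-fixed. For a complex $F \in D^b(\X(d))$, one compares the $\lambda$-weights of $s_*v^*F$ at $x$ with the $\lambda$-weights of $F$ at the projection of $x$. The pullback $v^*$ does not affect weights, while the pushforward $s_*$ introduces a shift governed by the Koszul resolution of $\mathcal{O}_{\mathscr{S}(d)}$ inside $\X^\sharp(d)$, namely by the $\lambda$-weights appearing in the exterior algebra of $\mathfrak{g}(d)^{\lambda > 0}$. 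The key numerical identity is
\[n^\sharp_\lambda - n_\lambda = 2\langle \lambda, \mathfrak{g}(d)^{\lambda > 0}\rangle,\]
and this exactly matches the Koszul contribution centered at $\langle \lambda, \mathfrak{g}(d)^{\lambda > 0}\rangle$, so that the interval
\[\left[-\tfrac{1}{2}n_\lambda + \langle \lambda, \delta\rangle, \tfrac{1}{2}n_\lambda + \langle \lambda, \delta\rangle\right]\]
from the definition of $\mathbb{M}(d; \delta)$ gets translated under $s_*v^*$ into the corresponding interval for $\mathbb{M}^\sharp(d; \delta)$.

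The main technical step in this plan is the weight bookkeeping at $\lambda$-fixed points of $R^\sharp(d)$: one must check that the condition in Lemma \ref{lemma:alt} for $\mathbb{M}^\sharp(d;\delta)$ is tested against all such fixed points, not only those that arise by pullback from $\lambda$-fixed points of $R(d)$, and verify that the Koszul shift does the right thing in all cases. Once this comparison is in hand, $s_*v^*$ restricts to a functor $\mathbb{S}(d;\delta) \to \mathbb{S}^\sharp(d;\delta)$, and since it is an equivalence on the ambient matrix factorization categories, the induced functor on the quasi-BPS subcategories is fully faithful. A symmetric argument applied to a quasi-inverse of $s_*v^*$ gives the essential surjectivity. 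As the statement is \cite[Proposition~2.14]{PTquiver}, the remaining bookkeeping can be extracted from that reference.
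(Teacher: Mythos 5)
The paper itself gives no proof of this proposition — it is recalled verbatim from \cite[Proposition~2.14]{PTquiver} — and your sketch reproduces the argument used there: Knörrer periodicity with respect to the trace pairing on the two extra adjoint loops gives the ambient equivalence, and the identity $n^\sharp_\lambda-n_\lambda=2\langle\lambda,\mathfrak{g}(d)^{\lambda>0}\rangle$, matched against the $\lambda$-weights of the Koszul factorization built from $\Lambda^\bullet\mathfrak{g}(d)$ (which lie in $[-\langle\lambda,\mathfrak{g}(d)^{\lambda>0}\rangle,\langle\lambda,\mathfrak{g}(d)^{\lambda>0}\rangle]$), shows via the weight characterization of Lemma~\ref{lemma:alt} that the window for $\mathbb{M}(d;\delta)$ is carried into the window for $\mathbb{M}^\sharp(d;\delta)$. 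The only loose point is deducing essential surjectivity from ``a symmetric argument applied to a quasi-inverse'': the inverse of Knörrer periodicity is not of the form $s'_*v'^*$ for a correspondence of the same shape, but the same weight estimate applied to its explicit Koszul-dual kernel, or a generation argument on the generators $\Gamma_{G(d)}(\chi)\otimes\mathcal{O}$, closes this exactly as in the cited reference.
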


We now give a proof of Proposition~\ref{reduce}: 
\begin{proof}[Proof of Proposition \ref{reduce}]
   Let $i,\ell\in\mathbb{Z}$.
   By Corollary \ref{corollary410}, there is a commutative diagram, where $b=\dim \X(d)-i-2\ell=\dim \X^\sharp(d)-i-2(\ell+\dim \mathfrak{g}(d))$:
   \begin{equation*}
       \begin{tikzcd}
           \mathrm{gr}_\ell K^{\mathrm{top}}_i(\mathrm{MF}(\X(d), \mathrm{Tr}\,W))
           \arrow[r, "\cong"', "s_*v^*"]
           \arrow[d, "\mathrm{c}"]& \mathrm{gr}_{\ell+\dim \mathfrak{g}} K^{\mathrm{top}}_i(\mathrm{MF}(\X^\sharp(d), \mathrm{Tr}\,W^\sharp))
           \arrow[d, "\mathrm{c}"]\\
           H^b(\X(d), \varphi_{\mathrm{Tr}\,W}^{\mathrm{inv}}\mathrm{IC}_{\X(d)})\arrow[r, "\cong"', "s_*v^*"]& H^{b}(\X^\sharp(d), \varphi_{\mathrm{Tr}\,W^\sharp}^{\mathrm{inv}}\mathrm{IC}_{\X^{\sharp}(d)}).
       \end{tikzcd}
       \end{equation*}
       The conclusion follows from Corollary \ref{corBPSprime} and Proposition \ref{quasiBPSprime}.
\end{proof}

\subsection{Coproduct-like maps in K-theory}\label{subsec64}
By Proposition~\ref{reduce}, it is enough to consider quivers 
satisfying the following assumption: 

\begin{assum}\label{assum3}
    Assume that the quiver $Q$ is symmetric and has at least two loops at any vertex.  
\end{assum}

In this subsection, under the above assumption, 
we construct coproduct-like maps in topological K-theory 
of matrix factorizations. 

We introduce some notation. For a cocharacter $\lambda$ of $T(d)$ 
with associated partition $\mathbf{d}$, we define 
\begin{align}\label{def:cd}
c_{\mathbf{d}}:=c_\lambda:=\dim \X(d)-\dim \X(d)^{\lambda\geq 0}.
\end{align}
The above assumption of the quiver is used in the following lemma: 

\begin{lemma}\label{clambdazero}
    Let $Q=(I,E)$ be a quiver which satisfies Assumption~\ref{assum3} and let $d\in\mathbb{N}^I$ be a non-zero dimension vector.

    (i) For any cocharacter $\lambda$ of $T(d)$, we have $c_\lambda\geq 0$, and the inequality is strict if $\lambda$ has an associated partition with at least two terms. Moreover, we have \[\dim \X(d)^{\lambda\geq 0}-\dim \X(d)^\lambda=c_\lambda.\]

    (ii) There exists a simple $Q$-representation of dimension $d$.
    In particular, the map $\pi_d\colon \X(d)\to X(d)$ is generically a $\mathbb{C}^*$-gerbe.  
\end{lemma}

\begin{proof}
    Let $S(d)$ be the affine space of dimension $d$ representations of 
the quiver obtained from $Q$ by deleting one loop at every vertex in $I$.
Then
\[\X(d)=\left(S(d)\oplus\mathfrak{g}(d)\right)/G(d).\] 
We have that 
$\dim \X(d)=\dim S(d)$ and $\dim \X(d)^{\lambda \geq 0}=\dim S(d)^{\geq 0}$. 
Note that $S(d)$ contains $\mathfrak{g}(d)$ as a 
sub $G(d)$-representation. 
Therefore we have 
\begin{align*}c_\lambda=\dim S(d)^{\lambda<0} \geq 
\dim \mathfrak{g}(d)^{\lambda<0}\geq 0,
\end{align*}
and 
the last inequality is strict if $\lambda$ corresponds
to a partition of length at least two. Therefore (i) 
holds. 
Then (ii) holds since the union of the images of
$a_{\lambda} \colon \X(d)^{\lambda} \to \X(d)$
for every cocharacters $\lambda$ corresponding to 
partitions of length at least two is strictly 
smaller than $\X(d)$ because of $c_{\lambda}>0$.

\end{proof}

Below we assume that $Q$ satisfies Assumption \ref{assum3}.
Let $\lambda$ be an antidominant cocharacter of $T(d)$. 
We have the natural 
morphism $a_{\lambda} \colon \X(d)^{\lambda} \to \X(d)$ 
induced by the inclusion $R(d)^{\lambda} \hookrightarrow R(d)$, 
see the diagram (\ref{diagram:natural}). 
Its pull-back
and cycle maps give the following commutative 
diagram 
  \begin{equation}\label{comm:gra}
        \begin{tikzcd}
            \mathrm{gr}_\ell K^{\mathrm{top}}_i(\mathrm{MF}(\X(d), \Tr W)) \arrow[d, hook, "\mathrm{c}"]\arrow[r, "a^*_\lambda"]& 
            \mathrm{gr}_{\ell-2c_\lambda} K^{\mathrm{top}}_i\left(\mathrm{MF}\left(\X(d)^{\lambda}, \mathrm{Tr}\,W\right)\right)\arrow[d, hook, "\mathrm{c}"]\\
            H^{2\dim \X(d)-2\ell-i}\left(\X(d), \varphi_{\mathrm{Tr}\,W}^{\mathrm{inv}}\right)\arrow[r, "a^*_\lambda"]&  H^{2\dim \X(d)^{\lambda}-2\ell-i}\left(\X(d)^{\lambda}, \varphi_{\mathrm{Tr}\,W}^{\mathrm{inv}}\right). 
        \end{tikzcd}
    \end{equation} 
Here we have used that $2c_\lambda:=\dim \X(d)-\dim \X(d)^{\lambda}$ by Lemma~\ref{clambdazero}. 
Since $\lambda$ acts on $\X(d)^{\lambda}$ trivially, 
the stack $\X(d)^{\lambda}$ is a $\mathbb{C}^{\ast}$-gerbe 
over another stack $\X(d)'^\lambda$
\begin{align}\label{gerb:lambda}
    \X(d)^{\lambda} \to \X(d)'^\lambda:=R(d)^\lambda/G(d)'
\end{align}
where $G(d)':=G(d)^\lambda/\text{image}(\lambda)$. 

Note that the relative tangent complex of $\X(d)^{\lambda \geq 0} \to \X(d)$
is isomorphic to $\mathbb{L}_{\X(d)}^{\lambda>0}$. Its determinant 
line bundle descends to a line bundle $\det \left(\mathbb{L}_{\X(d)}^{\lambda>0}\right)$ 
on $\X(d)^{\lambda}$ 
with $\lambda$-weight $n_{\lambda}$. Note that $n_{\lambda}>0$ by 
Assumption~\ref{assum3} if the associated partition $(d_i)_{i=1}^k$ 
has length $k\geq 2$. 
We set
\begin{align*}
    h:=\begin{cases}
        \frac{1}{n_{\lambda}} c_1(\det \mathbb{L}_{\X(d)}^{\lambda>0}) \in 
    H^2(\X(d)^{\lambda}, \mathbb{Q}),\text{ if }k\geq 2,\\
    c_1(\tau_d),\text{ if }k=1.
    \end{cases}
\end{align*}
We have the isomorphism 
\begin{align}\label{isom:h}
    H^{\ast}(\X(d)'^{\lambda}, \varphi_{\Tr W})[h] \stackrel{\cong}{\to}
    H^{\ast}(\X(d)^{\lambda}, \varphi_{\Tr W}). 
\end{align}

We need a little more care of having such an isomorphism for 
K-theory, as the $n_{\lambda}$-th root of $\det \left(\mathbb{L}_{\X(d)}^{\lambda>0}\right)$ does not 
necessary exist. 
Instead we consider the following. 
Let $(d_i)_{i=1}^k$ be a partition associated with $\lambda$, 
and consider the 
subgroup $SG(d)^{\lambda} \subset G(d)^{\lambda}$
defined as the kernel of the morphism
\begin{align*}
    G(d)^{\lambda} \to \mathbb{C}^{\ast}, \ (g_i^a)_{a \in I, 1\leq i\leq k} \mapsto 
    \prod_{a, i} \det g_{a}^i. 
\end{align*}
There is a surjective group homomorphism 
\begin{align}\label{surj:Gd}
    \mathbb{C}^{\ast} \times SG(d)^{\lambda} \to G(d)^{\lambda}, \ 
    (t, g) \mapsto \lambda(t) g.
\end{align}
The kernel of the above homomorphism is a finite
abelian group. We set 
\begin{align}\label{Xtilde}
    \widetilde{\X}(d)^{\lambda}:=R(d)^{\lambda}/(\mathbb{C}^{\ast}\times SG(d)^{\lambda}) \to 
    \widetilde{\X}(d)'^{\lambda}:=R(d)^{\lambda}/SG(d)^{\lambda}. 
\end{align}
Here $\mathbb{C}^{\ast}\times SG(d)^{\lambda}$ acts on $R(d)^{\lambda}$ through 
the surjection (\ref{surj:Gd}). 
The morphism (\ref{Xtilde}) is a trivial $\mathbb{C}^{\ast}$-gerbe, so we have 
the corresponding isomorphism 
\begin{align}\label{isom:MFq}
K^{\mathrm{top}}_i\left(\mathrm{MF}\left(\widetilde{\X}(d)'^{\lambda}, \mathrm{Tr}\,W\right)\right)[q^{\pm 1}]
\stackrel{\cong}{\to} 
K^{\mathrm{top}}_i\left(\mathrm{MF}\left(\widetilde{\X}(d)^{\lambda}, \mathrm{Tr}\,W\right)\right).
\end{align}
Here $q$ is the class of weight one $\mathbb{C}^{\ast}$-character 
pulled back via the projection $\widetilde{\X}(d)^{\lambda} \to B\mathbb{C}^{\ast}$. 
Note that $q^{n_{\lambda}}$ and the pull-back of 
$\det \mathbb{L}_{\X(d)^{\lambda>0}}$ via $\widetilde{\X}(d)^{\lambda} \to \X(d)^{\lambda}$
only differs by a character of $SG(d)^{\lambda}$. 
We have the commutative diagram 
\begin{align}\label{comm:lambda}
    \xymatrix{
\widetilde{\X}(d)^{\lambda} \ar[r] \ar[d] & \widetilde{\X}(d)'^{\lambda} \ar[d] \\
\X(d)^{\lambda} \ar[r] & \X(d)'^{\lambda}.     
    }
\end{align}
Here each vertical arrow is a gerbe of a finite abelian group
and the horizontal arrows are $\mathbb{C}^{\ast}$-gerbes. 
We have the following lemma. 
\begin{lemma}\label{lem:comm:gerb}
The diagram (\ref{comm:lambda}) induces the commutative diagram 
\begin{equation*}
       \begin{tikzcd}
            \mathrm{gr}_\ell K^{\mathrm{top}}_i(\mathrm{MF}(\X(d)^{\lambda}, \Tr W)) \arrow[d, hook, "\mathrm{c}"]\arrow[r]& 
            \mathrm{gr}_{\ell} K^{\mathrm{top}}_i\left(\mathrm{MF}\left(\widetilde{\X}(d)^{\lambda}, \mathrm{Tr}\,W\right)\right)\arrow[d, hook, "\mathrm{c}"]\\
            H^{\ast}\left(\X(d)'^{\lambda}, \varphi_{\mathrm{Tr}\,W}^{\mathrm{inv}}\right)[h]\arrow[r, "\cong"]&  H^{\ast}\left(\widetilde{\X}(d)'^{\lambda}, \varphi_{\mathrm{Tr}\,W}^{\mathrm{inv}}\right)[h]. 
        \end{tikzcd}
\end{equation*}
Here the horizontal arrows are given by pull-backs, 
and the bottom arrow is an isomorphism. 
\end{lemma}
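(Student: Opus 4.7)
\begin{pf}
The plan is to verify that the square (\ref{comm:lambda}) induces the claimed commutative diagram and, separately, to prove that the bottom horizontal pullback is a rational isomorphism. First I would record the structural features of (\ref{comm:lambda}): the horizontal maps $\widetilde{\X}(d)^{\lambda}\to\widetilde{\X}(d)'^{\lambda}$ and $\X(d)^{\lambda}\to\X(d)'^{\lambda}$ are $\mathbb{C}^{\ast}$-gerbes (trivial in the first case by construction of $\widetilde{\X}(d)^{\lambda}$), and the vertical maps $\widetilde{\X}(d)^{\lambda}\to\X(d)^{\lambda}$ and $\widetilde{\X}(d)'^{\lambda}\to\X(d)'^{\lambda}$ are gerbes for the finite abelian group $K:=\ker(\mathbb{C}^{\ast}\times SG(d)^{\lambda}\to G(d)^{\lambda})$; in particular they are smooth of relative dimension $0$, and their compositions with $\Tr W$ agree with $\Tr W$.

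Next I would construct the two horizontal maps of the diagram. Since the left vertical map of (\ref{comm:lambda}) is smooth of relative dimension $0$, Proposition~\ref{functoriality:Ktop} supplies the pullback in the top row, which preserves the filtration level $\ell$ because the relative dimension vanishes. For the bottom row, smoothness again provides a pullback on vanishing cycle cohomology (commuting with $\varphi_{\Tr W}^{\rm{inv}}$ since vanishing cycles commute with smooth pullback); this pullback is compatible with the action of the class $h$ defined just above (\ref{isom:h}), because on $\widetilde{\X}(d)^{\lambda}$ the class $h$ corresponds to $c_1(q)$ up to the character twist discussed below (\ref{isom:MFq}). Commutativity of the resulting square is then the Grothendieck--Riemann--Roch theorem for matrix factorizations along a smooth morphism, i.e.\ Theorem~\ref{GRRMFtop}(a) combined with Proposition~\ref{functoriality:Ktop}, and it descends to the associated graded quotients because the Chern character does.

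The main remaining point is that the bottom horizontal arrow is an isomorphism. The map $p\colon\widetilde{\X}(d)'^{\lambda}\to\X(d)'^{\lambda}$ is a gerbe for the finite abelian group $K$, so étale-locally on $\X(d)'^{\lambda}$ it is of the form $Y\times BK\to Y$; since $H^{\ast}(BK,\mathbb{Q})=\mathbb{Q}$, the unit $\mathrm{id}\to p_{\ast}p^{\ast}$ is a rational equivalence on constructible sheaves, and averaging along the $K$-action gives a splitting of $p^{\ast}$. This formal property applies equally to the pullback of $\varphi_{\Tr W}^{\rm{inv}}\mathrm{IC}$, so $p^{\ast}$ is an isomorphism of rational vanishing cycle cohomology; tensoring with $\mathbb{Q}[h]$ preserves this isomorphism, yielding the desired conclusion. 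The only subtlety I anticipate is bookkeeping the character twist used in (\ref{isom:MFq}) so that the class $h$ on $\widetilde{\X}(d)'^{\lambda}$ matches the pullback of $h$ from $\X(d)'^{\lambda}$; this follows from the observation that $q^{n_{\lambda}}$ differs from the pullback of $\det\mathbb{L}_{\X(d)}^{\lambda>0}$ only by an $SG(d)^{\lambda}$-character, which is rationally trivial on cohomology.
\end{pf}
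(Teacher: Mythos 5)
Your proof is correct and takes essentially the same route as the paper: commutativity comes from pullback along the vertical smooth (relative dimension $0$) maps of \eqref{comm:lambda} together with the GRR/functoriality statements, and the bottom arrow is an isomorphism because it is a gerbe over a finite abelian group, whose rational classifying-space cohomology is trivial. The paper compresses the commutativity check to "by construction," while you spell out the GRR input and the matching of the class $h$, but these are elaborations of the same argument rather than a different approach.
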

\begin{proof}
The diagram commutes by the construction. 
The bottom arrow is an isomorphism since 
$\widetilde{\X}(d)'^{\lambda} \to \X(d)'^{\lambda}$ is a gerbe over 
a finite abelian group, and because $H^{\ast}(B\mu_d, \mathbb{Q})=\mathbb{Q}$ for any $d\geq 1$. 
\end{proof}

Recall the definition of $c_{\mathbf{d}}$ and $c_{\lambda}$ in (\ref{def:cd}). 
We define 
\begin{equation}\label{def:width}
c_{\lambda, \delta}:=c_\lambda+\varepsilon_{\lambda,\delta},\, c_{\mathbf{d}, \delta}:=c_\mathbf{d}+\varepsilon_{\mathbf{d},\delta}.
\end{equation}
The following is the main technical result of this subsection. 
\begin{prop}\label{prop06}
For each partition $\mathbf{d}=(d_i)_{i=1}^k$ of $d$, 
there exists an antidominant cocharacter $\lambda$ of $T(d)$ with 
associated partition $\mathbf{d}$ such that, 
in the commutative diagram (\ref{comm:gra}),
       the image of $\mathrm{c}\,a^*_\lambda\mathrm{gr}_{\ell}K^{\mathrm{top}}_i\left(\mathbb{S}(d; \delta)\right)$ lies in the subspace 
    \[\bigoplus_{j=0}^{c_{\lambda, \delta}-1}H^{2\dim \X(d)^{\lambda}-2\ell-i-2j}\left(\X(d)'^{\lambda}, \varphi_{\Tr W}^{\mathrm{inv}}\right) h^j\subset 
    H^\ast\left(\X(d)'^{\lambda}, \varphi_{\Tr W}^{\mathrm{inv}}\right)[h].\]
    Moreover, the cocharacter $\lambda$ satisfies $\varepsilon_{\lambda, \delta}=\varepsilon_{\mathbf{d}, \delta}$, 
    hence $c_{\lambda, \delta}=c_{\mathbf{d}, \delta}$. 
\end{prop}

\begin{proof}
Let $\lambda$ be an antidominant cocharacter with corresponding 
partition $\mathbf{d}$. We may take such $\lambda$ such
that $\lambda \colon \mathbb{C}^{\ast} \to T(d)$ is injective. 
For an object $A \in \mathbb{S}(d; \delta)$, 
by the definition of quasi-BPS category, the pull-back $a_\lambda^*(A)$
has $\lambda$-weights contained in the interval
\[S_{\lambda, \delta}:=
\left[-\frac{n_{\lambda}}{2}+\langle \lambda,\delta\rangle, \frac{n_{\lambda}}{2}+\langle \lambda,\delta\rangle\right]\cap \mathbb{Z}.\]
Thus, by setting $b_{\lambda}$
to be the composition of $a_{\lambda}$ with 
$\widetilde{\X}(d)^{\lambda} \to \X(d)^{\lambda}$, we obtain 
\begin{equation}\label{subsetPhi}
    b^*_\lambda K^{\mathrm{top}}_i\left(\mathbb{S}(d; \delta)\right)\subset K^{\mathrm{top}}_i\left(\mathrm{MF}\left(\widetilde{\X}(d)'^{\lambda}, \mathrm{Tr}\,W\right)\right)\otimes
\mathscr{A},
\end{equation}
where 
$\mathscr{A}$ is defined by 
\begin{align*}
\mathscr{A}:=\bigoplus_{j\in S_{\lambda, \delta}}\mathbb{Q} \cdot q^j
\subset K_0^{\rm{top}}(B\mathbb{C}^{\ast})=\mathbb{Q}[q^{\pm 1}]. 
\end{align*}
There 
exists a filtration in the right hand side 
of (\ref{subsetPhi})
induced by the Chern character maps for both $K_i^{\mathrm{top}}\left(\mathrm{MF}(\widetilde{\X}(d)'^\lambda, \mathrm{Tr}\,W)\right)$ and $K_0^{\mathrm{top}}\left(B\mathbb{C}^*\right)$, and there is an isomorphism obtained by the Kunneth formula:
\begin{align}\label{kunnethgr}
&\mathrm{gr}_\ell K_i^{\mathrm{top}}\left(\mathrm{MF}(\widetilde{\X}(d)^\lambda, \mathrm{Tr}\,W)\right)\\
&\notag\cong \bigoplus_{a+b=\ell}\mathrm{gr}_a K_i^{\mathrm{top}}\left(\mathrm{MF}(\widetilde{\X}(d)'^\lambda, \mathrm{Tr}\,W)\right)\otimes \mathrm{gr}_b K_0^{\mathrm{top}}\left(B\mathbb{C}^*\right).
\end{align}

The filtration on $\mathscr{A}$ is induced by the filtration on
$\mathbb{Q}[q^{\pm 1}]$ given by 
$(q-1)^b \mathbb{Q}[q^{\pm 1}] \subset \mathbb{Q}[q^{\pm 1}]$
for $b\geq 0$, see Example~\ref{exam:BGL}.  
Note that we have 
\begin{align}\label{inclu:A}
    \mathscr{A} \cap (q-1)^b \mathbb{Q}[q^{\pm 1}] =0, \ b\geq n_{\lambda}+\varepsilon_{\lambda, \delta}. 
\end{align}
Suppose that $k=2$, i.e. $\mathbf{d}$ is a 
two length partition $d=d_1+d_2$. 
In this case, we choose $\lambda$ 
which acts on $\beta^i_a$ for $i\in I$ with weight one 
for $1\leq a \leq d^i_1$ and zero for $a>d^i_1$. 
Then the $\lambda$-weights on $R(d)$ and $\mathfrak{g}(d)$ 
are either zero or $\pm 1$, so we have $n_{\lambda}=c_{\lambda}$. 
Therefore the conclusion follows from (\ref{inclu:A})
and Lemma~\ref{lem:comm:gerb}. 

Suppose that $k\geq 3$. The stack $\X(d)^{\lambda}$
only depends on a partition $\bf{d}$, so we may write it as 
$\X(\mathbf{d})=R(\mathbf{d})/G(\mathbf{d})$. It is a $(\mathbb{C}^{\ast})^k$-gerbe
\begin{align*}
    \X(\mathbf{d}) \to \X(\mathbf{d})'=R(\mathbf{d})/G(\mathbf{d})'
\end{align*}
where $G(\mathbf{d})'=G(\mathbf{d})/Z(\mathbf{d})$
and $Z(\mathbf{d})=(\mathbb{C}^{\ast})^k \subset T(d_1)\times \cdots \times T(d_k)$ is the diagonal 
torus. 
As it is not necessarily a trivial gerbe, we apply a similar construction 
as in (\ref{Xtilde}).
By setting $S(\mathbf{d})=\times_{i=1}^k SG(d_i)$, 
we have the surjection $Z(\mathbf{d}) \times S(\mathbf{d})\to G(\mathbf{d})$, 
and trivial $Z(\mathbf{d})$-gerbe
\begin{align*}
    \widetilde{\X}(\mathbf{d}):=R(\mathbf{d})/(Z(\mathbf{d})\times S(\mathbf{d})) \to 
    \widetilde{\X}(\mathbf{d})':=R(\mathbf{d})/S(\mathbf{d}). 
\end{align*}
Similarly to (\ref{isom:MFq}), we have the isomorphism 
\begin{align*}
K_i^{\rm{top}}(\mathrm{MF}(\widetilde{\X}(\mathbf{d})', \Tr W))_{\mathbb{Q}} \otimes \mathbb{Q}[q_1^{\pm 1}, \ldots, q_{k}^{\pm 1}] \stackrel{\cong}{\to} 
    K_i^{\rm{top}}(\mathrm{MF}(\widetilde{\X}(\mathbf{d}), \Tr W))_{\mathbb{Q}}. 
\end{align*}
Giving an antidominant cocharacter of $T(d)$ with corresponding partition $\mathbf{d}$
is equivalent to giving an element 
\begin{align}\label{lambda:Zd}
    \lambda=(\lambda_1, \ldots, \lambda_k) \in \Hom(\mathbb{C}^{\ast}, Z(\mathbf{d}))=\mathbb{Z}^k, \ 
    \lambda_1>\cdots>\lambda_k. 
\end{align}
Let $\gamma$ be the composition 
$\gamma \colon \widetilde{\X}(\mathbf{d}) \to \X(\mathbf{d}) \to \X(d)$. 
We need to find such $\lambda$ such that 
the vanishing orders of the elements 
\begin{align}\label{pull:gamma}
    \gamma^{\ast}K_i^{\rm{top}}(\mathbb{S}(d; \delta)) \subset 
    K_i^{\rm{top}}(\mathrm{MF}(\widetilde{\X}(\mathbf{d})', \Tr W)) \otimes \mathbb{Q}[q_1^{\pm 1}, \ldots, q_{k}^{\pm 1}]
\end{align}
at the hypersurface 
\begin{align*}
    F_{\lambda}:=(f_{\lambda}=0) \subset Z(\mathbf{d}), \ 
f_{\lambda} :=q_1^{\lambda_1}\cdots q_k^{\lambda_k}-1
\end{align*}
are
at most $c_{\lambda, \delta}-1$. 
By the argument of the length two partition case, 
by setting $\lambda_0=(1, 0, \cdots, 0)$
the vanishing orders of the elements (\ref{pull:gamma}) at $F_{\lambda_0}=\{q_1=1\}$ are at most $c_{\lambda_0, \delta}-1$. 
We then perturb $\lambda_0$ to a rational 
cocharacter of $Z(\mathbf{d})$, 
\begin{align*}
 \lambda'=(1, \lambda_2', \cdots, \lambda_k'), \ 1\gg \lambda_2'>\cdots>\lambda_k'
 \end{align*}
 where $\lambda_i' \in \mathbb{Q}$. By a generic perturbation, the elements 
 from the left hand side of (\ref{subsetPhi}) are divisible by 
 $f_{\lambda'}$ at most $(c_{\lambda_0, \delta}-1)$-times. 
 
Indeed, the locus in the left hand side in (\ref{subsetPhi}) 
divisible $m$-times by $(q_1-1)$ is given by 
the intersection $H_1 \cap \cdots \cap H_m$, 
where $H_i$
is the linear subspace determined by the condition 
$(\partial/\partial q_1)^{i-1}(-)|_{q_1=1}=0$. 
Then we have $H_1 \cap \cdots \cap H_{c_{\lambda_0, \delta}}=0$. 
By a small perturbation of $\lambda_0$, 
the locus divisible $m$-times by $f_{\lambda'}$ is the intersection $H_1' \cap \cdots \cap H_m'$,
where $H_i'$ is a perturbation of $H_i$. 
Therefore, we have $H_1' \cap \cdots \cap H_{c_{\lambda_0, \delta}}'=0$
for an enough small perturbation of $\lambda_0$. 

We then take $N>0$ such that $\lambda=N\lambda'$ is integral. 
Since the vanishing orders at $F_{\lambda'}$ and $F_{N\lambda'}$ are 
the same and we have the inequality $c_{\lambda_0}<c_{\lambda}$,
we obtain a desired cocharacter $\lambda$. 
Moreover, the above choice of $\lambda$ can be taken 
arbitrary of the form (\ref{lambda:Zd}) 
such that $\lambda_1\gg \lambda_2>\cdots>\lambda_k$, 
which are general enough to assure that
$\varepsilon_{\lambda, \delta}=\varepsilon_{\mathbf{d}, \delta}$. 
\end{proof}

\subsection{Coproduct-like maps in cohomology}\label{subsec65}

In this subsection, we 
construct coproduct-like maps in cohomologies and 
use them together with Proposition~\ref{prop06} to give a proof of 
Theorem~\ref{thm2}. 
Below we 
assume that the quiver $Q=(I,E)$ satisfies Assumption \ref{assum3}. 
By Proposition \ref{reduce}, we obtain Theorem \ref{thm2} for general symmetric quivers $Q$.

For $\lambda$ an antidominant cocharacter with associated partition $\mathbf{d}=(d_i)_{i=1}^k$,
let $\pi_{\lambda}$ be the good moduli space map 
\begin{align*}\pi_\lambda:=\times_{i=1}^k \pi_{d_i}\colon \X(d)^\lambda\to X(d)^\lambda:=\times_{i=1}^k X(d_i).\end{align*}
Let $t_{\lambda}$ be the projection $t_\lambda\colon \X(d)^\lambda\to \X(d)'^\lambda$, 
which is a $\mathbb{C}^{\ast}$-gerbe. The maps introduced fit in the following diagram:

\begin{align*}
\xymatrix{
\X(d)^{\lambda} \ar[r]_-{t_{\lambda}}  \ar@/^20pt/[rr]_-{\pi_{\lambda}} \ar[d]_{a_{\lambda}} & \X(d)^{'\lambda} \ar[r] & X(d)^{\lambda} \ar[d]_-{i_{\lambda}} \\
\X(d) \ar[rr]^-{\pi_d} & & X(d). 
}    
\end{align*}
 We consider the following perverse truncation
\begin{align}
\label{Sdef} \mathrm{S}\colon t_{\lambda*}\mathrm{IC}_{\X(d)^{\lambda}}[c_{\lambda}] &\cong \mathrm{IC}_{\X(d)'^\lambda}[c_\lambda-1]\otimes \mathbb{Q}[h]  \\ \notag
&\to {}^p\tau^{\geq c_\lambda+1}\left(\mathrm{IC}_{\X(d)'^\lambda}[c_\lambda-1]\otimes \mathbb{Q}[h]\right)\\ &\cong 
\notag \bigoplus_{j\geq 0}\mathrm{IC}_{\X(d)'^\lambda}[-c_\lambda-1-2j]  \cong
t_{\lambda*}\mathrm{IC}_{\X(d)^\lambda}[-c_\lambda].
\end{align}
The map $\mathrm{S}$ sends $h^i$ for $0\leq i\leq c_{\lambda}-1$ to zero, 
and $h^i$ for $i\geq c_{\lambda}$ to $h^{i-c_{\lambda}}$. 
We then define the map $\Delta_{\lambda}$ to be the composition
\begin{equation}\label{defdeltamap}
\Delta_\lambda\colon \pi_{d \ast}\mathrm{IC}_{\X(d)} \to 
\pi_{d \ast}a_{\lambda\ast}\mathrm{IC}_{\X(d)^{\lambda}}[2c_{\lambda}] 
=i_{\lambda\ast}\pi_{\lambda\ast}
\mathrm{IC}_{\X(d)^{\lambda}}[2c_{\lambda}] 
\stackrel{\mathrm{S}}{\to}
i_{\lambda\ast}\pi_{\lambda\ast}
\mathrm{IC}_{\X(d)^{\lambda}}.
\end{equation}

Recall the notations from Subsection \ref{subsection:decompositiontheorem} and the decomposition theorem \eqref{DM}.
We consider the total perverse cohomology 
\begin{align*}{}^p\mathcal{H}^{\ast}\left(\pi_{d*}\mathrm{IC}_{\X(d)}\right):=\bigoplus_{i\in\mathbb{Z}}{}^p\mathcal{H}^i\left(\pi_{d*}\mathrm{IC}_{\X(d)}\right)[-i].
\end{align*}
For $A\in\mathscr{P}$ as in Subsection \ref{subsection:decompositiontheorem}, 
we have natural maps into and onto the direct summand $\mathrm{P}_A$
\begin{align*}
\mathrm{P}_A\hookrightarrow {}^p\mathcal{H}^{\ast}\left(\pi_{d*}\mathrm{IC}_{\X(d)}\right) \twoheadrightarrow \mathrm{P}_A.
\end{align*}
Suppose that the corresponding partition of $A$ is $\mathbf{d}=(d_i)_{i=1}^k$
as above, and let
$B \in \mathscr{P}$ be another element. 
We have the following map 
\begin{align*}
    \Delta_{\lambda} \colon \mathrm{P}_{\mathrm{B}} \hookrightarrow 
    {}^p\mathcal{H}^{\ast}\left(\pi_{d*}\mathrm{IC}_{\X(d)}\right)
    \stackrel{\Delta_{\lambda}}{\to} {}^p\mathcal{H}^{\ast}\left(i_{\lambda*}\pi_{\lambda*}\mathrm{IC}_{\X(d)^{\lambda}}\right)
    \twoheadrightarrow \mathrm{P}_{A}. 
\end{align*}

\begin{prop}\label{prop310}
    Let $A, B\in \mathscr{P}$ with corresponding sheaves $\mathrm{P}_A$ and $\mathrm{P}_B$ of different support. Assume that $p_B\leq p_A$. 

    (i) 
        The map \eqref{defdeltamap} induces an isomorphism 
    \begin{equation}\label{mapdeltaaa}
        \Delta_\lambda\colon \mathrm{P}_A\xrightarrow{\cong} \mathrm{P}_A
    .\end{equation}

    (ii) The map $\Delta_\lambda \colon \mathrm{P}_B\to \mathrm{P}_A$ is zero. 
\end{prop}

\begin{proof}
    (i) 
   Recall the subset $\mathscr{P}^{\circ} \subset \mathscr{P}$ in Subsection~\ref{subsec:reltop}. 
We first prove the case that $A \in \mathscr{P}^{\circ}$, i.e. 
$A$ corresponds to a tuple $(e_i, m_{i, a})_{i=1}^s$ such that 
$m_{i, a}=0$ for $a\geq 1$. 
Let $(d_i)_{i=1}^k$ be a partition of $d$
where $e_j$ appear $m_{j}:=m_{j, 0}$-times, and $\lambda$ 
an antidominant cocharacter with associated partition $(d_i)_{i=1}^k$. 
    Recall the sheaf-theoretic cohomological Hall algebra (\ref{coha})
    \begin{align*}
        m_{\lambda}^{\rm{co}} \colon i_{\lambda\ast}\pi_{\lambda*}\mathrm{IC}_{\X(d)^{\lambda}} \to 
        \pi_{d\ast}\mathrm{IC}_{\X(d)}. 
    \end{align*}
   The lowest non-zero piece of the perverse filtration on $i_{\lambda \ast}\pi_{\lambda*}\mathrm{IC}_{\X(d)^{\lambda}}$ is given by \[
    {}^p\tau^{\leq k}i_{\lambda \ast}\pi_{\lambda*}\mathrm{IC}_{\X(d)^\lambda}=i_{\lambda \ast}\boxtimes_{i=1}^s\left(\mathrm{IC}_{X(e_i)}[-1]\right)^{\boxtimes m_i}.\]
    The (shifted) perverse sheaf $i_{\lambda\ast}\boxtimes_{i=1}^s\left(\mathrm{IC}_{X(e_i)}[-1]\right)^{\boxtimes m_i}$ splits as a direct sum of simple perverse sheaves, and one of 
    them is $\mathrm{P}_A$. 
    There is thus a natural inclusion $\mathrm{P}_A\subset i_{\lambda \ast}\pi_{\lambda*}\mathrm{IC}_{\X(d)^\lambda}$.
    The map \begin{equation}\label{pa0}
    \Delta_\lambda m_\lambda^{\rm{co}}\colon \mathrm{P}_A\to i_{\lambda \ast}\pi_{\lambda*}\mathrm{IC}_{\X(d)^\lambda}
   \end{equation}
    has image in the lowest non-zero perverse truncation of $i_{\lambda \ast}\pi_{\lambda*}\mathrm{IC}_{\X(d)^\lambda}$,
    and thus \eqref{pa0} induces a map:
    \begin{equation}\label{pa}
    \Delta_\lambda m_\lambda^{\rm{co}}\colon \mathrm{P}_A\to {}^p\tau^{\leq s}i_{\lambda \ast}\pi_{\lambda*}\mathrm{IC}_{\X(d)^\lambda}=\boxtimes_{i=1}^k\left(\mathrm{IC}_{X(e_i)}[-1]\right)^{\boxtimes m_i}.
   \end{equation}
    The (shifted) perverse sheaf $i_{\lambda \ast}\boxtimes_{i=1}^k\left(\mathrm{IC}_{X(e_i)}[-1]\right)^{\boxtimes m_i}$ has only one summand isomorphic to $\mathrm{P}_A$, which is a simple (shifted) perverse sheaf. Thus the map \eqref{pa} restricts to a map 
    \begin{equation}\label{pa2}
        \mathrm{P}_A\to \mathrm{P}_A
    \end{equation} 
    All such maps are given by multiplication by scalars. It is thus an isomorphism if it is not the zero map. It suffices to show that the maps \eqref{pa0} or \eqref{pa} are not zero. We will show this after passing to a convenient open set of $X(d)^\lambda$. 

For any non-zero $e\in \mathbb{N}^I$, by the same argument used to prove Lemma~\ref{clambdazero},  there exists a point in $R(e)$ corresponding to a simple representation. 
For $1\leq i\leq k$, let $R_i$ be a simple representation of $Q$ of dimension $d_i$ such that $R_i$ and $R_j$ are not isomorphic for $1\leq i<j\leq k$. Let $R:=\bigoplus_{i=1}^k R_i$. Note that the stabilizer of $R$ is $T=(\mathbb{C}^*)^k$. By the etale slice theorem, there is an analytic smooth open substack $R\in \mathscr{U}/T\subset \X(d)$ such that \[\mathscr{U}\ssslash T\to X(d)\text{ and }\mathscr{U}^\lambda \ssslash T \to X(d)^\lambda\] are analytic neighborhoods of $\pi_d(R)$ and $\times_{i=1}^k \pi_{d_i}(R_i)=\pi_\lambda(R)$, respectively. After possibly shrinking $\mathscr{U}$, we may assume that $\mathscr{U}$ and $\mathscr{U}^\lambda$ are contractible. The maps 
\begin{equation*}
    \begin{tikzcd}
        \X(d)^\lambda \arrow[rr, bend right, "a_\lambda"] & \X(d)^{\lambda\geq 0}\arrow[l, "q_\lambda"']\arrow[r, "p_\lambda"] & \X(d)
    \end{tikzcd}
\end{equation*}
are, analytically locally over $\pi_d(R)\in X(d)$, isomorphic to the following:
\begin{equation}\label{dia:locU}
    \begin{tikzcd}
        \mathscr{U}^\lambda/T \arrow[rr, hook, bend right, "a_\lambda"] & \mathscr{U}^{\lambda\geq 0}/T\arrow[l,  "q_\lambda"']\arrow[r, hook, "p_\lambda"] & \mathscr{U}/T.
    \end{tikzcd}
\end{equation}
Note that the maps $p_\lambda$ and $a_\lambda$ in (\ref{dia:locU}) are closed immersions.
    %
    To show that the map \eqref{pa2} is non-zero, it suffices to check that the map 
    \begin{equation}\label{pa00}
    \Delta_\lambda m_\lambda|_{\mathscr{U}^\lambda \ssslash T}\colon \mathrm{P}_A|_{\mathscr{U}^\lambda \ssslash T}\to \mathrm{P}_A|_{\mathscr{U}^\lambda \ssslash T}
    \end{equation} is non-zero. It suffices to check that the map is non-zero after passing to global sections. We drop the restriction to $\mathscr{U}^\lambda$ from the notation from now on.
    We check by a direct computation that 
    \begin{equation}\label{computationpushpull}
    \Delta_\lambda m_\lambda(1)=1\in H^0(\mathscr{U}^\lambda/T).
    \end{equation}
    Note that the computation \eqref{computationpushpull} shows that the map \eqref{pa00} is non-zero, and thus the conclusion follows. It suffices to check the computation in \eqref{computationpushpull} for $\mathscr{U}^\lambda/\mathbb{C}^*$, where by $\mathbb{C}^*$ we denote the image of $\lambda$, because $H^0(\mathscr{U}^\lambda/\mathbb{C}^*)\cong H^0(\mathscr{U}^{\lambda}/T)\cong \mathbb{Q}$. Note that we have $H^{\ast}(\mathscr{U}^\lambda/\mathbb{C}^*)\cong \mathbb{Q}[h]$ and that \[m_\lambda(1)=p_{\lambda*}q_\lambda^*(1)=h^{c_\lambda}\] because $p_\lambda$ has relative dimension $-c_\lambda$. Note that $\Delta_\lambda(h^{c_\lambda})=1$ from the construction of $\Delta_\lambda$, and thus the conclusion follows. 
    

We next reduce the case of $A \in \mathscr{P} \setminus \mathscr{P}^{\circ}$
to the case as above. 
Let $A \in \mathscr{P}$ corresponds to 
a tuple $(e_i, m_{i, a})$ and let $A^{\circ} \in \mathscr{P}^{\circ}$ be 
a tuple $(e_i, m_{i, a}')$ with $m_{i, 0}'=\sum_{a}m_{i, a}$ and $m_{i, a}'=0$ for $a\geq 1$. 
For $d\in \mathbb{N}^I$, we set $\hbar_d:=c_1(\mathcal{O}(\sigma_d))\in H^2(\X(d))$.
By \cite[Theorem C]{DM},
the summand $\mathrm{P}_{A}$ of ${}^p\mathcal{H}^{\ast}\left(\pi_{d*}\mathrm{IC}_{\X(d)}\right)$ is obtained from $\mathrm{P}_{A^{\circ}}$
by multiplication with 
$u :=\mathrm{Sym}(\hbar_{d_1}^{b_1} \cdots \hbar_{d_k}^{b_k})$. 
Here $b_i$ is the sum of $a \geq 0$ for which 
$m_{j, a} \neq 0$ and $e_j=d_i$, and $\mathrm{Sym}(-)$ is the symmetrizer with respect to the 
$\mathfrak{S}_k$-action on $\{1, \cdots, k\}$. 
Then we have the following commutative diagram 
\begin{align*}
    \xymatrix{
P_{A^{\circ}} \ar[r]^-{\Delta_{\lambda}} \ar[d]_-{u}  & P_{A^{\circ}} \ar[d]_-{u}\\
P_A \ar[r]^-{\Delta_{\lambda}} & P_A.   
    }
\end{align*}
In order to show that the above diagram commutes, by simpleness of $P_A$ it is 
enough to check at $\mathcal{U}\ssslash T$ as in the above arguments. 
Then the above diagram commutes by
$\Delta_{\lambda}(h^{c_{\lambda}}u)=u$
from the definition of $\Delta_{\lambda}$. 

    (ii) If $p_B<p_A$, the map $\Delta_\lambda\colon\mathrm{P}_B\to\mathrm{P}_A$ is zero by considering the perverse degree. If $p_B=p_A$, then the map is zero because, after a shift, it is a map of simple perverse sheaves with different support. 
\end{proof}

\begin{example}\label{example:nabla}
In Example~\ref{exam:loop},
let $\lambda \colon \mathbb{C}^{\ast} \to (\mathbb{C}^{\ast})^2=T(2)$
be an antidominant cocharacter $t\mapsto (t^{\lambda_1}, t^{\lambda_2})$
for $\lambda_1 \geq \lambda_2$. 
When $\lambda_1>\lambda_2$, we have $h=\hbar_1-\hbar_2$, $c_{\lambda}=2e$ and  
\begin{align*}
    &\Delta_{\lambda}((\hbar_1-\hbar_2)^{2e})(\hbar_1^{m_1} \hbar_2^{m_2}+\hbar_1^{m_2} \hbar_2^{m_1})
    =\hbar_1^{m_1} \hbar_2^{m_2}+\hbar_1^{m_2} \hbar_2^{m_1}, 
    \end{align*}
    and $\Delta_{\lambda}((\hbar_1-\hbar_2)^{2a})=0$ for $\ 0\leq a \leq e-1$. 
    
When $\lambda_1=\lambda_2$, we have $h=(\hbar_1+\hbar_2)/2$, $c_{\lambda}=0$ and 
\begin{align*}
\Delta_{\lambda}((\hbar_1-\hbar_2)^{2a}(\hbar_1+\hbar_2)^k)=(\hbar_1-\hbar_2)^{2a}(\hbar_1+\hbar_2)^k
\end{align*}
for $0\leq a\leq e-1$ and $k\geq 0$.     
\end{example}

We next prove an analogue of Proposition \ref{prop310} for a non-zero potential.
Let $W$ be a potential of $Q$. Recall the shifted perverse sheaves $\mathrm{Q}_A$ from Subsection \ref{subsection:decompositiontheorem}. Let ${}^p\mathcal{H}^{\ast}(\pi_*\varphi_{\mathrm{Tr}\,W}\mathrm{IC}_{\X(d)}[-1])$ be the total perverse cohomology of $\pi_*\varphi_{\mathrm{Tr}\,W}\mathrm{IC}_{\X(d)}[-1]$.
We have natural maps into and onto the 
direct summands
\[\mathrm{Q}_A \hookrightarrow {}^p\mathcal{H}^{\ast}(\pi_*\varphi_{\mathrm{Tr}\,W}\mathrm{IC}_{\X(d)}[-1])\twoheadrightarrow \mathrm{Q}_A.\]
By applying the vanishing cycle functor to the maps \eqref{defdeltamap}, we obtain:
\begin{align}\label{defdeltamapbis}
\Delta_\lambda\colon \pi_{d*}\varphi_{\mathrm{Tr}\,W}\mathrm{IC}_{\X(d)}[-1]&\to i_{\lambda*}\pi_{\lambda *}\varphi_{\mathrm{Tr}\,W}\mathrm{IC}_{\X(d)^\lambda}[-1] \\
\notag &= i_{\lambda*} \boxtimes_{i=1}^k\left(\pi_{d_i*}\varphi_{\mathrm{Tr}\,W}\mathrm{IC}_{\X(d_i)}[-1]\right).
\end{align}
Let $\mathrm{Q}_A^{\mathrm{inv}}$ be defined by the exact triangle 
\[\mathrm{Q}_A^{\mathrm{inv}}[-1]\to \mathrm{Q}_A\xrightarrow{1-\mathrm{T}}\mathrm{Q}_A\to \mathrm{Q}_A^{\mathrm{inv}}.\]

\begin{prop}\label{prop310bis}
    Let $A, B\in \mathscr{P}$ with corresponding perverse sheaves $\mathrm{Q}_A$ and $\mathrm{Q}_B$ such that $P_A$ and $P_B$ have different supports. Assume that $p_B\leq p_A$. 

    (i) 
        The map \eqref{defdeltamapbis} induces isomorphisms 
    \begin{equation}\label{mapdeltaaabis}
        \Delta_\lambda\colon \mathrm{Q}_A\xrightarrow{\sim} \mathrm{Q}_A,\, \Delta_\lambda\colon \mathrm{Q}^{\mathrm{inv}}_A\xrightarrow{\sim} \mathrm{Q}^{\mathrm{inv}}_A 
    .\end{equation}

    (ii) The maps $\Delta_\lambda \colon \mathrm{Q}_B\to \mathrm{Q}_A$ and $\Delta_\lambda \colon \mathrm{Q}^{\mathrm{inv}}_B\to \mathrm{Q}^{\mathrm{inv}}_A$ are zero. 
\end{prop}

\begin{proof}
    The maps above are induced from the map \eqref{defdeltamap}, thus the conclusion follows from Proposition \ref{prop310}.
\end{proof}

We now record a corollary to be used in the proof of Theorem \ref{thm2}.
Fix a splitting 
\begin{equation}\label{xa}
H^\bullet\left(\X(d), \varphi^{\mathrm{inv}}_{\mathrm{Tr}\,W}\mathrm{IC}_{\X(d)}[-1]\right)=\bigoplus_{A\in\mathscr{P}}H^\bullet(X(d), \mathrm{Q}^{\mathrm{inv}}_A).
\end{equation}
For an element $x\in H^\bullet\left(\X(d), \varphi^{\mathrm{inv}}_{\mathrm{Tr}\,W}\mathrm{IC}_{\X(d)}[-1]\right)$, using the decomposition above we write 
\begin{align}\label{xa2}
x=\sum_{A\in\mathscr{P}}x_A, \ x_A\in H^\bullet(X(d), \mathrm{Q}^{\mathrm{inv}}_A). 
\end{align}

\begin{cor}\label{onesupport2}
Let $\delta\in M(d)_\mathbb{R}^{W_d}$
and $\lambda$ be an antidominant cocharacter of $T(d)$ with associated partition $\mathbf{d}$ such that $\varepsilon_{\lambda,\delta}=\varepsilon_{\mathbf{d},\delta}$.
For $x\in H^i(\X(d), \varphi^{\mathrm{inv}}_{\mathrm{Tr}\,W}\mathrm{IC}_{\X(d)}[-1])$, suppose that
\begin{align}\label{a:pullback}a^*_\lambda(x)\in \bigoplus_{j=0}^{c_{\mathbf{d}, \delta}-1}H^{i-2j}(\X(d)'^\lambda, \varphi^{\mathrm{inv}}_{\mathrm{Tr}\,W}\mathrm{IC}_{\X(d)'^\lambda}[-2])h^j.
\end{align}
Then we have the following: 

(i) If $\varepsilon_{\mathbf{d}, \delta}=0$, then $x_A=0$ for all tuples $A\in\mathscr{P}$ with corresponding partition 
$\mathbf{d}$. 

(ii) If $\varepsilon_{\mathbf{d}, \delta}=1$, then $x_A=0$ for all tuples $A\in\mathscr{P}$ with corresponding partition 
$\mathbf{d}$ such that $A \neq A^\circ$.
\end{cor}

\begin{proof}
If $\varepsilon_{\mathbf{d}, \delta}=0$, 
then $\mathrm{S}a_{\lambda}^{\ast}(x)=0$
by the definition of the map $\mathrm{S}$ in (\ref{Sdef})
and the condition (\ref{a:pullback}). 
Then we have $\Delta_{\lambda}(x)=0$. 
If $\varepsilon_{\mathbf{d}, \delta}=1$, 
then from the definition of $\mathrm{S}$ in (\ref{Sdef}), 
we have 
\begin{align*}
\mathrm{S}a_\lambda^*(x)\in H^{\ast}\left(X(d)^\lambda, \varphi_{\mathrm{Tr}\,W}^{\mathrm{inv}}\mathrm{IC}_{\X(d)'^\lambda}[-c_\lambda-2]\right). 
\end{align*}
Therefore 
$\Delta_\lambda(x)$ lies in the image of 
\begin{align*}
H^i(\X(d)'^\lambda, \varphi^{\mathrm{inv}}_{\mathrm{Tr}\,W}\mathrm{IC}_{\X(d)'^\lambda}[-2])\hookrightarrow  H^i(\X(d)^\lambda, \varphi^{\mathrm{inv}}_{\mathrm{Tr}\,W}\mathrm{IC}_{\X(d)^\lambda}[-1]).\end{align*}
Now both claims follow from Proposition \ref{prop310bis}
using the induction on $p_A$. 
\end{proof}






Finally in this section, we give a proof of Theorem~\ref{thm2}. 
\begin{proof}[Proof of Theorem \ref{thm2}]
Recall the cycle map in (\ref{cyclesdv})
\[\mathrm{c}\colon \mathrm{gr}_a K_i^{\mathrm{top}}(\mathbb{S}(d; \delta))\to H^{\dim \X(d)-2a-i}(\X(d), \varphi_{\Tr W}^{\rm{inv}}
\mathrm{IC}_{\X(d)}).\]
By Proposition \ref{reduce}, we may assume that $Q$ has at least two loops at every vertex.
    Let $y$ be in the image of the above map. 
    By Proposition \ref{prop06} and Corollary \ref{onesupport2},
    we have that $y_A=0$ unless $A=A^\circ$ for some partition $\mathbf{d}=(d_i, m_i)_{i=1}^k$ of $d$ with $m_i\geq 1$ and $d_i$ pairwise distinct with $\varepsilon_{\mathbf{d}, \delta}=1$. The statement thus follows.
\end{proof}


\section{Topological K-theory of quasi-BPS categories for preprojective algebras}\label{subsection:preproj}


In this section, we use the results of Sections \ref{s5} and \ref{s6} to compute the topological K-theory of quasi-BPS categories for 
preprojective algebras
in terms of BPS cohomology, see Theorem \ref{thm1plus}. The results in this section can be applied (in conjunction with those from Subsection \ref{subsec67}) to compute the topological K-theory of quasi-BPS categories for local Calabi-Yau surfaces, see \cite{PTK3}.

\subsection{The preprojective BPS sheaf}\label{subsec71}
Let $Q^\circ=(I, E^\circ)$ be a quiver
and let $(Q, W)$ be its tripled quiver. 
Recall the moduli stack of $Q$-representations of 
dimension $d$ and its good moduli space
\[\pi_{X,d}:=\pi_d\colon \X(d)\to X(d).\]
Recall also the moduli stack of representations 
of the preprojective algebra of $Q^{\circ}$:
\[\pi_{P,d}\colon \mathscr{P}(d)^{\rm{cl}}\to P(d).\]
We consider the moduli stack of dimension $d$ representations of the double quiver of $Q^\circ$ and its good moduli space:
\[\pi_{Y, d} \colon \mathscr{Y}(d):=(R^\circ(d)\oplus R^\circ(d)^\vee)/G(d) \to Y(d).\]
We have the diagram:
\begin{equation*}
    \begin{tikzcd}
        \mathscr{P}(d)^\mathrm{cl}
        \arrow[d, "\pi_{P,d}"]\arrow[r, hook, "j"]& \mathscr{Y}(d)
        \arrow[d, "\pi_{Y,d}"]&\mathscr{X}(d)\arrow[l, "\eta"']\arrow[d, "\pi_{X,d}"]\\
        P(d) \arrow[r, hook, "j"]& Y(d)& X(d).\arrow[l, "\eta"']
    \end{tikzcd}
\end{equation*} 
Here $\eta \colon \X(d) \to \mathscr{Y}(d)$ is the projection which forgets the $\mathfrak{g}(d)$-component and the bottom horizontal 
arrows are induced maps on good moduli spaces. 
Let $\mathbb{C} \hookrightarrow \mathfrak{g}(d)$
be the diagonal embedding, which induces the closed immersion 
\begin{align*}
\gamma \colon  X'(d) :=(R^{\circ}(d) \oplus R^{\circ}(d)^{\vee} \oplus \mathbb{C})\ssslash G(d) \hookrightarrow 
X(d). 
\end{align*}
Let $\overline{\eta}:=\eta|_{X'(d)}$. 
By \cite[Theorem/Definition 4.1]{D}, there exists a \textit{preprojective BPS sheaf} 
\[\mathcal{BPS}^p_d\in \mathrm{Perv}(P(d))\]  such that the BPS sheaf of the tripled quiver with potential $(Q,W)$ associated to $Q^\circ$ is 
\begin{equation}\label{etai}
\mathcal{BPS}_d=\gamma_{\ast}\overline{\eta}^{\ast}j_*(\mathcal{BPS}^p_d)[1]\in\mathrm{Perv}(X(d)).
\end{equation}
For a partition $A=(d_i)_{i=1}^k$ of $d$, define $\mathcal{BPS}^p_A\in \mathrm{Perv}(P(d))$ as in \eqref{BPSAsheaf}
using addition maps on $P(d)$. 
For $\delta\in M(d)^{W_d}_\mathbb{R}$,
define the following perverse sheaves on $P(d)$:
\begin{equation}\label{defpreprojbps}
\mathcal{BPS}^p_{d, \delta}:=\bigoplus_{A\in S^d_\delta}\mathcal{BPS}^p_A,\, \mathcal{BPS}^p_{d,v}:=\mathcal{BPS}^p_{d, v\tau_d},
\end{equation}
where the set of partitions $S^d_{\delta}$ is defined from the tripled quiver $Q$, see Subsection~\ref{subsec612}.
Then $\mathcal{BPS}^p_{d,v}$ is a direct summand of $\pi_{P,d*}\omega_{\mathscr{P}(d)^{\mathrm{cl}}}$, see \cite[Theorem A]{D}, and so $H^{-a}(P(d), \mathcal{BPS}^p_{d,v})$ is a direct summand of \[H^{\mathrm{BM}}_a(\mathscr{P}(d)^{\mathrm{cl}})=H^{-a}\big(P(d), \pi_{P,d*}\omega_{\mathscr{P}(d)^{\mathrm{cl}}}\big).\] 

Recall the maps 
\[\mathscr{P}(d)\xleftarrow{\eta'}\eta^{-1}(\mathscr{P}(d))\xrightarrow{j'}\X(d).\]
The dimension of $\mathscr{P}(d)$ as a quasi-smooth stack is $\dim \mathscr{P}(d):=\dim \mathscr{Y}(d)-\dim \mathfrak{g}(d)$.
Recall the dimensional reduction isomorphism from Subsection \ref{subsection:dimred}:
\begin{align*}
j'_*\eta'^*\colon H^{\mathrm{BM}}_a(\mathscr{P}(d)^{\mathrm{cl}})\cong H^{\mathrm{BM}}_a(\mathscr{P}(d))&\xrightarrow{\sim} H^{2\dim\mathscr{Y}(d)-a}(\X(d), \varphi_{\mathrm{Tr}\,W}\mathbb{Q}_{\X(d)}[-1])\\ &=H^{\dim\mathscr{P}(d)-a}(\X(d), \varphi_{\mathrm{Tr}\,W}\mathrm{IC}_{\X(d)}[-1]).
\end{align*}
By the construction of the PBW isomorphism for preprojective Hall algebras \cite[Equation (31)]{D}, 
the above isomorphism preserves the BPS cohomologies:
\begin{equation}\label{BPSmatched}
j'_*\eta'^*\colon H^{-a}(P(d), \mathcal{BPS}^p_{d,v})\xrightarrow{\sim} H^{\dim\mathscr{P}(d)-a}(X(d), \mathcal{BPS}_{d,v}).
\end{equation}

\subsection{Computations}

Recall the categories 
\[\mathbb{T}(d; \delta)\subset D^b(\mathscr{P}(d))\text{ and }\mathbb{T}(d; \delta)^{\mathrm{red}}\subset D^b(\mathscr{P}(d)^{\mathrm{red}})\] from Subsection \ref{subsec:qbps:pre}. 
Recall also that 
we have natural closed immersions 
\begin{align*}
    l \colon \mathscr{P}(d)^{\rm{cl}} \hookrightarrow 
    \mathscr{P}(d)^{\rm{red}} \stackrel{l'}{\to} 
    \mathscr{P}(d). 
\end{align*}

\begin{prop}\label{prop71}
Let $Q^\circ=(I, E^\circ)$ be a quiver, let $d\in \mathbb{N}^I$, and let $\delta\in M(d)^{W_d}_\mathbb{R}$.
    The closed immersion $l'\colon \mathscr{P}(d)^{\mathrm{red}}\hookrightarrow \mathscr{P}(d)$ induces an equivalence of spectra:
    \[l'_*\colon K^{\rm{top}}(\mathbb{T}(d; \delta)^{\mathrm{red}})\xrightarrow{\sim} K^{\rm{top}}(\mathbb{T}(d; \delta)).\]
\end{prop}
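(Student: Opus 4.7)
\emph{Strategy.} By Proposition \ref{redunred}, under the Koszul equivalences $\kappa, \kappa'$ the pushforward $l'_*$ corresponds to the pullback $t^*\colon \mathrm{MF}^{\mathrm{gr}}(\X(d)^{\mathrm{red}}, \mathrm{Tr}\,W) \to \mathrm{MF}^{\mathrm{gr}}(\X(d), \mathrm{Tr}\,W)$. Here $t$ comes from the $G(d)$-equivariant splitting $\mathfrak{g}(d) = \mathfrak{g}(d)_0 \oplus \mathbb{C}$ and realizes $\X(d) = \X(d)^{\mathrm{red}} \times \mathbb{A}^1$, with the $\mathbb{A}^1$-factor carrying the trivial $G(d)$-action. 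It thus suffices to show that $t^*$ induces an equivalence $K^{\mathrm{top}}(\mathbb{S}^{\mathrm{gr}}(d;\delta)^{\mathrm{red}}) \xrightarrow{\sim} K^{\mathrm{top}}(\mathbb{S}^{\mathrm{gr}}(d;\delta))$.

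\emph{Full K-theory equivalence.} First I would check that $t^*$ is an equivalence on the K-theory of the full matrix factorization categories. Combining \eqref{HLPSiso} with the Koszul equivalence gives $K^{\mathrm{top}}(\mathrm{MF}^{\mathrm{gr}}(\X(d), \mathrm{Tr}\,W)) \cong G^{\mathrm{top}}(\mathscr{P}(d))$ and similarly $K^{\mathrm{top}}(\mathrm{MF}^{\mathrm{gr}}(\X(d)^{\mathrm{red}}, \mathrm{Tr}\,W)) \cong G^{\mathrm{top}}(\mathscr{P}(d)^{\mathrm{red}})$. Since $\mathscr{P}(d)$ and $\mathscr{P}(d)^{\mathrm{red}}$ share the classical truncation $\mathscr{P}(d)^{\mathrm{cl}}$, the isomorphism \eqref{l} produces equivalences $G^{\mathrm{top}}(\mathscr{P}(d)^{\mathrm{cl}}) \xrightarrow{\sim} G^{\mathrm{top}}(\mathscr{P}(d)^{\mathrm{red}})$ and $G^{\mathrm{top}}(\mathscr{P}(d)^{\mathrm{cl}}) \xrightarrow{\sim} G^{\mathrm{top}}(\mathscr{P}(d))$ via the pushforwards from $\mathscr{P}(d)^{\mathrm{cl}}$; the factorization $l_{\mathscr{P}(d)} = l' \circ l_{\mathscr{P}(d)^{\mathrm{red}}}$ implies that $l'_*$ itself is an equivalence, and Proposition \ref{redunred} identifies this with $t^*$ on the matrix factorization side.

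\emph{SOD compatibility and main obstacle.} By Lemma \ref{lem:sod:Ktheory}, the SODs in Theorem \ref{theorem266} and Remark \ref{rmk:version} induce direct-sum decompositions of topological K-theory; it remains to show $l'_*$ is block-diagonal with respect to these decompositions, for then block invertibility forces an equivalence on each summand. The inclusion $l'_*(\mathbb{T}(d;\delta)^{\mathrm{red}}) \subseteq \mathbb{T}(d;\delta)$ is part of Proposition \ref{redunred}. The complementary containment $l'_*(\mathbb{A}(d;\delta)^{\mathrm{red}}) \subseteq \mathbb{A}(d;\delta)$---equivalently, on the MF side, $t^*(\mathbb{B}(d;\delta)^{\mathrm{red}}) \subseteq \mathbb{B}(d;\delta)$---is the main obstacle, since $\mathbb{B}$ is defined abstractly as a semiorthogonal complement rather than by explicit generators. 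I plan to verify it using the explicit generators of $\mathbb{M}(d;\delta)$: for $F \in \mathbb{B}(d;\delta)^{\mathrm{red}}$ and a generator $G = \mathcal{O}_{\X(d)} \otimes \Gamma_{G(d)}(\chi)$ of $\mathbb{M}(d;\delta)$, since $\Gamma_{G(d)}(\chi)$ is pulled back along $t$ and $t$ is flat, the projection formula reduces $\mathrm{Hom}(t^*F, G)$ to a sum (indexed by the $\mathbb{C}^*$-weights of $t_*\mathcal{O}_{\X(d)} = \mathcal{O}_{\X(d)^{\mathrm{red}}}[x]$) of $\mathrm{Hom}$'s into generators of $\mathbb{M}(d;\delta)^{\mathrm{red}}$, all of which vanish by semiorthogonality on the reduced side; hence $t^*F \in \mathbb{B}(d;\delta)$, and the proof is complete.
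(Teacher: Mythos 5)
Your overall route is the one the paper takes: its (terse) proof combines the equivalence $l'_*\colon G^{\mathrm{top}}(\mathscr{P}(d)^{\mathrm{red}})\xrightarrow{\sim} G^{\mathrm{top}}(\mathscr{P}(d))$ coming from the common classical truncation (isomorphism \eqref{l}) with Proposition \ref{redunred} and the semiorthogonal decompositions of Theorem \ref{theorem266} and Remark \ref{rmk:version}, and you correctly identify that what makes these ingredients suffice is the compatibility of $l'_*$ with both decompositions, which the paper leaves implicit. The genuine gap is in your verification of that compatibility: the orthogonality direction is backwards at both ends. With the paper's convention, right admissibility of $\mathbb{M}(d;\delta)$ gives $D^b(\X(d))_v=\langle\mathbb{B}(d;\delta),\mathbb{M}(d;\delta)\rangle$ with $\mathrm{Hom}(\mathbb{M},\mathbb{B})=0$, so $\mathbb{B}(d;\delta)$ is the \emph{right} orthogonal of $\mathbb{M}(d;\delta)$: to show $t^*F\in\mathbb{B}(d;\delta)$ you must prove the vanishing of $\mathrm{Hom}(G,t^*F)$ for generators $G$ of $\mathbb{M}(d;\delta)$, not of $\mathrm{Hom}(t^*F,G)$; the latter would only place $t^*F$ in the left orthogonal ${}^{\perp}\mathbb{M}(d;\delta)$, which is not the complementary summand in the K-theoretic splitting of Lemma \ref{lem:sod:Ktheory}. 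Worse, the vanishing you invoke on the reduced side is also unavailable: after adjunction your terms are maps \emph{from} $F\in\mathbb{B}(d;\delta)^{\mathrm{red}}$ \emph{into} (grading twists of) generators of $\mathbb{M}(d;\delta)^{\mathrm{red}}$, while the semiorthogonal decomposition only gives $\mathrm{Hom}(\mathbb{M}^{\mathrm{red}},\mathbb{B}^{\mathrm{red}})=0$; maps in the direction you need do not vanish in general, since the decomposition is not orthogonal. So the one step you add beyond the paper's sketch both proves the wrong membership and rests on a false vanishing.

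The step is fixable, and with the fix your argument does fill in what the paper leaves implicit: test against $G=t^*G'$ in the correct direction, use $\mathrm{Hom}(t^*G',t^*F)\cong\mathrm{Hom}\bigl(G',F\otimes t_*\mathcal{O}_{\X(d)}\bigr)$, observe that $t_*\mathcal{O}_{\X(d)}$ is a sum of copies of $\mathcal{O}_{\X(d)^{\mathrm{red}}}$ with trivial $G(d)$-weight and only auxiliary $\mathbb{C}^*$-grading shifts, that such shifts preserve the magic category (its defining weight condition involves only $G(d)$-weights), and then apply $\mathrm{Hom}(\mathbb{M}^{\mathrm{red}},\mathbb{B}^{\mathrm{red}})=0$. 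Two smaller points should also be addressed: the Homs must be taken in $D^b(\mathscr{P}(d))$ or, equivalently, in $\mathrm{MF}^{\mathrm{gr}}(\X(d),\Tr W)_v$, since the splitting you use is the one induced by the decomposition of Remark \ref{rmk:version} (so one needs that checking against the generating matrix factorizations suffices, i.e. that Hom-vanishing out of a set of generators passes to the thick subcategory they generate); and one should record why a block-diagonal equivalence of spectra restricts to an equivalence on each diagonal summand (immediate on homotopy groups). With these repairs your proposal is a valid, more detailed version of the paper's proof.
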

\begin{proof}
    There is an equivalence of spectra $l'_*\colon G^{\mathrm{top}}(\mathscr{P}(d)^{\mathrm{red}})\xrightarrow{\sim} G^{\mathrm{top}}(\mathscr{P}(d))$. The claim follows from Proposition \ref{redunred} and Theorem \ref{theorem266}.
\end{proof}

%


For $i\in\mathbb{Z}$, consider the Chern character map \eqref{chquasismooth} for the quasi-smooth stack $\mathscr{P}(d)$:
\begin{equation}\label{chpd}
    \mathrm{ch}\colon G^{\mathrm{top}}_i\left(\mathscr{P}(d)\right)\to \widetilde{H}^{\mathrm{BM}}_i(\mathscr{P}(d)).
\end{equation}
It induces a Chern character map: 
\begin{equation}\label{chtdv}
    \mathrm{ch}\colon K^{\mathrm{top}}_i(\mathbb{T}(d; \delta))\hookrightarrow G^{\mathrm{top}}_i\left(\mathscr{P}(d)\right)\to \widetilde{H}^{\mathrm{BM}}_i(\mathscr{P}(d)).
\end{equation}

\begin{cor}\label{cor615bis}
    The map \eqref{chtdv} is injective over $\mathbb{Q}$. 
\end{cor}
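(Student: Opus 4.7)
The plan is to reduce this statement to the already-established injectivity of the Chern character for the ungraded quasi-BPS category $\mathbb{S}(d;\delta)$, using the Koszul equivalence together with the compatibilities proved in Section~\ref{s5}.

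First, I would apply Proposition~\ref{prop522} in the setting $\X = \mathscr{Y}(d)$, $\mathscr{E}^\vee = \X(d)$, $\mathscr{P} = \mathscr{P}(d)$, and $f = \mathrm{Tr}\,W$, where the relevant section is the moment map $\mu$. The commutative diagram~\eqref{dd2} identifies (up to the Todd-class twist built into the quasi-smooth Chern character of Subsection~\ref{subsec331}, and which does not affect injectivity after tensoring with $\mathbb{Q}$) the restriction of \eqref{chpd} to $K^{\mathrm{top}}_i(\mathbb{T}(d;\delta))$ with the map
\[
\mathrm{ch}\colon K^{\mathrm{top}}_i(\mathbb{S}^{\mathrm{gr}}(d;\delta)) \to \widetilde{H}^i(\X(d), \varphi_{\mathrm{Tr}\,W}[-1]),
\]
via the Koszul equivalence~\eqref{kappamagic} $\kappa\colon\mathbb{T}(d;\delta)\xrightarrow{\sim}\mathbb{S}^{\mathrm{gr}}(d;\delta)$ on the source and the dimensional reduction isomorphism~\eqref{dimred2} on the target.

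Next, the commutative diagram~\eqref{dd1} of Proposition~\ref{prop522} further factors this map as
\[
K^{\mathrm{top}}_i(\mathbb{S}^{\mathrm{gr}}(d;\delta)) \xrightarrow{\ \Theta\ } K^{\mathrm{top}}_i(\mathbb{S}(d;\delta)) \xrightarrow{\ \mathrm{ch}\ } \widetilde{H}^i(\X(d), \varphi_{\mathrm{Tr}\,W}^{\mathrm{inv}}[-2]),
\]
composed with the injection $\gamma$ on cohomology. The second arrow is injective over $\mathbb{Q}$ by Proposition~\ref{cherninj}, since the pullback $r^{\ast}\colon \mathbb{S}(d;\delta)\to \mathrm{MF}(\X^{\alpha f}(d)^{\mathrm{ss}}, \mathrm{Tr}\,W)$ to the smooth variety $\X^{\alpha f}(d)^{\mathrm{ss}}$ is fully faithful with admissible image by Proposition~\ref{prop226}.

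Finally, the forget-the-grading map $\Theta$ is injective even integrally: by Proposition~\ref{prop:forg}, the induced map
\[
K^{\mathrm{top}}_{\ast}(\mathbb{S}^{\mathrm{gr}}(d;\delta))\otimes_{\mathbb{Z}}\Lambda \xrightarrow{\ \cong\ } K^{\mathrm{top}}_{\ast}(\mathbb{S}(d;\delta))
\]
is an isomorphism of $\Lambda$-modules, and since $\Lambda=\mathbb{Z}[\epsilon]$ is free over $\mathbb{Z}$ with $1\in\Lambda$ split, the inclusion $K^{\mathrm{top}}_{\ast}(\mathbb{S}^{\mathrm{gr}}(d;\delta))\otimes 1\hookrightarrow K^{\mathrm{top}}_{\ast}(\mathbb{S}^{\mathrm{gr}}(d;\delta))\otimes\Lambda$ is injective. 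Composing the two injective arrows and using the injection $\gamma$ yields the desired injectivity of \eqref{chtdv} over $\mathbb{Q}$. There is no real obstacle here: the corollary is essentially a bookkeeping exercise in assembling the previously established diagrams.
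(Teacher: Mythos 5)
Your proof is correct and follows essentially the same route as the paper: the paper's one-line argument (Proposition~\ref{cherninj}, Theorem~\ref{theorem266}, and the Koszul equivalence) unwinds to exactly the chain you spell out, with the Section~\ref{s5} compatibilities (Proposition~\ref{prop522} and Proposition~\ref{prop:forg}) transporting the question to the already-established injectivity for $\mathbb{S}(d;\delta)$ via the framed embedding of Proposition~\ref{prop226}. Your version just makes the Todd-twist and forget-the-grading steps explicit, which is consistent with how the paper uses these diagrams in the surrounding results.
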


\begin{proof}
 The injectivity of (\ref{chtdv}) follows from Proposition \ref{cherninj}, Theorem \ref{theorem266}, and the Koszul equivalence \eqref{Koszul}.
\end{proof}

\begin{cor}\label{cor615}
    We have $K^{\mathrm{top}}_1(\mathbb{T}(d; \delta))_{\mathbb{Q}}=0$.
\end{cor}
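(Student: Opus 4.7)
The plan is to combine the injectivity of the Chern character (Corollary~\ref{cor615bis}) with the main Theorem~\ref{thm1} applied to the tripled quiver with potential $(Q,W)$, transferred to the preprojective setting via the Koszul equivalence~\eqref{Koszulequivalence} and the dimensional reduction compatibilities from Section~\ref{s5}.

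By Corollary~\ref{cor615bis}, the Chern character $\ch \colon K^{\mathrm{top}}_1(\mathbb{T}(d;\delta))_{\mathbb{Q}} \hookrightarrow \widetilde{H}^{\mathrm{BM}}_1(\mathscr{P}(d))$ is injective over $\mathbb{Q}$, so it suffices to show that the image vanishes. First, I would use the Koszul equivalence to identify $\mathbb{T}(d;\delta) \simeq \mathbb{S}^{\mathrm{gr}}(d;\delta)$, and then apply the commutative diagram~\eqref{dd2} of Proposition~\ref{prop522} together with Proposition~\ref{prop53} to identify this image (via $j'_{\ast}\eta'^{\ast}$) with the image of the graded Chern character of $\mathbb{S}^{\mathrm{gr}}(d;\delta)$ in $\widetilde{H}^1(\mathscr{X}(d), \varphi_{\mathrm{Tr}\,W}[-1])$. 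The forget-the-grading functor then embeds this further into $\widetilde{H}^1(\mathscr{X}(d), \varphi_{\mathrm{Tr}\,W}^{\mathrm{inv}}[-2])$ via the injection $\gamma$, and by the canonical splitting~\eqref{notdagger} the image lies in the $\varphi_{\mathrm{Tr}\,W}[-1]$ summand.

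Next, by Theorem~\ref{thm1} applied to the tripled quiver $(Q,W)$, the image of the Chern character for $\mathbb{S}(d;\delta)$ agrees with the monodromy invariant BPS cohomology $\bigoplus_\ell H^{\dim \mathscr{X}(d) - 2\ell - 1}(X(d), \mathcal{BPS}_{d,\delta}^{\mathrm{inv}}[1])$. Tracking the distinguished $\gamma$-summand through the splitting~\eqref{notdagger} and using the dimensional reduction isomorphism~\eqref{BPSmatched}, whose cohomological shift $\dim \mathscr{P}(d) = 2(\dim R^{\circ}(d) - \dim \mathfrak{g}(d))$ is even, the image of $\mathbb{T}(d;\delta)$ corresponds to the odd-degree preprojective BPS cohomology $H^{\mathrm{odd}}(P(d), \mathcal{BPS}^p_{d,\delta})$.

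The main step, and the principal obstacle, is to show the parity vanishing $H^{\mathrm{odd}}(P(d), \mathcal{BPS}^p_{d,\delta}) = 0$. This is a purity statement for preprojective BPS sheaves: since $\mathcal{BPS}^p_{d,\delta}$ is a direct summand of $\pi_{P,d\ast}\omega_{\mathscr{P}(d)^{\mathrm{cl}}}$ (as recalled in Subsection~\ref{subsec71} from~\cite{D}), and the Borel-Moore homology of $\mathscr{P}(d)^{\mathrm{cl}}$ carries a pure Tate Hodge structure by the purity results of Davison~\cite{DavPurity} and Davison--Hennecart--Schlegel~Mejia~\cite{DHSM}, its cohomology is concentrated in a fixed (even) parity. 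Combined with the injectivity of $\ch$ from the first step, this forces $K^{\mathrm{top}}_1(\mathbb{T}(d;\delta))_{\mathbb{Q}} = 0$.
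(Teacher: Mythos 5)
Your argument is essentially correct and non-circular (you avoid Corollary~\ref{cor616} and Theorem~\ref{thm1plus}, which in the paper come after this statement), but it is far longer than necessary, and the paper's own proof shows why: by Corollary~\ref{cor615bis} the Chern character embeds $K^{\mathrm{top}}_1(\mathbb{T}(d;\delta))_{\mathbb{Q}}$ into $\widetilde{H}^{\mathrm{BM}}_1(\mathscr{P}(d))=\prod_j H^{\mathrm{BM}}_{1+2j}(\mathscr{P}(d)^{\mathrm{cl}})$, i.e.\ into the \emph{odd} Borel--Moore homology of the preprojective stack, and this target is already zero by Davison's purity theorem \cite[Theorem~A]{Dav}. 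So once you have injectivity there is nothing to track: the detour through the Koszul equivalence, the splitting of $\varphi^{\mathrm{inv}}$, Theorem~\ref{thm1}, and the identification of the image with $H^{\mathrm{odd}}(P(d),\mathcal{BPS}^p_{d,\delta})$ only re-derives a subspace of a group that is globally zero, at the cost of invoking the hardest theorem in the paper.

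One step in your write-up is also not valid as phrased: purity of the (Tate) Hodge structure on $H^{\mathrm{BM}}_{\ast}(\mathscr{P}(d)^{\mathrm{cl}})$ does not by itself imply concentration in even degrees (pure Tate classes can live in odd cohomological degree, e.g.\ $H^1(\mathbb{C}^{\ast})$). The statement you actually need is the vanishing $H^{\mathrm{BM}}_{\mathrm{odd}}(\mathscr{P}(d)^{\mathrm{cl}})=0$, which is exactly \cite[Theorem~A]{Dav}; quoting it both repairs this step (since $H^{\bullet}(P(d),\mathcal{BPS}^p_{d,\delta})$ is a direct summand of $H^{\mathrm{BM}}_{\ast}(\mathscr{P}(d)^{\mathrm{cl}})$) and, as above, makes the rest of your argument unnecessary.
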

\begin{proof}
 We have that $H^{\mathrm{BM}}_{\mathrm{odd}}(\mathscr{P}(d)^{\mathrm{cl}})=0$ by \cite[Theorem A]{Dav}. The conclusion follows by Corollary~\ref{cor615bis}.
\end{proof}

Recall the filtration $E_\ell G^{\mathrm{top}}_0(\mathscr{P}(d))$ of $G^{\mathrm{top}}_0(\mathscr{P}(d))$ from Subsection \ref{subsec331}. Define the filtration:
\[E_\ell K^{\mathrm{top}}_0(\mathbb{T}(d; \delta)):=E_\ell G^{\mathrm{top}}_0(\mathscr{P}(d))\cap K^{\mathrm{top}}_0(\mathbb{T}(d; \delta))\subset K^{\mathrm{top}}_0(\mathbb{T}(d; \delta)).\]
We denote by $\mathrm{gr}_\ell K^{\mathrm{top}}_0(\mathbb{T}(d; \delta))$ the associated graded piece, and note that it is a direct summand of $\mathrm{gr}_\ell G^{\mathrm{top}}_0(\mathscr{P}(d))$ by Theorem \ref{theorem266}. 
Similarly, we define filtrations 
\begin{align*}
&E_\ell G^{\mathrm{top}}_0(\mathscr{P}(d)^{\mathrm{red}})\subset G^{\mathrm{top}}_0(\mathscr{P}(d)^{\mathrm{red}}), \\ 
&E_\ell 
K^{\mathrm{top}}_0(\mathbb{T}(d; \delta)^{\mathrm{red}}
)\subset K^{\mathrm{top}}_0(\mathbb{T}(d; \delta)^{\mathrm{red}}).
\end{align*}
Recall the Koszul equivalence (\ref{kappamagic}) and the
forget-the-grading functor 
\begin{align*}
  D^b(\mathscr{P}(d)) \stackrel{\kappa}{\to} 
  \mathrm{MF}^{\rm{gr}}(\X(d), \Tr W) \stackrel{\Theta}{\to}
  \mathrm{MF}(\X(d), \Tr W). 
\end{align*}
The above functors restrict to functors 
\begin{align*}
    \mathbb{T}(d; \delta) \stackrel{\sim}{\to}
    \mathbb{S}^{\rm{gr}}(d; \delta) \stackrel{\Theta}{\to}
    \mathbb{S}(d; \delta). 
\end{align*}

\begin{cor}\label{cor616}
The functor $\Theta$ induces 
an isomorphism:  
    \begin{equation}\label{isograd}
        \mathrm{gr}_\ell K^{\mathrm{top}}_0\left(\mathrm{MF}^{\mathrm{gr}}(\X(d), \mathrm{Tr}\,W)\right)_{\mathbb{Q}}\xrightarrow{\sim} \mathrm{gr}_\ell K^{\mathrm{top}}_0\left(\mathrm{MF}(\X(d), \mathrm{Tr}\,W)\right)_{\mathbb{Q}}.
    \end{equation}
    There are thus also isomorphisms:
    \begin{align*}\mathrm{gr}_{\ell} K^{\mathrm{top}}_0\left(\mathbb{T}(d; \delta)\right)_{\mathbb{Q}}&\xrightarrow{\sim} \mathrm{gr}_{\ell+\dim \mathfrak{g}(d)} K^{\mathrm{top}}_0\left(\mathbb{S}^{\mathrm{gr}}(d; \delta)\right)_{\mathbb{Q}} \\
    &\xrightarrow{\sim} \mathrm{gr}_{\ell+\dim \mathfrak{g}(d)+1} K^{\mathrm{top}}_0\left(\mathbb{S}(d; \delta)\right)_{\mathbb{Q}}.\end{align*}
\end{cor}

\begin{proof}
    The isomorphism \eqref{isograd} follows from Proposition~\ref{prop:forg}, 
    and $H_{\mathrm{odd}}^{\mathrm{BM}}(\mathscr{P}(d)^{\mathrm{cl}})=0$
    by~\cite[Theorem~A]{Dav}.  The other isomorphisms follow from the Koszul equivalence, see Corollary~\ref{prop53} for an explanation of the degree of the graded pieces.
\end{proof}

    \begin{cor}\label{cor617}
    There is a commutative diagram,
    where the vertical maps are cycle maps and the left horizontal maps are the dimensional reduction maps $j'_*\eta'^*$: 
\begin{equation*}
    \begin{tikzcd}
        \mathrm{gr}_{\ast} G^{\mathrm{top}}_0(\mathscr{P}(d))_{\mathbb{Q}}\arrow[d, "\mathrm{c}"]\arrow[r, "\sim"]& \mathrm{gr}_{\ast} K^{\mathrm{top}}_0(\mathrm{MF}^{\mathrm{gr}}(\X(d), \mathrm{Tr}\,W))_{\mathbb{Q}}
        \arrow[d, "\mathrm{c}"]\arrow[r, "\sim"]&\mathrm{gr}_{\ast} K^{\mathrm{top}}_0(\mathrm{MF}(\X(d), \mathrm{Tr}\,W))_{\mathbb{Q}}\arrow[d, "\mathrm{c}"]
        \\
        \widetilde{H}^{\mathrm{BM}}_{0}(\mathscr{P}(d))\arrow[r, "\sim"]& \widetilde{H}^{0}(\X(d), \varphi_{\mathrm{Tr}\,W}[-1])\arrow[r, "\sim"]& \widetilde{H}^{0}(\X(d), \varphi_{\mathrm{Tr}\,W}^{\mathrm{inv}}).
    \end{tikzcd}
\end{equation*}
Here we have suppressed the cohomological degrees to make the diagram simpler. 
\end{cor}

\begin{proof}
    The claim follows from Corollary \ref{prop53} and Corollary \ref{cor616}.
\end{proof}

\begin{thm}\label{thm1plus}
For a quiver $Q^{\circ}$, $d\in \mathbb{N}^I$, $\delta \in M(d)_{\mathbb{R}}^{W_d}$ and $\ell \in \mathbb{Z}$,  
there is an isomorphism 
\begin{align}\label{isom:Tred}
    \mathrm{gr}_\ell K^{\mathrm{top}}_0(\mathbb{T}(d; \delta))_{\mathbb{Q}}\cong \mathrm{gr}_\ell K^{\mathrm{top}}_0(\mathbb{T}(d; \delta)^{\mathrm{red}})_{\mathbb{Q}}
\end{align}
and 
the cycle map for $\mathscr{P}(d)$ induced by (\ref{chpd})
gives 
an isomorphism 
\begin{equation}\label{cyclemappreproj}
    \mathrm{c}\colon \mathrm{gr}_\ell K^{\mathrm{top}}_0(\mathbb{T}(d; \delta))_{\mathbb{Q}}\stackrel{\cong}{\to}
    H^{-2\ell}(P(d), \mathcal{BPS}^p_{d, \delta}).
     \end{equation}
\end{thm}

\begin{proof}
The isomorphism (\ref{isom:Tred}) follows from Proposition \ref{prop71}.
Consider the diagram, whose lower square commutes from Corollary \ref{cor617} and the top horizontal map is an isomorphism by Corollary \ref{cor616}:
\begin{equation*}
    \begin{tikzcd}
    \mathrm{gr}_\ell K^{\mathrm{top}}_0(\mathbb{T}(d; \delta))_{\mathbb{Q}}\arrow[d, hook]\arrow[r, "\sim"]\arrow[dd, bend right=85, "\alpha"']& \mathrm{gr}_{\ell+\dim \mathfrak{g}(d)+1} K^{\mathrm{top}}_0(\mathbb{S}(d; \delta))_{\mathbb{Q}}\arrow[d, hook]\arrow[dd, bend left=85, "\beta"]\\
        \mathrm{gr}_\ell G^{\mathrm{top}}_0(\mathscr{P}(d))_{\mathbb{Q}}\arrow[d, "\mathrm{c}"]\arrow[r, "\sim"]& \mathrm{gr}_{\ell+\dim \mathfrak{g}(d)+1} K^{\mathrm{top}}_0(\mathrm{MF}(\X(d), \mathrm{Tr}\,W))_{\mathbb{Q}}\arrow[d, "\mathrm{c}"]
        \\
        H^{\mathrm{BM}}_{2\ell}(\mathscr{P}(d))\arrow[r, "\sim"]& H^{2\dim \mathscr{Y}(d)-2\ell}(\X(d), \varphi_{\Tr W}[-1]).
    \end{tikzcd}
\end{equation*}
By Theorem \ref{thm2},
the map $\beta$ has image in \[H^{\dim \mathscr{P}(d)-2\ell}(\X(d), \mathcal{BPS}_{d,\delta})\subset H^{2\dim\mathscr{Y}(d)-2\ell}(\mathscr{X}(d), \varphi_{\mathrm{Tr}\,W}[-1]).\] 
Therefore by \eqref{BPSmatched}, the map $\alpha$ has image in 
\begin{align}\label{image:TP}
H^{-2\ell}(P(d), \mathcal{BPS}^p_{d, \delta})
\subset H_{2\ell}^{\mathrm{BM}}(\mathscr{P}(d)).
\end{align}
It is an isomorphism over $\mathbb{Q}$ onto $H^{-2\ell}(P(d), \mathcal{BPS}^p_{d, \delta})$ by Theorem~\ref{thm1}. 
\end{proof}

\begin{remark}
    There are two perverse filtrations on $H^{\mathrm{BM}}_{\ast}(\mathscr{P}(d))$ for any quiver $Q^\circ$. One of them is induced from the tripled quiver with potential $(Q,W)$ and studied in \cite{DM}; the first non-zero piece in the perverse filtration is ${}^p\tau^{\leq 1}\pi_{d*}\varphi_{\mathrm{Tr}\,W}\mathrm{IC}_{\X(d)}=\mathcal{BPS}_d$. 
    Another filtration is induced from the map $\pi_{P,d}$ and studied in \cite{D}, where it is called ``the less perverse filtration"; the first non-zero piece in the perverse filtration is ${}^p\tau^{\leq 0}\pi_{P,d*}\omega_{\mathscr{P}(d)^{\mathrm{cl}}}$.
    Note that, for any $v\in\mathbb{Z}$, 
    ${}^p\tau^{\leq 1}\pi_{d*}\varphi_{\mathrm{Tr}\,W}\mathrm{IC}_{\X(d)}$ is a direct summand of 
    $\mathcal{BPS}_{d,v}$, which itself is a direct summand of  ${}^p\tau^{\leq 0}\pi_{P,d*}\omega_{\mathscr{P}(d)^{\mathrm{cl}}}$. Thus the topological K-theory of quasi-BPS categories lies between the first non-zero pieces of these two perverse filtrations. 
\end{remark}

\begin{remark}
     Davison--Hennecart--Schlegel Mejia computed in \cite{DHSM, DHSM2} the preprojective BPS sheaves in terms of the intersection complexes of the varieties $P(d)$. 
\end{remark}

We note the following numerical corollary of Theorem \ref{thm1plus}.

\begin{cor}\label{corollarythm1plus}
    Let $Q^\circ=(I, E^{\circ})$ be a quiver, let $d \in \mathbb{N}^{I}$, and let $\delta\in M(d)_\mathbb{R}^{W_d}$. 
    There is an equality 
    \[\dim_\mathbb{Q} K^{\mathrm{top}}_0(\mathbb{T}(d; \delta))_{\mathbb{Q}}=\dim_\mathbb{Q} H^{\ast}(P(d), \mathcal{BPS}^p_{d, \delta}).\]
\end{cor}

\begin{proof}
The map \eqref{chtdv} is injective by Corollary~\ref{cor615bis}.
The conclusion then follows from Theorem \ref{thm1plus}. 
\end{proof}

\section{Examples}\label{s8}

In this section, we discuss some explicit examples of computations of the topological K-theory of quasi-BPS categories. All vector spaces considered in this section are $\mathbb{Q}$-vector spaces.
We first note a preliminary proposition.

\begin{prop}\label{prop81}
    Let $Q=(I,E)$ be a symmetric quiver, let $d\in \mathbb{N}^I$, and let $v\in\mathbb{Z}$. Then
    \[\dim K^{\mathrm{top}}_0(\mathbb{M}(d)_v)_{\mathbb{Q}}=\# \left(M(d)^+\cap (\mathbf{W}(d)+v\tau_d-\rho)\right).\]
\end{prop}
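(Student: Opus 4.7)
The plan is to exhibit a natural bijection between $S := M(d)^+\cap (\mathbf{W}(d)+v\tau_d-\rho)$ and a $\mathbb{Q}$-basis of $K^{\mathrm{top}}_0(\mathbb{M}(d)_v)_{\mathbb{Q}}$, given by the classes $[V_\chi]$ where $V_\chi := \mathcal{O}_{\X(d)}\otimes \Gamma_{G(d)}(\chi)$. Spanning is immediate from the definition of $\mathbb{M}(d)_v = \mathbb{M}(d; v\tau_d)$ in \eqref{defmdw}: these vector bundles generate the pre-triangulated dg-category, so their classes generate $K^{\mathrm{top}}_0(\mathbb{M}(d)_v)_{\mathbb{Q}}$. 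The main task is linear independence.

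For this, I would factor the embedding $\mathbb{M}(d)_v \hookrightarrow D^b(\X(d))$ through the weight-$v$ subcategory $D^b(\X(d))_v$. The first inclusion induces an injection on $K^{\mathrm{top}}_0(-)_{\mathbb{Q}}$ since $\mathbb{M}(d)_v$ is right admissible in $D^b(\X(d))_v$ by Theorem~\ref{theorem266} (the semiorthogonal projection gives a retraction in K-theory). The second inclusion is also injective on $K^{\mathrm{top}}_0$, since the diagonal cocharacter $1_d$ acts trivially on $R(d)$ and thus induces a weight decomposition of $D^b(\X(d))$ which descends to a completed direct sum decomposition of $K^{\mathrm{top}}_0(\X(d))_{\mathbb{Q}}$.

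Next, since $R(d)$ is $G(d)$-equivariantly contractible as a complex vector space, the stack $\X(d)$ is topologically equivalent to $BG(d)$, and the Atiyah-Segal completion theorem provides an identification $K^{\mathrm{top}}_0(\X(d))\cong R(G(d))^{\wedge}_I$ with the $I$-adic completion of the representation ring at the augmentation ideal; under this identification, $[V_\chi]$ corresponds to $[\Gamma_{G(d)}(\chi)] \in R(G(d))$. The irreducible characters $\{[\Gamma_{G(d)}(\chi)]\}_{\chi\in M(d)^+}$ form a $\mathbb{Q}$-basis of $R(G(d))_{\mathbb{Q}}$, and the canonical map $R(G(d))_{\mathbb{Q}} \hookrightarrow R(G(d))^{\wedge}_{I,\mathbb{Q}}$ is injective by Krull's intersection theorem (since $R(G(d))$ is a finitely generated commutative $\mathbb{Q}$-algebra and $I$ is maximal). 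Hence $\{[V_\chi]\}_{\chi\in S}$ are linearly independent in $K^{\mathrm{top}}_0(\X(d))_{\mathbb{Q}}$, and a fortiori in $K^{\mathrm{top}}_0(\mathbb{M}(d)_v)_{\mathbb{Q}}$, giving the desired equality of dimensions.

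The only nontrivial bookkeeping is verifying that every $[V_\chi]$ with $\chi \in S$ lands in the same weight-$v$ piece of $K^{\mathrm{top}}_0(\X(d))_{\mathbb{Q}}$ under $1_d$, so that the weight decomposition does not cause interference: this follows because $\langle 1_d, \beta\rangle = 0$ for every $\beta \in \mathscr{A}$ (hence for every element of $\mathbf{W}(d)$), $\langle 1_d, \rho\rangle = 0$, and $\langle 1_d, v\tau_d\rangle = v$. Beyond this, no serious obstacle is expected: the essential content is the faithfulness of the completion map on the $\mathbb{Q}$-span of irreducible characters.
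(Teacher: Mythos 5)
Your linear-independence argument is essentially fine, but there are two intertwined problems, one of which is a genuine gap: the spanning step. You assert that since the vector bundles $\mathcal{O}_{\X(d)}\otimes\Gamma_{G(d)}(\chi)$, $\chi\in S$, generate the dg-category $\mathbb{M}(d)_v$, their classes generate $K^{\mathrm{top}}_0(\mathbb{M}(d)_v)_{\mathbb{Q}}$. Classical generation of a pre-triangulated dg-category by a set of objects gives generation of the \emph{algebraic} $K_0$, but Blanc's topological $K_0$ of a dg-category is in general not spanned by classes of objects (already $\mathrm{Perf}$ of a smooth projective variety is classically generated by one object while its topological $K_0$ is usually much larger than the image of algebraic $K_0$). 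So the surjectivity of $K_0(\mathbb{M}(d)_v)_\mathbb{Q}\to K^{\mathrm{top}}_0(\mathbb{M}(d)_v)_\mathbb{Q}$ is exactly what needs proof, and your write-up supplies no argument for it. This is compounded by your identification $K^{\mathrm{top}}_0(\X(d))\cong R(G(d))^{\wedge}_I$: with the paper's conventions this is not correct. By \eqref{HLPSiso} (Halpern-Leistner--Pomerleano), $K^{\mathrm{top}}_0(\mathrm{Perf}(\X(d)))$ is the \emph{genuine} Atiyah--Segal equivariant topological K-theory $K^{\mathrm{top}}_{0,M}(R(d))$, which for the equivariantly contractible $R(d)$ is the uncompleted representation ring $R(G(d))$; the completion $R(G(d))^{\wedge}_I$ is the K-theory of the Borel construction, a different (much larger) group, as the paper itself points out in the remark following \eqref{chernstack}. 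Note that if your completed model were the right one, the summand corresponding to $\mathbb{M}(d)_v$ could well be infinite-dimensional and the stated equality would be false as an equality of dimensions; the finiteness is precisely what the uncompleted comparison provides.

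The paper's proof closes this gap by comparing algebraic and topological K-theory: since $\X(d)\to BG(d)$ is an affine fibration, the natural map $K_0(\X(d))\to K^{\mathrm{top}}_0(\X(d))$ is an isomorphism, both being $R(G(d))$ with basis the classes of $\mathcal{O}_{\X(d)}\otimes\Gamma_{G(d)}(\chi)$ over all dominant $\chi$; admissibility of $\mathbb{M}(d)_v$ (Theorem \ref{theorem266}) makes this comparison compatible with the semiorthogonal projections, so it restricts to an isomorphism $K_0(\mathbb{M}(d)_v)\xrightarrow{\sim}K^{\mathrm{top}}_0(\mathbb{M}(d)_v)$, and the count then takes place in algebraic $K_0$, where generation by those bundles does give spanning and linear independence is clear because the classes form part of the basis of irreducibles. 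Your argument becomes correct once you insert this comparison step (and then the detour through the completion and Krull's intersection theorem is unnecessary: finitely many irreducible characters are already independent in $R(G(d))_{\mathbb{Q}}$).
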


\begin{proof}
Since $\X(d) \to BGL(d)$ is an affine fibration, 
there is a natural isomorphism \[K_0(\X(d))\xrightarrow{\sim} K_0^{\mathrm{top}}(\X(d))\cong K_0(BG(d)).\] The category $\mathbb{M}(d)_v$ is admissible in $D^b(\X(d))$, so the above isomorphism restricts 
    to the isomorphism \[K_0(\mathbb{M}(d)_v)\xrightarrow{\sim} K_0^{\mathrm{top}}(\mathbb{M}(d)_v).\] The basis of $K_0(\X(d))$ is given by
    the classes of the vector bundles $\mathcal{O}_{\X(d)}\otimes \Gamma_{G(d)}(\chi)$, where $\chi$ is a dominant weight of $T(d)$ and $\Gamma_{G(d)}(\chi)$ is the irreducible representation of $G(d)$ of highest weight $\chi$. The computation \[\dim K_0(\mathbb{M}(d)_v)_{\mathbb{Q}}=\# \left(M(d)^+\cap (\textbf{W}(d)+v\tau_d-\rho)\right)\] follows then from the definition of $\mathbb{M}(d)_v$.
\end{proof}

\begin{remark}\label{remarksub8}
  In view of Proposition \ref{prop81} and  Theorem \ref{thmWzero}, the total intersection cohomology of the spaces $X(d)$ can be determined by counting lattice points inside the polytope $M(d)^*\cap(\textbf{W}(d)+v\tau_d-\rho)$. 
\end{remark}

\subsection{Toric examples}

Let $g\in \mathbb{N}$.
Consider the quiver $Q=(I,E)$, where $I=\{1,2\}$ and $E$ has one loop at $1$, one loop at $2$, $2g+1$ edges \{$e_1,\ldots, e_{2g+1}\}$ from $1$ to $2$ and $2g+1$ edges $\{\overline{e}_1,\ldots, \overline{e}_{2g+1}\}$ from $2$ to $1$. The following is a figure for $g=1$.
\begin{equation*}
    \begin{tikzcd}
 1 \arrow[out=90, in=180, loop, swap] \arrow[rrr, bend left=45, "e_2"]
 \arrow[rrr, bend left=90, "e_3"]
\arrow[rrr, shift left=1, bend left=10, "e_1"] &&&2 \arrow[out=90, in=0, loop, swap] \arrow[lll, bend left=10, "\overline{e}_1"] \arrow[lll, bend left=45, "\overline{e}_2"] \arrow[lll, bend left=90, "\overline{e}_3"]
 \end{tikzcd}
\end{equation*}
Fix the dimension vector $d=(1,1)\in \mathbb{N}^I$. 
Then \[\X(d)=\left(\mathbb{C}^2\oplus \mathbb{C}^{2(2g+1)}\right)\big/(\mathbb{C}^*)^2.\]
The diagonal $\mathbb{C}^*\hookrightarrow (\mathbb{C}^*)^2$ acts trivially on $\mathbb{C}^2\oplus \mathbb{C}^{2(2g+1)}$. The factor $\mathbb{C}^*$ corresponding to the vertex $1$ acts with weight $0$ on $\mathbb{C}^2$, weight $1$ on $\mathbb{C}^{2g+1}$, and weight $-1$ on $\mathbb{C}^{2g+1}$. We consider the stack, which is the $\mathbb{C}^*$-rigidification of $\X(d)$:  
\[\X'(d)=\left(\mathbb{C}^2_0\oplus \mathbb{C}^{2g+1}_1\oplus \mathbb{C}^{2g+1}_{-1}\right)\big/\mathbb{C}^*.\]
The GIT quotient for any non-trivial stability condition provides a small resolution of singularities:
\[\tau\colon Y:=\left(\mathbb{C}^2_0\oplus \mathbb{C}^{2g+1}_1\oplus \mathbb{C}^{2g+1}_{-1}\right)^{\mathrm{ss}}\big/\mathbb{C}^*=\mathbb{C}^2\times\mathrm{Tot}_{\mathbb{P}^{2g}}\left(\mathcal{O}(-1)^{2g+1}\right)\to X(d).\]
Here, \textit{small} means that $\dim Y\times_{X(d)} Y=\dim X(d)$ and $Y\times_{X(d)} Y$ has a unique irreducible component of maximal dimension.
Then, by the BBDG decomposition theorem, we have that $\tau_*\mathrm{IC}_{Y}=\mathrm{IC}_{X(d)}$.
We decorate the BPS sheaves with a superscript zero to indicate that the potential is zero.
We obtain that:
\begin{equation}\label{computations}
    \mathcal{BPS}^0_d=\tau_*\mathrm{IC}_{Y}=\mathrm{IC}_{X(d)}\text{ and }\mathcal{BPS}^0_{(1,0)}=\mathcal{BPS}^0_{(0,1)}=\mathrm{IC}_{\mathbb{C}}.
\end{equation}

\begin{prop}\label{prop82}
(i) If $v$ is odd, then 
we have an equivalence \begin{align*}\mathbb{M}(d)_v \stackrel{\sim}{\to} D^b(Y)
\end{align*}
and $\mathcal{BPS}^0_{d,v}=\mathcal{BPS}^0_d$.

(ii) If $v$ is even, then we have the semiorthogonal decomposition 
\begin{align*}\mathbb{M}(d)_v=\langle D^b(Y), D^b(\mathbb{C}^2) \rangle, 
\end{align*}
and the direct sum decomposition 
\begin{align*}
\ \mathcal{BPS}^0_{d,v}=\mathcal{BPS}^0_d\oplus \mathcal{BPS}^0_{(1,0)}\boxtimes \mathcal{BPS}^0_{(0,1)}.
\end{align*}
\end{prop}

\begin{proof}
    The category $\mathbb{M}(d)_v \subset D^b(\X(d))$ is generated by the line bundles $\mathcal{O}_{\X(d)}(w\beta_2+(v-w)\beta_1)$ for $w\in \mathbb{Z}$ such that 
    \begin{equation}\label{boundsprop83}
    \frac{v}{2}\leq w\leq 2g+1+\frac{v}{2}.
    \end{equation}
    One can show that $\mathbb{M}(d)_v$ is equivalent to the ``window subcategory" (in the sense of \cite{halp}) of $D^b(\X'(d))$ containing objects $F$ such that 
    \begin{align*}
        \mathrm{wt}(F|_{0}) \subset \left[\frac{v}{2}, \frac{v}{2}+2g+1  \right] \cap \mathbb{Z}. 
    \end{align*}
    
    If $v$ is odd, then $\mathbb{M}(d)_v\stackrel{\sim}{\to} D^b(Y)$ by \cite[Theorem 2.10]{halp}. The boundary points 
    $v/2$ and $v/2+2g+1$ are not integers, so $\mathcal{BPS}^0_{d,v}=\mathcal{BPS}^0_d$.

    If $v$ is even, then 
    \begin{align*}\mathcal{BPS}^0_{d,v}=\mathcal{BPS}^0_d\oplus \mathcal{BPS}^0_{(1,0)}\boxtimes \mathcal{BPS}^0_{(0,1)}.
    \end{align*}
    The fixed locus of the unique Kempf-Ness locus in the construction of $Y$ is $(\mathbb{C}^2_0\oplus \mathbb{C}^{2g+1}_1\oplus \mathbb{C}^{2g+1}_{-1})^{\mathbb{C}^*}=\mathbb{C}^2$.
    As a corollary of \cite[Theorem 2.10]{halp}, see the remark in \cite[Equation (3)]{MR3552550}, the category $\mathbb{M}(d)_v$ has a semiorthogonal decomposition with summands $D^b(Y)$ and $D^b(\mathbb{C}^2)$.
\end{proof}

As a corollary of the above proposition and of the computations \eqref{computations}, we obtain the following equality: 
\[\dim K^{\mathrm{top}}_0(\mathbb{M}(d)_v)\stackrel{(*)}{=}\dim H^{\ast}(X(d), \mathcal{BPS}^0_{d,v})=\begin{cases} 2g+1,\text{ if }v\text{ is odd},\\
2g+2,\text{ if }v\text{ is even}.
\end{cases}\]
The equality $(*)$ is also the consequence \eqref{corollarytheorem61} of Theorem \ref{thm1}.
Note that the dimensions of $K^{\mathrm{top}}(\mathbb{M}(d)_v)$ can be computed immediately using Proposition \ref{prop81} and \eqref{boundsprop83}, and then $(*)$ can be checked without using window categories.
However, by Proposition \ref{prop82}, the equality $(*)$ is obtained as a corollary of the Atiyah-Hirzebruch theorem for the smooth varieties $Y$ and $\mathbb{C}^2$. 

Further, Proposition \ref{prop82} is useful when considering a non-zero potential for $Q$. 
For example, consider the potential \[W:=\sum_{i=1}^{2g+1}e_i\overline{e}_i.\]
Note that $\Tr W\colon Y\to \mathbb{C}$ is smooth. The computation \eqref{computations} implies that:
\begin{equation}\label{computations2}
    \mathcal{BPS}_d=\tau_*\varphi_{\Tr W}\mathrm{IC}_{Y}[-1]=0, \  \mathcal{BPS}^0_{(1,0)}=\mathcal{BPS}^0_{(0,1)}=\mathrm{IC}_{\mathbb{C}}.
\end{equation}
In this case, the monodromy actions of
BPS sheaves are trivial.
Further, Proposition \ref{prop82} implies that:
\begin{align*}
    \mathbb{S}(d)_v&\simeq \mathrm{MF}(Y, \Tr W)=0 \text{ if }v\text{ is odd},\\ \mathbb{S}(d)_v&=\langle \mathrm{MF}(Y, \Tr W), \mathrm{MF}(\mathbb{C}^2, 0)\rangle\simeq \mathrm{MF}(\mathbb{C}^2, 0)\text{ if }v\text{ is even}.
\end{align*}
Let $i\in \mathbb{Z}$.
The following equality (which also follows by \eqref{corollarytheorem61}) holds by a direct computation:
\[\dim K^{\mathrm{top}}_i(\mathbb{S}(d)_v)_{\mathbb{Q}}=\dim H^{\ast}(X(d), \mathcal{BPS}_{d,v})=\begin{cases} 0,\text{ if }v\text{ is odd},\\
1,\text{ if }v\text{ is even}.
\end{cases}\]

\begin{remark}
  A similar analysis can be done for any symmetric quiver $Q=(I,E)$ and a dimension vector $d=(d^i)_{i\in I}\in \mathbb{N}^I$ such that $d^i\in\{0,1\}$ for every $i\in I$. We do not give the details for the proofs. Let $v\in\mathbb{Z}$ such that $\gcd(\dd, v)=1$. Assume $W=0$.
  
  One can show that, for a generic GIT stability $\ell\in M(d)^{W_d}_\mathbb{R}\cong M(d)_\mathbb{R}$, the GIT quotient $Y:=R(d)^{\ell\text{-ss}}/ G(d)$ 
  gives a small resolution
  $\tau \colon Y \to X(d)$. 
  Then $\mathcal{BPS}^0_d=\tau_*\mathrm{IC}_{Y}.$
  By \cite{hls}, there is an equivalence:
  $\mathbb{M}(d)_v\stackrel{\sim}{\to} D^b(Y)$. 
  The following equality (which is a corollary of Theorem \ref{thm1}) follows then by the Atiyah-Hirzebruch theorem for the smooth variety $Y$:
  \[\dim K^{\mathrm{top}}_0(\mathbb{M}(d)_v)_{\mathbb{Q}}= \dim K^{\mathrm{top}}_0(Y)_{\mathbb{Q}}=\dim H^{\ast}(Y)=\dim H^{\ast}(X(d), \mathcal{BPS}^0_d).\]
  Similar computations can be done also for a general $v\in \mathbb{Z}$.
\end{remark}

\subsection{Quivers with one vertex and an odd number of loops}
For $g \in \mathbb{N}$, let 
$Q$ be the quiver with one vertex and $2g+1$ loops. The following is a picture for $g=1$.
\begin{equation}\label{quiverthreeloops}
    \begin{tikzcd}
\bullet \arrow[out=0,in=90,loop,swap]
  \arrow[out=120,in=210,loop,swap]
  \arrow[out=240,in=330,loop,swap]
\end{tikzcd}
\end{equation}
For $d\in \mathbb{N}$, recall the good moduli space map:
\[\X(d):=\mathfrak{gl}(d)^{\oplus (2g+1)}/GL(d)\to X(d):=\mathfrak{gl}(d)^{\oplus (2g+1)}\ssslash GL(d).\]
For $g>0$, the variety $X(d)$ is singular.
For every stability condition $\ell\in M(d)^{W_d}_\mathbb{R}$, we have that $\X(d)^{\ell\text{-ss}}=\X(d)$, so we do not obtain resolutions of singularities of $X(d)$ as in the previous example. 
There are no known crepant geometric resolutions (in particular, small resolutions) of $X(d)$. For $\gcd(d,v)=1$, \v{S}penko--Van den Bergh \cite{SVdB} proved that $\mathbb{M}(d)_v$ is a twisted noncommutative crepant resolution of $X(d)$. In view of Theorem \ref{thmWzero}, we regard $\mathbb{M}(d)_v$ as a categorical analogue of a small resolution of $X(d)$.

Reineke \cite{MR2889742} and Meinhardt--Reineke \cite{MeRe} provided multiple combinatorial formulas for the dimensions of the intersection cohomology groups $I\!H^i(X(d))$. 
As noted in Remark \ref{remarksub8}, 
the result of Theorem \ref{thmWzero} also provides combinatorial formulas for the total intersection cohomology of $X(d)$. We explain that our formula recovers a formula already appearing in the work of Reineke \cite[Theorem 7.1]{MR2889742}. 

Fix $v\in \mathbb{Z}$.
By Proposition \ref{prop81}, we need to determine the number of (integral, dominant) weights $\chi=\sum_{i=1}^d c_i\beta_i\in M(d)^+$ with $\sum_{i=1}^d c_i=v$ and $c_i\geq c_{i-1}$ for every $2\leq i\leq d$, such that
\begin{equation}\label{polytopeg}
\chi+\rho-v\tau_d\in \frac{2g+1}{2}\mathrm{sum}[0, \beta_i-\beta_j],
\end{equation}
where the Minkowski sum is taken over all $1\leq i,j\leq d$. 
We define $\widetilde{\chi}\in M(d)$ and $\widetilde{c}_i\in \mathbb{Z}$ for $1\leq i\leq d$ as follows:
\[\widetilde{\chi}:=\chi-g\cdot(2\rho)=\sum_{i=1}^d \widetilde{c}_i\beta_i.\]
Note that, for every $2\leq i\leq d$, the inequality $c_i\geq c_{i-1}$ becomes:
\begin{equation}\label{dominanttilde}
    \widetilde{c}_i-\widetilde{c}_{i-1}+2g\geq 0.
\end{equation}

A dominant weight $\chi$ satisfies \eqref{polytopeg} if and only if, for all dominant cocharacters $\lambda$ of $T(d)\subset GL(d)$, we have:
\begin{equation}\label{bounds}
\langle \lambda, \chi+\rho-v\tau_d\rangle\leq \frac{2g+1}{2}\langle \lambda, \mathfrak{g}^{\lambda>0}\rangle=(2g+1)\langle \lambda, \rho\rangle.
\end{equation}

\begin{prop}\label{propboundslambdal}
    The inequalities \eqref{bounds} hold for all dominant cocharacters $\lambda$ if and only if they hold for the cocharacters \begin{align*}\lambda_k(z)=(\overbrace{1\ldots, 1}^{d-k},\overbrace{z,\ldots, z}^k)\in T(d)
    \end{align*}
    for $1\leq k\leq d$.
\end{prop}

\begin{proof}
    In the cocharacter lattice, any dominant cocharacter $\lambda$ is a linear combination with nonnegative coefficients of $\lambda_k$ for $1\leq k\leq d$. Then, if \eqref{bounds} holds for all $\lambda_k$, it also holds for all dominant $\lambda$.
\end{proof}

The conditions \eqref{bounds} for $\lambda_k$ 
is equivalent to 
that $\langle \lambda_k, \widetilde{\chi}\rangle\leq \langle \lambda_k, v\tau_d\rangle$, or 
equivalently 
\begin{equation}\label{average}
    \sum_{i=d-k+1}^d\widetilde{c}_i\leq \frac{vk}{d}.
\end{equation}

\begin{defn}\label{def86}
Let $\mathscr{H}^{2g+1}_{d,v}$ be the set of tuplets of integers $(\widetilde{c}_i)_{i=1}^d\in \mathbb{Z}^d$ satisfying the inequality \eqref{dominanttilde} and \eqref{average} for every $2\leq k\leq d$ and such that $\sum_{i=1}^d \widetilde{c}_i=v$. 
Let $H^{2g+1}_{d,v}:=\# \mathscr{H}^{2g+1}_{d,v}$.
\end{defn}

\begin{remark}
The numbers $H^{2g+1}_{d,0}$ appear in combinatorics as ``score sequences of complete tournaments", and in the study of certain $\mathbb{C}^*$-fixed points in the moduli of $SL(n)$-Higgs bundles, see \cite[Section 7]{MR2889742}. By Proposition \ref{prop81}, we have that:
\[\dim K^{\mathrm{top}}_0(\mathbb{M}(d)_v)_{\mathbb{Q}}=H^{2g+1}_{d,v}.\]
By Theorem \ref{thmWzero}, for any $v\in\mathbb{Z}$ such that $\gcd(d,v)=1$, we obtain that:
\begin{equation}\label{hgdv}
\dim I\!H^{\ast}(X(d))=H^{2g+1}_{d,v}.
\end{equation}
The above statement was already proved (by different methods) by Reineke and Meinhardt--Reineke by combining \cite[Theorem 7.1]{MR2889742} and \cite[Theorem 4.6]{MeRe}, see also \cite[Section 4.3]{MeRe}. Note that we assume that the number of loops is odd in order to apply Theorem \ref{thm1}. In loc. cit., Reineke also provided combinatorial formulas for $m$-loop quivers for $m$ even.
\end{remark}

\begin{remark}
    Note that, as a corollary of \eqref{hgdv}, we obtain that $H^{2g+1}_{d,v}=H^{2g+1}_{d,v'}$ if $\gcd(d,v)=\gcd(d,v')=1$. There are natural bijections $\mathscr{H}^{2g+1}_{d,v}\xrightarrow{\sim} \mathscr{H}^{2g+1}_{d,v'}$ for $d|v-v'$ or for $v'=-v$, but we do not know such natural bijections for general $v,v'$ coprime with $d$.
\end{remark}

For $\gcd(d, v)=1$ and $n\geq 1$, the topological 
K-theory 
$K_i^{\rm{top}}(\mathbb{M}(nd)_{nv})$
is computed from the intersection cohomology of $X(e)$ for $e\in\mathbb{N}$, and the set $S_{nv}^{nd}$.
The following is a corollary of Proposition \ref{computationsetsdv}: 
\begin{cor}\label{setSnd}
For $\gcd(d, v)=1$ and $n\geq 1$,
the set $S_{nv}^{nd}$ consists of partitions $(d_i)_{i=1}^k$ of
$nd$ such that $d_i=n_i d$
for $(n_i)_{i=1}^k\in\mathbb{N}^k$ a partition of $n$.
\end{cor}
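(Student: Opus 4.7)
The plan is to simply invoke Proposition \ref{computationsetsdv} applied to the dimension vector $nd$ and integer $nv$ for the one-vertex quiver with $2g+1$ loops. Since this quiver has an odd number of loops (and vacuously satisfies the condition about even number of edges between distinct vertices, as there is only one vertex), Proposition \ref{computationsetsdv} applies.

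First I would unwind the notation: for a one-vertex quiver, any partition $\mathbf{d}=(d_i)_{i=1}^k$ of $nd$ is just a partition of the integer $nd$, and the quantity $\underline{nd}$ equals $nd$ while $\underline{d_i}=d_i$. Thus the criterion $nv \cdot \underline{d_i}/\underline{nd} \in \mathbb{Z}$ furnished by Proposition \ref{computationsetsdv} becomes the divisibility condition
\[
\frac{v\,d_i}{d} \in \mathbb{Z} \quad \text{for all } 1\leq i\leq k.
\]

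Next I would use the coprimality hypothesis $\gcd(d,v)=1$ to conclude that $v d_i/d \in \mathbb{Z}$ is equivalent to $d \mid d_i$, i.e. $d_i = n_i d$ for some $n_i\in \mathbb{N}$. Summing these equalities and comparing with $\sum_i d_i = nd$ forces $\sum_i n_i = n$, so $(n_i)_{i=1}^k$ is a partition of $n$. Conversely, any such partition $(n_i)_{i=1}^k$ of $n$ yields a partition $(n_i d)_{i=1}^k$ of $nd$ satisfying the divisibility condition, so it lies in $S_{nv}^{nd}$. This gives the claimed bijective description.

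There is no obstacle here; the statement is a direct bookkeeping consequence of Proposition \ref{computationsetsdv} together with elementary divisibility. The only subtlety worth highlighting is the use of $\gcd(d,v)=1$ to pass from the fractional condition to the integer divisibility $d\mid d_i$, which is essential since without coprimality the description would involve $d/\gcd(d,v)$ instead of $d$.
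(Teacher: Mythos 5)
Your proposal is correct and matches the paper's (unwritten) argument: the paper simply states the result as a direct corollary of Proposition \ref{computationsetsdv}, and the same coprimality reformulation ($\mathbf{d}\in S^d_v$ iff $\underline{d}/\gcd(\underline{d},v)$ divides each $\underline{d}_i$) already appears in the proof of Corollary \ref{corintro}. Applying it with $(nd,nv)$ for the one-vertex quiver with $2g+1$ loops and using $\gcd(d,v)=1$ to get $d\mid d_i$ is exactly the intended reasoning.
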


\begin{example}
Suppose that $g=0$. In this case, the 
variety $X(d)$ is smooth:
\[X(d)=\mathfrak{gl}(d)\ssslash GL(d) \stackrel{\sim}{\to} \mathrm{Sym}^d(\mathbb{C}) \cong\mathbb{C}^d.\] 
The above isomorphism is given by sending an element of $\mathfrak{gl}(d)$ to the
multiset of generalized eigenvalues. 
However $X(d)^{\mathrm{st}}=\emptyset$ if $d>1$, 
thus $\mathcal{BPS}_d=\mathrm{IC}_{\mathbb{C}}$ if 
$d=1$, and $\mathcal{BPS}_d=0$ for $d>1$.  
Then by Corollary~\ref{setSnd}, we have 
$\mathcal{BPS}_{d,v}=0$ unless $d|v$, in which case $\mathcal{BPS}_{d,v}=\mathrm{Sym}^d(\mathcal{BPS}_1)=\mathrm{IC}_{X(d)}$.
Thus for $g=0$, we have 
\begin{align}\label{com:g=0}
\dim H^{\ast}(X(d), \mathcal{BPS}_{d,v})=\begin{cases}
    1,\text{ if }d|v,\\ 0, \text{ otherwise.}
\end{cases}
\end{align}

On the other hand, by \cite[Lemma 3.2]{Toquot2} we have that $\mathbb{M}(d)_v=0$ unless $d|v$, in which case it is the subcategory of $D^b(\X(d))$ generated by $\mathcal{O}_{\X(d)}(v\tau_d)$, and thus equivalent to $D^b(X(d))$, see \cite[Lemma 3.3]{Toquot2}. Then:
\begin{align}\label{com2:g=0}\dim K^{\mathrm{top}}_0(\mathbb{M}(d)_v)_{\mathbb{Q}}=\begin{cases}
    1,\text{ if }d|v,\\ 0, \text{ otherwise.}
\end{cases}
\end{align}
For $g=0$, we can thus verify \eqref{corollarytheorem61} by the direct computations (\ref{com:g=0}), (\ref{com2:g=0}). 
\end{example}

\subsection{The three loop quiver}

In this subsection, we make explicit the corollary of Theorem \ref{thm1} for the three loop quiver $Q$ as in \eqref{quiverthreeloops} with loops $\{x,y,z\}$ and with potential \begin{align*}W=x[y,z]=xyz-xzy.\end{align*}
Note that $(Q, W)$ is the tripled quiver 
with potential of the quiver $Q^{\circ}$ with one 
vertex and one loop. 
The quasi-BPS categories $\mathbb{S}(d)_v$ 
in this case are studied in~\cite{PTzero}
as a categorification of BPS invariants 
for $\mathbb{C}^3$, 
and they are admissible 
subcategories 
of the DT category $\mathcal{DT}(d)$
for $\mathbb{C}^3$ studied in \cite{PTzero}. The quasi-BPS categories $\mathbb{T}(d)_v$
for the double of $Q^{\circ}$ are admissible
subcategories 
in $D^b(\mathscr{C}(d))$, where $\mathscr{C}(d)$ is the 
derived moduli stack of zero dimensional 
sheaves on $\mathbb{C}^2$ with length $d$. 
In~\cite[Theorem~4.13]{PTzero}, we 
computed torus-equivariant algebraic K-theory of 
$\mathbb{T}(d)_v$. 
The following is an analogue of~\cite[Theorem~4.13]{PTzero}
for topological K-theory: 

\begin{prop}\label{prop89}
Let $(d,v)\in \mathbb{N}\times\mathbb{Z}$ be coprime, let $n\in\mathbb{N}$ and $i\in \mathbb{Z}$. 
Then we have the following identities: 
\[\dim K^{\mathrm{top}}_i(\mathbb{S}(nd)_{nv})_{\mathbb{Q}}=\dim K^{\mathrm{top}}_0(\mathbb{T}(nd)_{nv})_{\mathbb{Q}}=p_2(n),\]
where $p_2(n)$ is the number of partitions of $n$.
\end{prop}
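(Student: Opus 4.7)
The plan is to reduce both topological K-theory computations to BPS cohomology via the main numerical corollaries of this paper, identify the quiver and preprojective BPS cohomologies through dimensional reduction, and compute the resulting BPS cohomology using the explicit structure of the Jordan preprojective BPS sheaf. Applying Corollary \ref{corollarytheorem61} to the tripled Jordan quiver $(Q, W=x[y,z])$ with dimension vector $nd$ and weight $\delta=nv\tau_{nd}$, together with Corollary \ref{corollarythm1plus} for the Jordan quiver $Q^\circ$, gives
\[
\dim K^{\mathrm{top}}_i(\mathbb{S}(nd)_{nv})_{\mathbb{Q}} = \dim H^{\ast}(X(nd), \mathcal{BPS}_{nd,nv})^{\mathrm{inv}}, \quad \dim K^{\mathrm{top}}_0(\mathbb{T}(nd)_{nv})_{\mathbb{Q}} = \dim H^{\ast}(P(nd), \mathcal{BPS}^p_{nd,nv}).
\]
The dimensional reduction isomorphism \eqref{BPSmatched} identifies these two right-hand sides degree-preservingly; since $\mathcal{BPS}^p_{nd,nv}$ is a direct summand of $\pi_{P,nd \ast}\omega_{\mathscr{P}(nd)^{\mathrm{cl}}}$, the matched sheaf $\mathcal{BPS}_{nd,nv}$ carries trivial monodromy, so the monodromy-invariant part equals the full cohomology. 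This establishes the first equality, reducing the problem to showing $\dim H^{\ast}(P(nd), \mathcal{BPS}^p_{nd,nv}) = p_2(n)$.

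For the remaining equality, Corollary \ref{setSnd} puts $S^{nd}_{nv}$ in bijection with the set of partitions of $n$: a partition $(n_1, \ldots, n_k)$ of $n$ corresponds to $\mathbf{d} = (n_i d)_{i=1}^k$. By \eqref{defpreprojbps} and the preprojective analogue of \eqref{BPSAsheaf},
\[
\mathcal{BPS}^p_{nd,nv} = \bigoplus_{A \in S^{nd}_{nv}} \mathcal{BPS}^p_A,
\]
where each $\mathcal{BPS}^p_A$ is built from pushforwards of symmetric products of the sheaves $\mathcal{BPS}^p_{md}$. The key input is that for every $m \geq 1$, the total cohomology $H^{\ast}(P(m), \mathcal{BPS}^p_m) \cong \mathbb{Q}$ is concentrated in a single even cohomological degree. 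Granting this, the K\"unneth formula together with the fact that $\mathrm{Sym}^{m_i}$ of a one-dimensional even graded vector space remains one-dimensional shows that each partition of $n$ contributes exactly $1$ to $\dim H^{\ast}(P(nd), \mathcal{BPS}^p_{nd,nv})$, yielding $p(n) = p_2(n)$.

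The main obstacle will be verifying the input statement $H^{\ast}(P(m), \mathcal{BPS}^p_m) \cong \mathbb{Q}$ in even degree for every $m \geq 1$. The dimension count should be extracted from the PBW theorem for preprojective cohomological Hall algebras \cite{D}, combined with the classical identity
\begin{align*}
\sum_{d \geq 0} \dim H^{\mathrm{BM}}_{\ast}(\mathscr{P}(d)^{\mathrm{cl}}) q^d = \prod_{n \geq 1}(1-q^n)^{-1}
\end{align*}
for the Jordan preprojective (equivalent via dimensional reduction to $\dim H^{\ast}(\mathrm{Hilb}^d(\mathbb{C}^2)) = p(d)$), which forces $\dim H^{\ast}(P(m), \mathcal{BPS}^p_m) = 1$. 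The parity statement should then follow by an explicit identification of $\mathcal{BPS}^p_m$ with a constant sheaf (shifted to lie in perverse degree $0$) on the small diagonal $\mathbb{C}^2 \hookrightarrow \mathrm{Sym}^m(\mathbb{C}^2) = P(m)$, whose cohomology sits in a single even degree.
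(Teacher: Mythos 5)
Your overall strategy matches the paper's: reduce via the numerical corollaries to a BPS-cohomology count, use Corollary \ref{setSnd} to count elements of $S^{nd}_{nv}$, and show that each BPS summand contributes exactly one dimension. The difference is that you work on the preprojective side (computing $\dim H^{\ast}(P(m), \mathcal{BPS}^p_m)$) whereas the paper works directly on the tripled-quiver side, citing Davison's \cite[Theorem~5.1]{Dav} that $\mathcal{BPS}_e = \Delta_{\ast}\mathrm{IC}_{\mathbb{C}^3}$ where $\Delta \colon \mathbb{C}^3 \hookrightarrow X(e)$ is the diagonal locus. Your alternative route (b) --- identifying $\mathcal{BPS}^p_m$ with the shifted constant sheaf on $\mathbb{C}^2 \hookrightarrow P(m) = \mathrm{Sym}^m(\mathbb{C}^2)$ --- is exactly the preprojective avatar of Davison's result via equation~\eqref{etai}, so this is the correct input and your argument then closes. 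The two sides buy the same thing; the paper's choice is marginally shorter because it bypasses the intermediate step of matching the two BPS cohomologies degree-by-degree via~\eqref{BPSmatched}, but you need that step anyway for the $\mathbb{T}(nd)_{nv}$ computation, so there is no real efficiency loss.

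The genuine problem is your proposed route (a). The generating-function identity
\[
\sum_{d \geq 0} \dim H^{\mathrm{BM}}_{\ast}\bigl(\mathscr{P}(d)^{\mathrm{cl}}\bigr)\, q^d = \prod_{n \geq 1}(1-q^n)^{-1}
\]
is false: $\mathscr{P}(d)^{\mathrm{cl}}$ is a stack with positive-dimensional automorphism groups, so $H^{\mathrm{BM}}_{\ast}(\mathscr{P}(d)^{\mathrm{cl}})$ is infinite-dimensional for $d \geq 1$ (even for the Jordan quiver, $\mathscr{P}(1)^{\mathrm{cl}} = \mathbb{C}^2/\mathbb{C}^{\ast}$ already has BM homology a free $\mathbb{Q}[h]$-module). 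The PBW theorem of \cite{D} gives a refined identity where each $\mathcal{BPS}^p_m$ summand appears tensored with the full cohomology of a $B\mathbb{C}^{\ast}$ factor, and extracting a single number $\dim H^{\ast}(P(m), \mathcal{BPS}^p_m)=1$ from that requires more bookkeeping than the identity you wrote. Drop option (a) --- it does not lead anywhere without substantial repair --- and make option (b) the argument, proving the identification of $\mathcal{BPS}^p_m$ directly (or simply citing \cite[Theorem~5.1]{Dav} together with~\eqref{etai}, as the paper does).
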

\begin{proof}
    It is proved in~\cite[Theorem 5.1]{Dav}
    that  
    $\mathcal{BPS}_e$
    is isomorphic to $\Delta_{\ast}\mathrm{IC}_{\mathbb{C}^3}$ for every $e\in\mathbb{N}$, where $\Delta \colon \mathbb{C}^3\hookrightarrow X(e)$ is the subvariety parameterizing three diagonal matrices. Then $\dim H^{\ast}(X(e), \mathcal{BPS}_e)=1$, and so 
    $\mathrm{Sym}^k \left(H^{\ast}(X(e), \mathcal{BPS}_e)\right)$
    is one dimensional 
    for every positive integers $e, k$. 
    Then $H^{\ast}(X(nd), \mathcal{BPS}_A)$ is also one dimensional for every $A\in S^{nd}_{nv}$. Note that $\# S^{nd}_{nv}=p_2(n)$ by Corollary \ref{setSnd}.
    Then 
    \[H^{\ast}(X(nd), \mathcal{BPS}_{nv, nd})=\bigoplus_{A\in S^{nv}_{nd}}H^{\ast}(X(nd), \mathcal{BPS}_A)=\mathbb{Q}^{\oplus p_2(n)}.\]
    The monodromy is trivial on $H^{\ast}(X(nd), \mathcal{BPS}_{nd, nv})$. By Theorems \ref{thm1} and \ref{thm1plus}, we obtain
    the desired computations.
\end{proof}

\begin{remark}
    By Theorem \ref{thm1}, the topological K-theory of quasi-BPS categories may be determined whenever one can compute the BPS cohomology and the set $S^d_v$. Proposition \ref{prop89} is an example of such a computation. 
    We mention two other computations for the three loop quiver with potentials $W':=x[y,z]+z^a$ (for $a\geq 2$) and $W'':=x[y,z]+yz^2$. 
    
    Let $\mathcal{BPS}'_d$ and $\mathbb{S}'(d)_v$ be the BPS sheaves and the quasi-BPS categories of $(Q,W')$. Denote similarly the BPS sheaves and the quasi-BPS categories of $(Q,W'')$.
    By \cite[Theorem 1.5]{DP}, we have that $H^{\ast}(X(d), \mathcal{BPS}'_d)^{\mathrm{inv}}=0$ because $H^{\ast}(\mathbb{C}, \varphi_{t^a})^{\mathrm{inv}}=0$. Then Theorem \ref{thm1} implies that, for every $i, v\in \mathbb{Z}$ with $\gcd(d,v)=1$
    we have $K^{\mathrm{top}}_i(\mathbb{S}'(d)_v)_{\mathbb{Q}}=0$. 
  
    By \cite[Corollary 7.2]{DP}, we have that $H^{\ast}(X(d), \mathcal{BPS}''_d)^{\mathrm{inv}}=H^{\ast}(X(d), \mathcal{BPS}''_d)$ is one dimensional. As in Proposition \ref{prop89}, we have that, for every $i,v\in\mathbb{Z}$, 
    the dimension of $K^{\mathrm{top}}_i(\mathbb{S}''(d)_v)_{\mathbb{Q}}$
    equals $p_2(\gcd(d,v))$.  
\end{remark}

\section{\'Etale covers of preprojective algebras}\label{subsec67}

In this section, we prove an extension of Theorem \ref{thm1plus} to étale covers of preprojective stacks which we use to compute the topological K-theory of quasi-BPS categories of K3 surfaces in \cite{PTK3}.

We will use the notations and constructions from Subsection \ref{subsec71}.
Throughout this section, we fix a quiver $Q^\circ=(I, E^{\circ})$ and a dimension vector $d\in\mathbb{N}^I$.
We begin by discussing the setting and by stating Theorem \ref{prop614}, the main result of this section.

\subsection{Preliminaries}\label{subsec91}
Let $E$ be an affine variety with an action of $G:=G(d)$ and with a $G$-equivariant étale map \[e\colon E\to R^\circ(d)\oplus R^\circ(d)^\vee.\]
We assume that the induced map 
\begin{align*}
    e \colon U:=E\ssslash G \to Y(d)
\end{align*}
is \'etale. In this subsection, we use the notation 
$(-)_U$ by the pull-back for the above map. So from the diagram 
\begin{align*}
    \xymatrix{
\mathscr{P}(d)^{\rm{cl}} \inclusion \ar[rrd] & \mathscr{P}(d)^{\rm{red}} \inclusion & 
\mathscr{P}(d) \ar[d] \inclusion & 
\mathscr{Y}(d) \ar[d] & \mathscr{X}(d) \ar[l]_-{\eta} \ar[d] \ar[rd]^-{f} & \\
& & P(d) \inclusion & Y(d) & \ar[l]_-{\eta} X(d) \ar[r]   & \mathbb{C}
    }
\end{align*}
the pull-back with respect to $e \colon U\to Y(d)$ is denoted by 
\begin{align*}
    \xymatrix{
\mathscr{P}(d)^{\rm{cl}}_U \inclusion \ar[rrd] & \mathscr{P}(d)^{\rm{red}}_U \inclusion  & \mathscr{P}(d)_U \ar[d] \inclusion & 
\mathscr{Y}(d)_U \ar[d] & \mathscr{X}(d)_U \ar[l]_-{\eta_U} \ar[d] \ar[rd]^-{f_U} \\
& &P(d)_U \inclusion & Y(d)_U & \ar[l]_-{\eta_U} X(d)_U  \ar[r] & \mathbb{C}. 
    }
\end{align*}

\begin{defn}
Let $\mathcal{BPS}^p_d\in \mathrm{Perv}(P(d))$ be the preprojective BPS sheaf. 
We define 
\begin{equation}\label{def:BPSLsheaf}
\mathcal{BPS}^p_{d, U}:=e^*(\mathcal{BPS}^p_d)\in \mathrm{Perv}(P(d)_U).
\end{equation}
Here $e \colon P(d)_U\to P(d)$ is the pull-back from $e \colon U \to Y(d)$. 
We also define $\mathcal{BPS}^p_{d, \delta, U}$ for $\delta\in M(d)^{W_d}_\mathbb{R}$ as in \eqref{defpreprojbps}.
\end{defn}

We take $\delta \in M(d)_{\mathbb{R}}^{W_d}$ such that $\langle 1_d, \delta\rangle=v \in \mathbb{Z}$. 
By Theorem \ref{theorem266} and Remark~\ref{rmk:version}, there is a semiorthogonal decomposition:
\begin{equation*}
    D^b\left(\mathscr{P}(d)\right)_v=\big\langle \mathbb{A}(d; \delta), \mathbb{T}(d; \delta)\big\rangle. 
\end{equation*}
The purpose of this section is to prove the following:

\begin{thm}\label{prop614}
Let $\langle 1_d, \delta \rangle =v\in\mathbb{Z}$.
    There exist subcategories 
    \begin{align*}\mathbb{T}_U=\mathbb{T}(d; \delta)_U, 
    \ \mathbb{A}_U=\mathbb{A}(d; \delta)_U \subset D^b(\mathscr{P}(d)_U)_v
    \end{align*}such that:
\begin{enumerate}
    \item there is a semiorthogonal decomposition $D^b(\mathscr{P}(d)_U)_v=\langle \mathbb{A}_U, \mathbb{T}_U\rangle$,
    \item if $U=P(d)$ and $e$ is the identity, then $\mathbb{T}_U=\mathbb{T}(d; \delta)$ and $\mathbb{A}_U=\mathbb{A}(d; \delta)$,
    \item if $h\colon E'\to E$ is a $G$-equivariant étale map inducing
    an étale map
    \begin{align*}e':=e\circ h\colon 
    U':=E'\ssslash G \to Y(d)
    \end{align*}
    and if we consider the categories $\mathbb{A}_{U'}$, 
    $\mathbb{T}_{U'}$ for $E'$, 
    then $h$ induces functors 
    \begin{align*}h^*\colon \mathbb{T}_U\to \mathbb{T}_{U'}, \ 
    h^*\colon \mathbb{A}_{U}\to \mathbb{A}_{U'}, 
    \end{align*}
    \item for any $i,\ell\in\mathbb{Z}$, the cycle map \eqref{cherngraded3} induces isomorphisms \[\mathrm{c}\colon \mathrm{gr}_\ell K^{\mathrm{top}}_i(\mathbb{T}_U)\xrightarrow{\sim} 
         H^{-2\ell-i}(P(d)_U, \mathcal{BPS}^p_{d, \delta, U}).\]
\end{enumerate}
Further, one can also define categories 
\begin{align*}\mathbb{T}^{\mathrm{red}}_U, \mathbb{A}^{\mathrm{red}}_U\subset D^b(\mathscr{P}(d)_U^{\mathrm{red}})_v
\end{align*}
which satisfy the analogous conditions to (1)-(4) above. In particular, the map $l'\colon \mathscr{P}(d)_U^{\mathrm{red}}\to\mathscr{P}(d)_U$ induces an isomorphism  
\begin{align}\label{lprimec}
\mathrm{c}\circ l'_*\colon \mathrm{gr}_\ell K^{\mathrm{top}}_i(\mathbb{T}_U^{\mathrm{red}})_{\mathbb{Q}}&\xrightarrow{\sim}
\mathrm{gr}_\ell K^{\mathrm{top}}_i(\mathbb{T}_U)_{\mathbb{Q}} \\
&\notag \xrightarrow{\sim} 
         H^{-2\ell-i}(P(d)_U, \mathcal{BPS}^p_{d,\delta,U}).
         \end{align}
    
\end{thm}

We will only explain the constructions for $\mathscr{P}(d)$, the case of $\mathscr{P}(d)_U^{\mathrm{red}}$ is similar.
In Subsection \ref{subsec671}, we define the categories $\mathbb{T}$ and $\mathbb{A}$ using graded matrix factorizations and the Koszul equivalence. In Subsection \ref{subsec672}, we prove the fourth claim in Theorem \ref{prop614}.

\subsection{Quasi-BPS categories for étale covers}\label{subsec671}

We will use the setting from Subsection \ref{subsec91}. 
There is a Cartesian diagram, where the horizontal maps $e$ are étale maps:
\begin{equation}\label{dia:Fe}
    \begin{tikzcd}
        \mathscr{X}(d)_U\arrow[dr, phantom, "\square"]\arrow[d, "\pi_F"']\arrow[r, "e"]& \X(d)\arrow[d,"\pi_{X,d}"]\\
        X(d)_U\arrow[r, "e"]& X(d).
    \end{tikzcd}
\end{equation}
By Theorem \ref{theorem266}, there is a semiorthogonal decomposition 
\begin{equation}\label{SODBM}
D^b\big(\X(d)\big)_v=\big\langle \mathbb{B}(d; \delta), \mathbb{M}(d; \delta)\big\rangle.
\end{equation}
We define subcategories 
\begin{align*}\mathbb{B}_U=\mathbb{B}(d; \delta)_U, \ 
\mathbb{M}_U=\mathbb{M}(d; \delta)_U \subset D^b(\mathscr{X}(d)_U)_v
\end{align*}to be 
classically generated (see Subsection \ref{basechange}) by $e^{\ast}\mathbb{B}(d; \delta)$, $e^{\ast}\mathbb{M}(d; \delta)$ respectively. 
Then Lemma \ref{propSODetale} implies that:



\begin{cor}\label{prop615}
There is a semiorthogonal decomposition
\begin{align}\label{sod:U}
D^b(\mathscr{X}(d)_U)_v=\langle \mathbb{B}_U, \mathbb{M}_U\rangle.
\end{align}
\end{cor}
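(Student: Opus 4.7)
My plan is to reduce the statement directly to a general base-change principle for semiorthogonal decompositions along étale maps of good moduli spaces, which is the content of Lemma~\ref{propSODetale}. The input is the $X(d)$-linear semiorthogonal decomposition $D^b(\X(d))_v = \langle \mathbb{B}(d;\delta), \mathbb{M}(d;\delta)\rangle$ from Theorem~\ref{theorem266}, together with the Cartesian diagram~(\ref{dia:Fe}) whose bottom map $e \colon X(d)_U \to X(d)$ is étale. Because the upper map in (\ref{dia:Fe}) is obtained by base change from $e$, the pullback functor $e^{\ast} \colon D^b(\X(d))_v \to D^b(\X(d)_U)_v$ is compatible with the good moduli morphisms on both sides.

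First I would verify semiorthogonality: for $F \in \mathbb{B}(d;\delta)$ and $G \in \mathbb{M}(d;\delta)$, the vanishing $\mathrm{RHom}_{\X(d)}(G,F) = 0$ pushes forward, via $\pi_{X,d}$, to a vanishing of a complex on $X(d)$. Pulling back along $e$ and applying the flat base-change isomorphism $e^{\ast} \pi_{X,d\,*} \simeq \pi_{F\,*} e^{\ast}$ (valid since $e$ is étale, hence flat) yields $\mathrm{RHom}_{\X(d)_U}(e^{\ast}G, e^{\ast}F) = 0$. This vanishing then extends from $e^{\ast}\mathbb{M}(d;\delta)$ and $e^{\ast}\mathbb{B}(d;\delta)$ to their classical generations $\mathbb{M}_U$ and $\mathbb{B}_U$ by stability of semiorthogonality under taking thick closures.

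Second I would check generation: since $\mathbb{B}(d;\delta)$ and $\mathbb{M}(d;\delta)$ jointly classically generate $D^b(\X(d))_v$, their pullbacks generate the essential image of $e^{\ast}$. Étale descent for quasi-coherent sheaves on the quotient stacks $\X(d)_U \to \X(d)$ (which is étale because $e$ is étale and the diagram is Cartesian over the good moduli spaces) then implies that pullbacks of a generating set give a generating set of $D^b(\X(d)_U)_v$, so $\langle \mathbb{B}_U, \mathbb{M}_U\rangle = D^b(\X(d)_U)_v$.

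The main technical obstacle, which is exactly what Lemma~\ref{propSODetale} encapsulates, is checking that classical generation is compatible with the pullback in a way that preserves admissibility; once admissibility of $\mathbb{M}_U$ inside $D^b(\X(d)_U)_v$ is in hand, the semiorthogonal decomposition follows automatically from the two steps above. Accordingly, my proposal is essentially to invoke Lemma~\ref{propSODetale} with input $(\ref{SODBM})$ and the étale map $e \colon X(d)_U \to X(d)$, verifying its hypotheses as outlined.
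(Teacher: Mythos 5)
Your proposal matches the paper's proof: the corollary is established in the text as an immediate application of Proposition~\ref{propSODetale} to the $X(d)$-linear semiorthogonal decomposition~\eqref{SODBM} and the étale map $e \colon X(d)_U \to X(d}$, and that is exactly what you invoke. The extra ``verification'' paragraphs are not needed (Proposition~\ref{propSODetale} already supplies both semiorthogonality and generation, and in fact does so by a retraction argument — $A$ is a summand of $f_T^{\ast}f_{T\ast}A$ via the diagonal section — rather than by the flat-base-change/descent chain you sketch, which as stated conflates global $\mathrm{RHom}$ with local $\mathcal{RHom}$ and omits the role of $S$-linearity plus compactness in propagating the vanishing to the non-coherent sheaf $e_{\ast}\mathcal{O}_{X(d)_U}$); but since your final step is to apply the proposition, the argument lands in the same place.
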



Consider the category of graded matrix factorizations $\mathrm{MF}^{\mathrm{gr}}(\mathscr{X}(d)_U, f_U)$, where the grading is given by weight two $\mathbb{C}^{\ast}$-action on
fibers of $\X(d)_U \to \mathscr{Y}(d)_U$. 
By the Koszul equivalence, we have that:
\begin{equation}\label{koszuletale}
\kappa_L\colon D^b(\mathscr{P}(d)_U)\xrightarrow{\sim}\mathrm{MF}^{\mathrm{gr}}(\mathscr{X}(d)_U, f_U).
\end{equation}
Define the subcategories of $D^b(\mathscr{P}(d)_U)$:
\[\mathbb{T}_U:=\kappa_L^{-1}\left(\mathrm{MF}^{\mathrm{gr}}(\mathbb{M}_U, f_U)\right), \ \mathbb{A}_U:=\kappa_L^{-1}\left(\mathrm{MF}^{\mathrm{gr}}(\mathbb{B}_U, f_U)\right).\]
By \cite[Proposition 2.5]{PTzero}, we obtain:

\begin{cor}\label{prop6155}
The properties (1), (2), and (3) in the statement of Theorem \ref{prop614} hold for the categories $\mathbb{A}_U$ and $\mathbb{T}_U$ of $D^b(\mathscr{P}(d)_U)$.
\end{cor}

\subsection{Comparison with BPS cohomology}\label{subsec672}

Recall the notation from Subsection \ref{subsec71}.
Recall the BPS sheaf $\mathcal{BPS}_d\in \mathrm{Perv}(X(d))$ for the tripled quiver with potential $(Q,W)$ associated to $Q^\circ$.
We define the following perverse sheaf, 
see the diagram (\ref{dia:Fe}):
\[\mathcal{BPS}_{d, U}=e^*(\mathcal{BPS}_d)\in \mathrm{Perv}(X(d)_U).\] 
For $\delta\in M(d)^{W_d}_\mathbb{R}$, we define 
\[\mathcal{BPS}_{d, \delta, U}\in D^b(\mathrm{Sh}_{\mathbb{Q}}(X(d)_U))\]
to be the direct sum 
as in \eqref{defBPSddelta}.
On the other hand, the monodromy 
on $H^\bullet(\mathscr{X}(d)_U, \varphi_{f_U})$ is trivial, so 
we have the cycle map
\begin{align}\label{cycleF}
\mathrm{c}\colon \mathrm{gr}_a K^{\mathrm{top}}_i\left(\mathrm{MF}(\mathscr{X}(d)_U, f_U)\right)_{\mathbb{Q}} \to &
H^{2\dim\X(d)-i-2a+1}(\mathscr{X}(d)_U, \varphi_{f_U}\mathbb{Q}[-1]) \\
&\notag \oplus H^{2\dim\X(d)-i-2a+2}(\mathscr{X}(d)_U, \varphi_{f_U}\mathbb{Q}[-1]).
\end{align}
For $\delta \in M(d)_{\mathbb{R}}^{W_d}$, we set
\begin{align*}
    \mathbb{S}_U:=\mathrm{MF}(\mathbb{M}(d; \delta)_U, f_U), \ 
    \mathbb{S}_U^{\rm{gr}} :=\mathrm{MF}^{\rm{gr}}(\mathbb{M}(d; \delta)_U, f_U). 
\end{align*}
We now define a cycle map from topological K-theory of quasi-BPS categories to BPS cohomology, which is the analogue of Theorem \ref{thm1}.

\begin{prop}\label{prop617}
For $\delta\in M(d)^{W_d}_\mathbb{R}$, the 
cycle map \eqref{cycleF} induces the morphism 
\begin{align}\label{cyclemapetale0}
\mathrm{c}\colon \mathrm{gr}_a K^{\mathrm{top}}_i\left(\mathbb{S}_U\right)\to &H^{\dim \mathscr{X}(d)-i-2a+1}(X(d)_U, \mathcal{BPS}_{d, \delta, U}) \\
\notag&\oplus 
H^{\dim \mathscr{X}(d)-i-2a+2}(X(d)_U, \mathcal{BPS}_{d, \delta, U}).
\end{align}
\end{prop}

\begin{proof}
    The same argument used in the proof of Theorem \ref{thm2} applies here. The $\lambda$-widths (see \eqref{def:width}) of the category $\mathbb{M}(d; \delta)_U$ are equal to the $\lambda$-widths of the category $\mathbb{M}(d; \delta)$ for all cocharacters $\lambda$. 
    The analogue of Proposition \ref{prop06} then holds for
    $\mathbb{S}_U$. 
    \end{proof}

We next prove the analogue of Theorem \ref{thm1}.

\begin{prop}\label{propthm1plus}
    The cycle map \eqref{cyclemapetale0} is an isomorphism over $\mathbb{Q}$.
\end{prop}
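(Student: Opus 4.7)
The plan is to mirror the proof of Theorem~\ref{thm1}, with the sheaf-theoretic statement Theorem~\ref{thm:Ksheaf2} as input and étale base change as the main extra ingredient. First, I would establish injectivity of (\ref{cyclemapetale0}) over $\mathbb{Q}$: by Corollary~\ref{prop615} the subcategory $\mathbb{M}(d;\delta)_U$ is admissible in $D^b(\mathscr{X}(d)_U)_v$, so after approximating $\mathscr{X}(d)_U$ by smooth varieties following Subsection~\ref{subsub25} fiberwise over $U$, Proposition~\ref{cherninj} applies and shows that the Chern character on $\mathbb{S}_U=\mathrm{MF}(\mathbb{M}(d;\delta)_U, f_U)$ is injective over $\mathbb{Q}$. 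It therefore suffices to match dimensions.

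Next I would promote the equality to the sheaf-theoretic level. Since $e\colon X(d)_U\to X(d)$ is étale and $\mathbb{M}_U$ is classically generated by $e^*\mathbb{M}(d;\delta)$ (and similarly $\mathbb{S}_U$ by the Koszul-equivalence from Corollary~\ref{prop6155}), the sheaf-of-spectra construction of Moulinos in Subsection~\ref{subsec:reltopKth} is compatible with $e^*$, giving
\begin{align*}
\mathcal{K}_{X(d)_U}^{\mathrm{top}}(\mathbb{S}_U)_{\mathbb{Q}}
\cong e^{\ast}\mathcal{K}_{X(d)}^{\mathrm{top}}(\mathbb{S}(d;\delta))_{\mathbb{Q}}.
\end{align*}
Combining this with Theorem~\ref{thm:Ksheaf2} and the étale base change for the (monodromy invariant) BPS sheaf yields
\begin{align*}
\mathcal{K}_{X(d)_U}^{\mathrm{top}}(\mathbb{S}_U)_{\mathbb{Q}}
\cong \mathcal{BPS}_{d,\delta,U}^{\mathrm{inv}}[\beta^{\pm 1}][-\dim\mathscr{X}(d)].
\end{align*}

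The final step is to identify $\mathcal{BPS}_{d,\delta,U}^{\mathrm{inv}}$ with the two summands in the target of (\ref{cyclemapetale0}). On the étale cover, the function $f_U$ is a fiberwise linear section of the vector bundle $\eta_U\colon \mathscr{X}(d)_U\to \mathscr{Y}(d)_U$ coming from the tripled quiver structure, so the dimensional reduction splitting (\ref{notdagger}) applies and the monodromy on $\varphi_{f_U}\mathrm{IC}$ becomes trivial. This gives a canonical isomorphism
\begin{align*}
\mathcal{BPS}_{d,\delta,U}^{\mathrm{inv}}\cong \mathcal{BPS}_{d,\delta,U}\oplus\mathcal{BPS}_{d,\delta,U}[-1],
\end{align*}
so taking the $i$-th cohomology of the previous display produces exactly the right-hand side of (\ref{cyclemapetale0}). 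The cycle map in Proposition~\ref{prop617} agrees with the one induced by the sheaf-theoretic identification by the same compatibility with Moulinos's Chern character used in Section~\ref{s4}, and the dimension matching together with the injectivity established in the first step yields the desired isomorphism over $\mathbb{Q}$.

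The hardest part will be the étale base change for $\mathcal{K}_{X(d)}^{\mathrm{top}}$ restricted to a semiorthogonal summand defined only up to classical generation: one must verify that the classically generated $\mathrm{Perf}(X(d)_U)$-linear category $\mathbb{S}_U$ really computes the pullback sheaf of spectra. I expect this to go through by combining the generation statement in Lemma~\ref{propSODetale} with the fact that both sides split according to the BBDG summands indexed by $S^d_\delta$, so that the comparison map is an equivalence on each piece, and hence globally.
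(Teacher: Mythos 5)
Your proof is correct and follows essentially the same route as the paper: reduce to a dimension count (injectivity of the cycle map being inherited from Proposition~\ref{prop617} and the injectivity of the Chern character on $\mathbb{S}_U$), pull back the sheaf-theoretic isomorphism of Theorem~\ref{thm:Ksheaf2} along the étale map $e$, and observe that the monodromy on $\varphi_{f_U}$ is trivial by dimensional reduction so that $\mathcal{BPS}^{\mathrm{inv}}_{d,\delta,U}$ splits into the two displayed summands. The paper's proof compresses all of this into three sentences; you are more explicit about the two points it leaves implicit, namely the injectivity of $\mathrm{ch}$ on $K^{\mathrm{top}}(\mathbb{S}_U)$ (which the paper isolates as a separate proposition at the end of the section, proved via a framed version of the semiorthogonal decomposition) and the étale base-change identification $\mathcal{K}^{\mathrm{top}}_{X(d)_U}(\mathbb{S}_U)_{\mathbb{Q}}\simeq e^*\mathcal{K}^{\mathrm{top}}_{X(d)}(\mathbb{S}(d;\delta))_{\mathbb{Q}}$, which you rightly flag as the load-bearing compatibility.
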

\begin{proof}
    Similarly to the proof of Theorem~\ref{thm1}, it is enough to show that both 
    sides in (\ref{cyclemapetale0}) have the same dimension. 
    By pulling back the isomorphism in Theorem~\ref{thm:Ksheaf2}, and noting 
    that the monodromy for $f_U$ is trivial, we obtain 
    \begin{align*}
        \mathcal{K}_{X(d)_U}(\mathbb{S}_U)_{\mathbb{Q}} \cong 
        \mathcal{BPS}_{d, \delta, U}[\beta^{\pm 1}] \oplus 
        \mathcal{BPS}_{d, \delta, U}[\beta^{\pm 1}][1]. 
    \end{align*}
    Therefore, the dimensions of both sides in (\ref{cyclemapetale0}) are equal. 
\end{proof}

We prove the analogue of Theorem \ref{thm1plus}.

\begin{prop}\label{prop98}
The cycle map induces an isomorphism 
\begin{equation}\label{cyclemapetalee}
\mathrm{c}\colon \mathrm{gr}_a K^{\mathrm{top}}_i\left(\mathbb{S}_U^{\rm{gr}}\right)_{\mathbb{Q}}
\xrightarrow{\sim} H^{\dim \mathscr{X}(d)-i-2a}(X(d)_U, \mathcal{BPS}_{d,\delta,U}).
\end{equation}
Further, there is an isomorphism:
\[\mathrm{gr}_a K^{\mathrm{top}}_i(\mathbb{T}_U)_{\mathbb{Q}}\xrightarrow{\sim} 
         H^{-2a-i}(P(d)_U, \mathcal{BPS}^p_{d,\delta,U})\]
\end{prop}

\begin{proof}
    The isomorphism \eqref{cyclemapetalee} follows from Corollary \ref{prop53}
    and Proposition~\ref{propthm1plus}.  The last isomorphism follows from \eqref{cyclemapetalee} and the compatibility between dimensional reduction and the Koszul equivalence from Corollary \ref{prop53}. 
\end{proof}

\begin{proof}[Proof of Theorem \ref{prop614}]
    The first three properties hold by Corollary \ref{prop6155}. The fourth property follows from Proposition \ref{prop98}.
    The statement for reduced stacks follows similarly. The isomorphism \eqref{lprimec} also follows directly from Proposition \ref{propo36}.
\end{proof}

We also note the following analogue of Corollary \ref{cor615bis}.

\begin{prop}
    The following Chern character map 
    is injective over $\mathbb{Q}$ \[\mathrm{ch}\colon K^{\mathrm{top}}_i(\mathbb{T}_U)\hookrightarrow G^{\mathrm{top}}_i(\mathscr{P}(d)_U)\to 
    \widetilde{H}_i^{\rm{BM}}(\mathscr{P}(d)_U).\] 
\end{prop}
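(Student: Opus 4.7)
The plan is to adapt the proof of Corollary \ref{cor615bis} verbatim, substituting Corollary \ref{prop615} for Theorem \ref{theorem266} at the point where admissibility is invoked. The three main ingredients will be: (i) the Koszul equivalence \eqref{koszuletale} together with the forget-the-grading embedding of Propositions \ref{prop522} and \ref{prop:forg}, which reduces the question from $\mathbb{T}_U \subset D^b(\mathscr{P}(d)_U)$ to the ungraded category $\mathbb{S}_U \subset \mathrm{MF}(\mathscr{X}(d)_U, f_U)$; (ii) the semiorthogonal decomposition \eqref{sod:U} of Corollary \ref{prop615}, showing that $\mathbb{M}_U$ is admissible in $D^b(\mathscr{X}(d)_U)_v$; and (iii) Proposition \ref{cherninj}, which converts the existence of a smooth variety $Y$ with an admissible pullback of $\mathbb{S}_U$ into injectivity of the Chern character on $\mathbb{S}_U$.

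First I would fix an even $\alpha$ large enough that $\mathscr{X}^{\alpha f}(d)^{\mathrm{ss}}$ has no strictly semistable points and is therefore a smooth quasi-projective variety. Pulling back along the étale map $X(d)_U \to X(d)$ produces a smooth quasi-projective variety $\mathscr{X}^{\alpha f}(d)^{\mathrm{ss}}_U$ together with a natural map $r \colon \mathscr{X}^{\alpha f}(d)^{\mathrm{ss}}_U \to \mathscr{X}(d)_U$. The étale base-change argument underlying Corollary \ref{prop615} (together with the pullback version of Proposition \ref{prop226} for the framed stack) then shows that the $X(d)_U$-linear decomposition of $D^b(\mathscr{X}(d)_U)_v$ extends to a semiorthogonal decomposition $D^b(\mathscr{X}^{\alpha f}(d)^{\mathrm{ss}}_U) = \langle \mathbb{B}_U', \mathbb{M}_U\rangle$, so that $r^*$ sends $\mathbb{S}_U = \mathrm{MF}(\mathbb{M}_U, f_U)$ fully faithfully onto an admissible subcategory of $\mathrm{MF}(\mathscr{X}^{\alpha f}(d)^{\mathrm{ss}}_U, r^*f_U)$.

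Next, I would apply Proposition \ref{cherninj} with $Y = \mathscr{X}^{\alpha f}(d)^{\mathrm{ss}}_U$ to conclude that the Chern character $\mathrm{ch}\colon K^{\mathrm{top}}_i(\mathbb{S}_U) \to \widetilde{H}^i(\mathscr{X}(d)_U, \varphi^{\mathrm{inv}}_{f_U})$ is injective over $\mathbb{Q}$. Via Proposition \ref{prop522} and the embedding of Proposition \ref{prop:forg}, this injectivity transfers to the graded category $\mathbb{S}^{\mathrm{gr}}_U$; via the Koszul equivalence \eqref{koszuletale} it then transfers to $\mathbb{T}_U$. Combining this with the commutative diagrams relating the Chern character of $\mathbb{S}^{\mathrm{gr}}_U$ with that of $\mathscr{P}(d)_U$ via dimensional reduction (Proposition \ref{prop53}, applied to $\mathscr{P}(d)_U$), and using that the target of the Chern character on $K^{\mathrm{top}}_i(\mathbb{T}_U)$ corresponds under \eqref{dimred2} to $\widetilde{H}^{\mathrm{BM}}_i(\mathscr{P}(d)_U)$, yields the claim.

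The main obstacle I expect is verifying the étale analogue of Proposition \ref{prop226}, i.e., that the $X(d)_U$-linear semiorthogonal decomposition of Corollary \ref{prop615} genuinely extends from $D^b(\mathscr{X}(d)_U)_v$ to $D^b(\mathscr{X}^{\alpha f}(d)^{\mathrm{ss}}_U)$. The key point is that GIT commutes with étale base change, so $\mathscr{X}^{\alpha f}(d)^{\mathrm{ss}}_U$ is both the étale pullback of $\mathscr{X}^{\alpha f}(d)^{\mathrm{ss}}$ along $X(d)_U \to X(d)$ and the natural framed variety attached to $\mathscr{X}(d)_U$; combining this with Lemma \ref{propSODetale} (and the fact that the generators of $\mathbb{M}_U$ come from the generators of $\mathbb{M}(d; \delta)$ by étale pullback) should propagate the decomposition through the étale cover. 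Once this admissibility is in hand, the remainder of the argument is a straightforward transcription of the proof of Corollary \ref{cor615bis}.
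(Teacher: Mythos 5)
Your proposal is correct and follows essentially the same route as the paper: the paper's proof likewise combines Proposition \ref{cherninj} applied to the framed variety, a framed version of the semiorthogonal decomposition \eqref{sod:U} over the étale cover (obtained, as you suggest, by étale base change as in Lemma \ref{propSODetale}), and the Koszul equivalence with the graded/ungraded compatibilities, exactly as in Corollary \ref{cor615bis}. The only cosmetic remark is that no largeness or parity condition on $\alpha$ is needed for the framed GIT quotient to be a smooth quasi-projective variety; that holds for any $\alpha\geq 1$ since framed semistability already coincides with stability.
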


\begin{proof}
The proof is analogous to that of Corollary \ref{cor615bis}.
    The claim follows from Proposition \ref{cherninj}, a framed version of 
    the semiorthogonal decomposition (\ref{sod:U})
    \begin{align*}
        D^b(\X^{\alpha f}(d)^{\rm{ss}}_U)=\langle \mathbb{B}_U', \mathbb{M}_U'\rangle
    \end{align*}
    and the Koszul equivalence. 
\end{proof}

\section{Some auxiliary results}\label{sec:aux}
In this section, we prove some postponed results. 

\subsection{Proof of Proposition~\ref{prop226}}\label{subsec:sod2}
\begin{proof}
    Let 
    \begin{align*}
        \X^{\alpha f}(d)=\mathcal{S}_1 \sqcup \cdots \sqcup \mathcal{S}_N \sqcup 
        \X^{\alpha f}(d)^{\rm{ss}}
    \end{align*}
    be the Kempf-Ness stratification with center $\mathcal{Z}_i \subset \mathcal{S}_i$
    and associated cocharacters $\lambda_i \colon \mathbb{C}^{\ast} \to T(d)$, 
    see~\cite[Section~2]{halp} for the above notions. 
    Let $\mathbb{G}^{\alpha}(d; \delta)$
    be the subcategory of $D^b(\X^{\alpha f}(d))$ consisting 
    of objects $\mathcal{E}$ such that $\mathcal{E}|_{\mathcal{Z}_i}$ has 
    $\lambda_i$-weights contained in the interval 
    \begin{align*}
        \left[-\frac{1}{2}n_{\lambda_i}^{\alpha}, \frac{1}{2}n_{\lambda_i}^{\alpha} \right]
        +\langle \lambda_i, \delta \rangle,
    \end{align*}
    where $n_{\lambda}^{\alpha}$ is defined by 
    \begin{align*}
        n_{\lambda}^{\alpha}:=\langle \lambda, (R^{\alpha f}(d)^{\vee})^{\lambda>0}
        -(\mathfrak{g}(d)^{\vee})^{\lambda>0} \rangle, 
    \end{align*} where $R^{\alpha f}(d)=R(d) \oplus V^{\alpha}(d)$ as in (\ref{Rf:framed}). 
    By the window theorem~\cite{halp, MR3895631}, the following 
    composition is an equivalence 
    \begin{align*}
        \mathbb{G}^{\alpha}(d; \delta) \subset D^b(\X^{\alpha f}(d)) \to D^b(\X^{\alpha f}(d)^{\rm{ss}}). 
    \end{align*}
    By~\cite[Lemma~5.1.9]{T}, the cocharacter $\lambda_i$ acts on 
    $V^{\alpha}(d)$ with strictly negative weights. 
    Therefore $n_{\lambda_i}^{\alpha}>n_{\lambda_i}$, so the pull-back 
    along $\pi_{\alpha f} \colon \X^{\alpha f}(d) \to \X(d)$ 
    restricts to the functor 
    \begin{align*}
    \pi_{\alpha f}^{\ast} \colon \mathbb{M}(d; \delta) \to \mathbb{G}^{\alpha}(d; \delta). 
    \end{align*}
    The above functor admits a right adjoint given by 
    \begin{align*}
    \Phi \colon 
        \mathbb{G}^{\alpha}(d; \delta)
        \xrightarrow{p_v \circ \pi_{\alpha f\ast}}
        D^b(\X(d))_v \stackrel{i^R}{\to} \mathbb{M}(d; \delta).
    \end{align*}
    The first functor is the composition of 
    $\pi_{\alpha f\ast}$ with the projection 
    onto the weight $v$-part, and $i^R$ is the 
    projection with respect to the semiorthogonal decomposition (\ref{SODtheorem266Pd}). 
    Then we have $\Phi \circ \pi_{\alpha f}^{\ast}=\id$, thus the functor 
    \begin{align*}
        \pi_{\alpha f}^{\ast} \colon \mathbb{M}(d; \delta) \to D^b(\X^{\alpha f}(d)^{\rm{ss}})
    \end{align*}
    is fully-faithful with right adjoint. 
\end{proof}

%

\subsection{Proof of Lemma~\ref{lem:rho}}\label{subsec:lem:rho}
\begin{proof} 
   Let $\mathrm{Emb}$ be the category whose objects consist of quasi-projective varieties $X$ with 
    morphisms $X \leadsto Z$ given by diagrams
    \begin{align}\label{dia:XZ}
X \stackrel{\pi}{\leftarrow} V \stackrel{i}{\hookrightarrow} Z
\end{align}
where $V\to X$ can be factored as a composition of torsors for locally free sheaves, and $V\hookrightarrow Z$ is a closed immersion. The composition is given by pull-back of closed subschemes. 
Then it is proved in~\cite{Thoma1, ThomasonAS} that given $X\leadsto Z_1$ and $X\leadsto Z_2$, there is 
$Z_1 \leadsto \mathbb{A}^n$ and $Z_2 \leadsto \mathbb{A}^n$ such that the two 
compositions $X\leadsto \mathbb{A}^n$ agree. 
We set 
\begin{align*}
    E^1(X)=\mathcal{K}_X^{\rm{top}}(D^b(X)) \in \mathrm{Sh}_{\mathrm{Sp}}(X^{\rm{an}}), \ 
    E^2(X)=\underline{KU}_{X, c}^{\vee}. 
\end{align*}
For a diagram (\ref{dia:XZ}) and for $k=1, 2$ we set 
\begin{align*}
    E^k_{V}(Z):=\mathrm{fib}(E^k(Z) \to E^k(Z \setminus V)) \in \mathrm{Sh}_{\mathrm{Sp}}(V^{\rm{an}})
\end{align*}
and define 
\begin{align*}
    E^k(X\leadsto Z):=\pi_{*}i^{-1}E_{V}^k(Z) \in \mathrm{Sh}_{\mathrm{Sp}}(X^{\rm{an}}).
\end{align*}
Below we omit $i^{-1}$ since $E_V(Z)$ is supported on $V$. 
As in the diagram (\ref{dia:Ktop:KU}), we have an equivalence 
\begin{align*}
    \eta_{X\leadsto Z} \colon E^1(X\leadsto Z) \stackrel{\sim}{\to} E^2(X\leadsto Z).
\end{align*}

By the devissage and Lemma~\ref{lem:Kdecompose}, we have an equivalence 
\begin{align*}\rho_{X\leadsto Z}\colon E^k(X) \stackrel{\sim}{\to} E^k(X\leadsto Z).
\end{align*}
For $Z\leadsto Z'$, we have the commutative diagram 
\begin{align*}
    \xymatrix{
    & & \ar[rrd]^-{\pi''} \ar[rd] V'' \ar[d]\ar[ld] \ar[lld]_-{i''} & & \\
X & \ar[l]^-{\pi} V \inclusion_-{i} & Z & V' \ar[l]^-{\pi'} \inclusion_-{i'} & Z'.
    }
\end{align*}
Here $V''=V\times_Z V'$, and the diagram
$X\leftarrow V'' \hookrightarrow Z'$. 
corresponds to 
the composition of $X\leadsto Z$ and $Z\leadsto Z'$. 
Then applying devissage and Lemma~\ref{lem:Kdecompose}, the above diagram 
induces an equivalence
\begin{align*}
    \rho_{Z\leadsto Z'} \colon \pi_{*}E_{V}^k(Z) \stackrel{\sim}{\to} \pi_{*}''E_{V''}^k(V')
    \stackrel{\sim}{\to} \pi''_{*}E_{V''}^k(Z').
\end{align*}
It is straightforward to check that 
\begin{align*}
    \rho_{X\leadsto Z'}\simeq \rho_{Z\leadsto Z'} \circ \phi_{X\leadsto Z} \colon 
    E^k(X) \stackrel{\sim}{\to} E^k(X\leadsto Z'). 
\end{align*}

Now for an embedding $M\subset A$ for a smooth $A$, we have 
$\rho_{M\subset A}=\rho_{M\leadsto A}$ where $M\leadsto A$ 
is a morphism in $\mathrm{Emb}$ given by $M=M \hookrightarrow A$. 
For an another embedding $M\subset A'$ for smooth $A'$, there are 
$A\leadsto \mathbb{A}^n$ and $A'\leadsto \mathbb{A}^n$ such that 
the compositions $M\leadsto \mathbb{A}^n$ agree. 
Then both of $\rho_{M\subset A}$ and $\rho_{M\subset A'}$ fit into 
a diagram 
\begin{align*}
    \xymatrix{
E^1(X) \ar[r]^-{\sim} \ar[d]_-{\rho_{M\subset A}}^-{\rho_{M\subset A'}} & 
E^1(X\leadsto \mathbb{A}^n) \ar[d]^-{\eta_{X\leadsto \mathbb{A}^n}}_-{\sim} \\
E^2(X) \ar[r]^-{\sim} & E^2(X\leadsto \mathbb{A}^n)
    }
\end{align*}
which is commutative in $\mathrm{Ho}(\mathrm{Sh}_{\mathrm{Sp}}(M^{\rm{an}}))$. 
Therefore $\rho_{M\subset A}$ is unique in $\mathrm{Ho}(\mathrm{Sh}_{\mathrm{Sp}}(M^{\rm{an}}))$.  
\end{proof}

We have used the following lemma: 
\begin{lemma}\label{lem:Kdecompose}
For a torsor $\pi \colon V \to X$ of a locally free sheaf $\mathcal{V}$ on $X$, 
we have a natural equivalence $E^k(X) \stackrel{\sim}{\to} \pi_{*}E^k(V)$.
\end{lemma}
\begin{proof}
    The proof is essentially same as in~\cite[Lemma~2.2, Lemma~2.6]{HLP}. 
    A torsor $V$ corresponds to an exact sequence 
    \begin{align*}
        0\to \mathcal{V} \to \mathcal{V}' \to \mathcal{O}_X \to 0
    \end{align*}
    such that $V=\mathbb{P}(\mathcal{V}') \setminus \mathbb{P}(\mathcal{V})$.
Let $i \colon \mathbb{P}(\mathcal{V}) \hookrightarrow \mathbb{P}(\mathcal{V}')$ be the 
closed immersion and $j \colon V
\subset \mathbb{P}(\mathcal{V}')$ be the open immersion. 
    Let $r=\mathrm{rank}(\mathcal{V})$ and $h\colon \mathbb{P}(\mathcal{V}') \to X$ the projection. 
    We have the $X$-linear semiorthogonal decomposition 
    \begin{align*}
        D^b(\mathbb{P}(\mathcal{V}'))=\langle h^* D^b(X), h^* D^b(X)\otimes \mathcal{O}(1), \ldots, h^* D^b(X) \otimes \mathcal{O}(r) \rangle
    \end{align*}
which gives an equivalence by Lemma~\ref{lem:sod:Ktheory}
\begin{align}\label{decom:V1}
\bigoplus_{i=0}^r \mathcal{K}_X^{\rm{top}}(D^b(X)) \stackrel{\sim}{\to}
\pi_{*}\mathcal{K}^{\rm{top}}_{\mathbb{P}(\mathcal{V}')}(D^b(\mathbb{P}(\mathcal{V}'))).
\end{align}
   Similarly we have an equivalence 
   \begin{align}\label{decom:V2}
\bigoplus_{i=0}^{r-1} \mathcal{K}_X^{\rm{top}}(D^b(X)) \stackrel{\sim}{\to}
h_{*}\mathcal{K}^{\rm{top}}_{\mathbb{P}(\mathcal{V})}(D^b(\mathbb{P}(\mathcal{V}))).
   \end{align}
By the devissage and Lemma~\ref{lem:openimm}, there is a fiber sequence
\begin{align*}
    i_{*}\mathcal{K}^{\rm{top}}_{\mathbb{P}(\mathcal{V})}(D^b(\mathbb{P}(\mathcal{V})))
    \to \mathcal{K}^{\rm{top}}_{\mathbb{P}(\mathcal{V}')}(D^b(\mathbb{P}(\mathcal{V}')))
    \to j_{*}\mathcal{K}^{\rm{top}}_{V}(D^b(V)). 
\end{align*}
By push-forward to $X$, we obtain
\begin{align*}
    h_{*}\mathcal{K}^{\rm{top}}_{\mathbb{P}(\mathcal{V})}(D^b(\mathbb{P}(\mathcal{V})))
    \to h_{*}\mathcal{K}^{\rm{top}}_{\mathbb{P}(\mathcal{V}')}(D^b(\mathbb{P}(\mathcal{V}')))
\to \pi_{*}\mathcal{K}^{\rm{top}}_V(D^b(V)). 
\end{align*}
Under the decompositions (\ref{decom:V1}), (\ref{decom:V2}), the first arrow is a split 
injection so that its cofiber is equivalent to $\mathcal{K}_X^{\rm{top}}(D^b(X))$.
Therefore the lemma for $E^1$ follows. The proof for $E^2$ is similarly proved as in 
the proof of~\cite[Lemma~2.6]{HLP}.
\end{proof}

We have also used the following lemma: 

\begin{lemma}\label{lem:sod:Ktheory}
Let $\mathscr{D}=\langle \mathcal{C}_1, \mathcal{C}_2\rangle$
be a $M$-linear semiorthogonal decomposition. 
Then we have 
\begin{align}\label{splitting}
    \mathcal{K}_M^{\rm{top}}(\mathscr{D}) =\mathcal{K}_M^{\rm{top}}(\mathcal{C}_1) \oplus \mathcal{K}_M^{\rm{top}}(\mathcal{C}_2).
\end{align}  
\end{lemma}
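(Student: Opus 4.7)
The plan is to deduce the splitting \eqref{splitting} from two standard ingredients: that $\mathcal{K}_M^{\rm{top}}$ is a localizing invariant, and that semiorthogonal decompositions of dg-categories are split at the level of any localizing invariant. Neither ingredient is proved in the excerpt, so I will indicate how each follows from results cited earlier.

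First, I would argue that $\mathcal{K}_M^{\rm{top}}(-)$ sends an exact sequence of $\mathrm{Perf}(M)$-linear dg-categories $\mathcal{A}\hookrightarrow\mathcal{B}\twoheadrightarrow\mathcal{C}$ to a fiber sequence of sheaves of spectra on $M^{\rm{an}}$. In the absolute case ($M=\mathrm{Spec}\,\mathbb{C}$) this is Theorem 1.1(c) of Blanc cited in Subsection~\ref{subsec:topK:review}. In the relative setting, by the construction of \cite{Moulinos} the functor $\mathcal{K}_M^{\rm{top}}$ is obtained by hypersheafifying (and Bott-completing) the presheaf assigning to each étale open $U\to M$ the absolute topological K-theory of $\mathscr{D}\otimes_{\mathrm{Perf}(M)}\mathrm{Perf}(U)$; since base change along $\mathrm{Perf}(M)\to\mathrm{Perf}(U)$ preserves exact sequences of $\mathrm{Perf}(M)$-linear dg-categories, and both hypersheafification and Bott inversion are exact, the localizing property is inherited from Blanc's.

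Second, a $\mathrm{Perf}(M)$-linear semiorthogonal decomposition $\mathscr{D}=\langle\mathcal{C}_1,\mathcal{C}_2\rangle$ produces an exact sequence of $\mathrm{Perf}(M)$-linear dg-categories $\mathcal{C}_1\hookrightarrow\mathscr{D}\twoheadrightarrow\mathcal{C}_2$, where the right-hand arrow is the $\mathrm{Perf}(M)$-linear Verdier quotient. Because $\mathcal{C}_2$ is (left/right) admissible in $\mathscr{D}$, the quotient functor admits an $\mathrm{Perf}(M)$-linear section given by the composition of the adjoint projection onto $\mathcal{C}_2$ with the inclusion $\mathcal{C}_2\hookrightarrow\mathscr{D}$.

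Putting these together, applying $\mathcal{K}_M^{\rm{top}}$ to the split exact sequence yields a split fiber sequence of sheaves of spectra
\[
\mathcal{K}_M^{\rm{top}}(\mathcal{C}_1)\to \mathcal{K}_M^{\rm{top}}(\mathscr{D})\to \mathcal{K}_M^{\rm{top}}(\mathcal{C}_2),
\]
and a split fiber sequence in a stable $\infty$-category is the same as a direct sum, giving \eqref{splitting}. The only real obstacle is confirming the relative localizing property for $\mathcal{K}_M^{\rm{top}}$; modulo a precise reference to \cite{Moulinos}, the argument is otherwise formal, and in the absolute case $M=\mathrm{Spec}\,\mathbb{C}$ it follows directly from Blanc's Theorem 1.1(c), after which the sheaf-level statement is obtained by applying the same argument after base change to each étale open of $M$.
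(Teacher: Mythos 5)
Your argument is correct, but it is not the route the paper takes. The paper's proof is a one-liner via additivity: for $i\in\{1,2\}$ it takes the $\mathrm{Perf}(M)$-linear endofunctors $F_i\colon\mathscr{D}\twoheadrightarrow\mathcal{C}_i\hookrightarrow\mathscr{D}$ (projection followed by inclusion) and observes that the induced maps $F_{i\ast}$ on $\mathcal{K}_M^{\rm{top}}(\mathscr{D})$ are idempotents realizing the splitting \eqref{splitting}; no localization sequence is invoked. You instead pass through the exact sequence $\mathcal{C}_1\hookrightarrow\mathscr{D}\twoheadrightarrow\mathcal{C}_2$ together with the $\mathrm{Perf}(M)$-linear section of the quotient coming from admissibility, and then use that $\mathcal{K}_M^{\rm{top}}$ is a localizing invariant to get a split fiber sequence of sheaves of spectra. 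This is valid: the localizing property does hold in the relative setting and is in fact used elsewhere in the paper (e.g.\ in the proofs of Lemma~\ref{lem:KBM} and Lemma~\ref{lem:sheaf:K}), although your description of Moulinos's construction is not quite the actual one — he builds $\mathcal{K}_M^{\rm{top}}$ as the Betti realization of the motivic spectrum $KGL_M(\mathscr{D})$ attached to (homotopy) K-theory, rather than by hypersheafifying Blanc's absolute invariant over étale opens; either way the functor is localizing because homotopy K-theory is and Betti realization/sheafification are exact, so your appeal to \cite{Moulinos} is legitimate. The trade-off: the paper's idempotent argument needs only functoriality and additivity of $\mathcal{K}_M^{\rm{top}}$ on $\mathrm{Perf}(M)$-linear functors, so it is lighter on input, while your localization-sequence argument uses a stronger property but generalizes verbatim to any localizing invariant and makes the compatibility of the splitting with quotient functors explicit.
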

\begin{proof}
    For $i\in\{1,2\}$, let $F_i$ be the composition 
    \begin{align}\notag
        F_i \colon \mathscr{D} \twoheadrightarrow \mathcal{C}_i \hookrightarrow \mathscr{D},
        \end{align}
        where the first functor is the projection onto the semiorthogonal 
        summand and the second functor is the inclusion. 
        Since $F_i$ is $M$-linear, the functors $F_i$ induce 
        the morphisms $F_{i\ast}$ on $\mathcal{K}_M^{\rm{top}}(\mathscr{D})$ giving 
        idempotents for a splitting (\ref{splitting}). 
\end{proof}

\subsection{Action of exterior algebra on the K-theory of matrix factorizations}\label{subsec:exterior}
In this subsection, we prove Proposition \ref{prop:forg}.
We denote by $\mathrm{pt}:=\mathrm{Spec}\,\mathbb{C}$.
The following computation follows as in Lemma~\ref{prop4zero}. 

\begin{lemma}\label{lemmalambda}
As a $\mathbb{Z}/2$-algebra, we have
\begin{align*}
    K_{\ast}^{\mathrm{top}}(\mathrm{MF}(\mathrm{pt}, 0))
    =\Lambda :=\mathbb{Z}[\epsilon]
\end{align*}
where $\deg \epsilon=-1$. 
\end{lemma}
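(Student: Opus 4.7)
The plan is to read off $K_\ast^{\mathrm{top}}(\mathrm{MF}(\mathrm{pt}, 0))$ from the Verdier localization sequence of dg-categories
\begin{equation*}
\mathrm{Perf}(\mathrm{pt}[\epsilon]) \to D^b(\mathrm{pt}[\epsilon]) \to \mathrm{MF}(\mathrm{pt}, 0),
\end{equation*}
coming from Theorem~\ref{thm:orlov} combined with the exact triangle~\eqref{exact:dgMF}; here $\mathrm{pt}[\epsilon]$ denotes the derived zero locus of the zero function on $\mathrm{pt}$, with $\epsilon$ in homological degree one. Once the underlying $\mathbb{Z}/2$-graded abelian group is identified, I would pin down the multiplicative structure using the Thom--Sebastiani equivalence $\mathrm{MF}(\mathrm{pt}, 0) \otimes \mathrm{MF}(\mathrm{pt}, 0) \simeq \mathrm{MF}(\mathrm{pt}, 0)$.

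First I would apply Lemma~\ref{prop4zero}: the projection $\pi\colon \mathrm{pt}[\epsilon] \to \mathrm{pt}$ and the closed immersion $l\colon \mathrm{pt} \to \mathrm{pt}[\epsilon]$ induce equivalences $\pi^\ast\colon K^{\mathrm{top}}(\mathrm{pt}) \xrightarrow{\sim} K^{\mathrm{top}}(\mathrm{Perf}(\mathrm{pt}[\epsilon]))$ and $l_\ast\colon G^{\mathrm{top}}(\mathrm{pt}) \xrightarrow{\sim} G^{\mathrm{top}}(\mathrm{pt}[\epsilon])$, while the natural comparison $\varepsilon'\colon K^{\mathrm{top}}(\mathrm{Perf}(\mathrm{pt}[\epsilon])) \to G^{\mathrm{top}}(\mathrm{pt}[\epsilon])$ vanishes. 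The induced long exact sequence on homotopy groups therefore collapses to short exact sequences
\begin{equation*}
0 \to K^{\mathrm{top}}_i(\mathrm{pt}) \to K^{\mathrm{top}}_i(\mathrm{MF}(\mathrm{pt}, 0)) \to K^{\mathrm{top}}_{i-1}(\mathrm{pt}) \to 0,
\end{equation*}
yielding $K^{\mathrm{top}}_0(\mathrm{MF}(\mathrm{pt}, 0)) = \mathbb{Z} \cdot 1$ and $K^{\mathrm{top}}_1(\mathrm{MF}(\mathrm{pt}, 0)) = \mathbb{Z} \cdot \epsilon$, with the degree-one generator $\epsilon$ characterized by $\partial(\epsilon) = [\mathcal{O}_{\mathrm{pt}[\epsilon]}]$ under the connecting homomorphism.

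It then remains to compute the product $\epsilon^2 \in K_0 = \mathbb{Z}$ and to conclude that $\epsilon^2 = 0$, so that the $\mathbb{Z}/2$-graded ring is the exterior algebra $\Lambda = \mathbb{Z}[\epsilon]$. I plan to represent $\epsilon$ by the image of the residue field $k = \mathcal{O}_{\mathrm{pt}}$ in $D_{\mathrm{sg}}(\mathrm{pt}[\epsilon])$, to compute the derived tensor product $k \otimes^L_{\mathbb{C}[\epsilon]} k \simeq k \oplus k[1]$, and to invoke the triangulated Grothendieck-group relation $[k[1]] = -[k]$ to force $\epsilon^2 = 0$. The main obstacle will be making this last step rigorous: one must carefully match the Thom--Sebastiani tensor product with the internal tensor in $D_{\mathrm{sg}}(\mathrm{pt}[\epsilon])$, track the class $\epsilon$ through the connecting homomorphism, and control the $\mathbb{Z}/2$-graded sign conventions so that the resulting algebra is the exterior algebra $\mathbb{Z}[\epsilon]/\epsilon^2$ rather than, for instance, the group ring $\mathbb{Z}[\mathbb{Z}/2]$.
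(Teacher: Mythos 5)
Your additive computation reproduces the paper's argument essentially verbatim: Orlov's equivalence $\mathrm{MF}(\mathrm{pt},0)\simeq D^b(T)/\mathrm{Perf}(T)$ for $T=\Spec\Lambda_\mathbb{C}=\mathrm{pt}[\epsilon]$, the resulting localization sequence, and the observation (Lemma~\ref{prop4zero}) that $K_i^{\rm{top}}(T)\to G_i^{\rm{top}}(T)$ vanishes, so the long exact sequence splits into short exact sequences that give $K_0\cong K_1\cong\mathbb{Z}$. This part is correct and identical to what the paper does.

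Where you diverge, and where your plan has a genuine gap, is the verification that $\epsilon^2=0$. Two distinct problems arise. First, the intermediate computation is incorrect: over $\mathbb{C}[\epsilon]$ (the exterior algebra, $\epsilon$ in homological degree $1$) the residue field $k$ has infinite projective dimension, with free resolution $\cdots\xrightarrow{\,\epsilon\,}\mathbb{C}[\epsilon]\xrightarrow{\,\epsilon\,}\mathbb{C}[\epsilon]\to k$, so $\mathrm{Tor}^{\mathbb{C}[\epsilon]}_i(k,k)\cong k$ for \emph{all} $i\geq 0$; the derived tensor $k\otimes^L_{\mathbb{C}[\epsilon]}k$ is not $k\oplus k[1]$. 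Second, and more fundamentally, the ring structure on $K^{\rm{top}}_\ast(\mathrm{MF}(\mathrm{pt},0))$ comes from the Thom--Sebastiani external product $\mathrm{MF}(\mathrm{pt},0)\otimes_{\mathbb{C}(\!(\beta)\!)}\mathrm{MF}(\mathrm{pt},0)\xrightarrow{\sim}\mathrm{MF}(\mathrm{pt}\times\mathrm{pt},0)$, which is \emph{not} the internal tensor product on $D_{\mathrm{sg}}(\mathrm{pt}[\epsilon])$, so even a corrected computation of $k\otimes^L k$ would not directly compute $\epsilon^2$. You flag this matching as the ``main obstacle,'' and indeed it is not a step you should expect to push through.

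The simpler route, which I believe is what the paper takes for granted, is to note that the Thom--Sebastiani product makes $\mathrm{MF}(\mathrm{pt},0)$ symmetric monoidal, so $K^{\rm{top}}_\ast(\mathrm{MF}(\mathrm{pt},0))$ is a graded-commutative $\mathbb{Z}/2$-graded ring. Graded-commutativity forces $\epsilon\cdot\epsilon=-\epsilon\cdot\epsilon$ for the odd class $\epsilon$, i.e.\ $2\epsilon^2=0$; since you have already shown $K_0\cong\mathbb{Z}$ is torsion-free, this gives $\epsilon^2=0$ with no further computation. Replace the tensor-product argument with this observation and your proof is complete.
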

\begin{proof}
    Let $T:=\Spec \left(\Lambda_{\mathbb{C}}\right)$. 
    Since $\mathrm{MF}(\mathrm{pt}, 0)$ is equivalent to $D^b(T)/\mathrm{Perf}(T)$
    by Theorem~\ref{thm:orlov}, 
    there are exact sequences
    \begin{align*}
      \cdots \to
      K_i^{\mathrm{top}}(T) \to G_i^{\mathrm{top}}(T) \to K_i^{\mathrm{top}}(\mathrm{MF}(\mathrm{pt}, 0)) \to K_{i+1}^{\mathrm{top}}(T) \to \cdots.  
    \end{align*}
    As $K_i^{\mathrm{top}}(T)=G_i^{\mathrm{top}}(T)=\mathbb{Z}$, it is enough to show that 
    $K_i^{\mathrm{top}}(T) \to G_i^{\mathrm{top}}(T)$ is zero, which follows as in Lemma~\ref{prop4zero}. 
\end{proof}

Note that, for any regular function on a smooth stack $h\colon \mathscr{Y}\to\mathbb{C}$, the category $\mathrm{MF}(\mathscr{Y}, h)$ is a module over $\mathrm{MF}(\mathrm{pt}, 0)$, 
  so $K_{\ast}^{\mathrm{top}}(\mathrm{MF}(\mathscr{Y}, h))$
  is a $\mathbb{Z}/2$-graded 
  $\Lambda$-module by Lemma \ref{lemmalambda}. 

\begin{prop}\label{Lambdastructure}
    Let $\X$ be a smooth stack. Then \[K^{\mathrm{top}}_{\ast}(\mathrm{MF}(\X, 0))\cong K^{\mathrm{top}}_{\ast}(\X)\otimes_\mathbb{Z}\Lambda\] as $\Lambda$-modules.
    Moreover, if $\mathbb{M}\subset D^b(\X)$ is an admissible subcategory of $D^b(\X)$, there is an isomorphism of $\Lambda$-modules: \[K^{\mathrm{top}}_{\ast}(\mathrm{MF}(\mathbb{M}, 0))\cong K^{\mathrm{top}}_{\ast}(\mathbb{M})\otimes_\mathbb{Z}\Lambda.\]
\end{prop}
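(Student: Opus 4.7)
The plan is to use Orlov's equivalence to reduce to a computation in the category of singularities, extract a short exact sequence in topological K-theory, and then upgrade it to an isomorphism of $\Lambda$-modules via the $K^{\mathrm{top}}_{\ast}(\X)$-linearity of the connecting map.

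Via Theorem \ref{thm:orlov}, I would first identify $\mathrm{MF}(\X,0)\simeq D_{\mathrm{sg}}(\X_0)$, where $\X_0=\X\times\Spec\mathbb{C}[\epsilon]$ is the derived zero locus of the zero function. The exact sequence of dg-categories $\mathrm{Perf}(\X_0)\to D^b(\X_0)\to\mathrm{MF}(\X,0)$ produces a long exact sequence in topological K-theory. Since $\X$ is smooth with $\X_0^{\mathrm{cl}}=\X$, the natural isomorphisms \eqref{l}, \eqref{l2} give $K^{\mathrm{top}}_{\ast}(\X_0)=G^{\mathrm{top}}_{\ast}(\X_0)=K^{\mathrm{top}}_{\ast}(\X)$, and by Lemma \ref{prop4zero} the natural map between them vanishes. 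The long exact sequence thus collapses into short exact sequences
\begin{align*}
0\to K^{\mathrm{top}}_i(\X)\xrightarrow{\iota}K^{\mathrm{top}}_i(\mathrm{MF}(\X,0))\xrightarrow{\partial}K^{\mathrm{top}}_{i-1}(\X)\to 0.
\end{align*}

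Let $p\colon\X\to\Spec\mathbb{C}$ be the structure map. The pullback functor $p^{\ast}\colon\mathrm{MF}(\Spec\mathbb{C},0)\to\mathrm{MF}(\X,0)$ realizes the $\Lambda$-module structure on $K^{\mathrm{top}}_{\ast}(\mathrm{MF}(\X,0))$ via multiplication with $p^{\ast}(\lambda)$. I would then define a $\Lambda$-linear map
\begin{align*}
\phi\colon K^{\mathrm{top}}_{\ast}(\X)\otimes_{\mathbb{Z}}\Lambda\to K^{\mathrm{top}}_{\ast}(\mathrm{MF}(\X,0)),\quad x\otimes\lambda\mapsto p^{\ast}(\lambda)\cdot\iota(x).
\end{align*}
Applying the short exact sequence above at $\X=\Spec\mathbb{C}$ and invoking Lemma \ref{lemmalambda} one computes $\partial_0(\epsilon)=1\in K^{\mathrm{top}}_0(\Spec\mathbb{C})$ (up to sign). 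Naturality of the sequence under $p$ then gives $\partial(p^{\ast}(\epsilon))=1\in K^{\mathrm{top}}_0(\X)$. Moreover, the localization sequence is a fiber sequence of $K^{\mathrm{top}}_{\ast}(\X)$-module spectra, so $\partial$ is $K^{\mathrm{top}}_{\ast}(\X)$-linear; combined with the fact that $\iota(x)$ is the image of the pullback of $x$ to $\X_0$, this yields $\partial(p^{\ast}(\epsilon)\cdot\iota(x))=x$ for any $x\in K^{\mathrm{top}}_{\ast}(\X)$. Hence $\phi$ fits into a morphism of short exact sequences with identity maps on the outer terms, and the five lemma implies $\phi$ is an isomorphism. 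For the admissible subcategory $\mathbb{M}\subset D^b(\X)$, the semiorthogonal decomposition $D^b(\X)=\langle\mathbb{M}^{\perp},\mathbb{M}\rangle$ (with $\mathrm{Perf}(\X)=D^b(\X)$ since $\X$ is smooth) pulls back along $\eta\colon\X_0\to\X$ to compatible semiorthogonal decompositions of $D^b(\X_0)$ and $\mathrm{Perf}(\X_0)$, hence of $\mathrm{MF}(\X,0)=D_{\mathrm{sg}}(\X_0)$. Restricting the argument above to the $\mathbb{M}$-summand yields the analogous isomorphism.

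The main obstacle I anticipate is justifying the identity $\partial(p^{\ast}(\epsilon)\cdot\iota(x))=x$ rigorously, which depends on a precise compatibility of the connecting map with the symmetric monoidal structure on matrix factorizations. This should follow formally from the fact that the localization sequence $\mathrm{Perf}(\X_0)\to D^b(\X_0)\to\mathrm{MF}(\X,0)$ is a sequence of $\mathrm{Perf}(\X)$-linear dg-categories, but requires careful tracking of ring and module actions across the terms of the K-theory long exact sequence.
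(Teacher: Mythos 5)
Your proposal is correct and takes essentially the same route as the paper: the paper's proof is precisely the localization sequence \eqref{LESKtheory} for $\X_0=\X\times\Spec\mathbb{C}[\epsilon]$ combined with the vanishing of $K^{\mathrm{top}}_\bullet(\X_0)\to G^{\mathrm{top}}_\bullet(\X_0)$ from Lemma \ref{prop4zero}, and your extra $\Lambda$-module bookkeeping (computing $\partial(p^*\epsilon)=\pm 1$ by naturality from the point case of Lemma \ref{lemmalambda}, using $K^{\mathrm{top}}_\ast(\X)$-linearity of $\partial$, and concluding by the five lemma) correctly fills in what the paper leaves implicit, and the treatment of the admissible subcategory via the induced semiorthogonal decompositions is likewise the intended argument. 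The one slip is the phrase ``$\iota(x)$ is the image of the pullback of $x$ to $\X_0$'': the class of $\eta^{*}x$ in $G^{\mathrm{top}}(\X_0)$ is actually zero (this is exactly the computation in Lemma \ref{prop4zero}), and the correct identification is that $\iota$ is induced by $l_*$ from the classical truncation, so that by the projection formula $\iota(x)=x\cdot\iota(1)$ with $\iota(1)$ the class of the monoidal unit, after which your chain $\partial\bigl(p^{*}(\epsilon)\cdot\iota(x)\bigr)=x\cdot\partial(p^{*}\epsilon)=x$ goes through as you intended.
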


\begin{proof}
It suffices to prove the first isomorphism.
Let $\X_0$ be the derived zero locus of $0\colon \X\to\mathbb{C}$. 
By the long exact sequence \eqref{LESKtheory}, it suffices to show that the map $\alpha'\colon \mathrm{Perf}(\X_0)\to D^b(\X_0)$ induces the zero map:
\[\alpha'\colon K^{\mathrm{top}}_\bullet(\X_0)\to G^{\mathrm{top}}_\bullet(\X_0),\]
which we showed in Lemma~\ref{prop4zero}.
\end{proof}

\begin{proof}[Proof of Proposition~\ref{prop:forg}]
    It is enough to prove \eqref{isolambdatheta}. 
    Recall the notation $\mathrm{pt}:=\Spec \mathbb{C}$ and $T:=\mathrm{Spec}\,\Lambda_{\mathbb{C}}$ and the Koszul equivalence \eqref{Koszul}. Using \cite[Proposition 3.24]{HLP}
    (also see~\cite[Proposition~3.9]{T4} and note that 
    $\mathrm{MF}(\mathrm{pt}, 0)\simeq D^b(T)/\mathrm{Perf}(T)$), the equivalence \eqref{Koszul} induces an equivalence:
    \[\kappa'\colon D^b(\mathscr{P})\otimes_{D^b(\mathrm{pt})}\mathrm{MF}(\mathrm{pt}, 0)\xrightarrow{\sim} \mathrm{MF}(\mathscr{E}^\vee, f).\]
       Let $\mathscr{P}_0:=\mathscr{P}\times T$ and let $\pi\colon \mathscr{P}_0\to\mathscr{P}$ and $t\colon T\to \mathrm{pt}$ be the natural projections. We have that $\mathrm{MF}(\mathrm{pt}, 0)\cong D^b(T)/t^*(D^b(\mathrm{pt}))$. Then 
       there is an equivalence
    \[D^b(\mathscr{P})\otimes_{D^b(\mathrm{pt})}\mathrm{MF}(\mathrm{pt}, 0)\simeq D^b(\mathscr{P}_0)/\pi^*(D^b(\mathscr{P})).\] It suffices to show that the map 
    \[\pi^*\colon G^{\mathrm{top}}_i(\mathscr{P})\to G^{\mathrm{top}}_i(\mathscr{P}_0)\stackrel{\cong}{\to} G^{\mathrm{top}}_i(\mathscr{P})\] is zero, which follows as in the proof of Lemma~\ref{prop4zero}. 
\end{proof}

\subsection{Proof of Lemma~\ref{lem:openimm}}\label{subsec:motivic}
\begin{proof}
	We use the construction of $\mathcal{K}_M^{\rm{top}}$ via motivic stable homotopy theory, 
	see~\cite[Proposition~7.7]{Moulinos}: 
	\begin{align*}
		\mathcal{K}_M^{\rm{top}}(\mathscr{D})=\mathrm{Betti}_M(KGL_M(\mathscr{D})). 
	\end{align*}
	Here $\mathrm{Betti}_M$ is the Betti realization of the stable 
	motivic category 
	\begin{align*}
		\mathrm{Betti}_M \colon \mathrm{SH}(M) \to \mathrm{Sh}_{\mathrm{Sp}}(M^{\rm{an}})
	\end{align*}
	and $KGL_M(\mathscr{D}) \in \mathrm{SH}(M)$ is the constant spectrum 
	associated to the homotopy K-theory 
	$\underline{KH}_{M}(\mathscr{D}):=L_{\mathbb{A}^1}(\underline{K}_M(\mathscr{D}))$, 
	see~\cite[Section~7.3]{Moulinos} for details of the notation. 
	
	We have the following diagram 
	\begin{align*}
		\xymatrix{
			\mathrm{SH}(U) \ar[r]^-{\mathrm{Betti}_U} \ar[d]_-{j_{\ast}} & \mathrm{Sh}_{\mathrm{Sp}}(U^{\rm{an}}) \ar[d]_-{j_{\ast}} \\
			\mathrm{SH}(M) \ar[r]_-{\mathrm{Betti}_M} & \mathrm{Sh}_{\mathrm{Sp}}(M^{\rm{an}}).         
		}
	\end{align*}
	The above diagram may not be commutative as $j$ is not proper. 
	However, there is a natural morphism 
	\begin{align*}
		\mathrm{Betti}_M \circ j_{\ast} \to j_{\ast} \circ \mathrm{Betti}_U
	\end{align*}
	which is an equivalence when applied to $\Lambda$-constructible objects
	for a fixed class of objects $\Lambda \subset \mathrm{SH}(\mathrm{pt})$, where $\mathrm{pt}=\Spec \mathbb{C}$,
	see~\cite[Corollary~3.17]{Ayoub}.
	In particular, we apply it for the object
	\begin{align*}KGL_U(\mathrm{Perf}(U))=\phi^{\ast}KGL_{\mathrm{pt}}(\mathrm{Perf}(\mathrm{pt}))
	\end{align*}
	for the structure morphism $\phi \colon U \to \mathrm{pt}$ to obtain the equivalence 
	\begin{align*}
		\mathrm{Betti}_M \circ j_{\ast}KGL_{U}(\mathrm{Perf}(U)) \stackrel{\sim}{\to} 
		j_{\ast}K_U^{\rm{top}}(\mathrm{Perf}(U)). 
	\end{align*}
	On the other hand, we have 
	\begin{align*}
		j_{\ast}KGL_U(\mathrm{Perf}(U)) \simeq KGL_M(\mathrm{Perf}(U)). 
	\end{align*}
	The above equivalence follows from the construction of $KGL_M$ and noting 
	that, for a smooth morphism $Y \to M$, we have 
	\begin{align*}
		\underline{K}_M(\mathrm{Perf}(U))(Y) &=K(\mathrm{Perf}(U) \otimes_{\mathcal{O}_M}
		\mathrm{Perf}(Y)) \\
		&=K(\mathrm{Perf}(Y_U)) \\
		&=\underline{K}_U(\mathrm{Perf}(U))(Y_U)
	\end{align*}
	where $Y_U=Y\times_M U$. Therefore we obtain the desired equivalence (\ref{equiv:open}). 
\end{proof}

\subsection{Base-change and semiorthogonal decompositions}\label{basechange}

In this paper, we need to construct semiorthogonal decompositions for \'etale maps (or open immersions) of moduli of representations of a quiver. We use the following base-change result for semiorthogonal decompositions, see~\cite{MR2801403} for the case of derived categories of varieties.
 
For a pretriangulated dg-category $\mathscr{D}$ and a subcategory $\mathscr{C} \subset \mathscr{D}$, we say 
that $\mathscr{D}$ \textit{is classically generated by} $\mathscr{C}$ if the smallest pretriangulated subcategory of $\mathscr{D}$ 
which contains $\mathscr{C}$ and is closed under direct summands is $\mathscr{D}$. 

\begin{prop}\label{propSODetale}
Let $\X$ be a QCA (quasi-compact with affine stabilizers) derived stack with a morphism 
$\pi \colon \X \to S$ to a scheme $S$. Let \[D^b(\X)=\langle \mathscr{C}_i \mid i \in I\rangle\]
be a $S$-linear semiorthogonal decomposition. 
Then, for any \'etale map $f \colon T \to S$ and $f_T \colon \X_T=\X \times_S T \to \X$, 
there is a semiorthogonal decomposition 
\[D^b(\X_T)=\langle \mathscr{C}_{i, T} \mid i \in I\rangle,\] 
where $\mathscr{C}_{i, T} \subset D^b(\X_T)$ is the subcategory classically 
generated by $f_T^{\ast}\mathscr{C}_i$. 
\end{prop}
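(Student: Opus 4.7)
My plan is to exploit the $S$-linearity of the decomposition together with flat base change along the étale morphism $f\colon T\to S$. Since $f$ is flat and of finite presentation, I have, for all $A,B\in D^b(\X)$, the base change isomorphism
\[
f^{\ast}R\pi_{\ast}R\mathcal{H}om_{\X}(B,A)\xrightarrow{\sim} R\pi_{T\ast}R\mathcal{H}om_{\X_T}(f_T^{\ast}B,f_T^{\ast}A),
\]
together with the flat base change equivalence $D^b(\X_T)\simeq D^b(\X)\otimes_{\mathrm{Perf}(S)}\mathrm{Perf}(T)$ of $\mathrm{Perf}(T)$-linear stable $\infty$-categories. Both of these are the main tools I will use.

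First I would verify semiorthogonality of the $\mathscr{C}_{i,T}$. Given $A\in\mathscr{C}_i$ and $B\in\mathscr{C}_j$ with $j>i$, the $S$-linear semiorthogonality of the original decomposition implies that $R\pi_{\ast}R\mathcal{H}om_{\X}(B,A)=0$ in $D(\mathrm{QCoh}(S))$: indeed, this sheaf represents the functor $\mathcal{F}\mapsto \Hom_{\X}(B,A\otimes_{\mathcal{O}_S}\pi^{\ast}\mathcal{F})$ on $\mathrm{Perf}(S)$, which vanishes by $S$-linearity of the $\mathscr{C}_i$. Applying the displayed base change isomorphism yields $\Hom_{\X_T}(f_T^{\ast}B,f_T^{\ast}A)=0$, and this vanishing propagates to the classically generated $\mathscr{C}_{i,T}$ by passing to shifts, cones, and direct summands.

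For the generation of $D^b(\X_T)$ by the $\mathscr{C}_{i,T}$, I would appeal to the general $\infty$-categorical principle that tensoring an $S$-linear semiorthogonal decomposition with the dualizable $\mathrm{Perf}(S)$-module $\mathrm{Perf}(T)$ produces a semiorthogonal decomposition of the base-changed category whose summands are $\mathscr{C}_i\otimes_{\mathrm{Perf}(S)}\mathrm{Perf}(T)$; this follows because the admissible inclusions $\iota_i\colon\mathscr{C}_i\hookrightarrow D^b(\X)$ and their $S$-linear adjoints base-change to an adjunction on $D^b(\X_T)$ via the flat base change equivalence. Combined with the identification $D^b(\X_T)\simeq D^b(\X)\otimes_{\mathrm{Perf}(S)}\mathrm{Perf}(T)$, this provides an abstract semiorthogonal decomposition of $D^b(\X_T)$ whose summands are $\mathscr{C}_i\otimes_{\mathrm{Perf}(S)}\mathrm{Perf}(T)$.

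The main obstacle will be identifying these abstract base-changed summands with the concrete subcategories $\mathscr{C}_{i,T}$ classically generated by $f_T^{\ast}\mathscr{C}_i$. The inclusion $\mathscr{C}_{i,T}\subset \mathscr{C}_i\otimes_{\mathrm{Perf}(S)}\mathrm{Perf}(T)$ is automatic from the definition. For the reverse inclusion, I would check the claim étale-locally on $T$, reducing to the case when $T$ is affine étale over an affine open of $S$. In that setting, the projection formula $f_{T\ast}f_T^{\ast}A\simeq A\otimes_{\mathcal{O}_S}f_{\ast}\mathcal{O}_T$ together with the fact that $f_{\ast}\mathcal{O}_T$ is a compact, hence perfect, $\mathcal{O}_S$-module (by étaleness) implies that every object in the base-changed summand is a retract of an iterated extension of objects of the form $f_T^{\ast}A$ with $A\in\mathscr{C}_i$. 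Making this étale-local verification rigorous, and checking compatibility with the bounded coherent rather than perfect setting, is the technical heart of the proof.
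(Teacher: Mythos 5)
Your semiorthogonality argument is fine and is a legitimately different route from the paper's: you deduce the vanishing of $R\pi_{\ast}R\mathcal{H}om_{\X}(B,A)$ from $S$-linearity (testing against perfect complexes, which compactly generate $D_{\rm qc}(S)$) and then apply flat base change, whereas the paper works with adjunction in Ind-categories, writing $\Hom_{\X_T}(f_T^{\ast}A_i,f_T^{\ast}A_j)=\Hom_{\Ind D^b(\X)}(A_i, A_j\otimes_{\mathcal{O}_S}f_{\ast}\mathcal{O}_T)$ and using that $A_j\otimes_{\mathcal{O}_S}f_{\ast}\mathcal{O}_T\in\Ind\mathscr{C}_j$ together with compactness of $A_i$. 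Both are acceptable, modulo the standard base-change facts for QCA stacks.

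The generation step, however, has a genuine gap, and it sits exactly where you locate the "technical heart". Your identification of the abstract summand $\mathscr{C}_i\otimes_{\mathrm{Perf}(S)}\mathrm{Perf}(T)$ with the classically generated $\mathscr{C}_{i,T}$ rests on the claim that $f_{\ast}\mathcal{O}_T$ is a compact, hence perfect, $\mathcal{O}_S$-module "by étaleness". This is false: étale morphisms need not be finite. Already a Zariski open immersion, or an affine localization $\Spec A_g\to\Spec A$, is étale, and there $f_{\ast}\mathcal{O}_T$ is flat but not a perfect complex; the reduction to the affine étale case does not repair this, since étale maps are only quasi-finite. So the asserted "retract of an iterated extension of objects $f_T^{\ast}A$" does not follow from the stated reasons. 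In addition, the input $D^b(\X_T)\simeq D^b(\X)\otimes_{\mathrm{Perf}(S)}\mathrm{Perf}(T)$ for bounded coherent (rather than perfect) complexes is itself a nontrivial theorem that you assume without justification. The paper avoids both issues by never leaving the concrete setting: it shows that any $A\in\Ind D^b(\X_T)$ is a direct summand of $f_T^{\ast}f_{T\ast}A$, because $f_T^{\ast}f_{T\ast}A\simeq A\otimes g_{T\ast}\mathcal{O}_{\X'}$ for $\X'=\X_T\times_{\X}\X_T$ and the diagonal section of $g_T$ (which exists and splits off $\mathcal{O}_{\X_T}$ precisely because $f$ is étale) exhibits $\mathcal{O}_{\X_T}$ as a summand of $g_{T\ast}\mathcal{O}_{\X'}$; then compactness of objects of $D^b(\X_T)$ inside $\Ind D^b(\X_T)$, which is where the QCA hypothesis enters via Drinfeld--Gaitsgory, gives classical generation of $D^b(\X_T)$ by $f_T^{\ast}D^b(\X)$ and hence by the $\mathscr{C}_{i,T}$. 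Note that this argument only needs $f_{\ast}\mathcal{O}_T\in D_{\rm qc}(S)=\Ind\mathrm{Perf}(S)$, not its perfectness. To salvage your approach you would either have to import the paper's retract-plus-compactness argument for the identification of summands, or restrict to finite étale $f$, which is weaker than the statement.
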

\begin{proof}
    The image of $f_T^{\ast} \colon \Ind D^b(\X) \to \Ind D^b(\X_T)$ 
    classically generates $\Ind D^b(\X_T)$, as 
    any $A \in \Ind D^b(\X_T)$ is a direct summand of $f_T^{\ast}f_{T\ast}A$. Indeed, consider the diagram:
    \begin{equation*}
        \begin{tikzcd}
           \X':=\X_T\times_{\X}\X_T\arrow[d, "g_T"]\arrow[r, "g_T"]& \X_T\arrow[d, "f_T"]\\
            \X_T\arrow[r, "f_T"]&\X.
        \end{tikzcd}
    \end{equation*}
    Then $f_T^*f_{T*}A=g_{T*}g_T^*A=A\otimes g_{T*}\mathcal{O}_{\X'}$. The map $g_T$ has a section given by the diagonal map $\Delta\colon \X_T\to \X'$, thus $g_{T*}\mathcal{O}_{\X'}$ has $\mathcal{O}_{\X_T}$ as a direct summand, and so $A$ is indeed a direct summand of $f_T^{\ast}f_{T\ast}A$.
    
    By the QCA assumption, objects in $D^b(\X_T) \subset \Ind D^b(\X_T)$ are compact, 
    see~\cite{MR3037900}. 
    Therefore $D^b(\X_T)$ is classically generated by 
    $f_T^{\ast} D^b(\X)$, thus by $\mathscr{C}_{i, T}$ for $i\in I$. 
    
    To show semiorthogonality, 
    consider $i,j\in I$ such that $\Hom(A_i, A_j)=0$ for all $A_i\in \mathscr{C}_i$ and $A_j\in \mathscr{C}_j$. 
We have 
\begin{align}\notag
    \Hom_{D^b(\X_T)}(f_T^{\ast}A_i, f_T^{\ast}A_j)&=\Hom_{\Ind D^b(\X)}(A_i, f_{T\ast}f_T^{\ast}A_j) \\
 \label{hom:tensor}   &=\Hom_{\Ind D^b(\X)}(A_i, A_j \otimes_{\mathcal{O}_S}f_{\ast}\mathcal{O}_T). 
\end{align}
    Here $f_{\ast}\mathcal{O}_T \in D_{\rm{qc}}(S)=\Ind \mathrm{Perf}(S)$, 
    and the $S$-linearity of $\mathscr{C}_j$ implies 
    $A_j \otimes_{\mathcal{O}_S} f_{\ast}\mathcal{O}_T \in \Ind \mathscr{C}_j$. 
    Then the vanishing of (\ref{hom:tensor}) follows from 
    the compactness of $A_i$ (see the end of the proof of \cite[Lemma 5.5]{PTK3} for how compactness is used). 
\end{proof}

\appendix

\section{Topological Grothendieck Riemann-Roch theorem via motivic stable homotopy theory}\label{subsec:append}
In the appendix, we derive the topological Riemann-Roch formulas which we need in this paper from the existing references of motivic stable homotopy theory~\cite{DJK, Deg2, Ayoub}. 
\subsection{Bivariant theories}
For a quasi-compact and quasi-separated scheme $S$ over $\mathbb{C}$, 
we denote by $\mathrm{SH}(S)$ the stable $\infty$-category of motivic 
spectra~\cite{KhanPhD}. Its homotopy category is equivalent to the stable 
$\mathbb{A}^1$-homotopy category by Voevodsky~\cite{Voev}. 
We refer to~\cite[Section~2]{DJK} for notations. 

Let $\mathbb{E}\in \mathrm{SH}(S)$ be a motivic ring spectrum. 
For any $p \colon X \to S$ which is an $s$-morphism (separated morphism of finite type), and $v\in K(X)$, the \textit{$v$-twisted bivariant spectrum} of $X$ over $S$ is 
defined to be the mapping spectrum 
\begin{align*}
    \mathbb{E}(X/S, v):=\mathrm{Map}(\mathbb{S}_S, p_{*}p^!(\mathbb{E}_S\otimes \mathrm{Th}_X(-v))).
\end{align*}
Here $\mathrm{Th}_X(-v)$ is the Thom spectrum associated with 
the virtual vector bundle, see~\cite[Section~2.1.5]{DJK}. 
For a morphism $f\colon X \to Y$ over $S$, there is a 
natural product, see~\cite[Section~2]{DJK}:
\begin{align*}
    \mathbb{E}(X/Y, v) \times \mathbb{E}(Y/S, w) \to 
    \mathbb{E}(X/S, v+f^* w).
\end{align*}

We write $r:=[\mathcal{O}_X^{\oplus r}] \in K(X).$
The particular classes of interest are 
Borel-Moore/cohomology theories
\begin{align*}
\mathbb{E}^{\mathrm{BM}}(X, r):=\mathbb{E}(X/S, r), \
\mathbb{E}(X, r):=\mathbb{E}(X/X, r).
\end{align*}
The spectrum $\mathbb{E}(X):=\mathbb{E}(X, 0)$ is a ring spectrum and $\mathbb{E}^{\mathrm{BM}}(X, r)$ is a module over it. 

\subsection{Induced maps}
For a $S$-morphism $f \colon X \to Y$, there is a 
natural pull-back 
\begin{align*}
    f^* \colon \mathbb{E}(Y, r) \to \mathbb{E}(X, r). 
\end{align*}
If $f$ is proper, there is a natural push-forward
\begin{align*}
    f_{*} \colon \mathbb{E}^{\mathrm{BM}}(X, r) \to \mathbb{E}^{\mathrm{BM}}(Y, r)
\end{align*}
given by the adjunction $f_{!} f^! \to \id$ and $f_!=f_*$ as $f$ is proper. 

Let $c$ be an orientation of $\mathbb{E}$ as in~\cite[Definition~4.4.1]{DJK}, which is the data of equivalences for each $S$-scheme $p\colon X \to S$ and $v\in K(X)$
\begin{align}\label{orient}
\tau_v^c \colon 
    \mathbb{E}(X/S, v) \simeq \mathbb{E}(X/S, r)
\end{align}
where $r$ is the rank $v$, 
which are functorial and respect the $E_{\infty}$-group structure on $K(X)$ up to a homotopy coherent 
system of compatibilities. 
For example, an $\mathrm{MGL}$-module structure on $\mathbb{E}$ gives rise to an orientation of $\mathbb{E}$, see~\cite[Example 4.4.2]{DJK}, \cite[Section~2.1.12]{Deg2}.
Then by~\cite[Theorem~3.3.2, Definition~4.1.3, Definition~4.1.4, Section~4.4.3]{DJK},
there exists a system of fundamental classes 
\begin{align}\label{fund:eta}
    \eta_f^{(\mathbb{E}, c)} \in \mathbb{E}(X/Y, d_f)
\end{align}
for smoothable lci $S$-morphisms $f\colon X \to Y$, 
satisfying expected properties, see~\cite[Theorem~3.3.2]{DJK}. 
Here $d_f$ is the virtual dimension of $f$.

The fundamental classes (\ref{fund:eta}) define, for a smoothable lci morphism $f$, a pull-back 
of Borel-Moore theories
\begin{align*}
    f_c^{*} \colon \mathbb{E}^{\mathrm{BM}}(Y, r) \to 
    \mathbb{E}^{\mathrm{BM}}(X, r+d_f), \ (-)\mapsto (-) \cdot \eta_f^{(\mathbb{E}, c)}.
\end{align*}
If $f$ is furthermore proper, we have the push-forward of cohomology theories
\begin{align*}
    f_{c*} \colon \mathbb{E}(X, r) \to \mathbb{E}(Y, r+d_f)
\end{align*}
given by 
\begin{align*}
    \mathbb{E}(X, r) \stackrel{\cdot \eta_f^{(\mathbb{E}, c)}}{\to} \mathbb{E}(X/Y, r+d_f) \stackrel{f_{*}}{\to} \mathbb{E}(Y/Y, r+d_f)=\mathbb{E}(Y, r+d_f).
\end{align*}

\subsection{Riemann-Roch theorem}
Let $\psi \colon \mathbb{E} \to \mathbb{F}$ be a map of motivic ring spectra. 
It naturally induces the maps 
\begin{align*}
    \psi \colon \mathbb{E}^{\mathrm{BM}}(X, r) \to \mathbb{F}^{\mathrm{BM}}(X, r), \  \psi \colon \mathbb{E}(X, r) \to \mathbb{F}(X, r)
\end{align*}
From the construction, for $f \colon X \to Y$,
there is a commutative diagram 
\begin{align}\label{com:E1}
    \xymatrix{
\mathbb{E}(Y, r) \ar[r]^-{f^*}\ar[d]_-{\psi}  & \mathbb{E}(X, r) \ar[d]^-{\psi} \\
\mathbb{F}(Y, r) \ar[r]^-{f^*} & \mathbb{F}(X, r). 
    }
\end{align}

If $f$ is furthermore proper, there is a commutative diagram 
\begin{align}\label{com:E2}
    \xymatrix{
\mathbb{E}^{\mathrm{BM}}(X, r) \ar[r]^-{f_*}\ar[d]_-{\psi}  & \mathbb{E}^{\mathrm{BM}}(Y, r) \ar[d]^-{\psi} \\
\mathbb{F}^{\mathrm{BM}}(X, r) \ar[r]^-{f_*} & \mathbb{F}^{\mathrm{BM}}(Y, r). 
    }
\end{align}

Let $c$ be an orientation of $\mathbb{E}$ and $d$ an orientation of $\mathbb{F}$. 
Then for a smoothable lci morphism $f\colon X \to Y$, by~\cite[Theorem~3.2.6]{Deg2} 
we have 
\begin{align}\label{Td}
\psi(\eta_{f}^{(\mathbb{E}, c)}) \simeq \mathrm{Td}_{\psi}(\mathbb{T}_f) \cdot \eta_f^{(\mathbb{F}, d)}.
\end{align}
Here $\mathrm{Td}_{\psi}(\mathbb{T}_f)\in \mathbb{F}(Y)$ is the Todd class~\cite[Definition~3.2.4]{Deg2}
of the tangent complex $\mathbb{T}_f$, which keeps track of the 
difference of the orientations $\psi(c)$ and $d$ of $\mathbb{F}$. 

We have the commutative diagram~\cite[Proposition~3.3.11]{Deg2}
\begin{align}\label{com:E3}
    \xymatrix{
\mathbb{E}^{\mathrm{BM}}(Y, r) \ar[rr]^-{f_c^*}\ar[d]_-{\psi} & & \mathbb{E}^{\mathrm{BM}}(X, r+d_f) \ar[d]^-{\psi} \\
\mathbb{F}^{\mathrm{BM}}(Y, r) \ar[rr]^-{\mathrm{Td}_{\psi}(\mathbb{T}_f)f_d^*} & & \mathbb{F}^{\mathrm{BM}}(X, r+d_f). 
    }
\end{align}
If $f$ is furthermore proper, there is a commutative diagram (the Grothendieck-Riemann-Roch theorem)~\cite[Proposition~3.3.11]{Deg2}
\begin{align}\label{com:E4}
    \xymatrix{
\mathbb{E}(X, r) \ar[rr]^-{f_{c*}}\ar[d]_-{\psi}  && \mathbb{E}(Y, r+d_f) \ar[d]^-{\psi} \\
\mathbb{F}(X, r) \ar[rr]^-{f_{d*}(\mathrm{Td}_{\psi}(\mathbb{T}_f \cdot ))} && \mathbb{F}(Y, r+d_f). 
    }
\end{align}
Note that in~\cite[Proposition~3.3.11]{Deg2} the commutative diagrams (\ref{com:E3}), (\ref{com:E4}) are stated 
for homotopy groups of spectra, but from the constructions they are lifted to 
commutative diagrams of spectra~\cite[Section~4.4.3]{DJK}, where the commutative structure is induced by a choice of a homotopy (\ref{Td}) and orientations (\ref{orient}). 

\begin{remark}\label{rmk:ch}
If $p_X \colon X \to S$ is lci, 
we can define $\psi^{\mathrm{BM}}$ to be 
\begin{align}\label{psiBM}
    \psi^{\mathrm{BM}} :=\mathrm{Td}_{\psi}(\mathbb{T}_{p_X})^{-1} \cdot \psi \colon \mathbb{E}^{\mathrm{BM}}(X) \to \mathbb{F}^{\mathrm{BM}}(X).
\end{align}
Then the commutative diagrams (\ref{com:E2}), (\ref{com:E3}) are written as 
the commutative diagrams:
\begin{align}\label{com:E5}
    &\xymatrix{
\mathbb{E}^{\mathrm{BM}}(X, r) \ar[rr]^-{f_*}\ar[d]_-{\psi^{\mathrm{BM}}}  && \mathbb{E}^{\mathrm{BM}}(Y, r) \ar[d]^-{\psi^{\mathrm{BM}}} \\
\mathbb{F}^{\mathrm{BM}}(X, r) \ar[rr]^-{f_{*}(\mathrm{Td}_{\psi}(\mathbb{T}_f \cdot ))} && \mathbb{F}^{\mathrm{BM}}(Y, r), 
    }\\
   \label{com:E55} &\xymatrix{
\mathbb{E}^{\mathrm{BM}}(Y, r) \ar[rr]^-{f_c^*}\ar[d]_-{\psi} & & \mathbb{E}^{\mathrm{BM}}(X, r+d_f) \ar[d]^-{\psi} \\
\mathbb{F}^{\mathrm{BM}}(Y, r) \ar[rr]^-{f_d^*} & & \mathbb{F}^{\mathrm{BM}}(X, r+d_f). 
    }
\end{align}    
\end{remark}
\subsection{Riemann-Roch theorem for the singularity spectrum}
For $p_X \colon X \to S$, by taking the product with 
$\eta_{p_X}^{(\mathbb{E}, c)}$, we obtain the map of spectra
\begin{align*}
    i_X^{(\mathbb{E}, c)}(-):=\eta_{p_X}^{(\mathbb{E}, c)} \cdot (-) \colon \mathbb{E}(X) \to \mathbb{E}^{\mathrm{BM}}(X, d_{p_X}).
\end{align*}
The \textit{singularity spectrum} is defined to be the cofiber 
\begin{align*}
    \mathbb{E}^{\mathrm{sg}}(X):=\mathrm{Cofib}(i_X^{(\mathbb{E}, c)}).
\end{align*}
Let $\psi \colon \mathbb{E} \to \mathbb{F}$ be a map of motivic ring spectra, 
with orientations $c$, $d$, respectively. 

Let $p_X \colon X \to S$ is lci, and consider the map (\ref{psiBM}). 
By the associativity of the fundamental classes in~\cite[Theorem~2.5.3]{Deg2},
we have the following commutative diagram 
\begin{align}\label{psi:E}
    \xymatrix{
    \mathbb{E}(X) \ar[r]^-{i_X^{(\mathbb{E}, c)}} \ar[d]_-{\psi} & \mathbb{E}^{\mathrm{BM}}(X, d_{p_X}) \ar[d]^-{\psi^{\mathrm{BM}}} \\
    \mathbb{F}(X) \ar[r]^-{i_X^{(\mathbb{F}, d)}} & \mathbb{F}^{\mathrm{BM}}(X, d_{p_X}).
    }
\end{align}
By taking the induced map on cofibers, we obtain 
\begin{align*}
    \psi^{\mathrm{sg}} \colon \mathbb{E}^{\mathrm{sg}}(X) \to \mathbb{F}^{\mathrm{sg}}(X).
\end{align*}

Let $f \colon X \to Y$ be a smoothable lci morphism. Then we have the commutative 
diagram 
\[
\begin{array}{c@{\qquad}c@{\qquad}c}
\vcenter{\hbox{
\xymatrix{
  \mathbb{E}(Y) \ar[r]^-{i_Y^{(\mathbb{E}, c)}} \ar[d]_-{\psi}
    & \mathbb{E}^{\mathrm{BM}}(Y, d_{p_Y}) \ar[d]^-{\psi^{\mathrm{BM}}} \\
  \mathbb{F}(Y) \ar[r]^-{i_Y^{(\mathbb{F}, d)}}
    & \mathbb{F}^{\mathrm{BM}}(Y, d_{p_Y})
}}}
&
\xrightarrow[(f^*,f_c^*)]{(f^*,f_c^*)}
&
\vcenter{\hbox{
\xymatrix{
  \mathbb{E}(X) \ar[r]^-{i_X^{(\mathbb{E}, c)}} \ar[d]_-{\psi}
    & \mathbb{E}^{\mathrm{BM}}(X, d_{p_X}) \ar[d]^-{\psi^{\mathrm{BM}}} \\
  \mathbb{F}(X) \ar[r]^-{i_X^{(\mathbb{F}, d)}}
    & \mathbb{F}^{\mathrm{BM}}(X, d_{p_X})
}}}
\end{array}
\]
By taking the induced maps on cofibers, we obtain the commutative diagram 
\begin{align}\label{com:Esg1}
    \xymatrix{
\mathbb{E}^{\mathrm{sg}}(Y) \ar[r]^-{f^{\mathrm{sg}*}}\ar[d]_-{\psi^{\mathrm{sg}}} & \mathbb{E}^{\mathrm{sg}}(X) \ar[d]^-{\psi^{\mathrm{sg}}} \\
\mathbb{F}^{\mathrm{sg}}(Y) \ar[r]^-{f^{\mathrm{sg}*}} & \mathbb{F}^{\mathrm{sg}}(X). 
    }
\end{align}
Here $f^{\mathrm{sg}*}$ is induced by $(f^*, f_c^*)$. 

If $f$ is furthermore proper, we have the commutative diagram 
\[
\begin{array}{c@{\qquad}c@{\qquad}c}
\vcenter{\hbox{
\xymatrix{
  \mathbb{E}(X) \ar[r]^-{i_X^{(\mathbb{E}, c)}} \ar[d]_-{\psi}
    & \mathbb{E}^{\mathrm{BM}}(X, d_{p_X}) \ar[d]^-{\psi^{\mathrm{BM}}} \\
  \mathbb{F}(X) \ar[r]^-{i_X^{(\mathbb{F}, d)}}
    & \mathbb{F}^{\mathrm{BM}}(X, d_{p_X})
}}}
&
\xrightarrow[\mathrm{Td}_{\psi}(\mathbb{T}_f)(f_{c*},f_{*})]{(f_{c*},f_{*})}
&
\vcenter{\hbox{
\xymatrix{
  \mathbb{E}(Y) \ar[r]^-{i_Y^{(\mathbb{E}, c)}} \ar[d]_-{\psi}
    & \mathbb{E}^{\mathrm{BM}}(Y, d_{p_Y}) \ar[d]^-{\psi^{\mathrm{BM}}} \\
  \mathbb{F}(Y) \ar[r]^-{i_Y^{(\mathbb{F}, d)}}
    & \mathbb{F}^{\mathrm{BM}}(Y, d_{p_Y})
}}}
\end{array}
\]
By taking the induced maps on cofibers, we obtain the commutative diagram 
\begin{align}\label{com:Esg2}
    \xymatrix{
\mathbb{E}^{\mathrm{sg}}(X) \ar[rr]^-{f^{\mathrm{sg}}_{*}}\ar[d]_-{\psi^{\mathrm{sg}}} & &\mathbb{E}^{\mathrm{sg}}(Y) \ar[d]^-{\psi^{\mathrm{sg}}} \\
\mathbb{F}^{\mathrm{sg}}(X) \ar[rr]^-{\mathrm{Td}_{\psi}(\mathbb{T}_f)f^{\mathrm{sg}}_{*}} & &\mathbb{F}^{\mathrm{sg}}(Y). 
    }
\end{align}
Here $f^{\mathrm{sg}}_{*}$ is induced by $(f_{c*}, f_{*})$. 

\subsection{Betti realization}
We now apply the Betti realization functor~\cite{Ayoub}
\begin{align*}
    \mathrm{Betti} \colon \mathrm{SH}(S) \to D(\mathrm{Sh}_{\mathbb{Q}}(S^{\mathrm{an}})).
\end{align*}
We set
\begin{align*}
    \varphi:=\mathrm{Betti}(\psi) \colon \mathcal{E}:=\mathrm{Betti}(\mathbb{E}) \to 
    \mathcal{F}:=\mathrm{Betti}(\mathbb{F}).
\end{align*}
We also define $\mathcal{E}^{\mathrm{BM}}$, $\mathcal{E}^{\mathrm{sg}}$ in a way similar to 
$\mathbb{E}^{\mathrm{BM}}$, $\mathbb{E}^{\mathrm{sg}}$. 
All the diagrams in the previous subsections also hold in the Betti setting. 
By replacing $(\mathbb{E}, \mathbb{E}^{\mathrm{BM}}, \mathbb{E}^{\mathrm{sg}})$, 
$(\mathbb{F}, \mathbb{F}^{\mathrm{BM}}, \mathbb{F}^{\mathrm{sg}})$
with $(\mathcal{E}, \mathcal{E}^{\mathrm{BM}}, \mathcal{E}^{\mathrm{sg}})$, 
$(\mathcal{F}, \mathcal{F}^{\mathrm{BM}}, \mathcal{F}^{\mathrm{sg}})$,  
we have the following topological Riemann-Roch formulas for $\star \in \{\emptyset, \mathrm{BM}, \mathrm{sg}\}$
\begin{align}\label{GRR:top}
   & \xymatrix{
\mathcal{E}^{\star}(Y) \ar[r]^-{f^{\star*}}\ar[d]_-{\varphi^{\star}} & \mathcal{E}^{\star}(X) \ar[d]^-{\varphi^{\star}} \\
\mathcal{F}^{\star}(Y) \ar[r]^-{f^{\star*}} & \mathcal{F}^{\star}(X),
    }
    &    \xymatrix{
\mathcal{E}^{\star}(X) \ar[rr]^-{f^{\star}_{*}}\ar[d]_-{\varphi^{\star}} & &\mathcal{E}^{\star}(Y) \ar[d]^-{\varphi^{\star}} \\
\mathcal{F}^{\star}(X) \ar[rr]^-{\mathrm{Td}_{\varphi}(\mathbb{T}_f)f^{\star}_{*}} & &\mathcal{F}^{\star}(Y). 
    }
\end{align}
In the first diagram, $f$ is smoothable lci for $\star\in \{\mathrm{BM}, \mathrm{sg}\}$, in the second diagram $f$ is smoothable lci and proper, and 
$X, Y$ are lci for $\star \in \{\mathrm{BM}, \mathrm{sg}\}$. 

In particular, we set $S=\mathrm{pt}$, and consider the motivic Chern character
\begin{align*}
    \psi=\ch \colon \mathbb{E}=\mathrm{KGL} \to \mathbb{F}=\bigoplus_{n\in \mathbb{Z}}H\mathbb{Q}(n)[2n].
\end{align*}
Note that both sides admit canonical orientations such that the 
associated Todd class equals the one which appears in the classical 
GRR formula, see~\cite[2.5.3.3]{Deg1}. 
Its topological realization is 
\begin{align*}
   \varphi= \ch \colon \mathcal{E}=KU \to \mathcal{F}=\mathbb{Q}[\beta^{\pm 1}],
\end{align*}
where $\deg \beta=2$. We note that 
$\mathcal{E}^{\mathrm{BM}}(X, r)$ is independent of $r$ by Bott periodicity, 
and write it as $\mathcal{E}^{\mathrm{BM}}(X)$. 
We have the induced Chern character map 
\begin{align}\label{ch:Dsg}
    \varphi^{\star}=\ch \colon \mathcal{E}^{\star}(X) \to \mathcal{F}^{\star}(X).
\end{align}
Below we investigate the above map. 

\begin{lemma}\label{lem:EBM}
For a smoothable lci scheme $X$, there are natural equivalences 
\begin{align*}
    \mathcal{E}(X) \simeq K^{\mathrm{top}}(X), \ \mathcal{E}^{\mathrm{BM}}(X) \simeq G^{\mathrm{top}}(X).
\end{align*}
    
\end{lemma}
\begin{proof}
    The first equivalence follows from 
    $\mathcal{E}(X)=p_{*}\underline{KU}_X=K^{\mathrm{top}}(X)$ 
    by the definition of $\mathcal{E}(X)$. 

    As for the second equivalence, let $X\subset A$ be a regular embedding where $A$ is smooth
    with $d=\dim A$, 
    and $U=A\setminus X$. 
    We have the commutative diagram 
    \begin{align*}
        \xymatrix{
\mathbb{E}^{\mathrm{BM}}(X, d) \ar[r] & \mathbb{E}^{\mathrm{BM}}(A, d) \ar[r] & \mathbb{E}^{\mathrm{BM}}(U, d) \\
&  \mathbb{E}(A) \ar[r] \ar[u]_-{\sim}^-{\eta_A^{(\mathbb{E}, c)}} & \mathbb{E}(U). \ar[u]_-{\sim}^-{\eta_U^{(\mathbb{E}, c)}} 
        }
    \end{align*}
    Here the top horizontal sequence 
    is a fiber sequence by the Morel-Voevodsky localization property~\cite[Proposition~2.2.10]{DJK}, and 
    vertical arrows are equivalences since $A, U$ are smooth, see~\cite[Example~2.3.13]{DJK}.
    By applying the Betti realization, using the first equivalence and Bott periodicity, we have 
    the commutative diagram, where each horizontal sequence is a fiber sequence
     \begin{align*}
        \xymatrix{
\mathcal{E}^{\mathrm{BM}}(X) \ar[r] & \mathcal{E}^{\mathrm{BM}}(A) \ar[r] & \mathcal{E}^{\mathrm{BM}}(U) \\
G^{\mathrm{top}}(X) \ar[r] &  K^{\mathrm{top}}(A) \ar[r] \ar[u]_-{\sim} & K^{\mathrm{top}}(U). \ar[u]_-{\sim} 
        }
    \end{align*}
    Therefore we obtain the equivalence $G^{\mathrm{top}}(X) \simeq \mathcal{E}^{\mathrm{BM}}(X)$
    by taking the induced map on fibers. An independence of a choice 
    of $X\subset A$ can be proved along with the same argument of Lemma~\ref{lem:rho}. 
    \end{proof}

    We obtain a commutative diagram of fiber sequences of spectra 
    \begin{align}\label{com:fiber}
    \xymatrix{
K^{\mathrm{top}}(\mathrm{Perf}(X)) \ar[r] \ar[d]_-{\sim} & K^{\mathrm{top}}(\mathrm{Coh}(X)) \ar[r] \ar[d]_-{\sim}& K^{\mathrm{top}}(D_{\mathrm{sg}}(X))  \\
K^{\mathrm{top}}(X) \ar[r] \ar[d]_-{\sim} & G^{\mathrm{top}}(X) \ar[r] \ar[d]_-{\sim} & G^{\mathrm{top}}(X)/K^{\mathrm{top}}(X) \\
\mathcal{E}(X) \ar[r] & \mathcal{E}^{\mathrm{BM}}(X) \ar[r] & \mathcal{E}^{\mathrm{sg}}(X).
    }
    \end{align}
    By taking the induced maps on cofibers, we obtain equivalences 
    \begin{align*}
        K^{\mathrm{top}}(D_{\mathrm{sg}}(X)) \simeq G^{\mathrm{top}}(X)/K^{\mathrm{top}}(X) \simeq 
        \mathcal{E}^{\mathrm{sg}}(X).
    \end{align*}
    Similarly for the spectrum $\mathcal{F}$, we have 
    equivalences 
    \begin{align*}
        \mathcal{F}(X) \simeq \Gamma(\mathbb{Q}_X)[\beta^{\pm 1}], \ 
        \mathcal{F}^{\mathrm{BM}}(X) \simeq \Gamma(\omega_X)[\beta^{\pm 1}].
    \end{align*}
    We define 
    \begin{align}\label{omega:sg}
        \omega_X^{\mathrm{sg}}:=\mathrm{Cofib}(\mathbb{Q}_X \stackrel{\alpha}{\to} \omega_X[-2\dim X])
    \end{align}
    where $\alpha$ is given by capping with the fundamental class of smoothable lci scheme $X$. Then we have 
    an equivalence 
    \begin{align*}
        \mathcal{F}^{\mathrm{sg}}(X) \simeq \Gamma(\omega_X^{\mathrm{sg}})[\beta^{\pm 1}].
    \end{align*}
By the Betti realization of (\ref{psi:E}), we obtain the Chern character map of 
fiber sequences of spectra
\begin{align}\label{GRR:KG}
    \xymatrix{
K^{\mathrm{top}}(X) \ar[r] \ar[d]^-{\ch} & G^{\mathrm{top}}(X) \ar[r] \ar[d]^-{\ch} & K^{\mathrm{top}}(D_{\mathrm{sg}}(X)) \ar[d]^-{\ch} \\
\Gamma(\mathbb{Q}_X)[\beta^{\pm 1}] \ar[r] & \Gamma(\omega_X)[\beta^{\pm 1}] \ar[r] & 
\Gamma(\omega_X^{\mathrm{sg}})[\beta^{\pm 1}].
    }
\end{align}
Moreover each vertical arrow satisfies the GRR transformation formulas (\ref{GRR:top}); 
\begin{align}\label{GRR1}
  &  \xymatrix{
K^{\mathrm{top}}(X) \ar[r]^-{f_{*}} \ar[d]_-{\ch} & K^{\mathrm{top}}(Y) \ar[d]^-{\ch} \\
\Gamma(\mathbb{Q}_{X})[\beta^{\pm 1}] \ar[r]^{\mathrm{Td}_{f}f_{*}} & 
\Gamma(\mathbb{Q}_{Y})[\beta^{\pm 1}],
    } \
   \xymatrix{
K^{\mathrm{top}}(Y) \ar[r]^-{f^{*}} \ar[d]_-{\ch} & K^{\mathrm{top}}(X) \ar[d]^-{\ch} \\
\Gamma(\mathbb{Q}_{Y})[\beta^{\pm 1}] \ar[r]^{f^{*}} & 
\Gamma(\mathbb{Q}_{X})[\beta^{\pm 1}]
    }\\
\label{GRR2} & \xymatrix{
G^{\mathrm{top}}(X) \ar[r]^-{f_{*}} \ar[d]_-{\ch} & G^{\mathrm{top}}(Y) \ar[d]^-{\ch} \\
\Gamma(\omega_{X})[\beta^{\pm 1}] \ar[r]^{\mathrm{Td}_{f}f_{*}} & 
\Gamma(\omega_{Y})[\beta^{\pm 1}],
    } \
   \xymatrix{
G^{\mathrm{top}}(Y) \ar[r]^-{f^{*}} \ar[d]_-{\ch} & G^{\mathrm{top}}(X) \ar[d]^-{\ch} \\
\Gamma(\omega_{Y})[\beta^{\pm 1}] \ar[r]^{f^{*}} & 
\Gamma(\omega_{X})[\beta^{\pm 1}]
    }\\
  \label{GRR3} &  \xymatrix{
K^{\mathrm{top}}(D_{\mathrm{sg}}(X)) \ar[r]^-{f_{*}} \ar[d]_-{\ch} & K^{\mathrm{top}}(D_{\mathrm{sg}}(Y)) \ar[d]^-{\ch} \\
\Gamma(\omega_{X}^{\mathrm{sg}})[\beta^{\pm 1}] \ar[r]^{\mathrm{Td}_{f}f_{*}} & 
\Gamma(\omega_{Y}^{\mathrm{sg}})[\beta^{\pm 1}],
    } \
   \xymatrix{
K^{\mathrm{top}}(D_{\mathrm{sg}}(Y)) \ar[r]^-{f^{*}} \ar[d]_-{\ch} & K^{\mathrm{top}}(D_{\mathrm{sg}}(X)) \ar[d]^-{\ch} \\
\Gamma(\omega_{Y}^{\mathrm{sg}})[\beta^{\pm 1}] \ar[r]^{f^{*}} & 
\Gamma(\omega_{X}^{\mathrm{sg}})[\beta^{\pm 1}].
    }
\end{align}
Here $f$ is proper and smoothable lci in the left diagrams, 
$X, Y$ are lci in the left (\ref{GRR2}), (\ref{GRR3}), $f$ is smoothable lci in the right (\ref{GRR2}), (\ref{GRR3}). 

\begin{remark}\label{rmk:GRR}
For $\mathcal{E}=KU$ the push-forward $f_{*}$ in the diagrams 
(\ref{GRR1}) is induced by the fundamental class of $f$ with respect 
to the canonical orientation of $KU$. 
On the other hand, for a proper and smoothable lci morphism $f$, we have 
\[f_{*} \colon \mathrm{Perf}(X) \to \mathrm{Perf}(Y),\] 
which induces a map $K^{\mathrm{top}}(X) \to K^{\mathrm{top}}(Y)$. 
Indeed, this map agrees with the pushforward induced by the fundamental class, see~\cite[Remark~2.5.3.7]{Deg1}. 
\end{remark}
\subsection{Quasi-smooth case}    
We also use a version of the GRR formula in the quasi-smooth case. 
Suppose that $X$ is a derived scheme such that $p_X \colon X \to S$
is quasi-smooth. We set 
\begin{align*}
\mathbb{E}^{\mathrm{BM}}(X, r):=\mathbb{E}^{\mathrm{BM}}(X^{\mathrm{cl}}, r). 
\end{align*}
For $\psi\colon \mathbb{E} \to \mathbb{F}$, we define 
\begin{align*}
    \psi^{\mathrm{BM}}:=\psi \cdot \mathrm{Td}_{\psi}^{-1}(\mathbb{T}_{p_X}) \colon \mathbb{E}^{\mathrm{BM}}(X, r)
    \to \mathbb{F}^{\mathrm{BM}}(X, r).
\end{align*}
    Let $p_Y \colon Y \to S$ be also quasi-smooth and 
    $f\colon X \to Y$ is a proper map. 
    Then we have the commutative diagram (\ref{com:E5})
    by applying it for $f^{\mathrm{cl}} \colon X^{\mathrm{cl}} \to Y^{\mathrm{cl}}$. Similarly if $f$ is smooth, then 
    $f^{\mathrm{cl}} \colon X^{\mathrm{cl}} \to Y^{\mathrm{cl}}$ is 
    also smooth, and we have the commutative diagram (\ref{com:E55}). 

    Let $G^{\mathrm{top}}(X):=G^{\mathrm{top}}(X^{\mathrm{cl}})$
    and $\omega_X=\omega_{X^{\mathrm{cl}}}$. 
    By applying the Betti realization, we obtain the commutative diagrams 
    \begin{align}\label{com:Gtop1}
    \xymatrix{
G^{\mathrm{top}}(X) \ar[r]^-{f_{*}} \ar[d]_-{\ch} & G^{\mathrm{top}}(Y) \ar[d]^-{\ch} \\
\Gamma(\omega_{X})[\beta^{\pm 1}] \ar[r]^{\mathrm{Td}_{f}f_{*}} & 
\Gamma(\omega_{Y})[\beta^{\pm 1}],
    } \
   \xymatrix{
G^{\mathrm{top}}(Y) \ar[r]^-{f^{*}} \ar[d]_-{\ch} & G^{\mathrm{top}}(X) \ar[d]^-{\ch} \\
\Gamma(\omega_{Y})[\beta^{\pm 1}] \ar[r]^{f^{*}} & 
\Gamma(\omega_{X})[\beta^{\pm 1}]. 
    }   
    \end{align}
    Here $f$ is proper and quasi-smooth in the first diagram and $f$ is smooth in the 
    second diagram. In the first case, $\mathrm{Td}_f$ is the Todd class of the tangent complex $\mathbb{T}_f$, and $f_{*}$ is induced by 
    $f_{*}\omega_X=f_{!} f^! \omega_Y \to \omega_Y$, and in the second case 
    $f^*$ is induced by $f^* \omega_Y=\omega_X[-2 \dim f]$.

 \bibliographystyle{amsalpha}
\bibliography{math}
\medskip

\textsc{\small Tudor P\u adurariu: Sorbonne Université and Université Paris Cité, CNRS, IMJ-PRG, F-75005 Paris, France.}\\
\textit{\small E-mail address:} \texttt{\small padurariu@imj-prg.fr}\\

\textsc{\small Yukinobu Toda: Kavli Institute for the Physics and Mathematics of the Universe (WPI), University of Tokyo, 5-1-5 Kashiwanoha, Kashiwa, 277-8583, Japan.}\\
\textit{\small E-mail address:} \texttt{\small yukinobu.toda@ipmu.jp}\\

 \end{document}